\newcommand{\longby}[1]{\stackrel{#1}{\longrightarrow}}
\newcommand{\surj}{\twoheadrightarrow}
\newcommand{\inj}{\hookrightarrow}
\newcommand{\lra}{\longrightarrow}
\newcommand{\ol}{\overline}
\newcommand{\wt}{\widetilde}
\newcommand{\wh}{\widehat}
\newcommand{\bosym}{\boldsymbol}
\newtheorem{theorem}{Theorem}[section]
\newtheorem{propn}[theorem]{Proposition}
\newtheorem{cor}[theorem]{Corollary}
\newtheorem{lemma}[theorem]{Lemma}
\theoremstyle{definition}
\newtheorem{defn}[theorem]{Definition}
\newtheorem{conj}[theorem]{Conjecture}
\newtheorem{remark}[theorem]{Remark}               
\newtheorem{thmalph}{Theorem}
\newcommand{\N}{\mathbb N}
\newcommand{\Q}{{\mathbb Q}}
\newcommand{\R}{\mathbb R}                         
\newcommand{\Z}{{\mathbb Z}}
\newcommand{\C}{\mathbb C}
\renewcommand{\AA}{{\mathbb{A}}} 
\newcommand{\FF}{{\mathbb{F}}} 
\newcommand{\II}{\mathbb I}
\newcommand{\JJ}{\mathbb J}
\newcommand{\KK}{\mathbb K}
\newcommand{\TT}{\mathbb T}
\newcommand{\WW}{\mathbb W}
\newcommand{\q}{{\mathfrak q}}
\newcommand{\p}{{\mathfrak p}}
\newcommand{\m}{{\mathfrak m}}
\newcommand{\fr}{\mathfrak{r}}
\newcommand{\fc}{\mathfrak{c}}
\newcommand{\fN}{\mathfrak{N}}
\newcommand{\cA}{{\mathcal A}}                      
\newcommand{\cC}{{\mathcal C}}
\newcommand{\cF}{{\mathcal F}}
\newcommand{\cG}{{\mathcal G}}
\newcommand{\ocG}{\overline{\mathcal G}}
\newcommand{\cH}{{\mathcal H}}
\newcommand{\cL}{{\mathcal L}}
\newcommand{\cO}{{\mathcal O}} 
\newcommand{\cU}{{\mathcal U}}
\newcommand{\sR}{{\mathscr R}}
\newcommand{\sS}{\mathscr S}                        
\newcommand{\sT}{{\mathscr T}}
\newcommand{\gq}{G_{\mathbb Q}}                     
\newcommand{\GQ}{G_{\mathbb Q}}
\newcommand{\gal}{\mathrm{Gal}}
\newcommand{\ad}[1]{\mathrm{Ad}^0{(#1)}}
\newcommand{\sg}[2]{\mathrm{Sel}_{#1}{(#2)}}        
\newcommand{\sgd}[2]{\mathrm{Sel}^\ast_{#1}{(#2)}}  
\newcommand{\sgart}[2]{\mathrm{S}_{#1}{(#2)}}
\newcommand{\Om}[2]{\Omega_{#1/#2}}
\newcommand{\gl}[1]{GL_2{(#1)}}                     
\newcommand{\F}{\cF}
\newcommand{\cR}{\mathcal R}
\newcommand{\cRin}{\mathcal R_\infty}
\newcommand{\Ein}{E_\infty}
\newcommand{\veps}{\varepsilon}
\newcommand{\IIG}{\mathbb I[[\cG]]}
\newcommand{\cnl}{\mbox{$CNL_{\mathcal O}$}\xspace} 
\newcommand{\bfi}{{\mathbf{i}}}
\newcommand{\bh}{\mathbf{h}}                        
\newcommand{\bxi}{{\bar{\xi}}}
\newcommand{\inft}[1]{{{#1}_\infty}}
\newcommand{\cy}[1]{{#1}_{cyc}}                     
\newcommand{\cyn}[2]{{#1}_{{#2},cyc}}               
\newcommand{\fH}{\mathfrak{H}}
\newcommand{\he}[1]{\mathbb{H}_{(#1)}}
\newcommand{\Hom}[3]{\mathrm{Hom}_{#1}(#2,#3)}
\newcommand{\homs}[2]{\mathrm{Hom}(#1,#2)}
\newcommand{\homcat}[3]{\mathrm{Hom}_{\mathrm{#1}}(#2,#3)}
\newcommand{\Hone}[2]{\mathrm{H}^1(#1,#2)}
\newcommand{\kone}[1]{K_1\left(#1\right)}
\newcommand{\konep}[1]{K'_1(#1)}
\newcommand{\Log}{\mathrm{Log}}
\newcommand{\Ind}[3]{\mathrm{Ind}^{#1}_{#2}{#3}}
\newcommand{\Indm}[1]{\mathrm{Ind}_M^{\Q}(#1)}
\newcommand{\maptheta}[2]{\Theta^{#1}_{#2}}
\newcommand{\Res}[2]{\mathrm{Res}^{#1}_{#2}}
\newcommand{\Conj}[1]{\mathrm{Conj}(#1)}
\newcommand{\Wh}{\mathrm{Wh}}
\DeclareSymbolFont{cyrletters}{OT2}{wncyr}{m}{n}
\DeclareMathSymbol{\Sha}{\mathalpha}{cyrletters}{"58}
\def\imod#1{\allowbreak\mkern10mu({\operator@font mod}\,\,#1)}
\title{Noncommutative Iwasawa theory arising from Hecke algebras}
\author{Chandrakant Aribam}
\date{}
\begin{document}

\maketitle
\abstract{Let $p$ be an odd prime and $f$ be a nearly ordinary Hilbert modular Hecke eigenform defined over a totally real field $F$. Let $\II$ be an irreducible component
of the universal nearly ordinary or locally cyclotomic deformation of the representation of $\gal_F$ that is associated to $f$. We study the deformation rings over a $p$-adic 
Lie extension $\inft{F}$ that contains the cyclotomic $\Z_p$-extension of $F$. More precisely, we prove a control theorem about these rings. 
We introduce a category $\mathfrak{M}_\cH^\II(\cG)$, where $\cG=\gal(\inft{F}/F)$ and $\cH=\gal(\inft{F}/\cy{F})$, which is the category of modules which are torsion 
with respect to a certain Ore set, which generalizes the Ore set introduced by Venjakob. 
For Selmer groups which are in this category, we formulate a 
Main conjecture in the spirit of Noncommutative Iwasawa theory. We then set up a strategy to prove the conjecture by generalizing work of Burns, Kato, Kakde, and Ritter and Weiss.
This requires appropriate generalizations of results due to Oliver and Taylor, and Oliver on Logarithms of certain $K$-groups, which we have presented here.
} 
\tableofcontents
\section{Introduction}\label{intro}
Let $F$ be a totally real number field and $F_\infty$ be a Galois extension such that $\gal(F_\infty/F)$ is a pro-$p$, $p$-adic Lie group $\cG$ for an odd prime $p$. 
We assume that the cyclotomic $\Z_p$-extension $\cy{F}$ is contained in $\inft{F}$.
For every $n\in\N$, let $F_n$ be a finite Galois 
extension of $F$ contained in $F_\infty$ such that $F_n\subset F_{n+1}$ and $F_\infty=\cup_n F_n$. 

Over the totally real field $F$, consider a Hecke eigenform 
$f_0\in S_{\kappa}^{n.ord}(\mathfrak{N},\veps_0;W)$ of weight $\kappa=(0,I)$ (see \S\ref{adelic-hmf} for a precise definition of these weights). 
Let $\rho_0$ be the representation of $\gal(\ol F/F)$ that is associated to $f_0$. As $\gal(F_\infty/F)$
is a pro-$p$, $p$-adic Lie group it is solvable and we can consider the base change $f_n$ of $f_0$ to the totally real field $F_n$. Then the representation 
$\rho_n:=\rho_{f_n}$ is isomorphic to the restriction $\rho_n:=\rho_0\mid_{\gal_{F_n}}$. For each of these representations, we consider the deformations of the residual 
representations $\ol\rho_n$. Under some conditions including the absolute irreducibility of the residual representation $\ol\rho_0$, 
a universal deformation ring $\cR_{F_n}$ exists for $\ol\rho_n$ for all $n$ ( see \S\ref{deformation}). 
In this article, we study the deformation rings over the $p$-adic Lie extension $F_\infty$. This allows us to study the Selmer groups of the adjoint representations $\ad{\rho_0}$
defined over the $p$-adic Lie extension $F_\infty$. In fact, if $\boldsymbol\rho_0$ denotes the deformation of $\rho_0$, then we consider the Selmer groups of the adjoint 
$\ad{\boldsymbol\rho_0}$ along an irreducible component $\II$ of $\cR_0$. Let $\cG:=\gal(\inft{F}/F)$ and $\cH:=\gal(\inft{F}/\cy{F})$. Then there is a natural action of the 
group $\cG$ on these Selmer groups, making these Selmer groups modules over $\II[[\cG]]$.

We formulate a Main conjecture for Selmer groups of $\ad{\bosym{\rho_0}}$ over $\IIG$. A Main conjecture for Galois representations attached to the 
ordinary Hida 
family was also formulated by Barth in his thesis \cite{barth}. Our formulation is slightly different from his formulation. 
All of these conjectures are generalizations of the Main Conjecture of Iwasawa theory in \cite{cfksv} for Galois representations arising from motives which are ordinary 
at a prime $p$. A requirement for this formulation is that the Pontryagin dual of Selmer groups defined over $\inft{F}$ be in the category $\mathfrak{M}_\cH(\cG)$. 
This category $\mathfrak{M}_\cH(\cG)$ consists of finitely generated modules over $\Z_p[[\cG]]$ and torsion 
with respect to a 
certain Ore Set (see Section \ref{non-commmutative}). 
This Ore set is defined to be the set of all the elements $x$ of $\Z_p[[\cG]]$ such that $\Z_p[[\cG]]/x$ is a finitely generated module over $\Z_p[[\cH]]$.
It is conjectured in \cite{cfksv} that the Selmer groups defined over $\inft{F}$ attached to $p$-ordinary Galois representations are in the category $\mathfrak{M}_\cH(\cG)$.

As a generalization, we consider the set $\sS$ which consists of elements $x\in\IIG$ such that $\IIG/x$ is a finitely generated module over $\II[[\cH]]$. Then we consider the
category $\mathfrak{M}_\cH^\IIG$ which consists of finitely generated modules over $\IIG$ which are $\sS$-torsion (see section \ref{non-commmutative} for more details). 
To formulate a noncommutative Main conjecture, we also require that the Selmer group of $\ad{\bosym\rho_0}$ over $\IIG$ is in the 
category $\mathfrak{M}_\cH^\II(\cG)$. We state this as a conjecture and in fact, this is a natural generalization of the conjecture for the Selmer group of 
$\ad{\rho_0}$ (\cite{cfksv}). Very little is known about this conjecture,  
even in the first non-trivial and crucial case, namely the case when $\cG$ is a 2 dimensional $p$-adic Lie group. 
It is interesting to note that, by \cite[Cor 5.5]{cfksv}, the Selmer group of a CM elliptic curve $E$ with ordinary reduction at $p$ is in $\mathfrak{M}_\cH(\cG)$
if its $\mu$-invariant is zero over $\cy{\Q(E[p])}$ .
This turns out to be related to a conjecture of Iwasawa regarding the vanishing of $\mu$-invariant of the field  $\cy{\Q(E[p])}$.
It can also be related to the vanishing of the $\mu$-invariants for CM modular forms of higher weight ( see section \ref{mu-cm}).
\begin{thmalph}(= Theorem \ref{elliptic-CM})
 Let $E$ be a CM elliptic curve defined over $\Q$. Let $p$ be a prime of good ordinary reduction for $E$, and $\rho_E$ be the representation 
 of $\gal_\Q$ attached to the Tate module of $E$. Let $K=\Q(E[p])$ be the field of $p$-torsion points of $E$, $\inft{K}=\Q(E[p^\infty])$, $\cG=\gal(\inft{K}/K)$, and $\cH=\gal(\inft{K}/\cy{K})$. 
 Then, assuming Iwasawa's conjecture 
 on the vanishing of the $\mu$-invariant of $\gal(\cy{L}/\cy{K})$, the dual Selmer group $\sgd{\cy{K}}{{\rho_E}}$ has $\mu$-invariant equal to zero. 
 In other words, Iwasawa's conjecture 
 on the vanishing of the $\mu$-invariant of $\gal(\cy{L}/\cy{K})$ implies that the dual Selmer group $\sgd{\cy{K}}{{\rho_E}}$
  is in the category $\mathfrak{M}_\cH(\cG)$.
\end{thmalph}
This can perhaps be considered as an evidence supporting the $\mathfrak{M}_\cH(\cG)$-conjecture for CM elliptic curves, since Iwasawa's conjecture on the vanishing of $\mu$-invariants is believed to be true. A proof of this theorem rests on the existence of Artin representations of weight one in the
Hida family of the Galois representation of $E$.
In general, we show that the conjecture on 
$\mathfrak{M}_\cH(\cG)$ is true if $\cRin$ is noetherian. This result also ties up with the numerical examples for the prime $p=3$, in \cite{chan-fine}, that Iwasawa's $\mu$-invariant over $\cy{\Q(E[p])}$ is zero, for all CM elliptic curves defined over $\Q$ with good ordinary reduction at the prime $3$, as well as those in \cite{chan-selmer}, that the $\mu$-invariant for
these Selmer groups vanish.


The main conjecture for Selmer group of $\ad{\bosym\rho_0}$ along $\II$ when $F_\infty$ is the cyclotomic $\Z_p$-extension of $F$ was studied by Hida in many papers.
However, for many of the results here we refer to the book \cite{hida-hmf}.

Our aim here is to explore the noncommutative Iwasawa theory for a $p$-adic family of modular forms.
The results of Hida in \cite{hs2} dealt with certain twists of the adjoint representation $\ad{\bosym\rho_0}$, and the Main conjecture formulated here takes
care of all the Artin twists arising from the $p$-adic Lie extension $\gal(\inft{F}/F)$ at once (see the Conjecture \ref{main-conj} below). After formulating the Main conjecture 
for Selmer groups over $\II[[\cG]]$, we also show that a suitable generalization of the strategy of Burns, Kato, Kakde and Hara may be build up to prove the Main conjecture over 
$\II[[\cG]]$. Their strategy has been successfully used to prove the main conjecture over totally real fields and for the trivial Galois representation. 
We also generalize the torsion congruences which played a crucial role in the proof of the noncommutative Main conjecture over totally real fields.
It would have been clear that the methods of Burns and Kato could be generalized to a $p$-adic family of Galois representations. However, there remained many technical details 
to verify in order to accomplish this. These details are in trying to generalize those results of Olivier which have been used by Burns and Kato. We carefully verify and extend only those results, which were used in the methods of these authors. Their results and techniques provide a basis for our proofs based on induction. The strategy can be used in general to show the existence of noncommutative $p$-adic L-functions for a $p$-adic family of Galois representations.

We first show a relation between the conjectures regarding the categories $\mathfrak{M}_\cH(\cG)$ and $\mathfrak{M}_\cH^\II(\cG)$.
\begin{thmalph}[=Theorem \ref{big-torsion-small-torsion}]
Consider the representation $\bosym{\rho}_\II:\gal_F\lra\gl{\II}$ which arises from the irreducible component $\II$ and let $\phi_k:\II\lra\cO$ be a morphism of local 
algebras which give rise to a locally cyclotomic
point $P$ of weight $k$. 
 The dual Selmer group $\sgd{\Ein}{\ad{{\rho}_\II}}$ is $\sS$-torsion if and only if $\sgd{\Ein}{\ad{\rho_P}}$ is $S$-torsion.
\end{thmalph}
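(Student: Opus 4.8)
The plan is to compare the two Selmer groups $\sgd{\Ein}{\ad{\rho_\II}}$ and $\sgd{\Ein}{\ad{\rho_P}}$ via the specialization map induced by $\phi_k:\II\to\cO$, and then to transport the torsion property across the two different Ore sets $\sS$ (for $\IIG$) and $S$ (for $\cO[[\cG]]=\Z_p[[\cG]]\otimes\cO$) using the fact that $\sS$ lies over $S$. First I would recall, from the control theorem for the deformation rings proved earlier in the paper, that specialization at the locally cyclotomic point $P$ induces a map
\[
\sgd{\Ein}{\ad{\rho_\II}}\otimes_{\II,\phi_k}\cO \lra \sgd{\Ein}{\ad{\rho_P}}
\]
whose kernel and cokernel are controlled — they are pseudo-null, or at least finitely generated over $\cO[[\cH]]$ — because $\ad{\rho_\II}$ specializes to $\ad{\rho_P}$ and the local conditions defining the Selmer groups are compatible under $\phi_k$. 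The key algebraic input is that $\ker\phi_k$ is a height-one prime of $\II$ generated (up to the relevant localization) by one element, so the snake lemma applied to multiplication by a generator of $\ker\phi_k$ gives the comparison.

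Next I would argue the two implications. For the forward direction, suppose $\sgd{\Ein}{\ad{\rho_\II}}$ is $\sS$-torsion, i.e.\ it is finitely generated over $\II[[\cH]]$. Since $\II$ is finite over $\Z_p[[X]]$ (a nearly ordinary Hecke algebra is finite over the weight algebra) and $\phi_k$ is a surjection onto $\cO$, base change $-\otimes_{\II,\phi_k}\cO$ turns a finitely generated $\II[[\cH]]$-module into a finitely generated $\cO[[\cH]]$-module; combined with the control theorem above (finite or pseudo-null kernel and cokernel over $\cO[[\cH]]$), $\sgd{\Ein}{\ad{\rho_P}}$ is then finitely generated over $\cO[[\cH]]$, i.e.\ $S$-torsion. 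For the converse, suppose $\sgd{\Ein}{\ad{\rho_P}}$ is $S$-torsion. Here the point is that $P$ is just one of infinitely many locally cyclotomic points; but the structure theory of modules over $\IIG$ shows that $\sS$-torsion can be detected after localizing $\II$ at the single height-one prime $\ker\phi_k$ — equivalently, if $M$ is a finitely generated $\IIG$-module with $M\otimes_{\II,\phi_k}\cO$ finitely generated over $\cO[[\cH]]$, then, using that $\II$ is a (suitably nice, e.g.\ normal or Cohen–Macaulay) domain and Nakayama over the local ring $\II$, $M$ itself is finitely generated over $\II[[\cH]]$ after inverting the part of $\sS$ we care about. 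Concretely I would invoke the characterization of $\mathfrak{M}_\cH^\II(\cG)$ from Section \ref{non-commmutative}, which (generalizing Venjakob and \cite{cfksv}) says $M$ is $\sS$-torsion iff $M$ is finitely generated over $\II[[\cH]]$, and then reduce the $\II[[\cH]]$-finiteness to a statement over $\cO[[\cH]]$ via a single faithfully-flat-enough base change.

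The main obstacle I expect is the converse direction: deducing $\II[[\cH]]$-finiteness of the big Selmer group from $\cO[[\cH]]$-finiteness at one point $P$. A single specialization $\phi_k$ does not see all of $\Spec\II$, so one cannot simply invoke Nakayama's lemma naively over the non-local-in-the-wrong-variable ring $\II$; one needs either (a) that the Selmer module is $\II$-torsion-free or at least has no associated primes contained in $\ker\phi_k$ except possibly $\ker\phi_k$ itself — a consequence of the control theorem and the fact that $\ad{\rho_\II}$ is a free $\II$-module — so that finiteness over $\cO[[\cH]]$ at $P$ forces the generic rank over $\IIG$ relative to $\II[[\cH]]$ to be zero; or (b) a direct argument that the characteristic ideal in $\mathfrak{M}_\cH^\II(\cG)$ specializes correctly, which is where the generalization of the Oliver–Taylor and Oliver logarithm results, together with the control theorem, are used to guarantee that no "extra" support in the $\cG/\cH$-direction appears or disappears under $\phi_k$. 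I would handle this by first establishing $\II$-torsion-freeness (or the vanishing of the relevant $\mathrm{Ext}$ groups) of $\sgd{\Ein}{\ad{\rho_\II}}$ from the control theorem, which reduces the whole equivalence to the faithfully-flat-descent statement that a finitely generated $\IIG$-module $M$, torsion-free over $\II$, is $\II[[\cH]]$-finite iff $M\otimes_{\II,\phi_k}\cO$ is $\cO[[\cH]]$-finite — a purely commutative-algebra lemma that I would isolate and prove separately.
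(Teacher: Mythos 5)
Your forward implication is fine and matches the paper's (it is the easy direction there too). The problem is the converse, where the ``main obstacle'' you identify is not actually present, and the machinery you assemble to get around it is both unnecessary and partly unjustified. The ring $\II[[\cH]]$ is a \emph{complete local} ring, and $P=\ker\phi_k$ is contained in $\m_\II$, hence in the maximal ideal $\mathfrak{n}$ of $\II[[\cH]]$. So for any compact $\II[[\cH]]$-module $N$ (for instance any finitely generated $\IIG$-module), finite generation of $N/PN$ over $\cO[[\cH]]\cong\II[[\cH]]/P$ forces $N/\mathfrak{n}N$ to be finite, and the topological Nakayama lemma then gives finite generation of $N$ over $\II[[\cH]]$ outright. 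This single observation \emph{is} the converse: one identifies $\sgd{\Ein}{\ad{\rho_P}}$ with $\mathcal{M}/P\mathcal{M}$ for $\mathcal{M}=\Om{\cRin}{W}\otimes_{\cRin}\II$ via the K\"ahler-differential description (Proposition \ref{arith-spe}), and applies Nakayama as above; the paper runs exactly this engine, lifting generators of $\mathcal{M}/P\mathcal{M}$ and annihilators $s_j\in S$ and applying Nakayama to the cyclic modules $\IIG/x$ to see that any lift of an element of $S$ lands in $\sS$. Your assertion that ``one cannot simply invoke Nakayama's lemma naively'' is exactly backwards: one can, precisely because everything in sight is local and $P$ lies inside the maximal ideal; no information about other points of $\mathrm{Spec}\,\II$ is needed to pass from the quotient back to the whole module in this direction.

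Consequently the alternative route you propose does not hold together. The $\II$-torsion-freeness (or control of associated primes) of $\sgd{\Ein}{\ad{\rho_\II}}$ is neither established nor needed; ``faithfully flat descent'' along $\phi_k:\II\to\cO$ is a misnomer, since $\cO$ is a quotient of $\II$ rather than a faithfully flat algebra over it, and the principle actually doing the work is again Nakayama; and the Oliver--Taylor logarithm results concern $K_1$ of Iwasawa algebras and have no bearing on this purely module-theoretic statement. If you discard that apparatus and keep the one-line Nakayama argument that you mention in passing and then reject, your proof closes.
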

\begin{thmalph}[=Theorem \ref{noetherian-mhg}] Let $\cG$ be a pro-$p$, $p$-adic analytic Lie group which is admissible. 
 If $\cRin$ is noetherian, then the dual Selmer group $\sgd{F_\infty}{\ad{\rho_P}\otimes_{W}W^\ast}$ is in the category $\mathfrak M_{\cH}(\cG)$.
\end{thmalph}
Over the cyclotomic $\Z_p$-extension, results of Hida show that the noetherian property of $\cy\cR$ is related to the vanishing of $\mu$-invariant of the dual Selmer group
of $\ad{\rho}$. 
In the spirit of this, we also mention few observations about the adjoint Selmer groups.

Let $\II\cong\cO[[X_1,\cdots,X_r]]$, for some $r$, with $\cO$ unramified over $\Z_p$.
and $\cG$ a $p$-adic Lie group of dimension 1. Let $\Sigma(\cG)$ be any set of rank 1 subquotients of $\cG$ of the form $U^{ab}$ with $U$ an open subgroup of 
$\cG$ that has the following property:
\begin{description}
 \item[($\ast$)] For each Artin representation $\rho$ of $\cG$, there is a finite subset $\{U^{ab}_i:i\in I\}$ of $\Sigma(\cG)$ and for each index 
$i$ an integer $m_i$ and a degree one representation $\rho_i$ of $U^{ab}$ such that there is an isomorphism of virtual representations 
$\rho\cong\sum_{i\in I}m_i.\Ind{\cG}{U_i}{\Ind{U_i}{U_i^{ab}}{\rho_i}}$. 
\end{description}
Let $U^{ab}$ be a subquotient satisfying the above property $(\ast)$, and for any group $G$, let $\II(G):=\II[[G]]$.
Note that we have the following natural homomorphism,
\begin{equation*}
 \konep{\IIG_\sS}\lra \konep{\II(U)_\sS}\lra \konep{\II(U^{ab})_\sS}\lra\II(U^{ab})_\sS^\times\subset Q_\II(U^{ab})^\times.
\end{equation*}
Taking all the $U^{ab}$ in $\Sigma(\cG)$ we get the following homomorphism
\begin{equation*}
 \Theta_{\Sigma(\cG)}:\konep{\IIG}\lra\prod_{U^{ab}\in\Sigma(\cG)}Q_\II(U^{ab})^\times.
\end{equation*}
For any subgroup $P$ of $\ol\cG$, we write $\maptheta{\ol\cG,ab}{P}$ for the following natural composite homomorphism
\begin{equation*}
 \konep{\IIG}\stackrel{\maptheta{\ol\cG}{P}}{\lra}\kone{\II(U_P)}\lra\kone{\II(U_P^{ab})}\cong\II(U_P^{ab})^\times,
\end{equation*}
where the isomorphism is induced by taking determinants over $\II(U_P^{ab})$.
\begin{thmalph}[=Theorem \ref{cong}]
 Let $\Xi\in\konep{\IIG}$ and for all subgroups $P$ of $\ol\cG$, put $\Xi_P:=\maptheta{\ol\cG,ab}{P}(\Xi)\in\II(U_P^{ab})^\times$.
\begin{enumerate}
 \item For all subgroups $P, P'$ of $\ol\cG$ with $[P',P']\leq P\leq P'$, we have
\begin{equation*}
 \mathrm{Nr}_P^{P'}(\Xi_{U_{P'}^{ab}})=\Pi_P^{P'}(\Xi_{U_{P'}^{ab}}).
\end{equation*}
 \item For all subgroups $P$ of $\ol\cG$ and all $g$ in $\ol\cG$ we have $\Xi_{gU_{P}^{ab}g^{-1}}=g\Xi_{U_{P}^{ab}}g^{-1}$.
 \item For every $P\in\ocG$ and $P\neq (1)$, we have
 \begin{equation*}
  \mathrm{ver}_P^{P'}(\Xi_{U_{P'}^{ab}})\equiv \Xi_{U_P^{ab}} \pmod{\sT_{P,P'}} (\mbox{ resp. } \sT_{P,P',\sS} \mbox{ and } \widehat\sT_{P,P'}).
 \end{equation*}
 \item For all $P\in C(\ol\cG)$  we have $\alpha_P(\Xi_{U_{P}^{ab}})\equiv\prod_{P'\in C_P(\ol\cG)}\alpha_{P'}(\Xi_{U_{P'}^{ab}})\pmod{p\sT_P}$.
\end{enumerate}
Conversely, if $\Xi_{U_P^{ab}}\in\II(U_P^{ab})^\times$ for all subgroups $P$ of $\ol\cG$, such that the above congruences hold then there exists an element 
$\Xi\in\konep{\II(\cG)}$ such that $\maptheta{\ol\cG,ab}{P}(\Xi)=\Xi_{U_P^{ab}}\in\II(U_P^{ab})^\times$.
\end{thmalph}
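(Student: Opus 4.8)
\emph{Strategy.} The plan is to reduce the multiplicative assertion to an additive one by means of an integral logarithm, following the method by which Oliver--Taylor, Burns, Kakde and Ritter--Weiss treat the case $\II=\Z_p$, but now with coefficients in the regular local ring $\II\cong\cO[[X_1,\dots,X_r]]$. Write $\cG\cong H\rtimes\Gamma$ with $H$ finite and $\Gamma\cong\Z_p$, which is possible since $\cG$ has dimension one, and let $\Log$ be the integral logarithm $\log-\tfrac1p\,\psi\circ\log$ attached to $\II(\cG)$, where $\psi$ is the Frobenius operator on the relevant abelian quotient. The key input, which is exactly the generalization of the results of Oliver and of Oliver--Taylor presented earlier in this paper, is that over $\II$ the map $\Log$ induces an isomorphism $\konep{\IIG}\otimes\Q\cong\bigl(\II(\cG)/[\II(\cG),\II(\cG)]\bigr)\otimes\Q$ and that, integrally, its kernel and cokernel are controlled by the group of $p$-power roots of unity and by the torsion subgroup of $\cG^{ab}$. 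The same statements, with $\sS$ inverted, hold for $\IIG_\sS$.

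\emph{Necessity of the congruences.} I would first show that every $\Xi\in\konep{\IIG}$ gives, via the family $\{\maptheta{\ocG,ab}{P}\}_P$, a tuple satisfying (i)--(iv). Congruence (ii) is the naturality of $K_1$ under conjugation and is immediate. Congruence (i), the equality of the norm map $\mathrm{Nr}_P^{P'}$ and the twisted map $\Pi_P^{P'}$, is obtained by comparing the two composites $\konep{\IIG}\to\kone{\II(U_{P'}^{ab})}$ that factor through $U_P$, using Mackey-type identities for induction together with compatibility of determinants with norms on the abelian quotients. Congruence (iii), the verlagerung congruence modulo $\sT_{P,P'}$ and its variants $\sT_{P,P',\sS}$ and $\widehat\sT_{P,P'}$, is the main point of this direction: one applies $\Log$ and is reduced to a congruence between a trace map and a restriction map between the modules $\II(U_{P'}^{ab})$ and $\II(U_P^{ab})$, which is checked directly from their explicit structure as free $\II[[\Gamma]]$-modules with bases indexed by the relevant (twisted) conjugacy classes. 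Congruence (iv), the mod $p$ relation among the $\alpha_P$ for $P\in C(\ocG)$, then follows from (iii) by a counting argument over conjugacy classes, as in the $\Z_p$ case.

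\emph{Sufficiency.} For the converse I would induct on the order of $\ocG$, the abelian case being immediate since there $\Theta$ is, up to the controlled torsion, an isomorphism. Given a tuple $(\Xi_{U_P^{ab}})_P$ satisfying (i)--(iv), one first shows that the associated additive tuple lies in the image of the additive theta map. This is the crux: one proves that the additive congruences cut out precisely that image by an explicit linear-algebra computation over $\II[[\Gamma]]$, using that $\II$ is flat over $\Z_p$ and that the twisted conjugacy-class modules are free of finite rank over $\II[[\Gamma]]$; here (i) governs the diagonal components, (iii) the verlagerung-compatibility, and (iv) the residual $p$-torsion ambiguity. One then lifts an additive preimage to $\konep{\IIG}$ using surjectivity of $\Log$ up to its kernel and cokernel, correcting by $p$-power roots of unity and by $\cG^{ab}_{\mathrm{tors}}$ exactly as in Kakde's and Burns's arguments, with (ii) and (iv) used to match the torsion contributions. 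Finally one verifies, using property $(\ast)$ of $\Sigma(\cG)$ together with (i), that the resulting $\Xi$ has the prescribed image under $\maptheta{\ocG,ab}{P}$ for \emph{all} subgroups $P$, not merely those appearing in the induction.

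\emph{Main obstacle.} The hard part is the additive computation in the sufficiency step, carried out with coefficients in $\II=\cO[[X_1,\dots,X_r]]$ instead of $\Z_p$: one must know that the generalized logarithm and the freeness and trace properties of the twisted conjugacy-class modules survive base change from $\Z_p$ to $\II$. Since $\II$ is a regular local ring and is $\Z_p$-flat this is plausible, but the simultaneous presence of the Ore set $\sS$ and of the three congruence moduli $\sT_{P,P'}$, $\sT_{P,P',\sS}$ and $\widehat\sT_{P,P'}$ forces one also to check that localization at $\sS$ and the relevant completions are exact and commute with $\Log$ and with the norm and verlagerung maps. A secondary difficulty is the bookkeeping of the torsion correction terms in passing from the additive to the multiplicative side, which is delicate but follows the template of Burns--Kakde once the additive statement is available.
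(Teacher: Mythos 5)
Your overall architecture is the right one and matches the paper's: transfer the problem to the additive side via an integral logarithm $\mathfrak L$ over $\sR=\II(\wt\Gamma^{p^e})$, identify the additive image by the map $\beta^{\ocG}:\sR[\Conj{\ocG}]^\tau\to\Psi^{\ocG}$, which is shown to be an isomorphism (Theorem \ref{additive}), and recover the multiplicative statement from the two four-term exact sequences $1\to\mu(\cO)\times\WW\times\cG^{ab}\to\konep{\IIG}\to\sR[\Conj{\ocG}]^\tau\to\WW\times\cG^{ab}\to1$ and its analogue $1\to\mu(\cO)\times\WW\times\cG^{ab}\to\Phi^{\ocG}\to\Psi^{\ocG}\to\WW\times\cG^{ab}\to1$, with the generalized Higman--Oliver torsion computation (Theorem \ref{k-tors}) supplying the kernel terms. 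The paper closes the sufficiency direction with the Five Lemma on this ladder (Theorem \ref{Theta-iso}) rather than your induction on $\lvert\ocG\rvert$; induction on the group order does appear where you expect it, but in establishing exactness of the logarithmic sequence (Proposition \ref{log-exact}), not in the lifting step. That is a cosmetic difference. Your checklist for the $\sS$-localized and completed variants is likewise what the paper carries out around Theorem \ref{hat-Theta}.

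However, your treatment of the necessity direction has the roles of (iii) and (iv) inverted, and this creates a genuine gap. The verlagerung congruence (iii) is proved in the paper (following Kakde) by a direct matrix computation with a unit pre-image $\wt\Xi$ of $\Xi$ and the coset basis $C(P',P)$ --- not by the logarithm; the ratio $\mathrm{ver}_P^{P'}(\Xi_{U_{P'}^{ab}})\,\Xi_{U_P^{ab}}^{-1}$ need not be congruent to $1$ modulo $p$, so the series $\Log$ of that ratio need not converge into $\sT_{P,P'}$, and your proposed reduction ``apply $\Log$ and compare trace with restriction'' does not get off the ground. Conversely, (iv) does \emph{not} follow from (iii) by a counting argument over conjugacy classes: it is precisely the congruence for which the integral logarithm is indispensable. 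The paper proves it by combining three facts: $\log_P$ of the ratio $\alpha_P(\Xi_{U_P^{ab}})\big/\prod_{P'\in C_P(\ocG)}\alpha_{P'}(\Xi_{U_{P'}^{ab}})$ equals $p(\eta_P\circ\Res{\ol\cG}{P})(\mathfrak{L}_{\ol\cG}(\Xi))$ (Lemma \ref{log-eta-res}); the integral logarithm has integral image and $\Res{\ol\cG}{P}$ carries $\sR[\Conj{\ol\cG}]^\tau$ into $\sT_P$ (Proposition \ref{int-log}, Lemma \ref{res-TP}); and $\log$ is an isomorphism between $1+p\sT_P$ and $p\sT_P$ (Lemma \ref{log-sub}). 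Without this chain your argument for (iv), the deepest of the four congruences, is missing.
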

Crucial in the proof is the existence of the following logarithmic map
\begin{equation*}
\konep{\IIG}\stackrel{\mathfrak{L}}{\lra}\II(Z)[\Conj{\ocG}]^\tau. 
\end{equation*}
Further, we  show that the integral logarithm map fits in the following commutative diagram: 
  \begin{equation*}
  \xymatrix{
  1\ar[r] &\mu(\cO)\times\WW\times\cG^{ab}\ar[r]\ar[d]_{=} &\konep{\IIG}\ar[r]^{\!\!\!\!\mathfrak L}\ar[d]^{\Theta^{\ocG}}
                                                                      &\II(Z)[\Conj{\ocG}]^\tau\ar[r]\ar[d]^{\beta^{\ocG}}_\cong &\WW\times\cG^{ab}\ar[r]\ar[d]_{=} &1\\
  1\ar[r] &\mu(\cO)\times\WW\times\cG^{ab}\ar[r]       &\Phi^{\ocG}\ar[r]_{\!\!\!\!\mathcal L}        &\Psi^{\ocG}\ar[r]                    &\WW\times\cG^{ab}\ar[r]       &1,
  }
 \end{equation*}
 where $\mathbb{W}:=(1+p\Z_p)^r$, and $\mathfrak{L}$ and $\mathcal{L}$ are the integral logarithm maps. In Theorem \ref{Theta-iso}, we show that the map $\Theta^{\ocG}$ is an 
 isomorphism and the congruences in the theorem above are derived from this isomorphism. 
\begin{thmalph}[=Theorem \ref{Theta-iso}] The map $\Theta^{\ocG}$ is an isomorphism.
\end{thmalph}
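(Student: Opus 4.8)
The theorem is read off from the commutative diagram displayed just above it by a five lemma argument, and all the substance lies in verifying that the diagram genuinely has the properties claimed. Precisely, once we know that both rows are exact, that the three squares commute, that the two outer vertical maps are the identity, and that $\beta^{\ocG}$ is an isomorphism, the five lemma applied to the five-term window centred at $\konep{\IIG}$ forces $\Theta^{\ocG}$ to be an isomorphism. So the plan is to establish these four inputs in turn, and then invoke the five lemma.

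Exactness of the top row is the integral logarithm sequence for $\konep{\IIG}$. One defines $\mathfrak L=\log-\tfrac1p\,\varphi\circ\log$, where $\varphi$ is the operator on the additive target induced by $g\mapsto g^{p}$ on group-like elements together with the canonical Frobenius lift on the unramified coefficient ring $\cO$; the correction term $-\tfrac1p\varphi\circ\log$ is exactly what clears the denominators in $\log(1+x)$ and makes $\mathfrak L$ a well-defined homomorphism into $\II(Z)[\Conj{\ocG}]^\tau$. That its kernel is the group of trivial units $\mu(\cO)\times\WW\times\cG^{ab}$ and its cokernel is $\WW\times\cG^{ab}$ is precisely the content of the generalizations of the theorems of Oliver and of Oliver--Taylor proved earlier in the paper: $\mu(\cO)\times\cG^{ab}$ is the contribution already visible over $\Z_p$, while $\WW=(1+p\Z_p)^{r}$ is the new feature coming from the $r$ power-series variables of $\II\cong\cO[[X_1,\dots,X_r]]$, and one uses that $\II$ is a complete regular local ring, flat over $\Z_p$, so the $p$-adic $\log$/$\exp$ calculus is available on the relevant congruence subgroups.

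For the bottom row, $\Phi^{\ocG}$ is by construction the subgroup of $\prod_{P}\II(U_P^{ab})^{\times}$ — the product over the rank-one subquotients $U_P^{ab}$ furnished by property~($\ast$) — cut out by the norm, conjugation and transfer relations, and $\Psi^{\ocG}$ is its additive analogue, with $\mathcal L$ the component-wise integral logarithm. Exactness of the bottom row then follows from the abelian, one-variable case of the same Oliver-type sequence applied to each $\II(U_P^{ab})^{\times}$, patched along the defining relations. The map $\beta^{\ocG}\colon\II(Z)[\Conj{\ocG}]^{\tau}\to\Psi^{\ocG}$ is the additive theta map, and the assertion that it is an isomorphism is the linear-algebra core of the construction: it is proved by a Mackey / double-coset decomposition, using property~($\ast$) to see that the characters of the $U_P^{ab}$ already detect every virtual representation of $\ocG$, so that a $\tau$-twisted class function on $\Conj{\ocG}$ with values in $\II(Z)$ is determined by, and can be reconstructed from, its images in the $\II(U_P^{ab})$.

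Finally one checks commutativity of the three squares. The right-hand square and the left-hand square are immediate from the definitions of $\beta^{\ocG}$, of the boundary maps onto $\WW\times\cG^{ab}$, and of the trivial units on both sides. The middle square, i.e.\ the identity $\mathcal L\circ\Theta^{\ocG}=\beta^{\ocG}\circ\mathfrak L$, is the delicate point, and this is where I expect the main obstacle to be: it amounts to the functoriality of the integral logarithm under each composite map $\maptheta{\ol\cG,ab}{P}$ — that is, under transfer, restriction, and the determinant over $\II(U_P^{ab})$ — which is genuinely subtle because one must verify that the Frobenius-type operator $\varphi$ commutes with all of these maps in the noncommutative coefficient setting, so that the two sides are computed by the same formula component by component. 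Granting this compatibility, the five lemma completes the proof that $\Theta^{\ocG}$ is an isomorphism.
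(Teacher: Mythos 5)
Your proposal is correct and follows essentially the same route as the paper: the proof there consists precisely of assembling the displayed commutative diagram from the two integral-logarithm exact sequences, the isomorphism $\beta^{\ocG}$ (Theorem \ref{additive}), and the restriction/Frobenius compatibility lemmas, and then invoking the Five Lemma. Your identification of the middle square $\mathcal L\circ\Theta^{\ocG}=\beta^{\ocG}\circ\mathfrak L$ as the delicate input matches where the paper does the real work (Proposition \ref{beta-formula} and the preceding lemmas).
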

To show this, among many other algebraic results, we need 
a generalization of a classical result of Higman in \cite{higman}, regarding the torsion subgroup of units of group rings. More precisely, we have the following generalization
of Higman's theorem: 
\begin{thmalph}[=Theorem \ref{k-tors}] For any finite $p$-group $G$, we have
\begin{equation*}
 (\kone{\II[G]})_{\mathrm{tors}}\cong\mu_K\times G^{ab}\times S\kone{\II[G]}.
\end{equation*} 
\end{thmalph}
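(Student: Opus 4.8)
The plan is to prove the two inclusions $\mu_K\times G^{ab}\times S\kone{\II[G]}\subseteq(\kone{\II[G]})_{\mathrm{tors}}$ and $\supseteq$ separately, the second (and harder) one by specialising the variables $X_1,\cdots,X_r$ to $0$ and reducing to the case $r=0$. First some structural preliminaries. Write $K:=\mathrm{Frac}(\cO)$ (a finite unramified extension of $\Q_p$, so $\mu_K=\mu(\cO)$) and $Q_\II:=\mathrm{Frac}(\II)$; then $Q_\II[G]$ is semisimple of characteristic $0$, and since $p$ is odd and $G$ is a $p$-group, Roquette's theorem gives $Q_\II[G]\cong\prod_i M_{n_i}(Z_i)$ with each $Z_i=Q_\II(\chi_i)$ a field, whence $\kone{Q_\II[G]}\cong\prod_i Z_i^\times$ via the reduced norms $\mathrm{nr}_i$. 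Also, as $\II$ is a complete regular local domain with residue field $k$ of characteristic $p$ and $G$ is a $p$-group, $\II[G]$ is a local ring with Jacobson radical $J=\ker(\II[G]\to k)$. Finally $S\kone{\II[G]}:=\ker\bigl(\kone{\II[G]}\to\kone{Q_\II[G]}\bigr)$, and $\mu_K$, $G^{ab}$ map into $\kone{\II[G]}$ through $\mu_K\subseteq\II^\times$ and through $g\mapsto[g]$.

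For $\supseteq$ interpreted as exhaustion, let $u\in\kone{\II[G]}$ be torsion. Put $\mathfrak{a}:=(X_1,\cdots,X_r)\II[G]$, so that $\II[G]/\mathfrak{a}\cong\cO[G]$ and the inclusion $\cO[G]\hookrightarrow\II[G]$ splits the projection; consequently $\kone{\cO[G]}$ is a direct summand of $\kone{\II[G]}$, and likewise $S\kone{\cO[G]}$ of $S\kone{\II[G]}$. I would factor $u=u_0u'$ with $u_0$ the lift along the splitting of the image of $u$ in $\kone{\cO[G]}$, and $u'\in\ker\bigl(\kone{\II[G]}\to\kone{\cO[G]}\bigr)$; both are again torsion. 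By the known description of $(\kone{\cO[G]})_{\mathrm{tors}}$ over the ring of integers $\cO$ of a $p$-adic field --- the case $r=0$, essentially Higman's theorem \cite{higman} in the form given by Oliver --- one has $u_0\in\mu_K\times G^{ab}\times S\kone{\cO[G]}\subseteq\mu_K\times G^{ab}\times S\kone{\II[G]}$. For $u'$: after stabilising, $u'$ is represented by a matrix over $\II[G]$ congruent to the identity modulo $\mathfrak{a}$, so each $\mathrm{nr}_i(u')$ lies in $1+(X_1,\cdots,X_r)\cO[\chi_i][[X_1,\cdots,X_r]]$, where $\cO[\chi_i]$ is the ring of integers of $\mathrm{Frac}(\cO[\chi_i])$ (reduced norms of elements of an order are integral and reduce correctly). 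But $1+(X_1,\cdots,X_r)\cO[\chi_i][[X_1,\cdots,X_r]]$ is torsion-free, by the standard argument with the lowest-degree term in characteristic $0$; since $\mathrm{nr}_i(u')$ is torsion it equals $1$, i.e.\ $u'\in S\kone{\II[G]}$. Hence $u\in\mu_K\times G^{ab}\times S\kone{\II[G]}$.

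For the reverse inclusion: roots of unity in $\II=\cO[[X_1,\cdots,X_r]]$ lying in $1+(X_1,\cdots,X_r)$ are trivial (same lowest-degree argument), so $\mu(\II)=\mu_K$ is genuinely torsion, and $G^{ab}$ is torsion since $G$ is a finite $p$-group; that $S\kone{\II[G]}$ is torsion --- in fact finite --- is the one serious input, and I expect it to be the main obstacle: it requires generalising Oliver's finiteness theorem for $SK_1$ of $p$-adic group rings from $\cO$ to the coefficient ring $\II$, by comparing $\II[G]$ with a maximal $\II$-order and controlling the extra units coming from the variables $X_i$. Directness of the product is then representation theory: projection to the trivial factor $Z_0=Q_\II$ shows that $\mu_K\hookrightarrow\kone{Q_\II[G]}$ and that this factor annihilates $G^{ab}$, while the degree-one factors separate the points of the finite abelian group $G^{ab}$; hence $\mu_K\cap G^{ab}=1$ and $(\mu_K\times G^{ab})\cap S\kone{\II[G]}=1$, so, the intersections being trivial, the threefold internal product inside the abelian group $\kone{\II[G]}$ is direct, which completes the proof. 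An alternative, more conceptual proof of exhaustion would construct the finite-level integral logarithm $\Gamma_G\colon\kone{\II[G]}\to\II[\Conj{G}]$ --- the precursor of the map $\mathfrak{L}$ of the introduction --- show $\ker\Gamma_G=(\kone{\II[G]})_{\mathrm{tors}}$, and identify this kernel by reduction modulo $J$; in that route the technical heart, and the other candidate for the main obstacle, is the integrality of the Frobenius-twisted logarithm over $\II=\cO[[X_1,\cdots,X_r]]$, i.e.\ the correct choice of the Frobenius operator on this ring.
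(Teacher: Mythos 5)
Your proposal is essentially correct, but it takes a genuinely different route from the paper's proof. The paper argues by induction on $|G|$, closely following Oliver's Theorem 7.3: the abelian case comes from the logarithm sequence and the computation of $\mathrm{tors}(\Wh(\cdot))$, and the inductive step shows that $pr^*:\konep{\II[G]}_{\mathrm{tors}}\lra\konep{\II[G^{ab}]}_{\mathrm{tors}}$ is injective by combining Roquette's induction theorem (a monomorphism of $\kone{\KK[G]}$ into the $K_1$-groups of the index-$p$ subgroups and order-$p$ quotients), transfer maps, and the vanishing of $S\kone{\II[G]}$ for groups with an abelian normal subgroup of cyclic quotient (Corollary \ref{sk-trivial}). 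You instead split along the coefficient ring: the retraction $\II\to\cO$, $X_i\mapsto 0$, reduces the interesting part of the torsion to Oliver's original theorem over $\cO[G]$, and the complementary summand $\ker\bigl(\kone{\II[G]}\to\kone{\cO[G]}\bigr)$, being represented (by Vaserstein, since $\II[G]$ is local) by units in $1+(X_1,\cdots,X_r)\II[G]$, is forced into $S\kone{\II[G]}$ by the reduced-norm and lowest-degree-term argument. Your route avoids the induction on $|G|$ for the exhaustion step and isolates exactly what is new in passing from $\cO$ to $\II$, at the price of importing Oliver's theorem over $\cO[G]$ as a black box; the paper's induction has the advantage that it simultaneously produces, via its generalization of Oliver's Theorem 7.1 (the Proposition preceding Corollary \ref{sk-trivial}), the finiteness that makes $S\kone{\II[G]}$ a torsion group. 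That finiteness is the one input you explicitly defer, and without it the containment $S\kone{\II[G]}\subseteq(\kone{\II[G]})_{\mathrm{tors}}$ --- hence the statement itself --- is not yet established; you should either quote the paper's Proposition and run the same induction, or supply an independent argument. Two smaller points to tie down: you define $S\kone{\II[G]}$ via $\mathrm{Frac}(\II)[G]$ whereas Definition \ref{sk-def} uses $\KK[G]$ with $\KK=K[[X_1,\cdots,X_r]]$ (which is local, not a field, so $\KK[G]$ is not literally semisimple), and one should check the two kernels coincide, e.g.\ by the coefficientwise-convergent logarithm on $1+(X_1,\cdots,X_r)\KK[G]$; and the compatibility of the reduced norm with the specialization $X_i\mapsto 0$ deserves a line (it holds because the irreducible representations of $G$ over $\mathrm{Frac}(\II)$ are already realizable over $\cO[\chi_i]$, so the norm of a unit congruent to $1$ modulo $(X_1,\cdots,X_r)$ lies in $1+(X_1,\cdots,X_r)\cO[\chi_i][[X_1,\cdots,X_r]]$).
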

In Section \ref{nearly-ordinary}, we recall Hilbert modular forms, the action of the Hecke algebra on the space of nearly ordinary Hilbert modular forms and the nearly ordinary
Hecke algebra. In section \ref{deformation}, we recall the deformation of two-dimensional representations of the Galois group
$\gal_F$, where $F$ is a totally real field. We then recall the results that relate the nearly ordinary deformation to the nearly ordinary Hilbert modular forms
both in Sections \ref{nearly-ordinary} and \ref{deformation}, in some detail, keeping in mind future applications and also for the convenience of the reader.
In Section \ref{adjoint}, we introduce the Selmer groups of the adjoint representation and then prove a control theorem. We recall that Selmer groups can be viewed as 
K\"ahler differentials. We then study the deformation rings over a $p$-adic Lie extension in section \ref{admissible}. In Section \ref{non-commmutative}, we give a sufficient
condition for Selmer groups to be in the category $\mathfrak{M}_\cH^\II(\cG)$ in terms of the deformation rings. This section also has some results which may be of independent interest. We then present a
noncommutative Main Conjecture for these Selmer groups. 
We continue looking at this category in Section \ref{mu-mhg}, and we first consider those Selmer groups attached to Artin representations of weight one. Then we relate them to Selmer groups of CM elliptic curves.

We also give some results regarding the structure of the deformation rings over the $p$-adic Lie extension $F_\infty$. In Section \ref{k-one}, we extend the 
strategy of Burns, Kato, Kakde, and of Ritter and Weiss to prove the Main Conjecture. This section has some results on $K$-groups and logarithm maps that may be of independent interest. 
We extend some of the results of Oliver, and this required us to generalize results on logarithm maps and computation of certain $K$-groups. 
In addition, we consider a suitable generalization of the $SK_1$-groups in Definition \ref{sk-def2}.
The results
in this section may be used to establish the Main Conjecture of a $p$-adic family of Galois representations arising from motives. In the next section, we show that the 
$p$-adic L-function over $\IIG$ specializes to the $p$-adic L-function for each of the members in the family. 

Section \ref{nearly-ordinary}, where we have recalled the main results regarding Hecke algebras provides a background for our work. It is evident how our results 
here are a generalization of results of Hida. One can begin with a representation of the Galois group over the ring $\II$. Along with this, 
Section \ref{k-one} is the main section where we do the computation of the $K$-groups and it is clear how many of our results are generalizations of those of Burns, 
Kato, Kakde, Ritter and Weiss, and Oliver. This section owes a lot to their works. In addition, the paper \cite{cfksv} of Coates, Fukaya, Kato, Sujatha and Venjakob
has been a strong influence on our work.
\section{Nearly ordinary Hilbert modular Hecke algebra}\label{nearly-ordinary}
\subsection{Adelic Hilbert Modular forms}\label{adelic-hmf}
In this section, we briefly recall the space of Hilbert modular forms, their \emph{Hecke algebra}, its \emph{nearly ordinary part} and the associated \emph{Galois representations} and the \emph{specialization} of a family of nearly ordinary forms to get a nearly ordinary form of a 
given weight. This is only to set the notations, and recall the definition of these terms, and we refer to Hida's book \cite{hida-hmf}, especially Chapter 2, for all the details that we recall below.

Let $F$ be a totally real number field, and $\cO$ denote the ring of integers of $F$. Let $\fN$ denote an integral ideal of $F$. 
Consider the algebraic group $G=\mathrm{Res}_{\cO/\Z}GL(2)$ over $\Z$. Then for each commutative ring $A$, we have $G(A)=GL_2(A\otimes_{\Z}\cO)$.
Let $T_0=\mathbb{G}_{m/\cO}^2$ be the diagonal torus of $GL(2)_{/\cO}$. Then consider 
$T=\mathrm{Res}_{\cO/\Z}\mathbb{G}_{m/\cO}$ and $T_G=\mathrm{Res}_{\cO/\Z}T_0$. Then $T_G$
contains the center $Z$ of $G$. 

Writing $I=\homcat{field}{F}{\ol\Q}$, the group of algebraic characters $X(T_G)=\homcat{alg\, gp}{T_{G/\ol\Q}}{\mathbb{G}_{m/\ol\Q}}$ can be identified 
with $\Z[I]^2$ so that $\kappa=(\kappa_1,\kappa_2)\in\Z[I]^2$ induces the following character on $T_G(\Q)=F^\times\times F^\times$:
\begin{equation*}
 T_G(\Q)\lra\ol\Q^\times: (\xi_1,\xi_2)\mapsto \kappa(\xi_1,\xi_2)=\xi_1^{\kappa_1}\xi_2^{\kappa_2},
\end{equation*}
where $\xi_j^{\kappa_j}=\prod_{\sigma\in I}\sigma(\xi_j)^{\kappa_{j,\sigma}}\in\ol\Q^\times$. Then consider the ``Neben'' characters defined as the triple
\begin{equation*}
 \varepsilon=(\varepsilon_1,\varepsilon_2:T(\widehat\Z)\lra\C^\times,\varepsilon_+:Z(\mathbb{A})/Z(\Q)\lra\C^\times).
\end{equation*}
This is the way in which the Neben characters have been considered in \cite[2.3.2]{hida-hmf} and this is so defined so that the character $\varepsilon_+$ is the 
central character of the automorphic form that corresponds to the Hilbert modular form over $GL_2(\mathbb{A}_F)$. Note that any character 
$\psi:T(\widehat\Z)\lra\C^\times$ which is continuous is of finite order, and we have an ideal $\fc(\psi)$ which is maximal among the integral ideals $\fc$ 
satisfying $\psi(x)=1$ for all $x\in T(\widehat\Z)=\widehat\cO^\times$ with $x-1\in\fc\widehat\cO$. The ideal $\fc(\psi)$ is called the conductor of $\psi$.
Then consider the congruence subgroup
\begin{eqnarray*}
 \widehat\Gamma_0(\fN)&=&\left\lbrace\begin{pmatrix}a & b\\ c &d \end{pmatrix}\in G(\widehat\Z)\mid c\equiv 0\mod\fN\widehat\cO\right\rbrace.
\end{eqnarray*}
Now put $\veps^-=\veps_1\veps_2^{-1}$, and assume that $\veps^-$ factors through ${(\cO/\fN)}^\times$, i.e., $\fc(\veps^-)\supset\fN$.
Then $\veps^-$ and $\veps_2$ induce a continuous character of the compact group $\widehat\Gamma_0(\fN)$ which we also denote by $\veps$ and defined by 
\begin{equation}\label{character-eps}
 \veps:\widehat\Gamma_0(\fN)\lra\C^\times: \begin{pmatrix}a & b\\ c& d\end{pmatrix}\mapsto \veps_2(ad-bc)\veps^-(a_\fN)=\veps_1(ad-bc)(\veps^{-})^{-1}(d_\fN).
\end{equation}

Let $S_\kappa(\fN,\veps;\C)$ denote the space of \emph{Hilbert cusp forms} of level $\fN$ and character $\veps$ which is defined with respect to the congruence subgroup $\widehat\Gamma_0(\fN)$. Each 
$f\in S_\kappa(\fN,\veps;\C)$ satisfies the classical automorphy condition (\cite[2.3.5]{hida-hmf}) for a Hilbert modular form, 
and it admits a Fourier expansion (cf. \cite[Prop 2.26]{hida-hmf}).
By considering the Hilbert cusp forms whose Fourier coefficients are in a subalgebra $A$ of $\C$, one can also consider the subspace
$S_\kappa(\fN,\veps;A)$ of $S_\kappa(\fN,\veps;\C)$. In fact, 
there is an interpretation of $S_\kappa(\fN,\veps;A)$ as the space of $A$-rational global sections of a line bundle on a variety defined over $A$. Then by
the flat base-change theorem (cf. \cite[Lemma 1.10.2]{hida-gme}), we have
\begin{equation*}
 S_\kappa(\fN,\veps;A)\otimes_A\C=S_\kappa(\fN,\veps;\C).
\end{equation*}
Therefore, for any $\ol\Q_p$-algebra $A$, we may define
\begin{equation*}
 S_\kappa(\fN,\veps;A)=S_\kappa(\fN,\veps;\ol\Q)\otimes_{\ol\Q,i_p}A.
\end{equation*}
The formal $q$-expansion of an $A$-rational form $f$ has values 
in the formal monoid algebra 
$A[[q^\xi]]_{\xi\in F}$ of the multiplicative semi-group $F_+$ made up of totally positive elements.
This can be extended to $\C_p$, after normalizing the Fourier coefficients, to obtain the space 
$S_\kappa(\fN,\veps;\C_p)$, and this contains the space $S_\kappa(\fN,\veps;A)$ for any subalgebra $A$ of $\C_p$  
(\cite[2.3.4]{hida-hmf}).

On this space $S_\kappa(\fN,\veps;A)$, there are Hecke operators acting (cf. \cite[2.3.4]{hida-hmf}). The Hecke operators form an  $A$-subalgebra of 
$\mathrm{End}_A(S_\kappa(\fN,\veps;A))$.
We denote the $A$-subalgebra of Hecke 
operators by $h_\kappa(\fN,\veps;A)$. As in the case of elliptic modular forms, the Hecke algebra contains two operators 
$T_p(\varpi_\p)$ and $U_p(\varpi_\p)$ whose action on a common Hecke eigenform gives the $\varpi_\p$-th
Fourier coefficient as an eigenvalue.

Suppose that $W$ is a sufficiently large complete discrete valuation ring inside $\ol\Q_p$ containing the values of $\veps$. We set 
$\mathbf{\Lambda}=W[[\mathbf\Gamma]]$, and let $W[\veps]\subset\ol\Q_p$ be the $W$-subalgebra generated by the values of $\veps$ (over the finite adeles). 
Now consider the \emph{nearly-ordinary} Hecke algebras $h_\kappa^{\mathrm{n.ord}}(\fN\p^r,\veps;W)$.
( It is defined to be the image of Hida's idempotent operator $e$ on $h_\kappa(\fN\p^r,\veps;W)$).
If the weight $\kappa=(0,I)$, then we set (cf. \cite[3.1.5]{hida-hmf}):
\begin{equation*}
 h^{\mathrm{n.ord}}(\fN,\veps;W):=h_{(0,I)}^{\mathrm{n.ord}}(\fN,\veps;W).
\end{equation*}
\begin{defn}\label{arithmetic}Recall that $\kappa=(\kappa_1,\kappa_2)$ induces a character on the torus $T_G$.
If $\kappa_2-\kappa_1+I\geq I$, then the pair $(\kappa,\veps)$ is called \emph{arithmetic}. For an integral domain $\II$ finite and flat over $W[[T_G(\Z_p)]]$,
if a $W$-algebra homomorphism $(P:\II\lra W)\in\mathrm{Spf}(\II)(W)$ coincides with an arithmetic weight on an open subgroup of $T_G(\Z_p)$, then $P$ is referred
to as an \emph{arithmetic} point. The set of arithmetic points of $\mathrm{Spf}(\II)$ with values in $W$ is denoted by $\mathrm{Spf}^{arith}(\II)(W)$.
\end{defn}
Let $\Sigma_p$ denote the set of primes of $F$ lying above $p$. A pair $(\kappa,\veps)$, with $\kappa\in X(T_G)$ such that $\kappa_j,\veps_j$ factors through the 
local norm maps
\begin{equation*}
\begin{split}
 &T(\Z_p)\lra\prod_{\p\mid p}\Z_p^\times \quad\mbox{ defined by }\quad
 N_p((x_\p)_\p)=(N_\p(x_\p))_\p, \mbox{ where } N_\p(x_\p)=N_{F_\p/\Q_p}(x_\p).
 \end{split}
\end{equation*}
is called \emph{locally cyclotomic}.
If $(\kappa,\veps)$ factor through the global norm map $N_{F/\Q}:T(\Z_p)\lra\mathbb{G}_m(\Z_p)=\Z_p^\times$, then $(\kappa,\veps)$ is called \emph{cyclotomic}.

The pair $(\kappa,\veps)$ induces a character $T_G(\widehat\Z)\lra W^\times$ given by 
\begin{equation*}
 \begin{pmatrix} a& 0\\ 0 &d\end{pmatrix}\mapsto \veps_1(a)a_p^{-\kappa_1}\veps_2(d)d_p^{-\kappa_2}.
\end{equation*}
This further induces a $W$-algebra homomorphism $\pi_{\kappa,\veps}:W[[T_G(\Z_p)]]\lra W$.
A $W$-point $P$
of the formal spectrum $\mathrm{Spf}(W[[T_G(\Z_p)]])$ is called \emph{arithmetic} if $P=\mathrm{ker}(\pi_{\kappa,\veps})$ with $\kappa_2-\kappa_1-I\geq0$. 
Similarly, an arithmetic point $P\in\mathrm{Spec}(W[[T_G(\Z_p)]])(W)$ associated with $(\kappa,\veps)$ is called \emph{locally cyclotomic} (resp. \emph{cyclotomic})
if $(\kappa,\veps)$ is locally cyclotomic (resp. cyclotomic). Thus \emph{locally cyclotomic} (resp. \emph{cyclotomic}) points are arithmetic. Let $\II$ be an integral domain
which is an algebra over $W[[T_G(\Z_p)]]$. Then a point $P\in\mathrm{Spec}(\II)(W)$ is said to be \emph{locally cyclotomic} (resp. \emph{cyclotomic}) if the
structure homomorphism $W[[T_G(\Z_p)]]\lra\II$ is locally cyclotomic (resp. cyclotomic). The maps $\pi_{\kappa,\veps}$ are also 
referred to as the \emph{weight} $(\kappa,\veps)$-\emph{ specializations}.

Hida also defines a \emph{locally cylotomic} Hecke algebra $\bh^{\mathrm{n.ord}}_{\mathrm{cyc}}(\fN,\veps;W[[\Gamma_F]])$ such that the locally cyclotomic points defined above also correspond to the forms in the dual of this Hecke algebra.
Roughly, it corresponds to the subspace of $\bh^{\mathrm{n.ord}}(\fN,\veps;W[[\Gamma_F]])$ which is defined over congruence subgroups
where the characters on the torus of $GL_2(\cO_\p)$ factors through the local norm for all primes $\p\mid p$ (\cite[page 165]{hida-hmf}).

The duality between the space of cusp forms and Hecke algebras gives rise to an algebra homomorphism $\lambda_f\in\homcat{\mathrm{alg}}{h_\kappa(\fN,\veps;W)}{W}$
(\cite[Theorem 2.28]{hida-hmf}).
We have the following theorem due to Shimura, Deligne, Serre, Carayol, Ohta, Wiles, Blasius, Rogawski, Taylor, and the version that we have presented here is from 
{\cite[Theorem 2.43]{hida-hmf}}.
\begin{theorem}
 Let $h=h_\kappa(\fN,\veps;W)$ and $k(P)$ denote the field of fractions of $h/P$. Let $\gal_F=\gal(\ol\Q/F)$. Then there exists a continuous semi-simple Galois representation 
 $\rho_f:\gal_F\lra GL_2(k(P))$, such that 
 \begin{enumerate}
  \item $\rho_f$ is unramified outside $p\fN$,
  \item $\mathrm{Tr}(\rho_f(Frob_\mathfrak l))=\lambda(T(\varpi_\mathfrak{l}))$ for $\mathfrak{l}\nmid\p\fN$.
 \end{enumerate}
\end{theorem}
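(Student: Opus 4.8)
The plan is to reduce the statement to attaching a Galois representation to a single cuspidal automorphic representation, and then to realise that representation in the \'etale cohomology of a Shimura variety, following the classical construction of Carayol, Wiles, Taylor and Blasius--Rogawski that underlies \cite[Theorem 2.43]{hida-hmf}.

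\emph{Step 1: reduction to an automorphic representation.} First I would note that $\lambda\colon h\to\ol\Q_p$ with $P=\ker\lambda$ corresponds, through the duality $\pi_f$ recalled above, to a normalised Hecke eigenform $f\in S_\kappa(\fN,\veps;W)$; since $\kappa=(0,I)$, $f$ has parallel weight $2$, so the representation sought has motivic weight $1$ and should live in an $H^1$. Via the embedding $i_p$ and the Fourier expansion, $f$ generates an irreducible cuspidal automorphic representation $\pi_f=\bigotimes_v\pi_{f,v}$ of $GL_2(\AA_F)$ whose system of Hecke eigenvalues is valued in $k(P)$, and by strong multiplicity one $\pi_f$ --- hence any representation with the claimed properties --- depends only on $\lambda$. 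It then suffices to attach to $\pi_f$ a continuous semisimple two-dimensional representation of $\gal_F:=\mathrm{Gal}(\ol\Q/F)$, unramified outside $p\fN$, with $\mathrm{Tr}(\rho_f(\mathrm{Frob}_\mathfrak{l}))=\lambda(T(\varpi_\mathfrak{l}))$ for $\mathfrak{l}\nmid p\fN$, and transport it back to $h$.

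\emph{Step 2: geometric realisation.} When $[F:\Q]$ is odd I would choose a quaternion algebra $D/F$ split at exactly one archimedean place and ramified at the other $[F:\Q]-1$, hence unramified at every finite place; Jacquet--Langlands carries $\pi_f$ to an automorphic representation of $D^\times$ realised, at a level $K$ prime to $p\fN$, in the cohomology of a Shimura curve $M_K/F$, and I would set $\rho_f$ to be the $\lambda$-isotypic part of $H^1_{\mathrm{et}}(M_K\times_F\ol\Q,\ol\Q_p)$. Good reduction of $M_K$ at all $\mathfrak{l}\nmid p\fN$ gives unramifiedness there, and the Eichler--Shimura congruence relation on the reduction of $M_K$ mod $\mathfrak{l}$, together with Hecke-equivariance of the \'etale comparison, yields $\mathrm{Tr}(\rho_f(\mathrm{Frob}_\mathfrak{l}))=\lambda(T(\varpi_\mathfrak{l}))$ and $\det\rho_f(\mathrm{Frob}_\mathfrak{l})=\veps_+(\varpi_\mathfrak{l})\,\mathrm{N}\mathfrak{l}$ --- the optimally $p$-integral normalisation $\TT(y)=y_p^{-\kappa_1}T(y)$ being precisely the twist needed to line the Hecke eigenvalues up with geometric Frobenius in the $\ell=p$ realisation. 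When $[F:\Q]$ is even this fails unless some $\pi_{f,\mathfrak{q}}$ is square-integrable (then take $D$ ramified at $\mathfrak{q}\mid\fN$); in general I would follow Taylor, choosing a solvable totally real $F'/F$ with $\pi_{f,F'}$ cuspidal and either $[F':\Q]$ odd or $\pi_{f,F'}$ square-integrable somewhere, constructing $\rho_{f,F'}$ as above, and recovering $\rho_f$ over $\gal_F$ by descent through the cyclic layers of $F'/F$ (the traces, hence $\rho_{f,F'}$ up to twist, being $\mathrm{Gal}(F'/F)$-stable, with the residual twist fixed by the determinant), or equivalently by a pseudo-representation and $p$-adic interpolation argument; Blasius--Rogawski's direct analysis of the cohomology of the Hilbert modular variety is an alternative.

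\emph{Step 3: semisimplicity and field of definition.} Finally I would semisimplify $\rho_f$: by Chebotarev density and Brauer--Nesbitt, a continuous semisimple $\ol\Q_p$-representation of $\gal_F$ is determined by its Frobenius traces on any set of primes of positive density, so $\rho_f$ is independent of the auxiliary choices of $D$ and $F'$ and is unramified precisely outside $p\fN$. Since $\mathrm{Tr}(\rho_f)$ is $k(P)$-valued and $\det\rho_f=\veps_+\cdot(\text{cyclotomic character})$ visibly is too, a standard descent for $GL_2$ produces a model $\rho_f\colon\gal_F\to\gl{k(P)}$ satisfying (i) and (ii). The main obstacle is Step 2 in the even-degree case: with no quaternionic Shimura curve over $F$ seeing $\pi_f$, one must build $\rho_f$ via solvable base change and pseudo-representation patching and check that Frobenius--Hecke compatibility survives base change and descent; making the identity in (ii) hold on the nose rather than up to a twist --- which comes down to matching Hida's $y_p^{-\kappa_1}$-normalisation of $T(y)$ against geometric Frobenius and pinning down the determinant as $\veps_+$ times the cyclotomic character --- is the technical heart of the argument.
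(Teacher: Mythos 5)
The paper does not prove this statement at all: it is quoted verbatim from \cite[Theorem 2.43]{hida-hmf} and attributed to Shimura, Deligne, Serre, Carayol, Ohta, Wiles, Blasius, Rogawski and Taylor. Your outline --- Jacquet--Langlands transfer to a quaternionic Shimura curve and Eichler--Shimura when $[F:\Q]$ is odd or some $\pi_{f,\q}$ is square-integrable, Taylor's solvable base change and pseudo-representation patching otherwise, then Brauer--Nesbitt, Chebotarev and descent of the field of coefficients to $k(P)$ --- is exactly the standard construction those references carry out, so it is the same argument the paper is implicitly relying on.
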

Let $f$ be nearly ordinary at all primes $\p\mid p$. Then we have the following theorem (\cite[Theorem 2.43 (3)]{hida-hmf}).
\begin{theorem}
 Let $f$ be nearly ordinary at all primes $\p\mid p$, i.e., $f|U_p(\varpi_\p)=\lambda(U_p(\varpi_\p))f$ with a $p$-adic unit $\lambda(U_p(\varpi_\p))$. Then 
 \begin{equation*}
  \rho_f\mid_{D_\p}\cong\begin{pmatrix}
                         \epsilon_\p & \ast\\
                         0           & \delta_\p
                        \end{pmatrix}
 \end{equation*}
for the decomposition subgroup $D_\p$ at $\p$, and $\delta_\p([\varpi_\p,F_\p])=\lambda(U_p(\varpi_\p))$, for the local Artin symbol $[\varpi_\p,F_\p]$. 
In particular, $\delta_\p([\varpi_\p,F_\p])=\veps_{1,\p}(u)u^{-\kappa_1}$ for $u\in\cO^\times_\p$.
\end{theorem}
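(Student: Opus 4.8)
The plan is to deduce the local shape of $\rho_f$ at a prime $\p\mid p$ from the structure of the $p$-adic \'etale cohomology of the Shimura variety on which the nearly ordinary Hecke algebra acts, in the spirit of Wiles' ordinarity theorem for elliptic modular forms and Hida's $GL_2/F$ analogue, and to read off the unramified quotient $\delta_\p$ from the congruence relation at $\p$ together with the action of the nebentypus operators. Concretely, the eigenform $f$ of weight $(0,I)$ corresponds to $P=\ker\lambda$ on $h^{\mathrm{n.ord}}(\fN,\veps;W)$, and $\rho_f$ is the semisimple representation constructed in the theorem above; since $(0,I)$ is arithmetic (Definition \ref{arithmetic}), $\rho_f$ occurs, up to semisimplification, in $H^{[F:\Q]}$ of the relevant (Hilbert, or quaternionic) Shimura variety. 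Equivalently one may regard $f$ as the specialization at $P$ of the big representation $\bosym{\rho}_\II:\gal_F\to\gl{\II}$ attached to an irreducible component $\II$ of the nearly ordinary Hecke algebra (as in Theorem \ref{big-torsion-small-torsion}), which interpolates the $\rho_{f'}$ over all arithmetic points. In either incarnation it suffices to exhibit a $D_\p$-stable filtration whose quotient becomes unramified after twisting by a finite-order and a Hodge--Tate character, and to compute the Frobenius eigenvalue on it; specializing at $P$ then gives the assertion for $\rho_f$.

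For the reducibility, apply the nearly ordinary projector $e=\lim_n\mathbb{T}(\varpi_\p)^{n!}$ to the cohomology; on the resulting module $U_p(\varpi_\p)$ acts invertibly. On this unit-root part the comparison between the Hodge filtration and (log-)crystalline Frobenius at $\p$ --- equivalently, the connected--\'etale sequence of the universal Barsotti--Tate group together with its canonical $\p$-subgroup --- produces a $D_\p$-stable sub and quotient; the ordinarity hypothesis forces the quotient to be one-dimensional and potentially unramified, with geometric Frobenius acting through the unit $U_p(\varpi_\p)$-eigenvalue. This is Wiles' argument carried over to $GL_2/F$ by Hida. When the representation is only available $\Lambda$-adically one either builds the filtration directly on the big ordinary cohomology module, so that it is $\II$-integral and specializes tautologically, or reconstructs the $D_\p$-stable line from the associated pseudo-character via Ribet's lemma along a Zariski-dense set of regular arithmetic points.

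To identify $\delta_\p$, the Eichler--Shimura congruence relation at $\p$ --- which on the ordinary quotient degenerates to the equality ``$U_p(\varpi_\p)=$ Frobenius'' --- gives $\delta_\p([\varpi_\p,F_\p])=\lambda(U_p(\varpi_\p))$. On the inertial side, the diamond/nebentypus operators act on the same $e$-part through $\veps_1$ localized at $\p$, while $p$-adic Hodge theory contributes the Hodge--Tate character $u\mapsto u^{-\kappa_1}$; composing with the local Artin map identifies $\delta_\p$ on $\cO_\p^\times$ with $u\mapsto\veps_{1,\p}(u)u^{-\kappa_1}$ (which is just $\veps_{1,\p}$ in the weight $(0,I)$ case $\kappa_1=0$). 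Taking determinants then recovers $\epsilon_\p$ as $\delta_\p^{-1}$ times the determinant character, and specialization at $P$ completes the proof.

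The serious obstacle is the geometric input for an \emph{arbitrary} totally real $F$: when $[F:\Q]$ is even no Shimura variety of abelian type carries $\rho_f$ two-dimensionally, so one must route through quaternionic Shimura curves, Jacquet--Langlands and solvable base change (or Taylor's congruence method) and then transfer the local ordinary filtration back --- all while keeping that filtration integral over $\II$, not merely over $\mathrm{Frac}(\II)$, so that the specialization at $P$ is legitimate. Pinning down the extension class $\ast$ and the precise cyclotomic twist on $\epsilon_\p$ is the remaining technical nuisance, but since only the shape of the filtration and the value of $\delta_\p$ are needed downstream, these can be handled by the standard determinant bookkeeping.
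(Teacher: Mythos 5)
The paper gives no proof of this statement: it is quoted verbatim from Hida's book (\cite[Theorem 2.43(3)]{hida-hmf}), which in turn rests on Wiles' ordinarity theorem \cite{wiles} and its Hilbert modular extensions. Your sketch reproduces exactly that standard route --- ordinary projector on cohomology, connected--\'etale sequence of the Barsotti--Tate group, Eichler--Shimura congruence at $\p$ to identify Frobenius with the $U_p(\varpi_\p)$-eigenvalue, Jacquet--Langlands and base change to handle even degree, and the nebentypus/Hodge--Tate bookkeeping on inertia --- and you correctly flag the genuine technical points (integrality of the filtration over $\II$, the even-degree case), so this is essentially the same approach as the cited source.
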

The Hecke algebra $h=h_\kappa(\fN,\veps;W)$ is a direct sum of finitely many local rings. Let $\II$ be a local component of $h$. Then, as in \cite[Theorem 3.29]{hida-mfg}, 
we have the following theorem. 
\begin{theorem}\label{big-rep}
 Let $\II$ be a local ring of the Hecke algebra $h=h_\kappa(\fN,\veps;W)$ such that the residual representation $\bar\rho:\gal_F\lra GL_2(\II/\m_\II)$ is absolutely
 irreducible. Then there exists a unique Galois representation $\rho=\rho_\II:\gal_F\lra GL_2(\II)$ (up to isomorphism) such that
\begin{enumerate}
  \item $\rho_\II$ is unramified outside $p\fN$,
  \item $\mathrm{Tr}(\rho_\II(Frob_\mathfrak l))=T_\II(\varpi_\mathfrak{l})$ for $\mathfrak{l}\nmid\p\fN$,
 \end{enumerate} 
 where $T_\II(\varpi_\mathfrak{l})$ is the image of the Hecke operator $T(\varpi_\mathfrak{l})$ in the component $\II$.
\end{theorem}
From the deformation theoretic point of view, that a Hilbert modular form $f$ arises from the Hecke algebra $\bh^{\mathrm{n.ord}}_{\mathrm{cyc}}(\fN,\veps;W[[\Gamma_F]])$ is equivalent to requiring that for all $\p\mid p$, the local Galois representation
at $\p$, $\rho_f\mid_{D_\p}\cong\begin{pmatrix}
                         \epsilon_\p & \ast\\
                         0           & \delta_\p
                        \end{pmatrix}$
with the two characters $\epsilon_\p$ and $\delta_\p$ of a fixed type outside $\gal(F_\p(\mu_{p^\infty})/F_\p)$ \cite[3.2.1, 3.2.2]{hida-hmf} and the condition (Q4') in \cite[3.2.8]{hida-hmf}. 

As in \cite[3.2.8]{hida-hmf}, we will denote the \emph{ locally cyclotomic }deformation functor that we are considering by 
$\Phi^{cyc}:CNL_W\lra SETS$. 
This functor is defined to be the set of isomorphism classes of representations $\rho:\gal_F\lra\mathrm{GL}_2(A)$ 
satisfying the deformation conditions in \cite[3.2.8]{hida-hmf}. Here it is required to assume that the modular form $f$ has maximal
level among all the forms equivalent to $f$.
By Mazur's theorem, 
(see for example \cite[Theorem 1.46]{hida-hmf}), the functor $\Phi^{cyc}$ is represented by a universal couple $(\cR_F,\bosym{\rho}^{cyc}_F)$, with $\cR_F\in CNL_W$.
\section{Deformation rings and base change}\label{deformation}
\subsection{Base change of deformation rings}
Let $E$ be a totally real field and $f_0\in S_\kappa(\fN,\veps;W)$ with $\kappa=(0,I)$ be a Hilbert modular eigenform 
defined over the totally real field $E$. Let $\rho_E$ denote the Galois representation that is attached to $f_0$. 
We assume that the Galois representation $\rho_E$ satisfies the conditions (h1)-(h4) in \cite[page 185]{hida-hmf}.
Roughly, the first condition on the conductor of a character $\underline\varepsilon$ being prime to $p$ ensures that 
$\bh^{\mathrm{n.ord}}_{\mathrm{cyc}}(\fN,\veps;W[[\Gamma_F]])$ is a torsion-free $W[[\Gamma_F]]$-module of finite type.
The other conditions are 
$\rho_E\mid_{D_\p}$ reducible at 
for all $\p\mid p$ and that the determinant of $\rho_E\mid_{D_\p}$ is a twist by a finite order character of the cyclotomic character, and 
the reduction of $\rho_E$ modulo $\m$ restricted to the inertia at the \emph{``special''} primes is unipotent with infinite image.

Let $F$ be a finite totally real Galois extension of $E$ such that the Galois group $\Delta:=\mbox{Gal}(F/E)$ is a \emph{finite} $p$ group. 
The Galois group $\Delta$ is \emph{not necessarily} cyclic. Since $\Delta$ is a group with order a power of $p$, it is a \emph{solvable} group.
Therefore the Hecke eigenform $f_0\in S_\kappa(\fN,\veps;W)$ admits 
a \emph{unique} base-change lift, say, $f\in S_\kappa(\fN',\veps;W)$, which is defined over the totally real field $F$ (cf. \cite[3.3.3]{hida-hmf}), 
for an appropriate choice of $\fN'$, such that the associated
Galois representation $\rho_f:\gal_F\lra\gl{W}$ is equivalent to the restriction $\rho_E\mid_{\gal_F}$. 
Since $\bar\rho_E$ is absolutely irreducible and $\Delta$ is a finite $p$-group, by \cite[Lemma 1.62]{hida-hmf}, the representation $\bar\rho_F$ is absolutely
irreducible.
Moreover, if $\rho_E$ satisfies
the conditions (h1)-(h4) over $E$, then the restriction $\rho_F$ satisfies the conditions (h1)-(h4) over $F$.
Therefore, from now on, we assume the following: 
\begin{description}
 \item \emph{All the representations of the Galois group satisfy the conditions (h1)-(h4).}
\end{description}
Let $\Phi^{cyc}_E$ and $\Phi^{cyc}_F$ denote the locally cyclotomic deformation for $\bar\rho_E$ and $\bar\rho_F$ respectively.
 Then, $\Phi^{cyc}_F$ is representable. Let $(\cR_F,\bosym{\rho}^{cyc}_F)$ denote the universal deformation ring and the universal deformation
of $\bar\rho_F$.
As $\bosym{\rho}^{cyc}_E\mid_{\gal_F}$ is a deformation in $\Phi^{cyc}_F$, we have a non-trivial algebra homomorphism $\alpha:\cR_F\lra \cR_E$ such that 
$\alpha\circ\bosym{\rho}_F^{cyc}\cong\bosym{\rho}_E^{cyc}\!\!\mid_{\gal_F}$. The morphism $\alpha$ is referred to as the base change morphism. 
We now  describe the action of $\Delta$ on $\Phi_F^{cyc}$ and $\cR_F$. 
\paragraph{$\Delta$-action on $\Phi_F^{cyc}$:}
Let $\sigma\in \Delta$ and $\rho\in\Phi_F^{cyc}(A)$, where $A$ is an $\cO$-algebra in \cnl. 
Consider any $c(\sigma)\in\gl\cO$ such that $c(\sigma)\equiv\bar\rho(\sigma)\pmod{\m_{\cO}}$. 
Then the action of $\sigma$ on $\rho$ is defined by
\begin{equation*}
 \rho^{\sigma}(g):=c(\sigma)^{-1}\rho(\sigma g\sigma^{-1}) c(\sigma)\in\Phi_F^{cyc}(A).
\end{equation*}
The strict equivalence class of $\rho^{\sigma}$ is well defined and depends only upon the class of $\sigma$ in 
$\Delta$. This gives a well-defined action of $\Delta$ on $\Phi_F^{cyc}$.
\paragraph{$\Delta$-action on $\cR_F$:}
For any $\sigma\in \Delta$, since $\ol{\rho^{\sigma}}=\ol{\rho_F}$, $\cR_F$ is the universal deformation ring for $\rho^{\sigma}$.
Therefore, there is a 
morphism $\cR_F\longby{\wt\sigma}\cR_F$ in \cnl. Similarly, we have $\cR_F\longby{\wt{(\sigma^{-1})}}\cR_F$ in \cnl. Composing these two morphisms 
gives the identity, so that $\wt\sigma$ is an automorphism in \cnl. Extending this to $\cO[\Delta]$, we find that $\cR_F$ is a module over 
$\cO[\Delta]$. In other words, the universality of the deformation rings gives rise to 
the automorphisms which define the action.

We consider the augmentation ideal of $\cR_F$ given by
\begin{equation*}
 I_\Delta(\cR_F):=\langle\sigma x-x\mid x\in\cR_F,\sigma\in\Delta\rangle
\end{equation*}                                                             
Let $(\cR_F)_\Delta:=\cR_F/{I_\Delta(\cR_F)}$. Since the determinant of the locally cyclotomic deformation
functor is fixed (see \cite[3.2.8]{hida-hmf}), by \cite[Prop 5.41]{hida-mfg}, we have the following proposition
\begin{propn}\label{base-change}
 $(\cR_F)_\Delta\cong\cR_E$.
\end{propn}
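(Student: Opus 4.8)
The plan is to verify the universal property: I will show that $(\cR_F)_\Delta = \cR_F/I_\Delta(\cR_F)$ represents the subfunctor of $\Phi^{cyc}_F$ consisting of those deformations on which $\Delta$ acts trivially (up to strict equivalence), and then identify that subfunctor with $\Phi^{cyc}_E$ via restriction/base change, so that the universality of $\cR_E$ forces the isomorphism. Concretely, first I would recall that for a $W$-algebra $A$ in \cnl\ and a deformation $\rho \in \Phi^{cyc}_F(A)$, a $W$-algebra homomorphism $\phi:\cR_F \to A$ corresponds to the strict equivalence class $[\phi \circ \bosym{\rho}^{cyc}_F]$. Such a $\phi$ factors through $(\cR_F)_\Delta$ precisely when $\phi(\sigma x) = \phi(x)$ for all $\sigma \in \Delta$, $x \in \cR_F$; and by the definition of the $\Delta$-action on $\cR_F$ (via $\wt\sigma$, which classifies $(\bosym{\rho}^{cyc}_F)^\sigma$) this is equivalent to $[\phi\circ\bosym{\rho}^{cyc}_F] = [(\phi\circ\bosym{\rho}^{cyc}_F)^\sigma]$ for all $\sigma$, i.e.\ to the deformation $\phi\circ\bosym{\rho}^{cyc}_F$ being $\Delta$-invariant up to strict equivalence. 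Hence $(\cR_F)_\Delta$ pro-represents the $\Delta$-invariant subfunctor $(\Phi^{cyc}_F)^\Delta$.

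Next I would show $(\Phi^{cyc}_F)^\Delta \cong \Phi^{cyc}_E$. One direction is immediate: restriction along $\gal_F \hookrightarrow \gal_E$ sends a deformation $\tilde\rho \in \Phi^{cyc}_E(A)$ of $\bar\rho_E$ to a deformation $\tilde\rho|_{\gal_F}$ of $\bar\rho_F$, which lies in $\Phi^{cyc}_F(A)$ (the defining conditions (L1)--(L5) for $F$ are inherited from those for $E$, using that $\fN'$, $\fc(\veps)$ over $F$ are compatible with base change and that near-ordinarity at $\p \mid p$ is preserved) and is manifestly $\Delta$-invariant since an inner automorphism of $\gal_E$ by a lift $c(\sigma)$ of $\bar\rho_E(\sigma)$ stabilizes its strict equivalence class. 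This gives a natural transformation $\Phi^{cyc}_E \to (\Phi^{cyc}_F)^\Delta$. For the converse — extending a $\Delta$-invariant deformation of $\gal_F$ to $\gal_E$ — this is exactly the content of \cite[Prop 5.41]{hida-mfg}: because $\bar\rho_E$ is absolutely irreducible, $H^0(\Delta, \mathrm{Ad}(\bar\rho_E)) $ controls the obstruction and the fixed determinant condition (L3) pins down the extension uniquely, so a $\Delta$-stable class over $F$ descends to a unique class over $E$. Invoking that proposition gives the inverse natural transformation, hence an isomorphism of functors.

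Combining the two steps: $(\cR_F)_\Delta$ pro-represents $(\Phi^{cyc}_F)^\Delta \cong \Phi^{cyc}_E$, which is also pro-represented by $\cR_E$; by Yoneda the representing objects are canonically isomorphic, and tracing through the identifications one checks the isomorphism $(\cR_F)_\Delta \xrightarrow{\sim} \cR_E$ is induced by the base change morphism $\alpha$ (i.e.\ $\alpha$ kills $I_\Delta(\cR_F)$ and the induced map is the inverse of the map classifying $\bosym{\rho}^{cyc}_E|_{\gal_F}$). The main obstacle is the descent direction of the second step: one must be careful that the deformation conditions defining $\Phi^{cyc}$ (especially the local conditions (L4), (L5) at primes dividing $p$ and at ramified primes, and the fixed-determinant condition (L3)) behave correctly under both restriction to $\gal_F$ and descent back to $\gal_E$, and that the cohomological input needed to descend a $\Delta$-invariant class is supplied precisely by absolute irreducibility of $\bar\rho_E$ together with $p \nmid \#\Delta$ being unnecessary here since the fixed determinant rigidifies things — this is where the cited \cite[Prop 5.41]{hida-mfg} does the real work, and the proof reduces to checking its hypotheses apply in the locally cyclotomic setting.
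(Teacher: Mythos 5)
Your proposal is correct and takes essentially the same route as the paper: the paper's proof is simply the observation that, the determinant being fixed, the statement is \cite[Prop 5.41]{hida-mfg}, which is exactly the descent step you isolate as the "real work." Your additional bookkeeping — that $(\cR_F)_\Delta$ pro-represents the $\Delta$-invariant subfunctor and that this subfunctor is identified with $\Phi^{cyc}_E$ — is the standard unwinding of that citation and matches the intended argument.
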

\subsection{Control of K\"ahler differentials}
Let $A$ and $B$ be complete local noetherian algebras. Let $A$ be a $B$-algebra and let $\Om{A}{B}$ denote the $A$-module of  K\"ahler differentials of $A$ over $B$.

The following proposition is 
a generalization to a $p$-adic Lie group $\cG$ of a  result of Hida for cyclotomic $\Z_p$-extension of $E$. 
For a closed subgroup $\cC$ of $\cH$, the action of $\cH$ on the ring $\cR_\cC$ factors through $\cC$.
\begin{propn}\label{control-kahler1}
Consider the nearly-ordinary deformation functor $\Phi_F^{cyc}$.
Let $A$ be a closed $\cO$-subalgebra of $\cRin$ such that $A$ is in \cnl and $\cG$ acts on 
it trivially. Let $B$ be an $A$-algebra in \cnl such that $\cG$ acts on it trivially, and $\pi:\cR_0\lra B$ be an $A$-algebra homomorphism. Then, for any closed normal subgroup
$\cC\subset\cH\subset\cG$, we have
\begin{equation*}
(\Om{\cR_\cC}{A}\widehat\otimes_{\cR_\cC} B)_{\cH}\cong \Om{\cR_\cH}{A}\widehat\otimes_{\cR_\cH} B.
\end{equation*}
\end{propn}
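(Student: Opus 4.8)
The plan is to deduce this from the standard base-change property of Kähler differentials together with Proposition \ref{base-change}, applied relative to the base ring $A$ rather than $W[[\Gamma_F]]$. First I would recall the two elementary facts about Kähler differentials in the complete-local-noetherian setting that will do the bulk of the work: (a) for a surjection $\cR_\cC \surj \cR_\cH$ of $A$-algebras with kernel $J$, there is the second exact sequence
\begin{equation*}
 J/J^2 \lra \Om{\cR_\cC}{A}\widehat\otimes_{\cR_\cC}\cR_\cH \lra \Om{\cR_\cH}{A} \lra 0;
\end{equation*}
and (b) for any $\cR_\cH$-algebra $B$ one has the base-change isomorphism $\Om{\cR_\cH}{A}\widehat\otimes_{\cR_\cH}B \cong \Om{\cR_\cH}{A}\widehat\otimes_{\cR_\cH}B$, so it suffices to prove the statement with $B=\cR_\cH$ and then apply $-\widehat\otimes_{\cR_\cH}B$ throughout (both sides of the claimed isomorphism are right-exact functors in $B$ that agree on $\cR_\cH$). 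This reduces us to showing
\begin{equation*}
 \bigl(\Om{\cR_\cC}{A}\widehat\otimes_{\cR_\cC}\cR_\cH\bigr)_\cH \cong \Om{\cR_\cH}{A}.
\end{equation*}

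Next I would identify $\cR_\cH$ with $(\cR_\cC)_{\cH/\cC}$, i.e. the quotient of $\cR_\cC$ by the augmentation-type ideal $I_{\cH/\cC}(\cR_\cC) = \langle \sigma x - x : x\in\cR_\cC,\ \sigma\in\cH/\cC\rangle$. This is exactly the content of Proposition \ref{base-change} applied to the finite $p$-extension $F_\cC/F_\cH$ with Galois group $\cH/\cC$ — here I am using that $\cH/\cC$ acts on $\cR_\cC$ by the universality-induced automorphisms, as spelled out in the ``$\Delta$-action on $\cR_F$'' paragraph, and that the locally cyclotomic determinant condition is preserved so Proposition \ref{base-change} indeed applies. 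With $J := I_{\cH/\cC}(\cR_\cC)$, the second exact sequence above becomes
\begin{equation*}
 J/J^2 \lra \Om{\cR_\cC}{A}\widehat\otimes_{\cR_\cC}\cR_\cH \lra \Om{\cR_\cH}{A} \lra 0,
\end{equation*}
and the remaining point is to show the image of the left-hand map is precisely the submodule generated by $\{(\sigma-1)\omega : \sigma\in\cH/\cC,\ \omega\in \Om{\cR_\cC}{A}\widehat\otimes_{\cR_\cC}\cR_\cH\}$, so that the cokernel $\Om{\cR_\cH}{A}$ is identified with the $\cH/\cC$-coinvariants, equivalently the $\cH$-coinvariants (since $\cC$ acts trivially). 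For this one computes, for $x\in\cR_\cC$ and $\sigma\in\cH/\cC$, that $d(\sigma x - x) = \sigma\,dx - dx = (\sigma-1)\,dx$ inside $\Om{\cR_\cC}{A}$ (using that $\sigma$ is an $A$-algebra automorphism and $d$ is natural), and then checks that modulo $J$, elements of the form $dx$ with $x\in\cR_\cC$ together with the relations $(\sigma-1)dx$ generate $\Om{\cR_\cC}{A}\widehat\otimes_{\cR_\cC}\cR_\cH$ as a $\cR_\cH$-module; a short diagram chase shows $J/J^2 \to \Om{\cR_\cC}{A}\widehat\otimes\cR_\cH$ has image exactly the augmentation submodule $I_{\cH/\cC}\cdot\bigl(\Om{\cR_\cC}{A}\widehat\otimes\cR_\cH\bigr)$, whose quotient is by definition the coinvariants. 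Taking $\cH/\cC$-coinvariants of the exact sequence (coinvariants being right exact) then yields the desired isomorphism.

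I would organize the write-up as: (1) reduce to $B = \cR_\cH$ by right-exactness; (2) invoke Proposition \ref{base-change} to write $\cR_\cH = \cR_\cC/J$ with $J$ the augmentation ideal for the $\cH/\cC$-action; (3) apply the conormal exact sequence and the computation $d(\sigma x - x) = (\sigma-1)dx$ to identify the cokernel with coinvariants; (4) reinstate a general $B$ and conclude. The main obstacle — and the step deserving the most care — is step (3): verifying that the natural map $J/J^2 \to \Om{\cR_\cC}{A}\widehat\otimes_{\cR_\cC}\cR_\cH$ has image equal to the full augmentation submodule and not something smaller. The subtlety is that $J$ is generated by the $\sigma x - x$ as an \emph{ideal}, so a general element of $J$ is a $\cR_\cC$-linear combination $\sum a_i(\sigma_i x_i - x_i)$, and one must check that after applying $d$ and reducing mod $J$ these still only contribute augmentation-type elements; this uses the Leibniz rule $d(a\cdot(\sigma x - x)) = a\,d(\sigma x - x) + (\sigma x - x)\,da$ and the fact that the second term lies in $J\cdot\Om{\cR_\cC}{A}$, hence vanishes after $\widehat\otimes_{\cR_\cC}\cR_\cH$. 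A secondary technical point is that all tensor products and differential modules must be taken in the completed/adic sense since the rings are merely complete local noetherian, but the conormal sequence and base-change compatibility hold in that setting (e.g. by the usual limit arguments over the Artinian quotients), so no genuine difficulty arises there.
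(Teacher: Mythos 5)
Your proposal is correct and follows essentially the same route as the paper's proof: both rest on the conormal exact sequence for the surjection $\cR_\cC\surj\cR_\cH$ (whose kernel is the augmentation ideal of the $\cH/\cC$-action, via the base-change isomorphism of Proposition \ref{base-change}), together with the identification, using $d(\sigma x - x)=(\sigma-1)dx$ and the Leibniz rule, of the image of $J/J^2$ with the augmentation submodule, so that the cokernel is exactly the module of $\cH$-coinvariants. The only organizational difference is that you first reduce to $B=\cR_\cH$ and reinstate a general $B$ by right-exactness, whereas the paper carries $B$ throughout by working with the kernel of the multiplication map $\cR_\cC\widehat\otimes_A B\to B$ and a section $j:B\to R$; this is cosmetic.
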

\begin{proof} Let $R:=\cR_\cC\hat\otimes_{\cR_\cC}B$ and $R':=\cR_\cH\hat\otimes_{\cR_\cH}B$.
Consider the following homomorphism of algebras in $\cnl$
\begin{equation*}\xymatrixcolsep{3pc}\xymatrixrowsep{3pc}\xymatrix{
\cR_\cH\hat\otimes_{\Lambda_F'}B\ar[r]^{\alpha\otimes id}\ar@/^3pc/@{->}[urrd]^{\lambda_F'} 
                                   &\cR_\cC\hat\otimes_{\Lambda_F'}B\ar[r]^{\mu_F'} & B,
}
\end{equation*}
where $\lambda_F'=\mu_F'\circ(\alpha\otimes id)$.
We then have,
\begin{equation*}\begin{split}
 ker(\lambda'_F)\otimes_R B\cong\Omega_{{\cR_\cC}/A}\otimes_{\cR_\cC} B \\
 ker(\mu'_F)\otimes_{R'} B\cong\Omega_{{\cR_\cH}/A}\otimes_{\cR_\cH} B.
\end{split}
\end{equation*}
Then we have the following exact sequence:
\begin{equation*}
 0\lra I_\cH(R)\lra ker(\lambda'_F)\longby{\alpha} ker(\mu')\lra 0.
\end{equation*}
Tensoring with $B$ over $R$ and writing $J:=I_\cH(R)$, we get another exact sequence:
\begin{equation*}
 (J/J^2)\otimes_R B=J\otimes_R B\longby{i}\Omega_{\cR_\cC/A}\otimes_{\cR_\cC} B\lra\Omega_{\cR_\cH/A}\otimes_{\cR_\cH} B\lra 0.
\end{equation*}
We claim that $im(i)=I_\cH(ker(\lambda_F')\otimes_{\cR_\cC} B)$.
Consider the map $j:B\lra R$, given by $j(b)=1\otimes b$, and let $B'=im(j)$. Then 
$\lambda'_{F}\circ j=id$,
and $\cH/\cC$ acts trivially on the image $j(B)$. For any $y\in R$, the element $x:=y-j\circ\lambda'_F(y)\in ker{\lambda'_F}$. As $\cH$ acts trivially on $j(B)$, we have
$(\sigma-1)(y)=(\sigma-1)(x)$. Therefore, $(\sigma-1)R=(\sigma-1)ker(\lambda'_F)$.
Since $\sigma$ is a $B'$-algebra automorphism of $R$
and $J/J^2$ is a $B'$-module, therefore
\begin{equation*}
 y(\sigma-1)y'\equiv (\sigma-1)yy' \pmod{J^2}, \mbox{ where }y, y'\in J.
\end{equation*}
We therefore have a $B$-linear map $\sigma-1:ker(\lambda_F')\lra R$. Consider an element $a(\sigma-1)b$ in the image of $J$. Then $a(\sigma-1)b=(\sigma-1)ab\pmod{J^2}$, and 
$(\sigma-1)ab\in(\sigma-1)R=(\sigma-1)ker(\lambda_F')$. As $J$ is generated by elements of the form $a(\sigma-1)b$, it follows that 
$im(i)=I_\cH(ker(\lambda_F')\otimes_{\cR_\cC} B)$.
\end{proof}
\begin{cor}\label{control-kahler2}
 Let $A_\infty$ be an $\cO$-algebra with a continuous action of $\cG$ which is a pro-object in $\cnl$. Suppose that $\cRin$ has a structure of
$A_\infty$-algebra and that the $\cG$-action on $A_\infty$ and $\cRin$ are compatible. Thus $\cR_\cC$ is an $A_\cC$-algebra for 
$A_\cC=(A_\infty)_\cC$. Let $B$ be an algebra in $\cnl$ and $\pi:\cR\lra B$ be an $A_\infty$-algebra homomorphism. Then, for any closed subgroups 
$\cC\subset\cH\subset\cG$, we have:
\begin{equation*}
(\Om{\cR_\cC}{A_\cC}\otimes_{\cR_\cC} B)_{\cH}\cong \Om{\cR_\cH}{A_\cH}\otimes_{\cR_\cH} B.
\end{equation*}
\end{cor}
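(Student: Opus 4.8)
The plan is to deduce the corollary from Proposition \ref{control-kahler1} by a limit argument, reducing the statement about the pro-object $A_\infty$ to a compatible family of statements over the finite-level $\cnl$-subalgebras to which the proposition applies directly. First I would observe that $A_\infty = \varprojlim_i A_i$ for a suitable inverse system of honest $\cnl$-algebras $A_i$, each carrying a $\cG$-action through a finite quotient, and each closed in $A_\infty$. Since the $\cG$-action on $A_\infty$ and on $\cRin$ are compatible, for each $i$ the subalgebra $A_i$ (viewed inside $\cRin$ via the structure map) is a closed $\cO$-subalgebra on which some open subgroup of $\cG$ acts trivially; passing to the appropriate open normal subgroup we may, after reindexing, assume $\cG$ acts trivially on $A_i$ for each $i$. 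This is precisely the hypothesis of Proposition \ref{control-kahler1}, applied with $A$ replaced by $A_i$: for any closed normal $\cC \subset \cH \subset \cG$ we get
\begin{equation*}
(\Om{\cR_\cC}{A_i} \widehat\otimes_{\cR_\cC} B)_\cH \cong \Om{\cR_\cH}{A_i} \widehat\otimes_{\cR_\cH} B.
\end{equation*}

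Next I would pass to the limit over $i$. The key point is that K\"ahler differentials commute with the relevant colimits/limits in this pro-setting: one has $\Om{\cR_\cC}{A_\cC} = \varprojlim_i \Om{\cR_\cC}{A_i}$ (equivalently, $\Om{\cR_\cC}{A_\cC}$ is the quotient of $\Om{\cR_\cC}{\cO}$ by the submodule generated by $d(A_\cC)$, and $A_\cC = \varprojlim (A_i)_\cC$), using that $\cR_\cC$ is noetherian at each relevant stage and completed tensor products are exact on the category of finitely generated modules. Taking $\cH$-coinvariants is right exact and commutes with inverse limits of the systems in play here because the transition maps are surjective with the Mittag-Leffler property; so applying $\varprojlim_i$ to the displayed isomorphism yields
\begin{equation*}
(\Om{\cR_\cC}{A_\cC} \widehat\otimes_{\cR_\cC} B)_\cH \cong \Om{\cR_\cH}{A_\cH} \widehat\otimes_{\cR_\cH} B,
\end{equation*}
which is the assertion of the corollary. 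Finally, I would remove the hypothesis that $\cC$ be \emph{normal} in $\cH$: since the statement only involves $\cR_\cC$, $\cR_\cH$ and the $\cH$-action on the former, and $\cC$ acts trivially on $\cR_\cC$ (the $\cH/\cC$-action factors through $\cH$), one can interpose a chain $\cC = \cC_0 \triangleleft \cC_1 \triangleleft \cdots \triangleleft \cC_m = \cH$ — possible because $\cG$ is pro-$p$, hence every closed subgroup is an inverse limit of finite $p$-groups which are nilpotent, so subnormal series exist — and iterate the normal case, noting $(\cdot)_\cH = (\cdots(\cdot)_{\cC_1/\cC_0}\cdots)_{\cC_m/\cC_{m-1}}$.

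The main obstacle I anticipate is the interchange of $\Omega$, $\widehat\otimes$ and $\varprojlim$: in the pro-category $\cnl$ the completed tensor product and the module of continuous K\"ahler differentials are the correct objects, and one must check that forming $\Om{-}{A_\cC}$ really does commute with the inverse limit defining $A_\cC$ out of the $A_i$, and that $\widehat\otimes_{\cR_\cC} B$ is exact enough (flatness or finiteness hypotheses) for the coinvariants computation in the proof of Proposition \ref{control-kahler1} to survive the limit. Concretely, one wants each $\Om{\cR_\cC}{A_i} \widehat\otimes_{\cR_\cC} B$ to be a finitely generated $B$-module so that Mittag-Leffler applies and no $\varprojlim^1$ obstruction appears; this should follow from $\cR_\cC$ being (at each finite level) a quotient of a power series ring over $A_i$ in finitely many variables, as in the representability results recalled in Section \ref{lcd}. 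Once this bookkeeping is in place, the corollary is a formal consequence of Proposition \ref{control-kahler1}.
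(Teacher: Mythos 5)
There is a genuine gap at the heart of your reduction. The whole point of the corollary, as opposed to Proposition \ref{control-kahler1}, is that the base $A_\infty$ carries a possibly nontrivial $\cG$-action. Your plan writes $A_\infty=\varprojlim_i A_i$ and then asserts that ``after reindexing, we may assume $\cG$ acts trivially on $A_i$.'' Continuity of the action only gives that each $A_i$ is acted on through a finite quotient, i.e.\ that some \emph{open} subgroup $\cG_i\leq\cG$ acts trivially on $A_i$; if the action on $A_\infty$ is nontrivial, it is nontrivial on $A_i$ for all large $i$, and no reindexing changes that. You cannot shrink $\cG$ to $\cG_i$, because the statement to be proved concerns coinvariants under the fixed subgroups $\cC\subset\cH\subset\cG$. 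Proposition \ref{control-kahler1} genuinely needs the whole of $\cG$ to fix the base $A$ (its proof uses that each $\sigma\in\cH/\cC$ is an automorphism over a $\cG$-fixed base, so that $(\sigma-1)$ lands in $\ker(\lambda_F')$ and the image of $i$ is the augmentation ideal). So the finite-level isomorphism $(\Om{\cR_\cC}{A_i}\widehat\otimes_{\cR_\cC}B)_\cH\cong\Om{\cR_\cH}{A_i}\widehat\otimes_{\cR_\cH}B$ that you want to pass to the limit with is not available from the proposition; the limit bookkeeping you worry about at the end is not the real obstacle.

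The paper's proof takes a different and much shorter route that sidesteps this entirely: it applies Proposition \ref{control-kahler1} only with base $\cO$ (on which the action \emph{is} trivial), twice --- once to $\cRin$ and once to $A_\infty$ itself --- and then compares the two via the first fundamental exact sequence
\begin{equation*}
\Om{A_\cC}{\cO}\widehat\otimes B\lra\Om{\cR_\cC}{\cO}\widehat\otimes B\lra\Om{\cR_\cC}{A_\cC}\widehat\otimes B\lra 0 .
\end{equation*}
Since taking $\cH$-coinvariants is right exact, the coinvariants of the cokernel is the cokernel of the coinvariants, and the two control isomorphisms for the absolute differentials identify the first two terms of the $\cH$-coinvariant sequence with the corresponding terms over $\cR_\cH$ and $A_\cH$; the relative differentials then match up as the cokernels. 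That exact-sequence comparison is the idea missing from your argument, and without it (or some substitute that genuinely handles a nontrivially acted-on base) the proposal does not prove the corollary.
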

\begin{proof}
By the above proposition, we have,
\begin{equation*}\begin{split}
(\Om{\cR_\cC}{A}\widehat\otimes_{\cR_\cC} B)_{\cH}\cong \Om{\cR_\cH}{A}\widehat\otimes_{\cR_\cH} B,\\
(\Om{A_\cC}{\cO}\widehat\otimes_{A_\cC} B)_{\cH}\cong \Om{A_\cH}{\cO}\widehat\otimes_{A_\cH} B.
\end{split}
\end{equation*}
These isomorphisms give rise to the following commutative diagram:
\begin{equation*}
\xymatrixcolsep{4pc}\xymatrixrowsep{4pc}
\xymatrix{
(\Om{A_\infty}{\cO}\widehat\otimes_{A_\infty} B)_\cH\ar[r]\ar[d]^\cong   
                        &(\Om{\cRin}{A}\widehat\otimes_{\cRin} B)_{\cH}\ar[r]\ar[d]^\cong  
                                                       &(\Om{\cRin}{A_\infty}\widehat\otimes_{\cRin} B)_{\cH}\ar[r]\ar[d]   
                                                                                                                    & 0          \\
\Om{A_\cH}{\cO}\widehat\otimes_{A_\cH} B \ar[r]   
                       &\Om{\cR_\cH}{\cO}\widehat\otimes_{A_\cH} B\ar[r]
                                                      &\Om{\cR_\cH}{A_\cH}\widehat\otimes_{\cR_\cH} B\ar[r]             & 0.
}
\end{equation*}
Hence
\begin{equation*}
(\Om{\cR_\cC}{A_\cC}\otimes_{\cR_\cC} B)_{\cH}\cong \Om{\cR_\cH}{A_\cH}\otimes_{\cR_\cH} B.
\end{equation*}
\end{proof}
\section{Adjoint Selmer groups and K\"ahler differentials}\label{adjoint}
\subsection{Selmer groups}
Let $p$ be an odd prime. We fix an algebraic closure $\bar\Q$ and embeddings $\Q\inj\bar\Q$ and $\Q\inj\bar\Q_l$ for every prime $l$. 
For a prime $p$ in $\Q$, let $D_p$ denote the decomposition group under this embedding. For a prime $\p$ in a finite extension $F$ of $\Q$, let
$D_\p$ denote the decomposition group at $\p$ defined by the above embedding.
Let $\cO$ be a finite extension of $\Z_p$ and \cnl denote the category of \emph{complete noetherian local rings which are $\cO$-algebras.} 

We recall the definition of the \emph{adjoint representation} associated to a $2$-dimensional Galois representation. 
Let $\II\in\cnl$ and $\mathbb M$ be the quotient field of $\II$. Consider a two-dimensional representation $\bosym\rho:\gal_F\lra\gl{\II}$ as in Theorem \ref{big-rep}.
Let $\mathbb L:=\mathbb I^2$. Then $\rho$ induces an action of $\gq$ on $M_2(\II)$, the ring of $2\times 2$-matrices over $\II$, by conjugation, i. e., 
$\sigma(x):= \bosym\rho(\sigma)x\bosym\rho(\sigma)^{-1}$.
Then the \emph{adjoint representation} is defined by 
\begin{equation*}
\ad{\bosym\rho}:=\{\eta\in \mbox{End}_\II(\mathbb L)\mid Trace(\eta)=0 \}.
\end{equation*}
It is easy to see that this is a 3-dimensional representation of $\gal_F$.
\begin{defn}Let $D_\p$ denote the decomposition group at a prime $\p$ dividing $p$. Then the representation $\rho$ is said to be \emph{nearly ordinary} at 
$\p$, if there is a 
two-step filtration of $\mathbb L$ given by
\begin{equation}\label{filtration}
\mathbb{L} \supset {\cF}_\p^+{\mathbb L} \supset {0}
\end{equation}
as $D_\p$-modules, such that  $\cF_\p^+\mathbb L$ is \emph{free} of rank \emph{one} over $\II$. 
\end{defn}
If $\bosym\rho$ is nearly ordinary, then it induces on $\ad{\bosym\rho}$ the following three-step filtration stable under $D_\p$: 
\begin{equation*}
\ad{\bosym\rho}\supset \cF_\p^-\ad{\bosym\rho}\supset \cF_\p^+\ad{\bosym\rho}\supset 0
\end{equation*}
where
\begin{equation*}\begin{split}
\cF_\p^-\ad{\bosym\rho}=&\{\eta\in\ad{\bosym\rho}\mid \eta(\cF_\p^+\mathbb L)\subset \cF_\p^+\mathbb L\}, \mbox{ and }\\
\cF_\p^+\ad{\bosym\rho}=&\{\eta\in\ad{\bosym\rho}\mid \eta(\cF_\p^+\mathbb L)=0\}.
\end{split}
\end{equation*}
In terms of matrices, if we choose a basis of $\mathbb L$ containing a generator of $\cF_p^+ \ad{\bosym\rho}$ and identify
$\mbox{End}_\II(\mathbb L)$ with $M_2(\II)$ using this basis, then $\cF_p^-\ad{\bosym\rho}$ is made up of \emph{upper triangular matrices with trace zero.} On 
the other hand, $\cF_p^+\ad{\bosym\rho}$ is made up of \emph{upper nilpotent matrices.}

Recall that $\mathbb M$ is the quotient field of $\II$. We put $\mathbb V:=\mathbb L\otimes_{\II}\mathbb M$ and 
$\mathbb A:=\mathbb V/\mathbb L$.
Let $\ad{\mathbb V}:=\ad{\bosym\rho}\otimes\mathbb M$ and $\ad{\mathbb A}:=\ad{\mathbb V}/\ad{\bosym\rho}$.

The above filtration allows us to define the following \emph{local} conditions:
\begin{equation}\label{loc-con}
\begin{split}
\mathscr L(F_\q)=  \begin{cases}  ker\left[\Hone{F_{\q}}{\AA}\lra \Hone{F_{\q}}{\AA}/\cF^+_{\q}\ad{\AA}\right]
\mbox{\quad for\quad} \q\mid p,\\
 ker\left[\Hone{F_{\q}}{\AA} \lra \Hone{F_{\q}}{\AA}\right]\quad\mbox{for}\quad \q\nmid p.\end{cases}
\end{split} 
\end{equation}
\begin{defn}
The Selmer group of $\ad{\bosym\rho}$ over $F$ is defined by 
\begin{equation*}
 \sg{F}{\ad{\bosym\rho}}:=ker \left[\Hone{F^\Sigma/F}{\AA}\lra\prod_{\q}\dfrac{\Hone{F_{\q}}{\AA}}{\mathscr L(F_\q)}\right].
\end{equation*} 
\end{defn}
Let $\Sigma$ be any finite set of primes of $F$ containing the primes above $p$, the infinite primes and the primes ramified in $F$. Let $F^\Sigma$ 
denote the maximal extension of $F$ that is unramified outside $F$. 
Suppose that $\inft{F}$ is any pro-$p$, $p$-adic Lie extension of $F$ that is contained in $F^\Sigma$ and 
$\inft{F}_{,\q}:=\varinjlim_n F_{n,\q_n}$, where $F_n$ are finite extensions of $F$ and $\{\q_n\}$ is a compatible sequence of primes. Then
by restriction we define local conditions $\mathscr L(F_{\infty,\q})$as in \eqref{loc-con}.
\begin{defn}
 We define the \emph{Selmer group} of $\ad{\bosym\rho}$ over $\inft{F}$ by
\begin{equation*}
 \sg{\inft{F}}{\ad{\bosym\rho}}:=ker \left[\Hone{F^\Sigma/\inft{F}}{\ad{\AA}}\lra\prod_{\q\mid\Sigma}
\dfrac{\Hone{F_{\infty,\q}}{\AA}}{\mathscr L(F_{\infty,\q})}\right].
\end{equation*}
\end{defn}
There is an action of the Galois group $\cG:=\mathrm{Gal}(\inft{F}/F)$ on $\sg{\inft{F}}{\ad{\bosym\rho}}$ via \emph{conjugation:} 
if $[c]\in\Hone{F^\Sigma/\inft{F}}{A}$ is any cocycle class and $g\in\cG$, then the action is given by
$(g\ast c)(\sigma):=\tilde g c(\tilde g^{-1}\sigma\tilde g)$, for a lift $\tilde g$ of $g$ to $\gal_{{F}}$.
%
%
%
\subsection{Selmer complexes}
Let $U:=\mathrm{Spec}(\cO_F)\backslash\Sigma$. 
Let $(F_n)_{n\in\N}$ be a sequence of finite extension of $F$ in $\inft{F}$ with $F_n\subset F_{n+1}$ and $\inft{F}=\cup_n F_n$. 
We consider the following representations, considered as a pro-\'etale sheaf:
\begin{enumerate}
 \item $\mathbf{T}:=\varprojlim_{n}(f_n)_\ast(f_n)^\ast(\ad{\bosym\rho})$, where 
$f_n:\mathrm{Spec}(F_n)\lra\mathrm{Spec}(F)$ are natural maps.
 \item $\mathbf T_\p^0:=\varprojlim_{n}(f_{n,\p})_\ast(f_{n,\p})^\ast(\cF^+_{\p}\ad{\bosym\rho})$, for each prime $\p|p$, under the natural maps
$f_{n,\p}:\mathrm{Spec}(F_{n,\p})\lra\mathrm{Spec}(F_\p)$.
\end{enumerate}
Consider the inhomogenous cochains $C(\gal(F^\Sigma/F),\mathbf T)$ and for every prime $v$ of $\cO_F$, the cochains $C(D_v,\mathbf T)$.
Let $C(U,\mathbf T):=C(\gal(F^\Sigma/F),\mathbf T)$ 
and $C(F_v,\mathbf T):=C(D_v,\mathbf T)$. 
We recall that we have fixed algebraic closures $\ol F\inj\ol{F_v}$, for all places $v$ of $F$. We then define the following
complexes:
\begin{defn} 
 \begin{enumerate}
  \item The complex $C_c(U,\mathbf T)$ is defined to be the mapping fiber of the map:
  \begin{equation*}
   C(U,\mathbf T)\lra\bigoplus_{v\in\Sigma}C(F_v,\mathbf T).
  \end{equation*}
 \end{enumerate}
\end{defn}
\begin{defn}
 Let $\ell\neq p$, be any prime of $\Q$, and $u$ be any prime above $\ell$ in $F$. 
 Then we consider the subcomplex $C_f(F_u,\mathbf T)$ of $C(F_u,\mathbf T)$ defined by:
 \begin{equation*}
  C^i_f(F_u,\mathbf T)=\begin{cases}
                C^0(F_u,\mathbf T), & \mbox{ if } i= 0\\
                \mathrm{ker}\left(((C^1(F_u,\mathbf T))_{d=0}\lra \Hone{F_u^{ur}}{\mathbf T}  \right), &\mbox{ if } i=1\\
                0, &\mbox{ if } i\neq 0,1.
               \end{cases}
 \end{equation*}
\end{defn}
\begin{remark}
 In the derived category, we have canonical isomorphisms:
 \begin{equation*}
  C_f(F_u,\mathbf T)\cong C(F_u^{ur}/F_u,\mathbf T^{I_u})\cong [1-\phi:\mathbf T^{I_u}\lra \mathbf T^{I_u}], 
 \end{equation*}
 where $I_u$ is the inertia subgroup at $u$, and $\phi$ denotes the geometric Frobenius, and the last complex lives in degrees 0 and 1.
\end{remark}
\begin{defn}
 The imprimitive Selmer complex $SC(U,\mathbf T,\{\mathbf T_\p^0\}_{\p\mid p})$ is defined to be the mapping fiber of the map
 \begin{equation*}
  C(U,\mathbf T)\lra \bigoplus_{v\mid p}C(F_v,\mathbf T/\mathbf T_v^0)\oplus\bigoplus_{v\in\Sigma\backslash\{p\}}C(F_v,\mathbf T).
 \end{equation*}
\end{defn}
For more properties of these Selmer complexes, see \cite[Section 3]{barth}. These properties are generalizations of the corresponding properties
for Selmer complexes for Galois representations defined over local fields. The following connection between Selmer groups and Selmer complexes
is crucial. We denote the Pontryagin dual of the primitive and imprimitive Selmer groups by $\sgd{F}{\mathbf T}$. 
Recall that $\gal(\inft{F}/F)$ is a $p$-adic Lie group which is denoted by $\cG$. For every prime $v$ of $F$, recall that $D_v$ denotes the 
decomposition subgroup at $v$. Then we have a natural map $D_v\lra\cG$, whose kernel is denoted by $\cG(v)$ and image is denoted by $\cG_v$.
\begin{propn}[{\cite[Prop 3.24]{barth}}]\label{selmer-dual-complex-sequence}
We have the following exact sequences of $\II$-modules:
\begin{equation*}
\begin{split}
 & 0\lra \sgd{F}{\mathbf T}\lra\mathrm{H}^2(SC(U,\mathbf T,\{\mathbf T_\p^0\}_{\p\mid p}))
                                         \lra\bigoplus_{v\mid p}\II[[\cG]]\otimes_{\cO[[\cG_v]]}(\mathbf T_v^0(-1))_{\cG(v)}\\
 & \lra \mathbf T(-1)_{\gal_{\inft{F}}}\lra \mathrm{H}^3(SC(U,\mathbf T,\{\mathbf T_\p^0\}_{\p\mid p}))\lra 0.
 \end{split}
\end{equation*}
Here $\sgd{F}{\mathbf T}\cong\sgd{\inft{F}}{\ad{\bosym\rho}}$.
\end{propn}


The Selmer group attached to the adjoint representation is related to K\"ahler differentials as follows ( see \cite{mt} or \cite{ht}).

Consider the representation $\bar\rho_F$ and the locally cyclotomic deformation functor $\Phi_F^{cyc}$. As in section \ref{deformation}, consider the representation $\bar\rho_F$ that is attached to a Hilbert modular eigenform $f_0\in S_{\kappa}(\fN,\veps;W)$, with $\kappa=(0,I)$, and also satisfying
the conditions (h1)-(h4).
\begin{theorem}\cite[{Prop 3.87}]{hida-hmf}
 Let $\Phi_F^{cyc}$ be the locally cyclotomic deformation functor of $\bar\rho_F$. Suppose that $\Phi_F^{cyc}$ is represented by the 
universal deformation ring $\cR_F$ and $\bosym{\rho}_F$ is
the representation of $\mathrm{Gal}_F$ into $\gl{\cR_F}$. Then for any $A\in\cnl$ and $\wt\rho\in\Phi_F^{cyc}(A)$,  
with $\varphi$ denoting the morphism $\cR_F\lra A$, there exists a canonical isomorphism
\begin{equation*}
 \sgd{F}{\ad{\wt\rho_F}\otimes_A A^\ast}\cong\Om{\cR_F}{W[[\Gamma_F]]}\otimes_{\cR_F,\varphi}A.
\end{equation*}
In particular,
\begin{equation*}
 \sgd{F}{\ad{\bosym{\rho}_F}}\cong\Om{\cR_F}{W[[\Gamma_F]]}.
\end{equation*}
\end{theorem}

\section{Deformation rings in an \emph{admissible} tower $\Ein/E$}\label{admissible}
\subsection{Admissible $p$-adic Lie extension}  
In this section, we study the deformation rings of the functor $\Phi_F^{cyc}$,
when $F$ varies over finite Galois subextensions of an \emph{admissible $p$-adic Lie extension} $\Ein$ over $E$, whose definition
we recall below.
\begin{defn}
 An  \emph{admissible $p$-adic Lie extension} $\Ein$ of $E$ is a Galois extension of $E$ such that (1) $\Ein/E$ is unramified outside a finite set of 
primes of $E$; (2) $\Ein$ is totally real; (3) $\Ein$ is a $p$-adic Lie extension; and (4) $\cy E\subset\Ein$.
\end{defn}
Let $\Ein$ be an \emph{admissible } $p$-adic Lie extension. Let $\Ein:=\cup_n E_n$, where $E_n$ is a finite Galois extension of $E$ for every $n$.
Consider the functor $\Phi_{E_n}^{cyc}$ and let $\cR_{E_n}$ denote the universal deformation ring over $E_n$. Since the extension $E_n/E$ is a pro-$p$
extension, all the deformation conditions are satisfied for the restriction.
Then for every $n$, we have the base change morphisms $\cR_{E_n}\lra\cR_E$. Consider the projective limit
\begin{equation*}
\cR_\infty:=\varprojlim_n\cR_{E_n}.
\end{equation*}
The action of $\Delta_n:=G(E_n/E)$ on $\cR_{E_n}$ induces an action of $\cG$ on $\cRin$. 
Moreover, for finite Galois extensions $F_m$ inside $\cy{E}$, we also define 
\begin{equation*}
\cR_{cyc}:=\varprojlim_m\cR_{F_m}.
\end{equation*}
\begin{propn}
Let $\cH:=\gal(\Ein/E_{cyc})$. Then $\cH$ acts on $\cRin$, and we have a morphism of rings $(\cRin)_\cH \lra \cR_{{cyc}}$,
which is an isomorphism of algebras. 
\end{propn}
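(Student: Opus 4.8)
The plan is to deduce this from the "control of K\"ahler differentials" machinery already established, specialized to the situation at hand. First I would identify the relevant base ring: set $A_\infty = W[[\Gamma_E]]$ (or more precisely the Iwasawa algebra over which the locally cyclotomic deformation functor is naturally defined), with its $\cG$-action induced by the fact that the cyclotomic $\Z_p$-extension sits inside $\Ein$. Since $\cH = \gal(\Ein/\cy E)$ acts trivially on the cyclotomic direction, the coinvariants $(A_\infty)_\cH$ recover $W[[\Gamma_E]]$ itself (the part of the Iwasawa variable that survives is exactly $\gal(\cy E/E)$), so the hypotheses of Corollary~\ref{control-kahler2} (and Proposition~\ref{control-kahler1}) are met with $\cC = (1)$, $\cH = \gal(\Ein/\cy E)$, and $B$ taken to be, say, $\cO$ itself or the structure algebra, along a chosen locally cyclotomic point.

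Next I would run the argument of Proposition~\ref{control-kahler1} at the level of the full projective limit rather than a single finite layer. Concretely: for each finite Galois $E_n/E$ inside $\Ein$ with $\Delta_n = \gal(E_n/E)$, Proposition~\ref{base-change} gives $(\cR_{E_n})_{\Delta_n} \cong \cR_E$, and more generally, for $F_m \subset \cy E$ the intermediate cyclotomic layers, one has $(\cR_{E_n})_{\gal(E_n/F_m)} \cong \cR_{F_m}$ whenever $F_m \subseteq E_n$ — this is the same Proposition~\ref{base-change} applied to the sub-extension $E_n/F_m$, whose Galois group is again a finite $p$-group. Passing to the inverse limit over $n$ and using that taking coinvariants by $\cH = \varprojlim_n \gal(E_n/\cy E)$ commutes with the inverse limit defining $\cRin$ (here one needs that the transition maps are surjective and the groups involved are compact, so that $\cH$-coinvariants of $\cRin = \varprojlim_n \cR_{E_n}$ agree with $\varprojlim_m (\cRin)_{\gal(\Ein/F_m)} = \varprojlim_m \cR_{F_m} = \cR_{cyc}$), one obtains the desired ring map $(\cRin)_\cH \lra \cR_{cyc}$ and its bijectivity.

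The map itself is the one induced by universality: $\bosym\rho_{\Ein}^{cyc}$ restricted to a cyclotomic layer is a deformation classified by $\cR_{cyc}$, giving $\cR_{cyc} \to \cRin$, and the $\cH$-action being trivial on the image forces it to factor through $(\cRin)_\cH$; conversely one builds the inverse from the $\Delta$-coinvariant statements above. I would check it is an algebra homomorphism (clear, everything in sight is a morphism in \cnl) and then that it is an isomorphism by reducing to the finite-level statement $(\cR_{E_n})_{\gal(E_n/F_m)} \cong \cR_{F_m}$ and taking limits.

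The main obstacle I expect is the exchange of inverse limits with $\cH$-coinvariants: coinvariants are a right-exact (not left-exact) operation, so commuting them past $\varprojlim_n$ is not formal and requires either a Mittag-Leffler / surjectivity argument on the projective system $\{I_{\gal(E_n/\cy E)}(\cR_{E_n})\}_n$ of augmentation ideals, or a direct verification that the natural map $(\varprojlim_n \cR_{E_n})_\cH \to \varprojlim_m \cR_{F_m}$ is bijective using the universal properties on both sides rather than a termwise limit. I would handle this by the latter route — constructing explicit mutually inverse maps via the universal deformations — so that right-exactness is never actually invoked, and the finite-level Proposition~\ref{base-change} together with the compatibility of the base-change morphisms does all the work.
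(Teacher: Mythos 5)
Your proposal is correct and is essentially the paper's own argument: the paper proves this proposition with the single sentence ``This follows from the base-change isomorphism,'' i.e.\ Proposition~\ref{base-change} applied to each finite subextension $E_n/F_m$ and then passed to the limit, exactly as you describe. The extra care you take over exchanging $\cH$-coinvariants with the projective limit (via surjectivity of the transition maps on the augmentation ideals, or via mutually inverse maps built from universality) addresses a genuine point that the paper leaves entirely implicit.
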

\begin{proof}
This follows from the the base-change isomorphism Prop \ref{base-change}. 
\end{proof}
\begin{cor}\label{noetherian}
If $\cRin$ is noetherian ring, 
then  $\cR_{cyc}$ is a noetherian ring.
\end{cor}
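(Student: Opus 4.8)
The plan is to obtain the corollary as an immediate consequence of the preceding proposition, which identifies $\cR_{cyc}$ with the ring of $\cH$-coinvariants $(\cRin)_\cH$. The first step is to spell out what $(\cRin)_\cH$ is as a ring: by definition it is the quotient $\cRin/I_\cH(\cRin)$, where
\[
I_\cH(\cRin) = \langle\, \sigma x - x \mid x \in \cRin,\ \sigma \in \cH \,\rangle
\]
is the augmentation ideal attached to the $\cH$-action (the same construction used in Propositions \ref{base-change} and \ref{control-kahler1}). Since $\cRin$ is commutative, this is a genuine ideal, so $(\cRin)_\cH$ is a quotient ring of $\cRin$, and there is a surjection $\cRin \surj (\cRin)_\cH$.

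The second step is the standard fact that a quotient of a noetherian commutative ring by an ideal is again noetherian: ideals of $\cRin/I_\cH(\cRin)$ correspond, inclusion-preservingly, to ideals of $\cRin$ containing $I_\cH(\cRin)$, so the ascending chain condition passes to the quotient. Transporting this along the ring isomorphism $(\cRin)_\cH \cong \cR_{cyc}$ supplied by the preceding proposition then shows that $\cR_{cyc}$ is noetherian whenever $\cRin$ is.

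There is no real obstacle here; the entire content of the corollary already sits in the preceding proposition (the base-change isomorphism), and what remains is the elementary surjection $\cRin \surj \cR_{cyc}$ together with the fact that noetherianity descends along surjections of commutative rings. The only points worth double-checking are that the map of the preceding proposition is genuinely a ring homomorphism (so that the quotient-ring structure is the relevant one, not merely a module structure) and that ``noetherian'' is understood throughout in the commutative sense, so that the quotient argument applies verbatim.
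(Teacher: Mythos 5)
Your argument is correct and is exactly the route the paper takes: the paper's one-line proof also passes from $\cRin$ noetherian to $(\cRin)_\cH$ noetherian (as a quotient by the augmentation ideal of the $\cH$-action) and then invokes the isomorphism $(\cRin)_\cH\cong\cR_{cyc}$ from the preceding proposition. You have merely spelled out the standard quotient-ring step that the paper leaves implicit.
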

\begin{remark}
 For the deformation rings of the nearly ordinary functor and the fixed determinant, analogous results over $\inft{E}/E$ can be proven using \cite[Cor 3.2]{hs2}. 
\end{remark}
\subsection{Example}
Let $E$ be a totally real field, and let $f$ be a nearly $p$-ordinary Hilbert modular eigenform. Let $\rho$ be the representation of 
$\gal_E$ that is associated to $f$. Let $\bosym{\rho}$ be the nearly ordinary deformation for $\rho$.
Let $\Ein$ be a $p$-adic Lie extension of $E$ containing the cyclotomic $\Z_p$-extension $\cy E$. We assume that $\cy{E}$ is totally ramified at primes above $p$.
Let $\{E_n\}_n$ be an increasing sequence of subfields with $E_0=E$ such that $\Ein=\cup_n E_n$.
Let $\cR_{E_n}^{n.ord}$ be the universal \emph{nearly ordinary deformation ring} and $h_{E_n}^{n.ord}$ be the \emph{nearly ordinary Hecke algebra}, for the Galois representation
$\rho$ restricted to $\gal_{E_n}$.

Let $\p$ denote a prime of $E$ lying above $p$,
and we also denote by $\p$ the unique prime of $F_n:=E_n\cap\cy E$ above $p$. 
We consider the universal nearly ordinary representation $\rho:\gal_E\lra\gl{\cR_{E_0}^{n.ord}}$. Then restricted to the decomposition subgroup $D_{0,\p}$ at a prime lying over 
$p$, we have
\begin{equation*}
 \rho\mid_{D_{0,\p}}\cong\begin{pmatrix}\wt{\epsilon}_{\p} &*\\ 0 &\wt{\delta}_{\p}	\end{pmatrix}, 
 \mbox{ with } \wt\delta_{\p}\equiv\bar\delta_\p\pmod{\m_0} \mbox{ in } D_{0,\p},
\end{equation*}
where $\m_0$ is the maximal ideal of $\cR_{E_0}^{n.ord}$.

Let $\rho_n:=\rho\mid_{\gal_{F_n}}$.
We also denote the unique prime of $F_n$ lying above $\p$ by $\p$.
Let $D_{n,\p}$ denote the decomposition group at the prime $\p$. Then 
\begin{equation*}
\rho_n|_{D_{n,\p}}\cong\begin{pmatrix}\wt{\epsilon}_{n,\p} &*\\ 0 &\wt{\delta}_{n,\p}	\end{pmatrix}, 
\mbox{ with } \wt\delta_{n,\p}\equiv\bar\delta_\p\pmod{\m_n} \mbox{ in } D_{n,\p}.
\end{equation*}
Let $\wt\delta_{cyc,\p}$ be the restriction of $\wt\delta_\p$ to $D_{cyc,\p}:=\gal_{E_{cyc,\p}}$, and let $\Lambda_{cyc}$ be the projective limit of the universal deformation 
rings for $\delta_{n,\p}=\wt\delta_{\p}\mid_{D_{n,\p}}$. Let $\cR_{F_n}^{n.ord}$ be the universal nearly ordinary deformation ring for the representation $\rho_n$.
Let $\wt\Lambda_n$ be the subalgebra of $\cR_{F_n}^{n.ord}$ topologically 
generated by the image of $\wt\delta_{cyc,\p}$ over $\cO$. Assume that the order of $\wt\epsilon_{n,\p}\mod\m_n$ is prime to $p$.
As $\wt\delta_{cyc,\p}$ restricted to the $p$-wild inertia subgroup factors through $\Gamma_{n,\p}$, and the tame part has values
in $\cO$, therefore $\wt\Lambda_n\cong\cO[[\wt\delta_{n,\p}(Frob_\p)-\delta_{n,\p}(Frob_\p)]]$ inside $\cR_{F_n}^{n.ord}$, for the Frobenius element
  $Frob_\p$ in $D_{cyc,\p}$. Let $\wt\delta_0(Frob_\p)=a(\p)\in\cR_0^{n.ord}$. Recall the following result of Hida.
\begin{propn}\label{inf-noetherian}{\cite{hida-mfg}}
Let $\wt\Lambda_0=\cO[[x_\p]]_{\p|p}$ (under the normalization $\gamma_\p\mapsto 1+x_\p$), and 
 \begin{equation*}
 Jac_{\wt\Lambda_0}:=\det\left(\dfrac{\partial a(\p)}{\partial x_{\p'}}\right)_{\p,\p'\mid p} \in\wt\Lambda_0^\times.
 \end{equation*}
  Then ${\Lambda_{cyc}}=\wt\Lambda_0$.
\end{propn}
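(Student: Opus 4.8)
The plan is to show that the tower of subalgebras $\wt\Lambda_n \subset \cR_n$ stabilizes, i.e.\ that each base-change map $\wt\Lambda_{n+1} \to \wt\Lambda_n$ is an isomorphism, so that the projective limit $\Lambda_\infty = \varprojlim_n \wt\Lambda_n$ equals $\wt\Lambda_0$. Since $\Ein/E$ is totally ramified at every $\p\mid p$, there is a unique prime $\p$ of $E_n$ above each $\p$ of $E$ and the decomposition group $D_{n,\p}$ has finite index in $D_{0,\p}$; moreover the local Galois groups $\gal(F_{\infty,\p}^{unr}(\mu_{p^\infty})/F_{\infty,\p}^{unr})$ at the various levels all inject compatibly into $1+p\Z_p$ via the cyclotomic character. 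The key point is that $\wt\delta_{\infty,\p}$, being the nearly $\p$-ordinary character, is \emph{already} pinned down on the wild inertia by the condition (L4) of the locally cyclotomic functor $\Phi^{cyc}$: its restriction to $p$-wild inertia factors through $\Gamma_{n,\p}$, and the tame part takes values in $\cO^\times$ by the hypothesis that the order of $\wt\epsilon_{n,\p}\bmod\m_n$ (equivalently $\wt\delta_{n,\p}\bmod\m_n$) is prime to $p$. Hence as already noted in the excerpt, $\wt\Lambda_n \cong \cO[[\wt\delta_{n,\p}(Frob_\p) - \delta_\p(Frob_\p)]]$, a power series ring in the single ``variable'' $\wt\delta_{n,\p}(Frob_\p)-\delta_\p(Frob_\p)$ for each $\p\mid p$.

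First I would compare $\wt\delta_{n,\p}(Frob_\p)$ across levels. Under the normalization $\gamma_\p \mapsto 1+x_\p$ for a topological generator $\gamma_\p$ of $\Gamma_\p$, one has $\wt\delta_{0,\p}(Frob_\p) = a(\p) \in \cR_0$ with $a(\p) \equiv \delta_\p(Frob_\p) \bmod \m_0$, and the generator $x_\p$ of $\wt\Lambda_0$ is recovered from $a(\p)$. At level $n$, the Frobenius $Frob_\p$ in $D_{\infty,\p}$ is the same element (we are taking $Frob_\p$ in the common quotient $D_{\infty,\p}$), so $\wt\delta_{n,\p}(Frob_\p)$ is the image of $\wt\delta_{0,\p}(Frob_\p) = a(\p)$ under the base-change map $\cR_0 \to \cR_n$, together with the identification of $\Gamma_{0,\p}$ with an open (in fact, by total ramification, equal-up-to-finite-index) subgroup of $\Gamma_{n,\p}$. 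The hypothesis $Jac_{\mathbf\Lambda_0} = \det(\partial a(\p)/\partial x_{\p'})_{\p,\p'\mid p} \in \wt\Lambda_0^\times$ says precisely that the $a(\p)$, $\p\mid p$, form a regular system of parameters for $\wt\Lambda_0$ over $\cO$ — equivalently, that $(x_\p)_{\p\mid p} \mapsto (a(\p) - \delta_\p(Frob_\p))_{\p\mid p}$ is an automorphism of $\cO[[x_\p]]_{\p\mid p}$, by the formal inverse function theorem (the Jacobian criterion for a power series endomorphism of $\cO[[x_\p]]$ to be invertible). Consequently $\wt\Lambda_0 = \cO[[a(\p)-\delta_\p(Frob_\p)]]_{\p\mid p}$ on the nose.

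Then I would run the same identification at level $n$: $\wt\Lambda_n = \cO[[\wt\delta_{n,\p}(Frob_\p)-\delta_\p(Frob_\p)]]_{\p\mid p}$, and the base-change map $\wt\Lambda_n \to \wt\Lambda_0$ sends the generator $\wt\delta_{n,\p}(Frob_\p)-\delta_\p(Frob_\p)$ to $a(\p)-\delta_\p(Frob_\p)$, hence is surjective by the displayed presentation of $\wt\Lambda_0$. For injectivity one observes that both rings are formally smooth over $\cO$ of the same relative dimension $\#\{\p\mid p\}$ (each being a power series ring in $\#\{\p\mid p\}$ variables, by the argument in the excerpt that the wild inertia contribution is a single $\Z_p$ per $\p$ and the tame part is absorbed into $\cO$), and a surjection of power series rings over $\cO$ in the same number of variables that is the identity on residue fields and compatible with the $\cO$-structure is an isomorphism — concretely, its kernel is a prime of height $0$ in a domain, hence zero. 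Passing to the inverse limit over $n$ gives $\Lambda_\infty = \varprojlim_n \wt\Lambda_n = \wt\Lambda_0$, as claimed.

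The step I expect to be the main obstacle is making rigorous the claim that the generators match up across levels — i.e.\ that $\wt\delta_{n,\p}(Frob_\p)$ really is the base-change image of $a(\p)$ and that $\wt\Lambda_n$ is \emph{exactly} a power series ring in the corresponding single variable, with no extra contribution appearing at deeper levels. This rests on: (i) total ramification of $\Ein/E$ at $\p$, so that the image of inertia does not shrink and the identification $\Gamma_{n,\p} \supset \Gamma_{0,\p}$ is of finite index with compatible cyclotomic embeddings into $1+p\Z_p$; (ii) the prime-to-$p$ order hypothesis on $\wt\epsilon_{n,\p}\bmod\m_n$ holding at all levels (which propagates from $n=0$ since $E_n/E$ is pro-$p$, so the tame quotient is unchanged); and (iii) condition (L4) of $\Phi^{cyc}$ forcing $\wt\delta_{\infty,\p}\veps_{1,\p}^{-1}$ to factor through $\gal(F_\p^{unr}(\mu_{p^\infty})/F_\p^{unr})$, which is what collapses the a priori infinite-dimensional local deformation problem for $\delta_{n,\p}$ down to the one-dimensional $\Lambda_\infty$. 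Once these three points are in hand, the Jacobian hypothesis does the rest via the formal inverse function theorem.
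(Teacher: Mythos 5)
Your argument is correct in substance but routes the key step differently from the paper. The paper's proof works directly with the map $\inft\Lambda\lra\wt\Lambda_0$ and the first fundamental exact sequence of K\"ahler differentials
$\Om{\inft\Lambda}{\cO}\otimes\wt\Lambda_0\lra\Om{\wt\Lambda_0}{\cO}\lra\Om{\wt\Lambda_0}{\inft\Lambda}\lra 0$:
since $\Om{\wt\Lambda_0}{\cO}=\oplus_{\p\mid p}\wt\Lambda_0\,dx_\p$ and the image of the left-hand term is generated by the $da(\p)=\sum_{\p'}\frac{\partial a(\p)}{\partial x_{\p'}}dx_{\p'}$, the unit Jacobian forces $\Om{\wt\Lambda_0}{\inft\Lambda}=0$, whence $\wt\Lambda_0=\inft\Lambda$. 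You instead invoke the formal inverse function theorem to say that $(x_\p)\mapsto(a(\p)-\delta_\p(Frob_\p))$ is an automorphism of $\cO[[x_\p]]_{\p\mid p}$, and then propagate this level by level up the tower. These are two packagings of the same input --- the inverse function theorem for $\cO[[x_\p]]$ is standardly proved by exactly the differential-plus-Nakayama computation the paper performs --- so the mathematical content of the crucial step coincides; what your version buys is an explicit coordinate change and an explicit statement that the subalgebras $\wt\Lambda_n$ stabilize, at the cost of having to justify that the generators $\wt\delta_{n,\p}(Frob_\p)-\delta_\p(Frob_\p)$ match up under base change (which the paper never needs, since it compares $\inft\Lambda$ with $\wt\Lambda_0$ in one step). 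One point to flag: you silently identify $\inft\Lambda$ with $\varprojlim_n\wt\Lambda_n$, whereas the paper defines it as the projective limit of the universal deformation rings of the local characters $\delta_{n,\p}$; these agree only after noting that each such universal ring maps onto $\wt\Lambda_n$ with the $a(\p)$'s as images of the tautological generators, which is what the paper's phrase ``the image of $\Om{\inft\Lambda}{\cO}\otimes\wt\Lambda_0$ is generated by $da(\p)$'' encodes. Your injectivity argument (a surjection of domains of equal dimension has kernel of height zero, hence zero) is fine and supplies a detail the paper leaves implicit.
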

Let $\wt\Lambda_\infty$ be the projective limit of the universal deformation rings for $\wt\delta\mid_{\gal_{E_n,\p}}$ for all primes $\p$ of $E_n$ lying above $p$. 
Then, $(\Om{\wt\Lambda_{\infty}}{\cO}\widehat\otimes\cO)_\cH\cong\Om{\wt\Lambda_{cyc}}{\cO}\widehat\otimes\cO\cong\cO^{\oplus r}$, where $r=$ number of primes of $E$ 
lying above $p$. By Nakayama lemma, we have the following result.
\begin{cor}
Under the same assumptions as in the previous theorem, the dual Selmer group $\Om{\wt\Lambda_{\infty}}{\cO}\widehat\otimes\cO$ is a finitely generated module over $\cO[[\cH]]$.
\end{cor}

For any number field $L$, let $S_L$ be the set of primes of $L$ lying above $p$ and $D_{\p}^{ab,p}$ be the maximal $p$-profinite
abelian quotient of the decomposition subgroup $D_\p$ at $\p$ in $\gal_{L}$. We write $D_L=\prod_{\p\in S_L}D_{\p}^{ab,p}$ 
and $I_L=\prod_{\p\in S_L}I_{\p}^{ab,p}$, where $I_{\p}^{ab,p}$ is the image of inertia subgroup in $D_{\p}^{ab,p}$. 
In \cite[Theorem 6.3]{hs2}, Hida gave the structure of Hecke algebras along the 
cyclotomic tower of a number field. One can have a slightly more general result as follows.
\begin{theorem}\label{trivial-zeros}
Let $\II$ be an irreducible component of $\cR_E^{n.ord}$. 
Let $s=\mid S_E\mid$ be the number of primes of $E$ lying above $p$, and $J:=Jac_{\II}$ be the Jacobian. Let $D_j$ be the image of $D_{\inft E}$ inside $D_{E_j}$.
We also put $A_j:=\cO[[D_j]]$ and $\Lambda:=\cO[[I_E]]$.
\begin{enumerate}
\item Let $M:=\Om{\cRin}{\cO[[\inft{D}]]}\otimes_{\cRin}\II$ and $Jac_{\II}\neq0$. Let $I_0$ be the image of $I_{\inft{E}}$ in $I_E$. Then we have the following short-exact 
sequence:
\begin{equation*}
 0\lra \sgd{\inft{E}}{\ad{\bosym\rho}} \stackrel{}{\lra} M\times(\Om{\II}{\cO[[I_0]]}\otimes_{\II}\II)\stackrel{}{\lra}\Om{\II}{A_0}\otimes_{\II}\II\lra 0.
\end{equation*}
\item Let $\cyn{E}{n}$ be the cyclotomic $\Z_p$-extension of $E_n$, and $\Gamma_n:=\gal(\cyn{E}{n}/E_n)$. Then the module $\sg{\cyn{E}{n}}{\ad{\bosym\rho}}$ is torsion over 
$\II[[\Gamma_n]]$ for all $n$; and is pseudo-isomorphic to $\II^s\oplus \Om{\cR_{\cyn{E}{n}}}{\mathcal O[[\cy D]]}\otimes\II$.
\item Let $\Phi_n(T)$ be the characteristic ideal of $\Om{\cR_{\cyn{E}{n}}}{\mathcal O[[\cy D]]}\otimes\II$, and $\Psi_n(T)$ the characteristic ideal of 
$\sg{\cyn{E}{n}}{\ad{\bosym\rho}}$. Then 
        \begin{equation}
         \Psi_n(T)=\Phi_n(T)T^s, \Phi_n(0)\neq0 \mbox{ and } \Phi_n(0)\mid J\eta,
        \end{equation}
where $\eta$ is the characteristic ideal of the $\II$-module $\sg{E_n}{\ad{\bosym\rho}}$.
\end{enumerate}
\end{theorem}
\begin{proof} 
We give a proof for (i), as the proof of the other statements follow analogously as in \cite[Theorem 6.3]{hs2}.
Let $\cR_j:=\cR^{\phi'}_{E_j}$, $A_j:=\cO[[D_j]]$ and $\Lambda:=\cO[[I_E]]$. 
Put $J_j:=\mathrm{ker}(\cR_j\lra\II)$. Then, we have the following commutative diagram with exact rows and columns, for all $j=1,\cdots,\infty$:
\begin{equation*}
 \xymatrix{
 0\ar[r]\ar[d]                                           &\Om{A_j}{\cO[[I_j]]}\otimes_{A_j}\II\ar@{=}[r]\ar[d]^{e}        &\Om{A_j}{\cO[[I_j]]}\otimes_{A_j}\II\ar[r]\ar[d]^{f}     &0\\
 \dfrac{J_j}{J_j^2}\otimes_{\II}\II\ar[r]\ar[d]^{\cong}  &\Om{R_j}{\cO[[I_j]]}\otimes_{R_j}\II\ar[r]^{b}\ar[d]^{g}        &\Om{\II}{\cO[[I_j]]}\otimes_{R_j}\II\ar[r]\ar[d]^{h}     &0\\
 \dfrac{J_j}{J_j^2}\otimes_{\II}\II\ar[r]\ar[d]          &\Om{R_j}{A_j}\otimes_{R_j}\II\ar[r]^{d}\ar[d]                   &\Om{\II}{A_j}\otimes_{R_j}\II\ar[r]\ar[d]                &0\\
 0 \ar[r]                                                &0 \ar[r]                                                        &0.
 }
\end{equation*}
Since the Jacobian $Jac_{\II}\neq0$, the maps $e$ and $f$ are injective. Therefore, for $j=\infty$, we have the following short-exact sequence:
\begin{equation*}
 0\lra \Om{A_j}{\cO[[I_j]]}\otimes_{A_j}\II \stackrel{\beta}{\lra} M\times(\Om{\II}{\cO[[I_0]]}\otimes_{\II}\II)\stackrel{\alpha}{\lra}\Om{\II}{A_0}\otimes_{\II}\II\lra 0,
\end{equation*}
where $\alpha(m,a)=d(m)-h(a)$ and $\beta(a)=(g(a),b(a))$. 
As $\sgd{\inft{F}}{\ad{\bosym\rho}}\cong\Om{A_j}{\cO[[I_j]]}\otimes_{A_j}\II$, we have 
\begin{equation*}
 0\lra \sgd{\inft{E}}{\ad{\bosym\rho}} \stackrel{\beta}{\lra} M\times(\Om{\II}{\cO[[I_0]]}\otimes_{\II}\II)\stackrel{\alpha}{\lra}\Om{\II}{A_0}\otimes_{\II}\II\lra 0.
\end{equation*}
\end{proof}
This result gives a finer structure of the dual Selmer group $\sgd{\inft{E}}{\ad{\bosym\rho}}$ of the nearly ordinary representation $\bosym\rho$. 
Over the cyclotomic $\Z_p$-extension, Hida interprets the finer structure of $\sgd{\cy{E}}{\ad{\bosym\rho}}$ in terms of trivial zeros of the $p$-adic L-function. 
\section{Noncommutative Iwasawa theory of $\sgd{\Ein}{\ad{\phi}}$}\label{non-commmutative}
\subsection{Ore sets and the category $\mathfrak M^\II_\cH(\cG)$}
Let $\Ein/E$ be a $p$-adic Lie extension such that $\cy{E}\subset\Ein$. Let $\cG:=\gal(\Ein/E)$ and $\cH=\gal(\Ein/\cy{E})$.
We will consider an analogue of
the \emph{Ore set}, that was first considered by Venjakob for a formulation of the Iwasawa Main conjecture over $p$-adic Lie extensions 
(see \cite{cfksv}). 
We recall the Ore set that was considered by Venjakob.
\begin{defn} Let $\cO$ be a finite extension of $\Z_p$. Then the set
 \begin{equation*}
 S:=\{x\in\cO[[\cG]]\mid\cO[[\cG]]/x \mbox{ is a finitely generated module over } \cO[[\cH]]\}.
 \end{equation*}
 is a left-right Ore set.
\end{defn}
The following Ore set is a natural and obvious generalization of the one which has been considered by Venjakob, Coates et al in \cite{cfksv} and in Fukaya-Kato \cite{fk}.
Let $\II$ be an irreducible component of the universal locally cyclotomic deformation ring $\cR_E$ for the functor $\Phi_E^{cyc}$.
\begin{defn}
The set defined by
\begin{equation*}
\sS:=\{x\in\IIG\mid\IIG/x \mbox{ is a finitely generated module over } \II[[\cH]]\}
\end{equation*}
is a left-right Ore set.
\end{defn}
\begin{lemma}
The set $\sS$ is a multiplicatively closed set.
\end{lemma}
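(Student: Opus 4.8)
The plan is to verify directly that $\sS$ contains $1$ and is closed under multiplication. Since $\IIG/(\IIG\cdot 1)=0$ is trivially a finitely generated $\II[[\cH]]$-module, we have $1\in\sS$, and it remains to show $xy\in\sS$ whenever $x,y\in\sS$. Throughout I read $\IIG/x$ as the quotient of $\IIG$ by the left ideal $\IIG x$; the argument for the right-handed version of the definition is identical with the roles of left and right reversed.

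First I would record the chain of left ideals $\IIG xy\subseteq\IIG y\subseteq\IIG$, which yields a short exact sequence of left $\IIG$-modules, hence of left $\II[[\cH]]$-modules,
\begin{equation*}
0\lra \IIG y/\IIG xy\lra \IIG/\IIG xy\lra \IIG/\IIG y\lra 0.
\end{equation*}
The right-hand term is finitely generated over $\II[[\cH]]$ because $y\in\sS$. For the left-hand term, consider the map $\IIG/\IIG x\lra \IIG y/\IIG xy$ induced by right multiplication by $y$, namely $a+\IIG x\mapsto ay+\IIG xy$. It is well defined since $(\IIG x)y\subseteq \IIG xy$, it is surjective by construction, and it is left $\II[[\cH]]$-linear because left multiplication commutes with right multiplication by $y$. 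Hence $\IIG y/\IIG xy$ is a quotient of $\IIG/\IIG x$, which is finitely generated over $\II[[\cH]]$ since $x\in\sS$; therefore $\IIG y/\IIG xy$ is finitely generated over $\II[[\cH]]$ as well.

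Finally, in the displayed exact sequence both the sub- and the quotient module are finitely generated over $\II[[\cH]]$, so the middle term $\IIG/\IIG xy$ is finitely generated over $\II[[\cH]]$, i.e.\ $xy\in\sS$. This uses only that a quotient of a finitely generated module is finitely generated and that an extension of two finitely generated modules is finitely generated, so no noetherian hypothesis on $\II[[\cH]]$ is invoked (though $\II[[\cH]]$ is in fact noetherian, $\cH$ being a compact $p$-adic Lie group and $\II$ noetherian).

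There is no serious obstacle here; the only points requiring care are the bookkeeping of the left/right module structures — in particular verifying that the comparison map $\IIG/\IIG x\lra\IIG y/\IIG xy$ is $\II[[\cH]]$-linear and not merely $\cO$-linear — and arranging the statement so that it applies verbatim to the right-handed quotient $\IIG/x\IIG$, which is the form needed afterwards when one checks that $\sS$ satisfies the two-sided Ore condition.
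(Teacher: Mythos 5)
Your proof is correct and follows essentially the same route as the paper: a short exact sequence exhibiting $\IIG/xy$ as an extension of $\IIG/y$ by a quotient of $\IIG/x$ (the paper writes the mirror-image, right-ideal version, with the surjection $\IIG/y\to x\IIG/xy$ given by left multiplication by $x$). Your version is if anything more carefully bookkept on the left/right module structures, and the extra observations ($1\in\sS$, no noetherian hypothesis needed) are fine.
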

\begin{proof}
For two elements $x, y \in\IIG$ consider the following exact sequence
\begin{equation*}
 0\lra x\IIG/xy \lra \IIG/xy \lra \IIG/x \lra 0.
\end{equation*}
The surjection $\IIG/y\lra x\IIG/xy\lra 0$ implies that $x\IIG/xy$ is finitely generated over $\II[[\cH]]$ and the lemma follows.
\end{proof}
\begin{defn} Let $\m$ denote the maximal ideal of $\II$. We define
\begin{equation*}\sS^\ast:=\cup_n \m^n\sS.\end{equation*}
\end{defn}
The set $\sS^*$ is also a multiplicative Ore set. In his thesis, Barth \cite{barth} has also considered an Ore set which is different from ours.
\begin{defn} We define $\mathfrak M^\II_\cH(\cG)$ to be the category of all modules which are finitely generated over $\IIG$ and $\sS^\ast$-torsion.
\end{defn}
For the maximal ideal $\m$ of $\II$, we define
\begin{eqnarray*}
M[\m]&:=&\{x\in M\mid ax=0 \mbox{ for some } a\in\m\}\\
M(\m)&:=&\cup_n M[\m^n].
\end{eqnarray*}
As $\II$ is an commutative integral domain, it is easy to see that $M[\m]$ and $M(\m)$ are submodules of $M$ over $\IIG$.

As in \cite[Lemma 2.1]{cfksv}, we have the following characterization of the Ore set $\sS$.
\begin{lemma} Let $\varphi_\cH:\IIG\lra\II[[\Gamma]]$ and $\psi_\cH:\IIG\lra\Omega(\Gamma)$ be the natural surjections. Then
 \begin{enumerate}
  \item $\sS$ is the set of all $x$ in $\IIG$ such that $\II[[\Gamma]]/\II[[\Gamma]]\varphi_\cH(x)$ is a finitely generated $\II$-module;
  \item $\sS$ is the set of all $x$ in $\IIG$ such that $\Omega(\Gamma)/\Omega(\Gamma)\psi_\cH(x)$ is finite.
 \end{enumerate}
\end{lemma}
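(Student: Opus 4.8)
The plan is to follow the proof of \cite[Lemma 2.1]{cfksv} essentially verbatim, with the ring $\II$ playing the role of $\Z_p$; since $\II$ is a complete Noetherian local ring with finite residue field $\FF:=\II/\m$ of characteristic $p$, no step of that argument is affected. Write $I(\cH)$ for the kernel of the augmentation $\II[[\cH]]\to\II$. Because $\cH$ is normal in $\cG$ (the cyclotomic $\Z_p$-extension being abelian over $E$), the left ideal $I(\cH)\IIG$ is two-sided, so $\varphi_\cH$ is identified with the quotient ring homomorphism $\IIG\to\IIG/I(\cH)\IIG\cong\II[[\Gamma]]$, while $\psi_\cH$ factors through $\varphi_\cH$ and is the natural surjection onto $\Omega(\Gamma)\cong\FF[[\Gamma]]\cong\IIG/(\m\IIG+I(\cH)\IIG)$. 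Since $\cG$, and hence $\cH$, is pro-$p$ and $\II/\m^{n}$ is finite for every $n$, the ring $\II[[\cH]]$ is a compact local ring whose maximal ideal $\mathfrak M=(\m,I(\cH))$ is topologically nilpotent, so topological Nakayama's lemma applies to any compact $\II[[\cH]]$-module.

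First I would fix $x\in\IIG$ and set $M:=\IIG/\IIG x$, a compact left $\II[[\cH]]$-module. As $\varphi_\cH$ is a surjective ring homomorphism one has $\varphi_\cH(\IIG x)=\II[[\Gamma]]\varphi_\cH(x)$, and likewise after reducing mod $\m$; hence
\[
 M/I(\cH)M\;\cong\;\II[[\Gamma]]/\II[[\Gamma]]\varphi_\cH(x),\qquad M/\mathfrak M M\;\cong\;\Omega(\Gamma)/\Omega(\Gamma)\psi_\cH(x).
\]
By definition $x\in\sS$ means exactly that $M$ is finitely generated over $\II[[\cH]]$, and by topological Nakayama this is equivalent to $M/\mathfrak M M$ being finitely generated over $\FF$, i.e.\ finite. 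This is precisely assertion (ii), and it also shows $x\in\sS$ iff $\Omega(\Gamma)/\Omega(\Gamma)\psi_\cH(x)$ is finite. For (i), one direction is immediate: if $x\in\sS$ then $M$ is finitely generated over $\II[[\cH]]$, whence $M/I(\cH)M\cong\II[[\Gamma]]/\II[[\Gamma]]\varphi_\cH(x)$ is finitely generated over $\II[[\cH]]/I(\cH)\cong\II$. Conversely, assuming $\II[[\Gamma]]/\II[[\Gamma]]\varphi_\cH(x)$ is a finitely generated $\II$-module, reducing modulo $\m$ (and using that $\II[[\Gamma]]\cong\II[[T]]$ is Noetherian) gives that
\[
 \bigl(\II[[\Gamma]]/\II[[\Gamma]]\varphi_\cH(x)\bigr)\otimes_\II\FF\;\cong\;\Omega(\Gamma)/\Omega(\Gamma)\psi_\cH(x)
\]
is finitely generated over $\FF$, hence finite, so $x\in\sS$ by the previous step. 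This closes the equivalences.

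I expect the only genuine obstacle to be the bookkeeping around topological Nakayama: one must verify that $M$ is a compact module — clear, since it is a continuous quotient of the compact ring $\IIG$ — and that $\II[[\cH]]$ is a compact local ring to which the lemma applies. For the latter it suffices that $\II/\m^{n}$ is finite for all $n$ (true because $\II$ is Noetherian local with finite residue field) and that $\cH$ is pro-$p$, so that $\II[[\cH]]$ is an inverse limit of finite local rings $(\II/\m^{i})[\cH/\cH_{j}]$ with topologically nilpotent, open maximal ideal. Everything else is the same formal manipulation as in \cite{cfksv}.
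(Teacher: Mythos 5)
Your proposal is correct and follows essentially the same route as the paper: the paper also sets $M=\IIG/\IIG x$, identifies $M_\cH$ with $\II[[\Gamma]]/\II[[\Gamma]]\varphi_\cH(x)$ and $M/\m_\cH M$ with $\Omega(\Gamma)/\Omega(\Gamma)\psi_\cH(x)$, and concludes by Nakayama's lemma. You simply supply the details (compactness of $M$, topological nilpotence of the maximal ideal of $\II[[\cH]]$) that the paper leaves implicit.
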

\begin{proof}
 For any element $x\in\IIG$, we put $M=\IIG/\IIG x$. Then
\begin{equation*}
 M_\cH=\II[[\Gamma]]/\II[[\Gamma]]\varphi_\cH(x),\quad M/{\m_\cH}M=\Omega(\Gamma)/\Omega(\Gamma)\psi_\cH(x),
\end{equation*}
where $\m_\cH$ denotes the maximal ideal of $\II[[\cH]]$. Therefore the assertions follow from Nakayama's lemma ( \cite[Lemma 5.2.18]{nsw}).
\end{proof}
\begin{propn}
 A finitely generated module $M$ over $\IIG$ is $\sS$-torsion if and only if $M$ is finitely generated over $\II[[\cH]]$.
\end{propn}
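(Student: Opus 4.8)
\emph{Proof proposal.} The plan is to prove the two implications separately; the forward one ($\sS$-torsion $\Rightarrow$ finitely generated over $\II[[\cH]]$) is formal, while the converse carries all the content and rests on the noetherianity of $\II[[\cH]]$.

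For the forward direction I would argue as follows. Write $M=\sum_{i=1}^{r}\IIG m_i$. Since $M$ is $\sS$-torsion, each $m_i$ is annihilated by some $s_i\in\sS$, so the $\IIG$-linear map $x\mapsto x m_i$ has kernel containing $\IIG s_i$ and hence factors through a $\IIG$-linear surjection $\IIG/\IIG s_i\surj \IIG m_i$; summing over $i$ gives a $\IIG$-linear surjection $\bigoplus_{i=1}^{r}\IIG/\IIG s_i\surj M$. By the very definition of $\sS$, each $\IIG/\IIG s_i$ is finitely generated over $\II[[\cH]]$, hence so is the finite direct sum, and therefore so is its quotient $M$.

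For the converse, assume $M$ is finitely generated over $\II[[\cH]]$. First I would note that $\II[[\cH]]$ is noetherian: $\II$ is noetherian, $\cH$ contains an open uniform pro-$p$ subgroup $\cH_0\cong\Z_p^{\,d}$, the ring $\II[[\cH_0]]\cong\II[[t_1,\dots,t_d]]$ is noetherian, and $\II[[\cH]]$ is free of finite rank over $\II[[\cH_0]]$; hence $M$ is a noetherian $\II[[\cH]]$-module. Now fix $m\in M$, choose a topological generator $\gamma$ of $\Gamma=\cG/\cH$ together with a lift $\wt\gamma\in\cG\subset\IIG^{\times}$, and consider the ascending chain of $\II[[\cH]]$-submodules $M_n:=\sum_{i=0}^{n}\II[[\cH]]\,\wt\gamma^{\,i} m$. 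It stabilizes, say $M_{N}=M_{N+1}$, which produces $a_0,\dots,a_N\in\II[[\cH]]$ with $\wt\gamma^{\,N+1} m=\sum_{i=0}^{N}a_i\wt\gamma^{\,i} m$. Set $s:=\wt\gamma^{\,N+1}-\sum_{i=0}^{N}a_i\wt\gamma^{\,i}\in\IIG$, so that $sm=0$. Applying the surjection $\varphi_\cH\colon\IIG\to\II[[\Gamma]]$ and identifying $\gamma$ with $1+T$, one gets $\varphi_\cH(s)=(1+T)^{N+1}-\sum_{i=0}^{N}\varphi_\cH(a_i)(1+T)^{i}$, which, since the $\varphi_\cH(a_i)$ lie in the image $\II$ of $\II[[\cH]]$, is a monic polynomial of degree $N+1$ in $T$ over $\II$. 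Hence $\II[[\Gamma]]/\II[[\Gamma]]\varphi_\cH(s)$ is free of rank $N+1$ over $\II$, in particular finitely generated over $\II$, so $s\in\sS$ by the preceding lemma. As $m$ was arbitrary, every element of $M$ is killed by an element of $\sS$, i.e. $M$ is $\sS$-torsion.

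I expect the main obstacle to be essentially bookkeeping rather than a real difficulty: the only non-formal input is the noetherianity of $\II[[\cH]]$ (which uses the structure theory of compact $p$-adic Lie groups), and the one point that must be noticed is that the stabilization relation above is rigged precisely so that its image under $\varphi_\cH$ is \emph{monic} — which is exactly the membership criterion for $\sS$ isolated in the lemma above. Everything else follows from the exact sequences and the characterization of $\sS$ already established.
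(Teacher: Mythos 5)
The paper states this proposition without proof, so there is nothing to compare against directly; your argument is the natural adaptation to $\II$-coefficients of the proof of Proposition 2.3 of \cite{cfksv}, and its overall structure (the surjection $\bigoplus_i\IIG/\IIG s_i\surj M$ in one direction; noetherianity of $\II[[\cH]]$ plus a monic ``Cayley--Hamilton'' relation in $\wt\gamma$ in the other) is correct and complete in outline.

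Two justifications need repair, though neither affects the structure of the argument. First, an open uniform pro-$p$ subgroup $\cH_0$ of $\cH$ need not be isomorphic to $\Z_p^d$ (that would force $\cH$ to be virtually abelian); the noetherianity of $\II[[\cH]]$ is still standard, but should be obtained from Lazard's filtration argument (the graded ring of $\II[[\cH_0]]$ is a polynomial ring over $\mathrm{gr}\,\II$), not from a power-series presentation. Second, $\II[[\Gamma]]/\II[[\Gamma]]\varphi_\cH(s)$ is \emph{not} in general free of rank $N+1$ over $\II$: a monic polynomial in $T=\gamma-1$ can acquire unit factors in $\II[[T]]$ (e.g.\ $T-1=-(1-T)$ is a unit), so the rank can drop. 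What survives, and is all you need, is finite generation: reducing modulo $\m_\II$, the image of $\varphi_\cH(s)$ in $\Omega(\Gamma)\cong k[[T]]$ is a monic, hence nonzero, polynomial, so $\Omega(\Gamma)/\Omega(\Gamma)\psi_\cH(s)$ is finite and $s\in\sS$ by part (ii) of the preceding lemma (equivalently, conclude finite generation over $\II$ by topological Nakayama). With these two points corrected the proof is sound.
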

\begin{cor}
 A finitely generated module $M$ over $\IIG$ is $\sS^\ast$-torsion if and only if $M/M(\m)$ is finitely generated over $\II[[\cH]]$.
\end{cor}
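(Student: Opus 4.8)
The plan is to deduce the Corollary from the preceding Proposition, using the $\m$-power torsion submodule $M(\m)$ as a bridge. Two elementary facts about $\sS^\ast$ will be needed. First, $\sS\subseteq\sS^\ast$ (immediate from $\sS^\ast=\cup_n\m^n\sS$, since $1\in\sS$ — note $\IIG/\IIG\cdot1=0$ is finitely generated over $\II[[\cH]]$), so every $\sS$-torsion module is a fortiori $\sS^\ast$-torsion; and for $n\geq1$ every nonzero $a\in\m^n$ lies in $\sS^\ast$, because $a=a\cdot1\in\m^n\sS$. Second, and in the other direction, every $t\in\sS^\ast$ lies in some $\m^n\sS$, so $t=as$ with $a\in\m^n$, $s\in\sS$; absorbing a unit if necessary and using that $\II$ is local, we may assume $t$ itself lies in $\sS$, or else is of the form $as$ with $0\neq a\in\m$ and $s\in\sS$. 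I would also use freely that the classes of $\sS$- and of $\sS^\ast$-torsion modules are each closed under submodules, quotients, finite direct sums, directed unions, and extensions; these are routine consequences of the (left) Ore property and multiplicative closedness of the respective sets (which are granted in the text).

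For the forward implication, suppose $M$ is finitely generated over $\IIG$ and $\sS^\ast$-torsion, and put $N:=M/M(\m)$. The crucial point to establish first is that $N(\m)=0$: if $\bar y\in N$ is annihilated by some $0\neq a\in\m^k$, then $ay\in M(\m)$, so $b(ay)=0$ for some $0\neq b\in\m^j$, whence $ba\cdot y=0$ with $0\neq ba\in\m^{j+k}$, so $y\in M(\m)$ and $\bar y=0$. Granting this, I would choose generators $x_1,\dots,x_r$ of $M$ over $\IIG$; each $x_i$ is killed by some $t_i\in\sS^\ast$, which by the second fact above is either in $\sS$, or of the form $a_is_i$ with $0\neq a_i\in\m$ and $s_i\in\sS$ — and in the latter case $a_i(s_i\bar x_i)=0$ in $N$ forces $s_i\bar x_i\in N(\m)=0$. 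Either way $\bar x_i$ is annihilated by an element of $\sS$, so $N=\sum_i\IIG\,\bar x_i$ is a sum of cyclic $\sS$-torsion modules and hence $\sS$-torsion; being finitely generated over $\IIG$, the Proposition then gives that $M/M(\m)$ is finitely generated over $\II[[\cH]]$.

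For the converse, assume $N:=M/M(\m)$ is finitely generated over $\II[[\cH]]$. By the Proposition $N$ is $\sS$-torsion, hence $\sS^\ast$-torsion. Also $M(\m)=\cup_kM[\m^k]$ is $\sS^\ast$-torsion: each $M[\m^k]$ consists of elements annihilated by some nonzero element of $\m^k\subseteq\sS^\ast$, and a directed union of $\sS^\ast$-torsion modules is $\sS^\ast$-torsion. Applying closure of $\sS^\ast$-torsion modules under extensions to $0\to M(\m)\to M\to N\to0$ then yields that $M$ is $\sS^\ast$-torsion.

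The proof is essentially bookkeeping around the two Ore sets, so the ``hard part'' is minor: it is the verification that $N(\m)=0$, which is exactly what makes the descent from $\sS^\ast$-torsion back down to $\sS$-torsion work, together with the observation that a denominator coming from $\sS^\ast$ splits as an element of $\m$ times an element of $\sS$. Beyond the preceding Proposition and the standard formalism of Ore localization, no further input is required.
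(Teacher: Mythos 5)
Your proof is correct. The paper states this corollary without proof, as an immediate consequence of the preceding proposition; your argument --- factoring an annihilator in $\sS^\ast$ as an element of $\m$ times an element of $\sS$, passing to $M/M(\m)$, and applying the proposition there, with the extension $0\to M(\m)\to M\to M/M(\m)\to 0$ handling the converse --- is exactly the intended deduction.
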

The previous proposition follow analogously as in \cite[Prop 2.3]{cfksv}, and the following lemma is an easy generalization of \cite[Lemma 5.2]{ven-characteristic}.
\begin{lemma}
 Let $T$ be equipped with a continuous linear action of $\cG$, and $\psi$ be any representation of $\cG$ defined over $\II$. 
 Let $\phi:T\cong\II^r$ be an ismorphism of $\II$-modules. Then we have the following isomorphism
 \begin{equation*}
  \IIG\otimes_\II T\cong\IIG\otimes_\II\II^r
 \end{equation*}
which is induced by the mapping $g\otimes t$, with $g\in\cG$ and $t\in T$, to $g(\psi(g^{-1})t)$.
\end{lemma}
From this lemma, the following Lemma follows analogously as in the proof of \cite[Lemma 3.2]{cfksv}.
\begin{lemma}\label{twisting}
Let $\psi$ be any irreducible representation $\psi$ of $\Delta$ of degree $n_\psi$, and $U$ be finitely generated $\sS$-torsion module over $\IIG$. 
Then the twist $U\otimes_\II\II^{n_\psi}$, with the diagonal action of $\cG$
is also finitely generated and $\sS$-torsion over $\IIG$.
\end{lemma}
There is another way of describing the category $\mathfrak{M}_\cH^\II(\cG)$. Consider the canonical injection $i:\IIG\lra\IIG_\sS$.
Indeed, as $\cG$ is a uniform pro-$p$ group, $\IIG$ has no zero divisors, hence the injection.
First recall that $K_0(\IIG,\IIG_\sS)$ is an abelian group, whose group law is denoted additively. Consider triples 
$(P,\alpha,Q)$, with $P$ and $Q$ finitely generated projective modules over $\IIG$ and $\alpha$ is an isomorphism between $P\otimes_{\IIG}\IIG_\sS$ and 
$Q\otimes_{\IIG}\IIG_\sS$ over $\IIG_\sS$. A morphism between $(P,\alpha,Q)$ and $(P',\alpha',Q')$ is naturally defined to be a pair of $\IIG$-module homomorphism $g:P\lra P'$
and $h:Q\lra Q'$ such that 
\begin{equation*}
 \alpha'\circ(\mathrm{id}_{\IIG_\sS}\otimes g)=(\mathrm{id}_{\IIG_\sS}\otimes h)\circ\alpha.
\end{equation*}
Note that it is an isomorphism if both $g$ and $h$ are isomorphisms. We denote the isomorphism class by $[(P,\alpha,Q)]$.
Then the abelian group $K_0(\IIG,\IIG_\sS)$, is defined by the following generators and relations. 
Generators are the isomorphism classes $[(P,\alpha,Q)]$ and the relations are given by 
\begin{enumerate}
 \item $[(P,\alpha,Q)]=[(P',\alpha',Q')]$ if $(P,\alpha,Q)$ is isomorphic to $[(P',\alpha',Q')]$
 \item $[(P,\alpha,Q)]=[(P',\alpha',Q')]+[(P'',\alpha'',Q'')]$ \\
 for every short exact sequence $0\lra [(P',\alpha',Q')]\lra[(P,\alpha,Q)]\lra [(P'',\alpha'',Q'')]\lra 0$ in $\mathscr C_i$.
 \item $[(P_1,\beta\circ\alpha,P_3)]=[(P_1,\alpha,P_2)]+[(P_2,\alpha,P_3)]$,
 for the map $P_1\stackrel{\alpha}{\lra}P_2\stackrel{\beta}{\lra}P_3$.
\end{enumerate}
Recall the category $\mathscr C_i$,
whose objects are bounded complexes of finitely generated projective $\IIG$-modules whose cohomologies are $\sS$-torsion. Then the abelian group $K_0(\mathscr C_i)$ is 
defined with the following set of generators and relations. The generators are given by $[C]$, where $C$ is an object of $\mathscr C_i$. The relations are given by
\begin{enumerate}
 \item $[C]=0$ if $C$ is acyclic,
 \item $[C]=[C']+[C'']$, for every short-exact sequence $0\lra C'\lra C\lra C''\lra 0$ in $\mathscr C_i$.
\end{enumerate}
It is known that $K_0(\IIG,\IIG_\sS)\cong K_0(\mathscr C_i)$. Moreover, if $\mathscr{H}_\sS$ is the category of all finitely generated $\IIG$-modules which are 
$\sS$-torsion with a 
finite resolution by finitely generated projective modules then $K_0(\IIG,\IIG_\sS)\cong K_0(\mathscr H_\sS)$. For details see Weibel \cite{weibel}.   
Therefore $K_0(\IIG,\IIG_\sS)$ is isomorphic to $K_0(\mathfrak{M}_\cH^\II(\cG))$. We then have the following exact sequence of localization:
\begin{equation}\label{localization}
 \kone{\IIG}\lra\kone{\IIG_\sS}\stackrel{\partial}{\lra} K_0(\IIG,\IIG_\sS)\lra K_0(\IIG) \lra K_0(\IIG_\sS).
\end{equation}
Regarding the connecting homomorphism $\partial$, we have the following generalization of \cite[Lemma 5]{kakde} and \cite[Prop 3.4]{cfksv}.
\begin{lemma}
 The connecting homomorphism $\partial$ is surjective.
\end{lemma}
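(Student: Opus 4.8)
The plan is to read off the surjectivity of $\partial$ from the exactness of the localization sequence \eqref{localization}, following the pattern of \cite[Prop.~3.4]{cfksv}. Since the segment $\kone{\IIG_\sS}\stackrel{\partial}{\lra}K_0(\IIG,\IIG_\sS)\lra K_0(\IIG)\stackrel{i_*}{\lra}K_0(\IIG_\sS)$ is exact, $\partial$ is surjective if and only if the map $i_*$ induced by the inclusion $i\colon\IIG\inj\IIG_\sS$ is injective; so the whole task reduces to proving that $i_*$ is injective.

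First I would identify the source. Because $\II$ is an irreducible component of $\cR_E$, it is a complete Noetherian local domain whose residue field $\FF$ has characteristic $p$, and $\cG$ is pro-$p$; hence $\IIG$ is a (noncommutative) local ring, i.e. $\IIG/\mathrm{rad}(\IIG)\cong\FF$. Over such a ring every finitely generated projective module is free, so $K_0(\IIG)=\Z\cdot[\IIG]$, and $i_*$ is the homomorphism $\Z\lra K_0(\IIG_\sS)$ sending $[\IIG]$ to $[\IIG_\sS]$.

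Next I would split $i_*$ after applying $K_0$. Put $L:=\mathrm{Frac}(\II[[\Gamma]])$, which is a field because $\Gamma\cong\Z_p$ makes $\II[[\Gamma]]\cong\II[[T]]$ a Noetherian domain. The quotient $\cG\surj\Gamma=\cG/\cH$ induces the surjection $\varphi_\cH\colon\IIG\lra\II[[\Gamma]]$. By the characterization of $\sS$ proved above, any $x\in\sS$ has $\II[[\Gamma]]/\II[[\Gamma]]\varphi_\cH(x)$ finitely generated over $\II$; since $\II[[\Gamma]]$ is not finitely generated over $\II$, this forces $\varphi_\cH(x)\neq0$, hence $\varphi_\cH(x)$ is a unit in $L$. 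By the universal property of Ore localization the composite $\IIG\stackrel{\varphi_\cH}{\lra}\II[[\Gamma]]\inj L$ then factors through $\IIG_\sS$, yielding a ring homomorphism $\theta\colon\IIG_\sS\lra L$.

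Finally, functoriality of $K_0$ gives a composite $\Z=K_0(\IIG)\stackrel{i_*}{\lra}K_0(\IIG_\sS)\stackrel{\theta_*}{\lra}K_0(L)=\Z$ which carries $[\IIG]$ to $[\IIG_\sS\otimes_{\IIG_\sS}L]=[L]=1$, hence is the identity on $\Z$; therefore $i_*$ is injective, and exactness of \eqref{localization} then forces $\partial$ to be surjective. The only steps needing care are the local-ring claim for $\IIG$ — the standard input behind $K_0(\IIG)\cong\Z$, which uses $\II$ local with residue characteristic $p$ and $\cG$ pro-$p$ — and the use of the hypothesis that $\II$ is a domain to guarantee $\varphi_\cH(\sS)\subset\II[[\Gamma]]\setminus\{0\}$; neither is a real obstruction, and in particular no hypothesis on $p$-torsion in $\cG$ is needed.
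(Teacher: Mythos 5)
Your argument is correct, but it is not the route the paper takes, so it is worth spelling out the difference. Both proofs start identically: exactness of \eqref{localization} reduces surjectivity of $\partial$ to injectivity of $i_*\colon K_0(\IIG)\to K_0(\IIG_\sS)$. From there the paper follows \cite[Prop.\ 3.4]{cfksv} and \cite[Lemma 5]{kakde}: it treats $\IIG$ as a semilocal ring, picks a pro-$p$ open normal subgroup $P\leq\cG$, decomposes $\mathbb{K}[\Delta]$ for $\Delta=\cG/P$ by Wedderburn theory and Morita equivalence, assembles the map $\lambda=\lambda_4\circ\lambda_3\circ\lambda_2\circ\lambda_1$ into $\prod_{\psi}K_0(\mathbb{K})$, and needs the twisting lemma (that $tw_\psi(U)$ remains finitely generated and $\sS$-torsion) to see that classes coming from $K_0(\IIG,\IIG_\sS)$ die under $\lambda$. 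You instead invoke the standing pro-$p$ hypothesis on $\cG$ to conclude that $\IIG$ is local, hence $K_0(\IIG)=\Z\cdot[\IIG]$, and then build an explicit retraction through $K_0(L)$ with $L=\mathrm{Frac}(\II[[\Gamma]])$, using the characterization of $\sS$ (together with $\II$ being a domain) to get $\varphi_\cH(\sS)\subseteq\II[[\Gamma]]\setminus\{0\}$. Your version is shorter, self-contained, and dispenses with both the representation theory of $\Delta$ and the twisting lemma; its cost is that it is tied to $\IIG$ being local rather than merely semilocal, whereas the paper's sketch, like Kakde's, is written so as to apply to groups that are not pro-$p$ — precisely the shape ($\cG\cong\Delta\times\cG_p$ with $\Delta$ of order prime to $p$) that arises in the reduction steps of Section \ref{k-one}. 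Even in that case your argument survives after decomposing $\IIG\cong\prod_{\psi\in\Delta^\ast}\II[\psi](\cG_p)$ into local factors and arguing componentwise, so nothing essential is lost; just state explicitly where the pro-$p$ (local-ring) hypothesis enters.
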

\begin{proof}
 We give only a brief sketch of the proof. Let $P$ be a pro-$p$ open normal subgroup of $\cG$, and $L$ be a finite extension of $\Q_p$ such that all the irreducible 
representations of $\Delta=\cG/P$ are defined. Then, we have an isomorphism of rings $L[\Delta]\stackrel{\cong}{\lra}\prod_{\psi:\mathrm{irred}}M_{n_\psi}(L)$, where
$\psi$ runs over all the irreducible representations of $\Delta$ and $n_\psi$ is the dimension of $\psi$. Let $\II=\cO[[X_1,\cdots,X_r]]$ and $\mathbb{K}:=L[[X_1,\cdots,X_r]]$. 
Then tensoring with $\II$, we have
\begin{equation*}
 \mathbb{K}[\Delta]\stackrel{\cong}{\lra}\prod_{\psi:\mathrm{irred}}M_{n_\psi}(\mathbb{K}).
\end{equation*}
Analogously as in Coates et. al \cite{cfksv}, we construct a map 
\begin{equation*}
 \lambda: K_0(\IIG)\lra\prod_{\psi:\mathrm{irred}}K_0(\mathbb K),
\end{equation*}
as the composition $\lambda=\lambda_4\circ\lambda_3\circ\lambda_2\circ\lambda_1$ of the following natural maps
\begin{align*}
 \lambda_1&:K_0(\IIG)\lra K_0(\II[\Delta]),\\
 \lambda_2&:K_0(\II[\Delta])\lra K_0(\mathbb K[\Delta]),&\\
 \lambda_3&:K_0(\mathbb K[\Delta])\lra K_0(\KK[\Delta]),\\
 \lambda_4&:K_0(\KK[\Delta])\stackrel{\cong}{\lra}\prod_{\psi:\mathrm{irred}}K_0(M_{n_\psi}(\mathbb{K}))\stackrel{\cong}{\lra}\prod_{\psi:\mathrm{irred}}K_0(\KK).
\end{align*}
Here, the map $\lambda_1$ is defined analogously as in \cite[Lemma 3.5]{cfksv}, and $\lambda_2, \lambda_3$ are induced by the inclusion of rings. The map $\lambda_4$ is induced
by the isomorphism above followed by Morita equivalence. After this, the proof proceeds analogously as in the proof of \cite[Lemma 5]{kakde} using Lemma \ref{twisting}. 
\end{proof}
As a generalization of Conjecture 5.1 in \cite{cfksv}, we can hope that the following is true.
\begin{conj}
 The dual Selmer group  $\sgd{\Ein}{\ad{\bosym\rho}}$ is in the category $\mathfrak M^\II_\cH(\cG)$.
\end{conj}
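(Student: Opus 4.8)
\emph{Remarks on a strategy.}
The plan is to unwind the definition of $\mathfrak{M}^\II_\cH(\cG)$ and reduce the conjecture to the classical case treated in \cite{cfksv}. Concretely, write $M:=\sgd{\Ein}{\ad{\rho}}$ for the dual Selmer group in question (the dual Selmer group of $\ad{\rho_\II}$, in the notation of Theorem~\ref{big-torsion-small-torsion}). We must show that $M$ is (i) finitely generated over $\IIG$ and (ii) $\sS^\ast$-torsion. By the Corollary following the Proposition above, (ii) is equivalent to the assertion that $M/M(\m)$ is finitely generated over $\II[[\cH]]$, and by that Proposition applied to the (finitely generated) quotient $M/M(\m)$ this says precisely that $M/M(\m)$ is $\sS$-torsion. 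So the first step is to dispose of finite generation, and then all the work concentrates on the $\sS$-torsion statement.

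For (i), I would argue by topological Nakayama over $\IIG$, which is noetherian since $\cG$ is a compact $p$-adic Lie group and $\II\cong\cO[[X_1,\dots,X_r]]$ is noetherian. Viewing $M$ as a quotient of $\Hone{F^\Sigma/\Ein}{\ad{\mathbb A}}^\vee$, it suffices that $M/\m_{\IIG}M$ be finite, where $\m_{\IIG}$ is the maximal ideal of $\IIG$; this reduces to the finiteness of a residual adjoint Selmer group over $E$, which follows from the absolute irreducibility of $\ol\rho_0$ and the finiteness inputs already used in the control theorem of Section~\ref{adjoint}. Alternatively one can use the Kähler-differential description: via Proposition~\ref{control-kahler1} and Corollary~\ref{control-kahler2}, $M$ is assembled from the relative differentials $\Om{\cRin}{A_\infty}\widehat\otimes_{\cRin}\II$ together with differentials of $\II$ over $\cO[[I_0]]$, and the finite-level description in Theorem~\ref{trivial-zeros} makes the finite generation transparent in the cases covered there.

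For (ii), the key reduction is Theorem~\ref{big-torsion-small-torsion}: choosing a locally cyclotomic point $P$ of weight $k$ corresponding to $\phi_k:\II\lra\cO$, the module $M$ is $\sS$-torsion if and only if $\sgd{\Ein}{\ad{\rho_P}}$ is $S$-torsion; combined with the $M(\m)$-characterizations above this transports the whole question to the classical condition of Coates, Fukaya, Kato, Sujatha and Venjakob (the refinement, for this particular motive, of \cite[Conjecture 5.1]{cfksv}) for the single $p$-ordinary Galois representation $\ad{\rho_P}$. To establish that condition one would run the standard descent along $\cy{E}\subset\Ein$: over the cyclotomic $\Z_p$-extension the dual Selmer group $\sgd{\cyn{E}{n}}{\ad{\bosym\rho}}$ is torsion over $\II[[\Gamma_n]]$ by Hida's theorem, recalled here as Theorem~\ref{trivial-zeros}(iii); granting the vanishing of its $\mu$-invariant, a Hochschild--Serre descent argument for the pro-$p$ group $\cH$ shows that $\sgd{\Ein}{\ad{\rho_P}}$ is finitely generated over $\Z_p[[\cH]]$ after quotienting by its $p$-power torsion, which is the required torsion statement.

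The principal obstacle is step (ii), and it is genuinely hard for two reasons. First, the vanishing of the cyclotomic $\mu$-invariant of $\sgd{\cy{E}}{\ad{\bosym\rho}}$ is not known in general and has to be assumed (or proved in special cases), in exact analogy with the usual $\mu=0$ hypothesis in noncommutative Iwasawa theory. Second --- and more seriously, as the paper flags around Theorem~\ref{trivial-zeros} and Proposition~\ref{not-noetherian} --- even when $\cG$ has dimension two and the cyclotomic $\mu$-invariant vanishes, the ring $\cRin$ is \emph{not} noetherian, so the Nakayama-type finiteness inputs that drive both the descent in step (ii) and part of step (i) are no longer available in their usual form; one would have to replace them by arguments carried out directly on the Ore localisation $\IIG_\sS$ or on the Kähler-differential presentation rather than on $\cRin$ itself. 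It is for precisely these reasons that the statement is recorded here as a conjecture, with Theorem~\ref{big-torsion-small-torsion} serving to reduce it cleanly to the classical case.
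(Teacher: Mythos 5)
The statement you were asked about is recorded in the paper as a conjecture, not a theorem: the paper offers no proof, explicitly presenting it as the natural generalization of \cite[Conjecture 5.1]{cfksv} and noting that it remains open even when $\cG$ has dimension two. Your proposal correctly recognizes this, and the reduction you sketch --- passing through the $M(\m)$-characterization of $\sS^\ast$-torsion and then invoking Theorem~\ref{big-torsion-small-torsion} to transport the question to the classical $S^\ast$-torsion condition for a single locally cyclotomic specialization $\ad{\rho_P}$ --- is precisely the relationship the paper itself establishes between the two conjectures. Your identification of the obstacles (the unproven vanishing of the cyclotomic $\mu$-invariant, and the failure of $\cRin$ to be noetherian, which blocks the Nakayama-type descent) matches the paper's own discussion of why the statement is left as a conjecture, so there is nothing further to compare.
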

We compare the Ore sets $S$ and $\sS$ in the following proposition.
\begin{propn}
 Let $\phi_k:\II\lra\cO$ be a specialization map. Then $\phi_k(\sS)= S$. 
\end{propn}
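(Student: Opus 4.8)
The plan is to show that membership in either Ore set is controlled by a single, manifestly $\phi_k$-compatible non-vanishing condition in the reduced cyclotomic Iwasawa algebra $\Omega(\Gamma)=\FF\pow{\Gamma}$, where $\FF=\II/\m$ denotes the residue field of $\II$. First I would record the maps involved. Here $\phi_k$ is the quotient $\II\surj\II/P\cong\cO$ attached to the locally cyclotomic point $P$, hence surjective and local; being local it sends $\m$ into $\m_\cO$ and therefore induces a homomorphism $\FF\to\FF'$ of the (finite) residue fields, which is injective as any ring homomorphism out of a field is, so that the base-change map $\FF\pow{\Gamma}\inj\FF'\pow{\Gamma}$ is injective as well. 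Passing to completed group algebras, $\phi_k$ induces a surjection $\IIG\surj\cO[[\cG]]$: both are inverse limits over the open normal subgroups $U$ of $\cG$ of $\II[\cG/U]$ and $\cO[\cG/U]$, the level-$U$ maps $\II[\cG/U]\surj\cO[\cG/U]$ are surjective, and since their kernels $P\cdot\II[\cG/U]$ have surjective transition maps the surjections pass to the limit. Finally, directly from the construction of the natural surjection $\psi_\cH$ (reduce modulo the maximal ideal of $\II$, then quotient $\cG$ by $\cH$) the square with top row $\phi_k\colon\IIG\to\cO[[\cG]]$, bottom row $\FF\pow{\Gamma}\inj\FF'\pow{\Gamma}$, and vertical maps $\psi_\cH$ (I write $\psi_\cH$ also for the analogous surjection out of $\cO[[\cG]]$) commutes.

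Next I would invoke the characterizations already available. By the characterization of $\sS$ established above, $x\in\sS$ iff $\Omega(\Gamma)/\Omega(\Gamma)\psi_\cH(x)$ is finite; and by the analogous statement for $S$ (cf.\ \cite[Lemma 2.1]{cfksv}), $h\in S$ iff $\FF'\pow{\Gamma}/\FF'\pow{\Gamma}\psi_\cH(h)$ is finite. Since $\Gamma\cong\Z_p$, the ring $\FF\pow{\Gamma}\cong\FF\pow{T}$ is a discrete valuation ring with finite residue field, so for $g\in\FF\pow{\Gamma}$ the quotient $\FF\pow{\Gamma}/g$ is finite exactly when $g\neq0$, and likewise over $\FF'$. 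Hence $x\in\sS\iff\psi_\cH(x)\neq0$ in $\FF\pow{\Gamma}$, and $h\in S\iff\psi_\cH(h)\neq0$ in $\FF'\pow{\Gamma}$.

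It then remains only to combine the two descriptions. If $x\in\sS$, then $\psi_\cH(x)\neq0$, so by commutativity of the square and injectivity of its bottom arrow the image $\psi_\cH(\phi_k(x))$ in $\FF'\pow{\Gamma}$ is nonzero, i.e.\ $\phi_k(x)\in S$; this gives $\phi_k(\sS)\subseteq S$. Conversely, given $h\in S$, surjectivity of $\phi_k$ on completed group algebras lets me pick $x\in\IIG$ with $\phi_k(x)=h$, and then $\psi_\cH(x)$ maps under an injection to $\psi_\cH(h)\neq0$, so $\psi_\cH(x)\neq0$ and $x\in\sS$, whence $h=\phi_k(x)\in\phi_k(\sS)$ and $S\subseteq\phi_k(\sS)$. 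I do not expect a genuine obstacle: the only points needing any care are the surjectivity of $\phi_k$ on completed group algebras and the elementary residue-field bookkeeping above; the real content is simply that $\sS$ and $S$ are both detected in the \emph{same} object $\FF\pow{\Gamma}$, which is precisely what makes the otherwise delicate reverse inclusion --- lifting an element of $S$ to one of $\sS$, rather than merely to one of $\IIG$ --- completely automatic.
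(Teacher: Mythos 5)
Your argument is correct, and it takes a genuinely different (and in fact sharper) route than the paper's. The paper works directly with the definition of $\sS$ as the set of $x$ with $\IIG/x$ finitely generated over $\II[[\cH]]$: for $\phi_k(\sS)\subseteq S$ it pushes a surjection $\II[[\cH]]^m\surj\IIG/x$ through $\phi_k$, and for the reverse inclusion it lifts $y\in S$ to some $z\in\IIG$, identifies $\cO[[\cG]]/y$ with $(\IIG/z)\otimes\II/P$, and then applies Nakayama's lemma over the local ring $\II[[\cH]]$ to conclude $z\in\sS$. You instead route everything through the second criterion of the preceding lemma, namely that membership in $\sS$ (resp.\ $S$) is equivalent to non-vanishing of $\psi_\cH(x)$ in $\Omega(\Gamma)=\FF\pow{\Gamma}$ (resp.\ $\FF'\pow{\Gamma}$), together with the observation that $\phi_k$, being a surjective local homomorphism, induces an injection (indeed an isomorphism) of residue fields and hence an injection $\FF\pow{\Gamma}\inj\FF'\pow{\Gamma}$ compatible with the two $\psi_\cH$'s. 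The payoff is that you obtain the stronger statement $\sS=\phi_k^{-1}(S)$, i.e.\ $x\in\sS$ \emph{if and only if} $\phi_k(x)\in S$, from which $\phi_k(\sS)=S$ follows by surjectivity of $\IIG\surj\cO[[\cG]]$, and the corollary $\phi_k(\sS^\ast)=S^\ast$ becomes essentially formal. The only hypotheses your reduction quietly uses --- that $\FF$ is finite (so that $\FF\pow{T}/(g)$ finite $\Leftrightarrow g\neq0$) and that $\phi_k$ is local --- both hold here, since $\II\cong\cO[[X_1,\dots,X_r]]$ has the finite residue field of $\cO$ and $\phi_k$ is a surjection of complete local $\cO$-algebras. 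The paper's Nakayama argument is more self-contained, in that it does not lean on the $\Omega(\Gamma)$ characterization, but your version localizes the entire content of the proposition in the single injection of reduced cyclotomic Iwasawa algebras, which is where it genuinely lives.
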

\begin{proof} 
 Let $x\in\sS$. Then there exists a positive integer $m$, such that $\II(\cH)^m\surj\IIG/x$. Applying $\phi_k$, we get the following diagram
 $$
  \xymatrix{
  \II(\cH)^m\ar[r]\ar[d]   &\IIG/x\ar[r]\ar[d]   & 0\\
  \cO[[\cH]]^m\ar[r]\ar[d] &\cO[[\cG]]/\phi_k(x)\ar[d]  &\\
  0                  & 0.
  }
 $$
Since the specialization map is surjective, the vertical maps induced by the specialization map $\phi_k$ are also surjective. 
Therefore $\phi_k(x)\in S$ (\cite[Lemma 2.1]{cfksv}).

Conversely, let $y\in S$. Then, we have a surjection $\cO[[\cH]]^m\lra\cO[[\cG]]/y\lra 0$ for some $m$. Since $\phi_k$ is surjective, there exists $z\in\II(\cH)$ such that
$\phi_k(z)=y$. Further, $\cO[[\cG]]\cong\IIG/\ker{\phi_k}$. Therefore, $\cO[[\cG]]/y\cong\dfrac{\IIG/\ker{\phi_k}}{z}\cong \dfrac{\IIG/z}{\ker{\phi_k}}$, which is
finitely generated over $\cO[[\cH]]\cong\II(\cH)/\ker{\phi_k}$. Therefore, $\dfrac{\IIG/z}{\mathfrak n}$ is finitely generated over $\II(\cH)/\mathfrak n$, 
where $\mathfrak n$ is the maximal ideal
of $\II(\cH)$. By Nakayama's lemma, $\IIG/z$ is finitely generated over $\II(\cH)$. Hence $z\in\sS$.
\end{proof}
\begin{cor} For any specialization map $\phi_k$, $\phi_k(\sS^\ast)=S^\ast$.
\end{cor}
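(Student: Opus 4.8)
The plan is to bootstrap the corollary from the preceding proposition, which already gives $\phi_k(\sS)=S$; the only extra ingredient I need is that the surjective local homomorphism $\phi_k$, and its continuous extension to the Iwasawa algebras, respect powers of the maximal ideal.

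First I would record the elementary facts about $\phi_k$. Since $\phi_k:\II\lra\cO$ is a surjective homomorphism of local rings, $\ker\phi_k\subseteq\m$, and hence $\phi_k$ carries $\m$ onto the maximal ideal $\m_\cO$ of $\cO$; multiplicativity of $\phi_k$ then yields $\phi_k(\m^n)=\m_\cO^n$ for every $n\geq0$, while $\phi_k^{-1}(\m_\cO)=\m$. Passing to the induced continuous surjection $\IIG\lra\cO[[\cG]]$ (still denoted $\phi_k$), the ideal $\m^n\IIG$ is carried onto $\m_\cO^n\cO[[\cG]]$.

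Next I would check the two inclusions. An element of $\sS^\ast=\cup_n\m^n\sS$ is a product $as$ of an element $a$ lying in some power $\m^n$ of $\m$ with an element $s\in\sS$; then $\phi_k(as)=\phi_k(a)\phi_k(s)$, where $\phi_k(a)\in\m_\cO^n$ by the first step and $\phi_k(s)\in S$ by the Proposition, so that $\phi_k(as)\in\m_\cO^nS\subseteq S^\ast$. This gives $\phi_k(\sS^\ast)\subseteq S^\ast$. Conversely, given $y\in S^\ast$, I would write $y=bt$ with $b\in\m_\cO^n$ and $t\in S$, use surjectivity to pick $a\in\m^n$ with $\phi_k(a)=b$, and use the Proposition to pick $s\in\sS$ with $\phi_k(s)=t$; then $as\in\m^n\sS\subseteq\sS^\ast$ and $\phi_k(as)=bt=y$, so $y\in\phi_k(\sS^\ast)$. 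Combining the two inclusions yields $\phi_k(\sS^\ast)=S^\ast$.

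I do not expect any real obstacle here: the content has been absorbed into the already-proved identity $\phi_k(\sS)=S$. The one point that deserves a moment's attention is the compatibility of the ``scaling'' parts of the two Ore sets, namely that $\phi_k$ maps the $n$-th power of the maximal ideal of $\II$ exactly onto the $n$-th power of the maximal ideal of $\cO$; this is immediate once one uses that $\phi_k$ is local and surjective, so that $\phi_k(\m)=\m_\cO$.
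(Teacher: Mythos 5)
Your argument is correct and is exactly the derivation the paper intends: the corollary is stated there without proof as an immediate consequence of the proposition $\phi_k(\sS)=S$, and your two inclusions, together with the observation that a surjective local homomorphism satisfies $\phi_k(\m^n)=\m_\cO^n$ (and hence lifts elements of $\m_\cO^n$ into $\m^n$), supply precisely the missing details. The only caveat is that the paper never defines $S^\ast$ explicitly; your reading $S^\ast=\cup_n\m_\cO^nS$ is the one that parallels the paper's definition of $\sS^\ast$ and makes the equality hold, whereas under the convention $S^\ast=\cup_n p^nS$ of \cite{cfksv} the forward inclusion would require $\cO/\Z_p$ unramified.
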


\begin{theorem}
 Let $\phi_k$ be a specialization map with kernel $P_k$, and $W=\II/P_k$. Then any finitely generated $\IIG$-module $M$ is $\sS$-torsion if and only if 
 $M/P_kM$ is $S$-torsion.
\end{theorem}
\begin{proof}
 The module $M/P_kM$ is $S$-torsion if and only if $M/P_kM$ is finitely generated over $W[[\cH]]$. By Nakayama Lemma \cite{nsw}, this is equivalent to $M$ being finitely generated over 
 $\II[[\cH]]$. This is further equivalent to $M$ being finitely generated and $\sS$-torsion. 
\end{proof}
As a consequence, we have the following result.
\begin{theorem}
Consider the representation $\bosym{\rho}_\II:\gal_F\lra\gl{\II}$ and $\phi_k:\II\lra\cO$ be any surjective morphism of local algebras which give rise to a locally cyclotomic
point $P$ of weight $k$. 
 The dual Selmer group $\sgd{\inft{F}}{\ad{{\bosym\rho}_\II}}$ is $\sS$-torsion if and only if $\sgd{\inft{F}}{\ad{\rho_P}}$ is $S$-torsion.
\end{theorem}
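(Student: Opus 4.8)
The plan is to reduce the $\sS$-torsion of $\sgd{\inft{F}}{\ad{\rho_\II}}$ and the $S$-torsion of $\sgd{\inft{F}}{\ad{\rho_P}}$ to finite generation over the Iwasawa algebra of $\cH$ — over $\II$ in the first case, over $\cO$ in the second — and to bridge these two statements by specialising along $\phi_k$. Set $M:=\sgd{\inft{F}}{\ad{\rho_\II}}$ and $P:=\ker(\phi_k)\subset\II$, so that $\rho_P=\phi_k\circ\bosym{\rho}_\II$ and $\II/P\cong\cO$. The first step is to produce a natural identification of $\cO[[\cG]]$-modules
\begin{equation*}
M/PM\;=\;M\otimes_{\II,\phi_k}\cO\;\cong\;\sgd{\inft{F}}{\ad{\rho_P}}
\end{equation*}
(taken in the normalisation of Proposition~\ref{arith-spe}, i.e. with the dualising twist $\otimes_\cO\cO^\ast$ understood; this twist is irrelevant to the finiteness statements below). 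I would obtain this by combining Proposition~\ref{arith-spe} with the Kähler-differential description of the dual adjoint Selmer group over $\cRin$ — the analogue over the tower of Hida's Proposition~3.87 recalled in Section~\ref{adjoint} — which gives $M\cong\Om{\cRin}{W}\otimes_{\cRin}\II$ (with $W$ as in Proposition~\ref{arith-spe}), and then using right-exactness of $\Omega$ together with $\cR_0/P\cong\cO$. Making this precise — in particular reconciling the base ring of the $\II$-adic control theorem with the ring $W$ of Proposition~\ref{arith-spe}, and checking that the rank-one local conditions $\cF^+_\p$ at the primes above $p$ are compatible with the reduction $\II\surj\cO$ — is the step I expect to be the main technical obstacle.

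Granting the identification above, I would next observe that $M$, being the Pontryagin dual of a discrete Selmer group, is a compact $\IIG$-module, and that $M/PM\cong\sgd{\inft{F}}{\ad{\rho_P}}$ is finitely generated over $\cO[[\cG]]=\IIG/P\IIG$ by Proposition~\ref{arith-spe}. Since $\cG$ is pro-$p$ and $\II$ is local, $P\IIG$ lies in the maximal ideal of $\IIG$, so the topological version of Nakayama's lemma (for compact modules over a compact local ring) shows that $M$ is finitely generated over $\IIG$ — genuinely so, as $\IIG$ is noetherian. Hence the characterisation of $\sS$-torsion modules proved in \S\ref{non-commmutative} applies: $M$ is $\sS$-torsion if and only if $M$ is finitely generated over $\II[[\cH]]$. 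In the same way, by the analogous criterion over $\cO$ (\cite{cfksv}, i.e. the case $\II=\cO$ of that proposition), the finitely generated $\cO[[\cG]]$-module $\sgd{\inft{F}}{\ad{\rho_P}}$ is $S$-torsion if and only if it is finitely generated over $\cO[[\cH]]$.

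It then remains to match these two finite-generation statements through $M/PM$. For the forward implication: if $M$ is finitely generated over $\II[[\cH]]$, then $M/PM$ is finitely generated over $\II[[\cH]]/P\II[[\cH]]\cong\cO[[\cH]]$, so $\sgd{\inft{F}}{\ad{\rho_P}}$ is $S$-torsion. (Alternatively, at the level of annihilators: a finitely generated $\sS$-torsion module is killed by some $s\in\sS$, and then $\phi_k(s)\in S$ kills $\sgd{\inft{F}}{\ad{\rho_P}}$ by the already-established identity $\phi_k(\sS)=S$.) For the converse: suppose $\sgd{\inft{F}}{\ad{\rho_P}}\cong M/PM$ is finitely generated over $\cO[[\cH]]$. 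Restricting scalars, $M$ is a compact $\II[[\cH]]$-module, the ideal $P\II[[\cH]]$ is closed and lies in the maximal ideal of the local ring $\II[[\cH]]$, and $M/P\II[[\cH]]M=M/PM$ is finitely generated over $\II[[\cH]]/P\II[[\cH]]\cong\cO[[\cH]]$. Topological Nakayama then gives that $M$ is topologically — hence, $\II[[\cH]]$ being noetherian, genuinely — finitely generated over $\II[[\cH]]$, so $M=\sgd{\inft{F}}{\ad{\rho_\II}}$ is $\sS$-torsion. This closes the equivalence.
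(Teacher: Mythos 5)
Your proposal is correct and follows the same overall skeleton as the paper's own proof: identify the specialized dual Selmer group with $\mathcal M/P\mathcal M$ via Proposition~\ref{arith-spe} and the K\"ahler-differential description of $\sgd{\inft{F}}{\ad{\rho_\II}}$, reduce both torsion statements to finite generation over $\II[[\cH]]$ (resp.\ $\cO[[\cH]]$) via the criterion of Section~\ref{non-commmutative}, and pass between the two levels by Nakayama's lemma. The difference lies in how the converse is executed. The paper lifts a finite generating set $\{y_j\}$ of $\mathcal M/P\mathcal M$ over $\cO[[\cG]]$ to generators $\{z_j\}$ of $\mathcal M$ (Nakayama over $\IIG$ with respect to $P\IIG$), lifts an annihilator $s_j\in S$ of each $y_j$ to $t_j\in\IIG$, produces an element $x=\alpha_jt_j$ with $\IIG/x$ surjecting onto $\IIG z_j$, and only then applies Nakayama over $\II[[\cH]]$ to the cyclic module $\IIG/x$ to conclude that each $\IIG z_j$, hence $\mathcal M$, is finitely generated over $\II[[\cH]]$. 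You short-circuit this by applying topological Nakayama over the compact local ring $\II[[\cH]]$ directly to the compact module $\mathcal M$ with the closed ideal $P\II[[\cH]]$, which is cleaner: it avoids the paper's intermediate step of choosing an annihilator $\alpha_j$ of $t_jz_j$ (a step whose justification in the paper is thin, since such a nonzero annihilator is exactly what one is trying to produce) and handles the whole module at once rather than summand by summand. Both arguments rest on the same identification of $\sgd{\inft{F}}{\ad{\rho_P}}$ with the specialization $\mathcal M\otimes_{\cRin}\cR_0/P$, which the paper also takes from Proposition~\ref{arith-spe} without further control-theoretic verification, so you are right to flag that identification as the genuine technical content rather than the Nakayama bookkeeping.
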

Now suppose that $M=\sgd{\inft{F}}{\ad{\rho_P}}$ is in the category $\mathfrak{M}_\cH(\cG)$. Then $M/M(p)$ is $S$-torsion. 
Then for $\mathcal{M}=\sgd{\inft{F}}{\ad{{\rho}_\II}}$, the natural surjection $\mathcal{M}\lra M/M(p)$ factors through the
submodule 
$\mathcal{M}(\m)$ of $\mathcal{M}$. As in the above proof, we can see that $\mathcal{M}/\mathcal{M}(\m)$ is annihilated by $\sS$. 
Therefore, $\mathcal{M}$ is in the category $\mathfrak{M}_\cH^\IIG$. We therefore have the following consequence:
\begin{theorem}\label{big-torsion-small-torsion}
 Consider the representation $\bosym{\rho}_\II:\gal_F\lra\gl{\II}$ and $\phi_k:\II\lra\cO$ be any surjective morphism of local algebras which give rise to a locally cyclotomic
point $P$ of weight $k$. 
 The dual Selmer group $\sgd{\inft{F}}{\ad{{\rho}_\II}}$ is $\sS^\ast$-torsion if and only if $\sgd{\inft{F}}{\ad{\rho_P}}$ is $S^\ast$-torsion.
\end{theorem}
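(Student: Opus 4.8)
The plan is to derive Theorem~\ref{big-torsion-small-torsion} from the preceding theorem (the ``$\sS$-torsion if and only if $S$-torsion'' statement) together with three facts already established: that $\phi_k(\sS^\ast)=S^\ast$; that a finitely generated $\IIG$-module $N$ is $\sS^\ast$-torsion precisely when $N/N(\m)$ is finitely generated over $\II[[\cH]]$; and the corresponding statement over $\cO[[\cG]]$ with $p$ in place of $\m$. As in the proof of the preceding theorem, put $\mathcal M:=\Om{\cRin}{W}\otimes_{\cRin}\II$, a finitely generated $\IIG$-module, and $M:=\mathcal M/P\mathcal M\cong\Om{\cRin}{W}\otimes_{\cRin}\cR_0/P$. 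By Proposition~\ref{arith-spe}, $\mathcal M$ computes $\sgd{\inft{F}}{\ad{\rho_\II}}$ and $M$ is the Pontryagin dual of $\sgd{\inft{F}}{\ad{\rho_P}\otimes_W W^\ast}$, so what has to be shown is that $\mathcal M$ is $\sS^\ast$-torsion if and only if $M$ is $S^\ast$-torsion.

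One direction is immediate: an element $s\in\sS^\ast$ annihilating $\mathcal M$ maps under $\phi_k$ to an element $\phi_k(s)\in S^\ast$ annihilating $M=\mathcal M/P\mathcal M$. For the converse, suppose $M$ is $S^\ast$-torsion, i.e. $M/M(p)$ is finitely generated over $\cO[[\cH]]$. I would repeat the generator-and-annihilator lifting argument of the preceding theorem with $p$-power multiples inserted: choose $\cO[[\cG]]$-generators $y_1,\dots,y_m$ of $M$, lift them (Nakayama over $\IIG$, using $M=\mathcal M/P\mathcal M$ and $P\subseteq\m$) to $\IIG$-generators $z_1,\dots,z_m$ of $\mathcal M$, and use that each $y_j$ is killed by some $p^{n_j}s_j$ with $s_j\in S$. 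Lifting $s_j$ to $t_j\in\IIG$ and producing, exactly as in the preceding proof, a nonzero $\alpha_j\in\IIG$ annihilating $p^{n_j}t_jz_j$, one obtains a nonzero annihilator $x_j:=p^{n_j}\alpha_jt_j$ of $z_j$; the same diagram chase then shows that $\cO[[\cG]]/\phi_k(\alpha_jt_j)$ is a quotient of $\cO[[\cG]]/s_j$, hence finitely generated over $\cO[[\cH]]$, and since $\cO[[\cG]]/\phi_k(\alpha_jt_j)\cong(\IIG/\alpha_jt_j)/P$, Nakayama over $\II[[\cH]]$ gives $\alpha_jt_j\in\sS$ and thus $x_j\in\m^{n_j}\sS\subseteq\sS^\ast$. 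Therefore each $\IIG z_j\cong\IIG/\mathrm{Ann}(z_j)$, and hence $\mathcal M=\sum_j\IIG z_j$, is $\sS^\ast$-torsion. (Equivalently, since $P\subseteq\m$ the natural surjection $\mathcal M\to M/M(p)$ factors through $\mathcal M/\mathcal M(\m)$, and one applies the reasoning of the preceding theorem to the pair $\mathcal M/\mathcal M(\m)$, $M/M(p)$ to conclude that $\mathcal M/\mathcal M(\m)$ is finitely generated over $\II[[\cH]]$.)

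The step I expect to be the main obstacle is the torsion bookkeeping relating the $p$-primary part $M(p)$ over $\cO[[\cG]]$ to the $\m$-primary part $\mathcal M(\m)$ over $\IIG$ — in particular guaranteeing that a $p^{n}s$-annihilator downstairs genuinely produces an $\m^{n}\sS$-annihilator upstairs, and that the auxiliary annihilators $\alpha_j$ are available (which, as in the preceding proof, rests on $\mathcal M$ being torsion over $\IIG$). Granting these, the remainder is a transcription of the diagram chases already carried out for the preceding theorem, combined with the two corollaries that characterize $\sS^\ast$- and $S^\ast$-torsion.
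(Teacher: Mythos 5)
Your proposal takes essentially the same route as the paper: the paper's entire proof of this theorem is the observation in your closing parenthetical, namely that $M/M(p)$ being $S$-torsion and the factorization of $\mathcal M\to M/M(p)$ through $\mathcal M/\mathcal M(\m)$ let one rerun the annihilator-lifting argument of the preceding theorem to conclude that $\mathcal M/\mathcal M(\m)$ is $\sS$-torsion, hence $\mathcal M$ is $\sS^\ast$-torsion, with the easy direction handled via $\phi_k(\sS^\ast)=S^\ast$ exactly as you do. Your more explicit main-line version with the $p^{n_j}$/$\m^{n_j}$ bookkeeping is a faithful unwinding of that same idea, and the two points you flag as delicate (the existence of the nonzero annihilators $\alpha_j$ and the passage between $\beta_js_j$ and $s_j$) are precisely the steps the paper itself leaves implicit in the preceding proof.
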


\subsection{Deformation rings over $F_\infty$}
We now give some results which are extensions of the results over the cyclotomic $\Z_p$-extension \cite[Th 5.9, Cor 5.10, 5.11]{hida-hmf} to the case of $p$-adic Lie extensions. Recall the conditions {\bf (h1)-(h4)} in section \ref{deformation}. We also assume the 
condition {\bf (sf)} regarding the square-freenes of the conductor.
\begin{propn}
Let the conditions {\bf (sf), (h1)-(h4)} and absolute irreducibility over $F(\mu_p)$ hold for the representation $\bar\rho_f$. Then 
the local complete algebra $\cR_n$ is reduced for $n=0,1,2,\cdots,\infty$ is reduced with trivial nilradical.
\end{propn}
\begin{proof}
 By \cite[Th 3.50]{hida-hmf}, the deformation $\cR_n$ is isomorphic to a local ring $\mathbb{T}_n$ of the Hecke algebra 
 $\mathbf{h}_{cyc}^{n.ord}(\mathfrak{N},\epsilon;W[[\Gamma_F]])$. This Hecke algebra is reduced under the conditions {\bf (h1)-(h4)}. Therefore
 $\cRin=\varprojlim_n\cR_n$ is reduced.
\end{proof}
\begin{propn}\label{arith-spe} 
 Let $P$ be a locally cyclotomic arithmetic point of weight $k$ and $\rho_P$ denote the representation of the specialization with respect to the point $P$. Suppose that the 
 conditions {\bf (sf)} and {\bf (h1)-(h4)} hold for the representation $\rho_P$. Then 
 \begin{equation*}
  \sgd{F_\infty}{\ad{\rho_P}\otimes_{W}W^*}\cong\Om{\cRin}{W}\otimes_{\cRin} \cR_0/P,
 \end{equation*}
as $W[[\cG]]$-modules. 
Further, $\sgd{F_\infty}{\ad{\rho_P}\otimes_{W}W^*}$ is a $W[[\cG]]$-module of finite type which is torsion. Here $W^*$ is the Pontryagin dual of $W$.
\end{propn}
\begin{proof} Let $\pi_n:\cR_n\lra\cR_0$ be the base change morphism.
 Let $P_n=\pi^{-1}_{n}(P)$ and consider the module $\Om{\cR_n}{W}$. 
 Note that $\cRin/P_\infty=\cR_n/P_n$, and 
 \begin{equation*}
 \sgd{F_n}{\ad{\rho_P}\otimes_{W}W^*}\cong\Om{\cR_n}{W}\otimes_{\cRin}\cR_n/P_n
 \end{equation*}
 Taking projective limits, we have
 \begin{equation*}
 \sgd{F_\infty}{\ad{\rho_P}\otimes_{W}W^*}\cong\Om{\cRin}{W}\otimes_{\cRin} \cR_0/P.
 \end{equation*} 
 By control theorem \ref{control-kahler1}, we have $(\sgd{\inft{F}}{\ad{\rho_P}\otimes W^\ast})_\cG\cong\sgd{F}{\ad{\rho_P}\otimes W^\ast}$ which is a finitely generated 
 module over $W$. Hence, $\sgd{\inft{F}}{\ad{\rho_P}\otimes W^\ast}$ is finitely generated over $W[[\cG]]$. As 
 $\sgd{F}{\ad{\rho_P}\otimes_{W}W^*}\cong\Om{\cR_0}{W}\otimes_{\cRin}\cR/P$ is finite, it is $W[[\cG]]$-torsion by Nakayama Lemma.
\end{proof}
\begin{theorem}\label{noetherian-mhg} Let $\cG$ be a pro-$p$, $p$-adic analytic Lie group of dimension 2. 
 If $\cRin$ is noetherian, then the dual Selmer group $\sgd{F_\infty}{\ad{\rho_P}\otimes_{W}W^\ast}$ is in the category $\mathfrak M_{\cH}(\cG)$.
 Moreover, $\sgd{F_\infty}{\ad{\rho_P}\otimes_{W}W^\ast}$ is torsion as a module over $W[[\cH]]$.
\end{theorem}
\begin{proof}
 If $\cRin$ is noetherian, then the deformation ring $\cR_{cyc}=(\cRin)_\cH$ is also noetherian, and hence $\sgd{\cy{F}}{\ad{\rho_P}\otimes_{W}W^\ast}$, 
 is finitely generated over $W$, i.e., the $\mu$-invariant over the cyclotomic $\Z_p$-extension of $F$ of $\sgd{\cy{F}}{\ad{\rho_P}\otimes_{W}W^\ast}$ is zero, 
 (\cite[Cor 5.11]{hida-hmf}). Then by \cite[Th 5.3]{hachi-ven}, the $\mu$-invariant over the extension $F_\infty$ is also zero. Hence, we have,
 $\sgd{\cy{F}}{\ad{\rho_P}\otimes_{W}W^\ast}(p)=0$. Therefore $\sgd{\inft{F}}{\ad{\rho_P}\otimes_{W}W^\ast}$ is in the category $\mathfrak{M}_\cH(\cG)$.
 
If  $\sgd{F_\infty}{\ad{\rho_P}\otimes_{W}W^\ast}$ has a submodule over $W[[\cH]]$ which is torsion-free, then the module 
$\sgd{F_\infty}{\ad{\rho_P}\otimes_{W}W^\ast}\otimes\mathbb{F}$, for the residue 
field $\mathbb{F}$ of $W$, contains a submodule isomorphic to $\mathbb{F}[[\cH]]$. 
Then the module $\Om{\cRin}{W}\otimes_{\cRin} \cR_0/P\otimes\mathbb{F}$ is not finite, and by \cite[Cor 5.11]{hida-hmf}, $\cRin$ is not noetherian.
\end{proof}
Let $P$ be a locally-cyclotomic point. Then $\widehat\cRin:=\varprojlim_{n}\cR_{\infty,P}/P^n$ is the localization-completion at the point $P$. Over the cyclotomic 
$\Z_p$-extension,
the localization completion $\widehat{\cR_{cyc}}^{}:=\varprojlim_{n}\cR_{cyc,P}/P^n$ is a complete noetherian algebra over the fraction field $K=\mathrm{Frac}(W)$, 
(\cite[Cor 5.12]{hida-mfg}). 

\begin{theorem}\label{loc-noetherian}
Let $\cG$ be a pro-$p$, $p$-adic Lie group which is admissible and $\cH$ be a normal subgroup of $\cG$ such that $\cG/\cH\cong\Gamma$. 
Let $\widehat\varphi:\widehat\cRin\lra\widehat\cRin/P\widehat\cRin=K$ be the natural projection map.
Then 
\begin{enumerate}
\item $(\widehat{\cRin})_\cH\cong\widehat{\cR_{cyc}}$,
\item $\left(\Om{\cRin}{W}\widehat\otimes\cRin/P\right)\widehat\otimes K\cong\Om{\widehat{\cRin}}{K}\widehat\otimes_{\widehat\cRin,\widehat\varphi}K$ as $K[[\cG]]$-modules,
\item the module $\Om{\widehat{\cRin}}{K}\widehat\otimes_{\widehat\cRin,\widehat\varphi}K$ is finitely generated over $K[[\cG]]$.
\end{enumerate}
\end{theorem}
\begin{proof}
\begin{enumerate}
\item This follows as in the base-change isomorphism of Theorem \ref{base-change}.
\item We have $\Om{\cRin}{W}\otimes_{\cRin}\cRin/P\cong P\cRin/P^2\cRin$. Noting that $K$ is the residue field of $\widehat\cRin$, and localizing at the ideal $P\cRin$, we have:
\begin{equation*}
 \left(P/P^2\right)\widehat\otimes K\cong P\widehat\cRin/P^2\widehat\cRin,
\end{equation*}
from which the isomorphism follows. This is an isomorphism of $K[[\cG]]$-modules.
\item As $\Om{\cRin}{W}\widehat\otimes\cRin/P$ is finitely generated over $W[[\cG]]$, the module $\left(\Om{\cRin}{W}\widehat\otimes\cRin/P\right)\widehat\otimes K$ is 
finitely generated over $K[[\cG]]$. 
\end{enumerate}
\end{proof}
\begin{propn}
Let $\inft{F}$ be a totally ramified $p$-adic Lie extension over $F$, and $e=|\Sigma_p|$ the number of primes of $F$ above $p$. Then $\mathrm{dim}\,\cR_{m,P}=e+1$.
 Further, let $P$ be a locally cyclotomic point over $\cR_n$, which we may regard as a point over $\cRin$. Then for any finite index subgroup $\Delta_m$, with $m\geq n$, we have
 \begin{equation*}
  (\cR_{\infty,P})_{\Delta_m}\cong\cR_{n,P}.
 \end{equation*}
\end{propn}
\begin{proof}
Since $\cR_{m,P}$ is an integral domain of dimension $e+1$, and the base change morphism $\cR_{m,P}\lra\cR_{n,P}$ of $W$-algebras is surjective, we have
$\mathrm{dim}\,\cR_{m,P}=\mathrm{dim}\,\cR_{n,P}$. It follows that $\cR_{m,P}\cong\cR_{n,P}$, and the result follows.
\end{proof}
\subsection{The category $\mathfrak{M}_{\cH}(\cG)$}
We begin by recalling few facts about pseudocompact rings and pseudocompact modules from \cite{brumer}. 
A \emph{pseudocompact ring} $\boldsymbol\Lambda$ is a complete Hausdorff topological ring which admits a system of open neighborhoods of 0 consisting of two-sided ideals $I$ 
for which $\boldsymbol\Lambda/I$ is an Artin ring. 
A complete Hausdorff topological $\boldsymbol\Lambda$-module $M$ is said to be \emph{pseudocompact} if it has a system of open neighborhoods of 0 consisting of submodules 
$N$ for which $M/N$ has finite length.

Consider the field $K$ from the previous section.
The ring $K[[\cG]]$ is pseudocompact with respect to the topology given by the power of the augmentation ideals $\{J_\cG^n\mid n\in\N\}$. 
Indeed, $K[[\cG]]=\varprojlim_UK[\cG/U]$, where $U$ runs over the open normal subgroups of $\cG$, and each of the rings $K[\cG/U]$ is Artinian as $\cG/U$ is a finite group.
Moreover, $K[\cG/U]=K[[\cG]]/J_{U}$ for the augmentation ideal $J_{U}$, which is the kernel of the canonical projection map $K[[\cG]]\lra K[\cG/U]$.

Since $\cG$ is topologically finitely generated 
powerful pro-$p$ group, the topology generated by the basis of neighborhoods $\{J_\cG^n\mid n\in\N\}$ is equivalent to the topology given by the augmentation ideals 
$\{J_{\cG_n}\mid \cG_n\mbox{ open normal subgroup of }\cG, n\in\N\}$.
\begin{theorem}
Under the natural action of $\cG$, we have
 \begin{equation*}
 \left(\Om{\widehat{\cRin}}{K}\widehat\otimes_{\widehat\cRin,\widehat\varphi}K\right)_\cH\cong\Om{\widehat{\cR_{cyc}}}{K}\widehat\otimes_{\widehat\cR_{cyc},\widehat\varphi}K.
 \end{equation*}
 Let $\Om{\widehat{\cRin}}{K}\widehat\otimes_{\widehat\cRin,\widehat\varphi}K$ be a pseudocompact $K[[\cG]]$-module. 
Then the module $\Om{\widehat{\cRin}}{K}\widehat\otimes_{\widehat\cRin,\widehat\varphi}K$ is finitely generated over $K[[\cH]]$.
\end{theorem}
\begin{proof}
The first statement follows from the isomorphism $(\widehat{\cRin})_\cH\cong\widehat{\cR_{cyc}}$. 
If $Y:=\Om{\widehat{\cRin}}{K}\widehat\otimes_{\widehat\cRin,\widehat\varphi}K$ is pseudocompact with respect to the $\{J_{\cH_n}(Y)\}$-topology, then
applying Nakayama Lemma for pseudocompact modules, the module $Y$ is finitely generated over $K[[\cH]]$ provided $Y_{\cH}$ is finitely generated over $K$. 
This is easy to see as
$Y_{\cH}\cong\Om{\widehat{\cR_{cyc}}}{K}\widehat\otimes_{\widehat\cR_{cyc},\widehat\varphi}K$, and 
$\Om{\widehat{\cR_{cyc}}}{K}\widehat\otimes_{\widehat\cR_{cyc},\widehat\varphi}K\cong\Om{\cR_{cyc}}{W}\widehat\otimes_{\cR_{cyc},\widehat\varphi}K$, 
which is a finitely generated module over $K$.
\end{proof}
\begin{lemma} 
Let $M:=\Om{\cRin}{W}\widehat\otimes\cRin/P$ and $M':=M/M(p)$.
Then there exist a finitely generated $\Z_p[[\cH]]$-submodule $M_0$ of $M'$ such that 
$(M'/M_0)\widehat\otimes K=0$. In particular, $M'/M_0$ is $W$-torsion.
\end{lemma}
\begin{proof}
As $M/M(p)$ is $\Z_p$ torsion-free, the natural map $M/M(p)\lra M\widehat\otimes\Q_p$ is an injective map of $W[[\cH]]$ modules.
By Theorem \ref{loc-noetherian} and the previous proposition, $m\otimes p^{-e}$ generates $M\widehat\otimes W$, so finitely many of them generate $M\widehat\otimes K$ over $K[[\cH]]$. Let
$\{x_i\otimes p^{-t_i}\mid i=1,\cdots,k\}$ generate $M\widehat\otimes K$. 
As each $x_i\otimes p^{-t_i}=p^{-t_i}(x\otimes1)$, we can take the generators to be $\{x_i\otimes1\mid i=1,\cdots,k\}$.

Consider the module $M_0$ generated by $\{x_i+ M(p)\mid i=1,\cdots,k\}$ over $W[[\cH]]$.
Then $M_0$ is a submodule of $M'$ over $W[[\cH]]$. From the following short exact sequence of $W[[\cH]]$-modules:
\begin{equation*}
 0\lra M_0\lra M'\lra M'/M_0\lra 0,
\end{equation*}
by  tensoring with $K$, we get the following short exact sequence: 
\begin{equation*}
 0\lra M_0\widehat\otimes K\lra M'\widehat\otimes K\lra (M'/M_0)\widehat\otimes K\lra 0.
\end{equation*}
As the images of $M_0\widehat\otimes K$ and $M'\widehat\otimes K$ generate the same module, we have 
$M_0\widehat\otimes K=M'\widehat\otimes K$.
Therefore, $(M'/M_0)\widehat\otimes K=0$. It follows easily that $M'/M_0$ is $W$-torsion.
\end{proof}
\begin{theorem}
 Let $\Om{\widehat{\cRin}}{K}\widehat\otimes_{\widehat\cRin,\widehat\varphi}K$ be a pseudocompact $K[[\cG]]$-module.
The module $M'=M/M(p)$ is finitely generated over $\Z_p[[\cH]]$. In other words, the dual adjoint selmer group $\Om{\cRin}{W}\widehat\otimes\cRin/P$ is in the category
$\mathfrak{M}_\cH(\cG)$.
\end{theorem}
\begin{proof}
Note that $M$ and hence $M'/M_0$ is finitely generated over $W[[\cG]]$.
Let $y_1,\cdots,y_t$  be the generators of $M'/M_0$ over $W[[\cG]]$. By the lemma above, these generators are $p$-torsion. 
Let $k$ be such that $p^k$ annihilate $y_1,\cdots,y_t$. Then $p^k$ will annihilate all the other elements 
of $M'/M_0$. Hence $p^kM'$ is a submodule of $M_0$ over $W[[\cH]]$. As $W[[\cH]]$ is noetherian, $p^kM'$ is also finitely generated over $W[[\cH]]$.

Consider the multiplication by $p^k$ map $M'\stackrel{p^k}{\lra} M'$. The image of this map is $p^kM'$ and the kernel is trivial as $M'$ has no $p$-torsion. Therefore
$M'\cong p^kM'$ as modules over $W[[\cH]]$. Hence $M'$ is also finitely generated over $W[[\cH]]$. 
\end{proof}
\section{$\mu$-invariants and $\mathfrak{M}_\cH(\cG)$}\label{mu-mhg}
\subsection{Artin representations}
For a totally real field $F$, let $\rho:\gal_F\lra GL_2(\cO)$ be an Artin representation, i.e., a continuous representation with finite image. 
We also assume that it is nearly ordinary at all primes dividing $p$. Let $K=(\ol F)^{ker(\rho)}$ and $\Delta=\gal(K/F)$. We denote by $S$, the set consisting of all
the ramified primes of $\rho$ including the primes lying over $p$ and the infinite primes. As before, we write $V$ for the representation space of $\rho$, $T$ for a 
$\gal_F$-stable lattice of $V$ and $A:=V/T$.

Let $K$ be \emph{totally real}. 
Then identifying $\gal(\cy{K}/\cy{F})$ with $\Delta$, for the mod-$\pi$ reduction $\ol\rho$ of $\rho$, 
consider the  inflation and restriction exact sequence:
\begin{equation*}
 0\lra \Hone{\Delta}{\ad{\ol\rho}^{\gal_{\cy{K}}}}\lra\Hone{\gal(K_S/\cy{F})}{\ad{\ol\rho}}\lra\Hom{}{\gal(K_S/\cy{K})}{M_2(\FF)}^\Delta
\end{equation*}
As $K$ is \emph{totally real}, $\gal(K_S^{ab,p}/\cy{K})$ is finitely generated and torsion over $\Z_p[[\Gamma]]$.
If the $\mu$-invariant of $\gal(K_S^{ab,p}/\cy{K})$ over $\Z_p[[\Gamma]]$ is zero, then 
$\Hom{}{\gal(K_S^{ab,p}/\cy{K})}{\FF}$ and hence $\sgd{\cy{F}}{\ad{\bar\rho}}$ are finite. 
It follows that $\sgd{\cy{F}}{\ad{\rho}}$ has $\mu$-invariant equal to zero, and 
hence belongs to the category $\mathfrak{M}_\cH(\cG)$. 
Indeed, the dual Selmer group $\sgd{\inft{F}}{\ad{\rho}}_\cH$ is finitely generated over $\cO$, and by Nakayama Lemma,
$\sgd{\inft{F}}{\ad{\rho}}$ is finitely generated over $\cO[[\cH]]$.
This vanishing of the $\mu$-invariant for $\Hom{}{\gal(K_S^{ab,p}/\cy{K})}{\FF}$ is a conjecture of Iwasawa. 
We then have the following result.
\begin{theorem}\label{totally-even}
 Let $K$, the field cut out by $\rho$, be totally real, and $\inft{F}$ be a $p$-adic Lie extension containing $\cy{F}$. Then, assuming Iwasawa's conjecture on the vanishing
 of the $\mu$-invariant for $\gal(K_S^{ab,p}/\cy{K})$, the dual Selmer group $\sgd{\inft{F}}{\ad{\rho}}$ is in the category $\mathfrak{M}_\cH(\cG)$.
\end{theorem}
Let $K$ be \emph{not totally real}.
Let $\rho$ be an Artin representation, such that $\Delta$ has order prime to $p$. Let $\rho$ be ordinary at all primes of $F$ dividing $p$. Let $\delta_\p$ denote the 
unramified quotient of $\rho\mid_{G_\p}$.
In this case, we are inspired by the computations of the Selmer group in \cite{cv}. There the computations are done for representations which 
are induced by a finite order character of a real quadratic extension of $\Q$. Similar computations work here also, as we briefly explain below. A computation covering more general Artin representations is given in \cite{gv-artin}.

We consider the fields $F_n$ be as before and the compositum $K_n=KF_n$.
Let $I_{n,p}=\prod_{\p|p}I_{n,\p}$ where $I_{n,\p}$ denotes the abelianization of the inertia subgroup of $\gal_{K_n}$ at $\p$, and $I_{n,\p,\cO}=I_{n,\p}\otimes_{\Z_p}\cO$. 
As $\Delta$ acts on $I_{n,\p,\cO}$, we have the decomposition 
$I_{n,\p,\cO}=I_{n,\p,\cO}^{(\delta_\p)}\times J_{n,\p,\cO}^{(\delta_\p)}$, where the subscript denotes the $\delta_\p$-isotypical component, and $J_{n,\p,\cO}^{(\delta_\p)}$ denotes the other components.
As $\sg{F_n}{A}\subset\Hom{}{{\gal(M_n/K_n)}}{{A}}^\Delta$, it follows from the definition that any element of $\sg{F_n}{A}$ vanishes on $J_{n,\p,\cO}^{(\delta_\p)}$. 

Writing $I_{n,\p,\cO}=\prod_{\p\mid p}I_{n,\p,\cO}, I_{n,p,\cO}^{(\delta)}:=\prod_{\p\mid p}I_{n,\p,\cO}^{(\delta_\p)}$, $J_{n,p,\cO}^{(\delta)}:=\prod_{\p\mid p}J_{n,\p,\cO}^{(\delta_\p)}$, and 
denoting by $L_n$ the $p$-Hilbert class field of $K_n$, we have the short exact sequence of Galois modules:
\begin{equation*}
 0\lra I_{n,p,\cO}\lra\gal(M_n/K_n)_\cO\lra\gal(L_n/K_n)_\cO\lra0,
\end{equation*}
where the groups with the subscript $\cO$ denotes the scalar extension by $\cO$.
Unwinding the definition of $\sg{F_n}{A}$, gives the following exact sequence of $\cO[[\cG]]$-modules:
\begin{equation*}
 0\lra \mathrm{Hom}(\gal(L_n/K_n)_\cO,A)^\Delta\lra \sg{F_n}{A} \lra \mathrm{Hom}(I_{n,p,\cO}/J_{n,p,\cO}^{(\delta)},A)^\Delta\lra0.
\end{equation*}
Taking projective limit defined by the norm maps over $K_n$ and $F_n$, we get the following exact sequence of $\cO[[\cG]]$-modules:
\begin{equation*}
0\lra \mathrm{Hom}(\gal(L_\infty/K_\infty)_\cO,A)^\Delta\lra \sg{F_\infty}{A}
    \lra \mathrm{Hom}({I}_{\infty,p,\cO}^{}/{J}_{\infty,p,\cO}^{(\delta)},A)^\Delta\lra0,
\end{equation*}
where ${I}_{\infty,p,\cO}=\prod_{\p\mid p}{I}_{\infty,\p,\cO}, {J}_{\infty,p}^{(\delta_\p)}=\varprojlim_n{J}_{n,p,\cO}^{(\delta_\p)}$, the inverse limits are defined by the norm maps.
Similarly, taking projective limit over all finite extensions of $K$ contained in $\cy{K}$, we get the following exact sequence:
\begin{equation*}
0\lra \mathrm{Hom}(\gal(\cy{L}/\cy{K})_\cO,A)^\Delta\lra \sg{\cy{F}}{A}
            \lra \mathrm{Hom}({I}_{cyc,p,\cO}^{}/{J}_{cyc,p,\cO}^{(\delta)},A)^\Delta\lra0,
\end{equation*}
where ${I}_{cyc,p,\cO}=\prod_{\p\mid p}{I}_{cyc,\p,\cO}, {J}_{cyc,p,\cO}^{(\delta)}=\varprojlim_n{J}_{n,p,\cO}^{(\delta)}$, the inverse limits are defined by the norm maps.
As in the proof of \cite[Theorem 3.1]{cv}, it follows from Class Field Theory that the dual Selmer group $\sgd{\cy F}{\ad{\rho}}$ is torsion over $\cO[[\Gamma]]$. In general, the selmer group $\sgd{\cy{F}}{A}$ is also torsion over $\cO[[\Gamma]]$, (see \cite[Theorem 1]{gv-artin}).

For an Artin representation $\beta$ of $\Delta$, we consider the representation space $V_{\beta}$ and a Galois stable lattice $T_\beta$ and put 
$A_\beta=V_\beta/T_\beta$. Viewing $T\otimes T_\beta$ as a lattice in $V\otimes V_\beta$, we consider the image of $V^{\delta_\p}\otimes V_\beta$ in $A\otimes A_\beta$.
Following Greenberg in \cite{gr-modular}, we consider the \emph{Selmer atoms} for $A\otimes A_\beta=(V\otimes V_\beta)/(T\otimes T_\beta)$, defined by
\begin{equation*}
 \sg{F_n}{\rho\otimes\beta}=ker\left(\Hone{F_n}{A\otimes A_\beta}
                                         \lra\prod_{v\in\Sigma-\Sigma_p}\Hone{F_{n,v}}{A\otimes A_\beta}\times\Hone{F_{n,\p}}{A^{(\delta_\p)}\otimes A_\beta}\right).
\end{equation*}
\begin{theorem}\label{iwasawa-mhg}
Let the $\cO[[\Gamma]]$-module $\gal(\cy{L}/\cy{K})_\cO$ have $\mu$-invariant equal to zero. Then 
\begin{enumerate}
 \item ${I}_{cyc,p,\cO}^{}/{J}_{cyc,p,\cO}^{(\delta)}$ also has $\mu$-invariant equal to zero;
 \item $\sgd{\cy F}{{\rho}}$ has $\mu$-invariant equal to zero;
 \item for $\cG$ a $p$-adic Lie group of dimension 2,
 $\sgd{F_\infty}{{\rho}}$ is in the category $\mathfrak{M}_{\cH}(\cG)$;
 \item for any $p$-adic Lie extension $\inft{F}$ of $F$, 
  $\sgd{F_\infty}{\ad{\rho}}$ is in the category $\mathfrak{M}_{\cH}(\cG)$.
\end{enumerate}
\end{theorem}
\begin{proof}
\begin{enumerate}
 \item Consider the exact sequence
\begin{equation*}
 0\lra I_{cyc,p,\cO}\lra\gal(\cy{M}/\cy{K})_\cO\lra\gal(\cy{L}/\cy{K})_\cO\lra0.
\end{equation*}
This gives rise to the following exact sequence of their $p$-primary parts:
\begin{equation*}
 0\lra I_{cyc,p,\cO}(p)\lra\gal(\cy{M}/\cy{K})_\cO(p)\lra\gal(\cy{L}/\cy{K})_\cO(p).
\end{equation*}
As any prime ramified in $K/F$ is not totally split in the cyclotomic $\Z_p$-extension $\cy{K}/K$,
it follows from the corollaries of \cite[Cor 11.3.6, 11.3.16, 11.3.17]{nsw}, that the $\mu$-invariants of the $\cO[[\Gamma]]$-modules 
$\gal(\cy{M}/\cy{K})_\cO(p)$ and $\gal(\cy{L}/\cy{K})_\cO(p)$ 
are equal.
By the assumption, $\gal(\cy{M}/\cy{K})_\cO(p)$ is finite. Therefore
$I_{cyc,p,\cO}(p)$, $I_{cyc,p,\cO}^{(\delta)}(p)$, $J_{cyc,p,\cO}^{(\delta)}(p)$, $(I_{cyc,p,\cO}/J_{cyc,p,\cO}^{(\delta)})(p)$ are all finite. 
So, $I_{cyc,p,\cO}/J_{cyc,p,\cO}^{(\delta)}$ is finitely generated over $\cO$.
\item This follows from (i). 
\item Since $\cH$ has dimension equal to one, it has $p$-cohomological dimension one. So the kernel and cokernel of the restriction map
$\sg{\cy F}{\rho}\lra\sg{F_\infty}{\rho}^\cH$ are co-finitely generated over $\cO$. Also $\sg{\cy F}{\rho}$ is cofinitely generated
over $\cO$, by (ii) above. Taking Pontryagin duals and applying 
Nakayama Lemma, $\sgd{F_\infty}{\rho}$ is finitely generated over $\cO[[\cH]]$.
\item Since we have an exact control theorem for the Selmer group of $\ad{\rho}$, $\sgd{\inft F}{\ad\rho}_\cH$ is finitely generated over $\cO$. Again, by Nakayama Lemma, $\sgd{\inft F}{\ad\rho}$ is finitely generated over $\cO[[\cH]]$.
\end{enumerate}
\end{proof}
\begin{remark}It is a conjecture of Iwasawa that the $\mu$-invariant of the $\Z_p[[\Gamma]]$-module $\gal(\cy{L}/\cy{K})$ vanishes. 
This is a theorem due to Ferrero and Washington for number fields $K$ which are abelian over $\Q$.
\end{remark}
\subsection{CM elliptic curves}\label{mu-cm}
Let $E$ be an elliptic curve with CM by an imaginary quadratic extension $M$ of $\Q$ which is defined over $\Q$.  Let $\theta$ be 
the reduction modulo $p$ of 
the Grossencharacter of $E$ viewed as a $p$-adic Galois character. Then $\ol\rho_E=\Ind{M}{\Q}{\theta}$ is the reduction modulo $p$ of the  representation 
of $\gal_\Q$ attached to the Tate module of $E$. 
Further, $\det{\ol\rho_E}=\omega$, where $\omega$ is the mod-$p$ reduction of the cyclotomic character.
Writing $\theta_c$ for the conjugate character of $\theta$, we have $\theta\theta_c=\omega$ on $\gal_M$.

Let $K=\Q(E[p])$ be the field of $p$-torsion points of $E$. 
Let $p$ be a prime of good ordinary reduction for $E$, then the prime $p$ splits completely in $M$, and 
the Galois group $\Delta=\gal(M/\Q)$ is the normalizer of a split Cartan subgroup of $GL_2(\FF_p)$, with the  subgroup 
$\Delta_0=\gal(K/M)$ abelian of  index 2 in $\Delta$. 
Further, $\Delta_0\cong C_1\oplus C_2$, where $C_1,C_2$ are both cyclic subgroups of the same order, say $n$. Let the order $n^2=|\gal(K/M)|$ be \emph{even} and prime-to-$p$. 

 Consider the Hida family of $\rho_E$, which is given by a representation $\boldsymbol\rho:\gal_{\Q}\lra GL_2(\mathbf{h})$, where $\mathbf{h}$ is the localization of 
 the universal Hecke algebra at a maximal ideal which is determined by $\rho_E$. Then there exists a prime ideal $P_1\in\mathrm{Spec}(\mathbf{h})$ such that 
 $\boldsymbol{\rho}$ specializes to a representation $\rho_1$ of a weight-one eigenform $f_1$.  Let $\mathbf{T}$ be a free $\mathbf{h}$-module of rank 2. Then there exists a 
 $G_{\Q_p}$-stable $\mathbf{h}$-submodule $\mathbf{T}_0\subset\mathbf{T}$ such that $\mathbf{T/T_0}$ is unramified at $p$. Both $\mathbf{T_0}$ and $\mathbf{T}$ are free 
 $\mathbf{h}$-modules of rank 1. Let the specializations for $\mathbf{T_0}$ and $\mathbf{T/T_0}$ be $\varepsilon_p$ and $\delta_p$ respectively.
It is a well-known theorem due to Deligne and Serre on weight one eigenforms that $\rho_1$ is an Artin representation, and it comes from a weight 1 eigenform, which is ordinary at $p$ with $\delta_p$ unramified. In fact, $\rho_1=\Ind{\Q}{M}{\wt\xi}$, for some \emph{finite order} character $\wt\xi$ of $\gal_M$, 
and $\ol\rho_E\equiv\ol\rho_1$ modulo $p$. 
Let $\xi$ be the reduction of $\wt\xi$ modulo $p$. Then the reduction modulo $p$ of $\rho_1$ is $\ol\rho_1=\Ind{M}{\Q}{\xi}$.

As the characters $\wt\xi,\wt\xi_c$ are defined over $\Z_p$, the representation $\rho_1$
is defined over $\Z_p$.
Since each of the components $C_1, C_2$ of $\Delta_0$ are cyclic of order dividing $(p-1)$, any character $\psi$ of $\Delta_0$
defined over a finite field takes values in $\FF_p$. Therefore, any irreducible representation $\alpha=\Ind{M}{\Q}{\psi}$ over a finite extension of $\FF_p$, is actually defined over $\FF_p$, and it is the reduction modulo $p$ of the representation 
$\wt\alpha=\Ind{M}{\Q}{\wt\xi}$ defined over $\Q_p$.
Also, as $p$ is unramified in $M$, it is linearly disjoint from $\Q(\mu_p)$,
which is a field contained in $K$. Let $\langle c\rangle=\gal(M/\Q)$, and for any character $\psi$ of $\Delta_0$, its Galois
conjugate is denoted by $\psi_c$.

It is also easy to see that all irreducible representations of $\Delta$ over $\FF_p$ are of degree at most 2, and induced from characters of $\Delta_0$.
\begin{lemma}As representations of $\gal_\Q$, we have the isomorphisms
\begin{eqnarray*}
\ol\rho_1\otimes\alpha=\Ind{M}{\Q}{\xi}\otimes_{\FF_p}\Ind{M}{\Q}{\psi}\cong\Ind{M}{\Q}{(\xi\psi)}\oplus\Ind{M}{\Q}{(\xi\psi_c)}
\mbox{ over }\FF_p. 
\end{eqnarray*}
\end{lemma}
\begin{proof}
 Let $U$ be any representation of $\Delta$. Then, we have, by Frobenius reciprocity, 
 \begin{align*}
 \Hom{\Delta}{U}{\Ind{M}{\Q}{\xi}\otimes\Ind{M}{\Q}{\psi}}&\cong\Hom{\Delta}{U\otimes(\Ind{M}{\Q}{\psi})\hat\quad}{\Ind{M}{\Q}{\xi}}\\
                                                          &\cong\Hom{\Delta_0}{U\otimes(\Ind{M}{\Q}{\psi})\hat\quad}{\xi}\\
                                                          &\cong\Hom{\Delta_0}{U}{\Ind{M}{\Q}{\psi}\otimes\xi}\\
                                                          &\cong\Hom{\Delta_0}{U}{\xi\psi}\oplus\Hom{\Delta_0}{U}{\xi\psi_c}\\
                                                          &\cong\Hom{\Delta}{U}{\Ind{M}{\Q}{(\xi\psi)}}\oplus\Hom{\Delta}{U}{\Ind{M}{\Q}{(\xi\psi_c)}}.
 \end{align*}
 Therefore $\Ind{M}{\Q}{\xi}\otimes_{\FF_p}\Ind{M}{\Q}{\psi}\cong\Ind{M}{\Q}{(\xi\psi)}\oplus\Ind{M}{\Q}{(\xi\psi_c)}$.
 \end{proof}
\begin{theorem}\label{mu-twists}
 The Selmer group $\sg{\cy\Q}{\ol\rho_1\otimes\alpha}$ is finite for all irreducible Artin representations $\alpha$ of $\Delta$ over 
 $\FF_p$, if 
 $\sg{\cy\Q}{\Ind{M}{\Q}{\xi^r}\otimes\omega^a}$ is finite for all $0<r,a\leq(p-1)$.
\end{theorem}
\begin{proof} Let $\alpha=\Ind{M}{\Q}{(\psi)}$. 
Then the isomorphism of representations in the above Lemma induces exact sequences of cohomology groups, from which we get the following commutative diagram:
\begin{equation*}
\xymatrix{
\Hone{\cy{\Q}}{\Ind{M}{\Q}{(\xi\psi)}}\ar[r]\ar[d] &\Hone{\cy{\Q}}{\rho\otimes\alpha}\ar[r]\ar[d] &\Hone{\cy{\Q}}{\Ind{M}{\Q}{(\xi\psi_c)}}\ar[d]\\
\Hone{\Q_{cyc,p}}{\xi_c\psi_c}\ar[r] &\Hone{\Q_{cyc,p}}{\xi_c\otimes\alpha}\ar[r] &\Hone{\Q_{cyc,p}}{\xi_c\psi}.
}
\end{equation*}
Now, with the Selmer group
\begin{equation*}
\sgart{\cy\Q}{\Ind{M}{\Q}{\xi\psi_c}}=ker(\Hone{\cy{\Q}}{\Ind{M}{\Q}{(\xi\psi_c)}}\lra\Hone{\Q_{cyc,p}}{\xi_c\psi}),
\end{equation*}
we have the sequence which is exact at the middle:
\begin{equation*}
\sgart{\cy\Q}{\Ind{M}{\Q}{\xi\psi}} \lra \sg{\cy\Q}{\rho\otimes\alpha} \lra\sgart{\cy\Q}{\Ind{M}{\Q}{\xi\psi_c}}.
\end{equation*}
Therefore $\sg{\cy\Q}{\ol\rho_1\otimes\alpha}$ is finite if both $\sgart{\cy\Q}{\Ind{M}{\Q}{\xi\psi}}$ and
$\sgart{\cy\Q}{\Ind{M}{\Q}{\xi\psi_c}}$ are finite.
By the Lemmas above, the character $\psi=\xi^a\xi_c^b$ for some integers $a,b$. Since $\xi\xi_c=\omega$,
$\Ind{M}{\Q}{\xi\psi_c}\cong(\Ind{M}{\Q}{\xi^{-a+b+1}})\otimes\omega^a$, and 
$\sgart{\cy\Q}{\Ind{M}{\Q}{\xi\psi_c}}\cong\sg{\cy\Q}{\Ind{M}{\Q}{(\xi^{-a+b+1})\otimes\omega^{a}}}$.

If $\alpha$ is a one dimensional character of $\Delta$, then clearly $\ol\rho_1\otimes\alpha$ is isomorphic to
$(\Ind{M}{\Q}{\xi})\otimes\omega^a$. The theorem follows combining both the cases. 
\end{proof}
\begin{remark}\label{reducible-mu}
It is widely believed to be true that the Selmer groups $\sg{\cy\Q}{\Ind{M}{\Q}{\xi\psi}}$ and $\sgart{\cy\Q}{\Ind{M}{\Q}{\xi\psi_c}}$ have $\mu$-invariant equal to zero if the respective representations $\Ind{M}{\Q}{\xi\psi}$ and $\Ind{M}{\Q}{\xi\psi_c}$
are  irreducible. Some of the representations occurring in the theorem above could be reducible. 
In the following proposition, we show that the Selmer groups of those representations have $\mu$-invariant zero.
For this it is crucial to have $M$ to be \emph{ imaginary } quadratic. We also clarify how the same argument cannot be made to work if $M$ is \emph{real} quadratic.
\end{remark}
\begin{propn}
 Let $M$ be an imaginary quadratic extension of $\Q$ in which $p$ splits completely, and $\Ind{M}{\Q}{\xi\psi_c}$ be \emph{reducible}. Then 
 the $\mu$-invariant of $\sg{\cy\Q}{\Ind{M}{\Q}{\xi\psi_c}}$ is equal to 0. On the other hand, let $M$
 be a real quadratic extension of $\Q$ in which $p$ splits completely. Let $\tau$ be the real quadratic character of $M$,
 and for a character $\psi$ of $\gal_M$, $\psi_\tau$ be the conjugate character.
 Then there exist characters $\psi$ of $\gal_M$, such that $\Ind{M}{\Q}{\xi\psi_\tau}$ is \emph{reducible} and the 
 $\mu$-invariant of  $\sg{\cy\Q}{\Ind{M}{\Q}{\xi\psi_\tau}}$ is \emph{not} equal to 0.
\end{propn}
\begin{proof}
Let $M$ be imaginary quadratic and 
 $\Ind{M}{\Q}{\xi\psi_c}$ be \emph{reducible}. Then $\Ind{M}{\Q}{\xi\psi_c}\cong\omega^i\oplus\eta\omega^i$, where 
$\eta$ is the quadratic character attached to $M$. Clearly, the determinant is \emph{odd} as $M$ is \emph{ imaginary }quadratic.
This plays a \emph{crucial} role below in using the theorem of Ferrero-Washington. One of the characters $\omega^i$ and 
$\eta\omega^i$ is odd and the other even. Therefore, the $\FF_p[[\Gamma]]$-corank of $\Hone{\cy{\Q}}{\omega^i}$ is 0 or 1, according
as $i$ is even or odd, and that of $\Hone{\cy{\Q}}{\eta\omega^i}$ is 1 or 0. It follows that the $\FF_p[[\Gamma]]$-corank of $\Hone{\cy{\Q}}{\omega^i\oplus\eta\omega^i}$ is 1. 
Also, restricted to $G_p$, we have $\omega^i=\eta\omega^i$. 

Let $i$ be \emph{odd}. Then $\omega^i$ is odd and $\eta\omega^i$ is even. 
We then consider the following commutative diagram of exact sequences:
\begin{equation*}
\xymatrix{
\Hone{\cy{\Q}}{\eta\omega^i}\ar[r]\ar[d] &\Hone{\cy{\Q}}{\omega^i\oplus\eta\omega^i}\ar[r]\ar[d] 
                                         &\Hone{\cy{\Q}}{\omega^i}\ar[d]\\
0                           \ar[r] &\Hone{\Q_{cyc,p}}{\eta\omega^i}\ar[r]^\cong &\Hone{\Q_{cyc,p}}{\omega^i}.
} 
\end{equation*}
Since $\eta\omega^i$ is even, $\Hone{\cy{\Q}}{\eta\omega^i}$ has $\mu$-invariant equal to 0, by Ferrero-Washington theorem.
Again, by Ferrero-Washington theorem, the $\mu$-invariant of $ker[\Hone{\cy{\Q}}{\omega^i}\lra\Hone{\Q_{cyc,p}}{\omega^i}]$ is equal to 0. Therefore, the $\mu$-invariant of the kernel $\sg{\cy\Q}{\Ind{M}{\Q}{\xi\psi_c}}$ is equal to 0.

Let $i$ be \emph{even}. Then $\omega^i$ is even and $\eta\omega^i$ is odd. 
We then consider the following commutative diagram of exact sequences:
\begin{equation*}
\xymatrix{
\Hone{\cy{\Q}}{\omega^i}\ar[r]\ar[d] &\Hone{\cy{\Q}}{\omega^i\oplus\eta\omega^i}\ar[r]\ar[d] 
                                         &\Hone{\cy{\Q}}{\eta\omega^i}\ar[d]\\
0                           \ar[r] &\Hone{\Q_{cyc,p}}{\eta\omega^i}\ar[r]^\cong &\Hone{\Q_{cyc,p}}{\eta\omega^i}.
} 
\end{equation*}
As $\omega^i$ is even, $\Hone{\cy{\Q}}{\omega^i}$ has $\mu$-invariant equal to 0, by Ferrero-Washington theorem.
Again, by Ferrero-Washington theorem, the $\mu$-invariant of $ker[\Hone{\cy{\Q}}{\eta\omega^i}\lra\Hone{\Q_{cyc,p}}{\eta\omega^i}]$ is equal to 0. This is proved in \cite[Lemma 5.9]{gr-cetraro}. Therefore, the $\mu$-invariant of the kernel 
$\sg{\cy\Q}{\Ind{M}{\Q}{\xi\psi_c}}$ is equal to 0.

Similarly, the $\mu$-invariant of $\sg{\cy\Q}{\Ind{M}{\Q}{\xi\psi}}$ can be seen to be 0 when $\Ind{M}{\Q}{\xi\psi}$ is reducible.

Let $M$ be a real quadratic extension, and 
$\Ind{M}{\Q}{\xi\psi_\tau}$ is \emph{reducible}, where $\xi$ is now a character of $\gal_M$ and $\tau$
the character of order 2 that defines $M$.
As $\psi$ runs over all the irreducible representations of $\Delta$, we might as well consider $\psi_\tau=\xi^{-1}$.
Then $\Ind{M}{\Q}{\xi\psi_\tau}=\Ind{M}{\Q}{\mathbf{1}}$, for the trivial character $\mathbf{1}$ of $\gal_M$. Then, by Mackey's
theorem, $\Ind{M}{\Q}{\mathbf{1}}=\mathbf1\oplus\tau$, for the quadratic character $\tau$ of $M$. Then for the two real places, say, $c,c'$ of $\gal_M$, $\mathbf{1}(c)=1=\mathbf{1}(c')$. Therefore, $\Ind{M}{\Q}{\mathbf{1}}$ is \emph{even}.
The twist $\Ind{M}{\Q}{\mathbf1}\otimes\omega$ is still even, but the components $\omega$ and $\tau\omega$ are odd.
Again, by \cite[Lemma 5.9]{gr-cetraro}, the $\FF_p[[\Gamma]]$-corank of $\Hone{\cy\Q}{\Ind{M}{\Q}{\mathbf1}\otimes\omega}$ is 2, and that of 
$\Hone{\Q_{cyc,p}}{\omega}$ is 1. It follows that $\sgart{\cy\Q}{\Ind{M}{\Q}{\mathbf1}\otimes\omega}$ has a positive 
$\FF_p[[\Gamma]]$-corank and therefore a positive $\mu$-invariant.
\end{proof}

\begin{theorem}\label{elliptic-CM}
 Let $E$ be an elliptic curve with CM which is defined over $\Q$. Let $p$ be a prime of good ordinary reduction for $E$, and $\rho_E$ be the representation 
 of $\gal_\Q$ attached to the Tate module of $E$. Let $K=\Q(E[p])$ be the field of $p$-torsion points of $E$, $\inft{K}=\Q(E[p^\infty])$, $\cG=\gal(\inft{K}/K)$, and $\cH=\gal(\inft{K})/\cy{K}$. Let $\Delta=\gal(K/\Q)$.
 Then, assuming Iwasawa's conjecture 
 on the vanishing of the $\mu$-invariant of $\gal(\cy{L}/\cy{K})$, the dual Selmer group $\sgd{\cy{K}}{{\rho_E}}$ has $\mu$-invariant equal to zero. 
 In other words, Iwasawa's conjecture 
 on the vanishing of the $\mu$-invariant of $\gal(\cy{L}/\cy{K})$ implies that the dual Selmer group $\sgd{\cy{K}}{{\rho_E}}$
  is in the category $\mathfrak{M}_\cH(\cG)$.
\end{theorem}
\begin{proof}It is well known that $\cG$ is of dimension 2 as a $p$-adic Lie group, and both $\cH$ and $\Gamma\cong\cG/\cH$ are of
dimension one. By \cite[Cor 5.5]{cfksv}, it is enough to show that the $\mu$-invariant of $\sgd{\cy K}{\rho_E}$ is zero.
For this, by \cite[Prop 4.2.5]{gr-modular}, it is enough to show that $\sg{\cy\Q}{{\ol\rho_E}\otimes\alpha}$ is finite for all irreducible representations $\alpha$ of $\Delta$ which are defined over  $\FF_p$. 
By the previous theorem, it is sufficient to show
that the $\mu$-invariant of $\sg{\cy\Q}{\Ind{M}{\Q}{\xi^r}\otimes\omega^a}$ is zero for all $0<r,a\leq(p-1)$.

Consider an Artin representation $\rho_1=\Ind{\Q}{M}{\wt\xi}$, which corresponds to an eigenform of weight one, and which is 
 present in the Hida family of $\ol\rho_E$. As mentioned above, $\ol\rho_1\cong\ol\rho_E$ and the representations $\Ind{\Q}{M}{\ol\theta^r}\cong\Ind{\Q}{M}{\ol\xi^r}$. 
If the representation $\Ind{M}{\Q}{\theta^r}$ is irreducible, then the $\mu$-invariant of $\sg{\cy\Q}{\Ind{M}{\Q}{\theta^r}\otimes\omega^a}$ is zero if and only if it is zero for $\sg{\cy\Q}{\Ind{M}{\Q}{\xi^r}\otimes\omega^a}$. If it were reducible,
then by Remark \ref{reducible-mu}, the $\mu$-invariant of $\sg{\cy\Q}{\Ind{M}{\Q}{\xi^r}\otimes\omega^a}$ is zero.
Finally, under the assumption that the $\mu$-invariant of $\gal(\cy{L}/\cy{K})$ is zero, it follows that 
$\gal(\cy{L'}/\cy{K'})$ for any Galois subfield $K'\subseteq K$ with $L'$ the maximal pro-$p$ abelian extension of $K'$ unramified
everywhere. Taking $K'$ to be the field cut-out by $ker(\Ind{M}{\Q}{\xi^r}\otimes\omega^a)$ in 
Theorem \ref{iwasawa-mhg}, the $\mu$-invariant of $\sg{\cy\Q}{\Ind{M}{\Q}{\xi^r}\otimes\omega^a}$ is zero.
\end{proof}
\subsection{Example: Adjoint of some CM elliptic curves}
So far, we have assumed that the initial representation of $\gal_F$ over a finite field is absolutely irreducible. This guarantees the existence of the universal deformation 
rings. We now consider one case where the representation is reducible and study it with respect to the conjecture on the category $\mathfrak{M}_\cH(\cG)$.

Let $E$ be an elliptic curve defined over $\Q$ with CM by an imaginary quadratic field $M=\Q(\sqrt{-D})$. Let $p$ be an \emph{odd} prime where $E$ has 
ordinary reduction. 
Let $\xi$ be the $p$-adic character of the Galois group $\gal_M$ attached to the CM elliptic curve $E$, and $\xi^c$ be the complex conjugate of $\xi$. Then the two dimensional 
representation of $\gal_{\Q}$ attached to the Tate module of $E$ is isomorphic to $\Indm{\xi}$.
Let $\eta$ be the Legendre symbol $(\frac{-D}{})$. We denote the reduction of $\eta,\xi$ and $\xi^c$ modulo $p$ by $\bar\eta,\bar\xi$ and $\bar\xi^c$.
Then, we have the following decomposition 
of $\gal_{\Q}$-modules:
\begin{equation}\label{decomp-adjoint}
\ad{E[p]} \cong\FF_p(\bar\eta) \oplus \Indm{\bar\xi/\bar\xi^c},
\end{equation} 
where $\Indm{\bar\xi/\bar\xi^c}$ is the induced module of $\FF_p(\bar\xi/\bar\xi^c)$. Here 
$\bar\xi$ denotes the reduction modulo $p$ of $\xi$ which is viewed as a continuous $p$-adic Hecke character. 
The following isomorphism is also easy to see (\cite[Lemma 3.13 (iii), (iv)]{chan-fine}).
\begin{equation*}
\ad{E[p]}\ol\chi_p \cong\FF_p(\bar\eta\ol\chi_p) \oplus \Indm{\bar\xi^2}, 
\end{equation*}
Consider the field extensions
\begin{equation*}
 K:=\Q(E[p]),\quad \cy K:=K\cy\Q, \quad F_\infty:=K(E[p^\infty]) \quad \mbox{ and }\quad \Delta=G(K/\Q). 
\end{equation*}
where $\cy\Q$ is the cyclotomic $\Z_p$-extension of $\Q$. 
Let $\cG:=\gal(F^\infty/K)$ and $\cH:=\gal(F^\infty/\cy K)$. 
Then the dual of Selmer group of $\ad{E[p^\infty]}$ over $F_\infty$ satisfies the $\mathfrak M_\cH(\cG)$-conjecture arising from 
\emph{non-commutative} Iwasawa theory if and only if it is finitely generated over the Iwasawa algebra $\Z_p[[\cH]]$ (\cite[Conj 5.1]{cfksv}). This 
last assertion is equivalent to the vanishing of the $\mu$-invariant of the dual of Selmer group of $\ad{E[p^\infty]}$ over $\cy K$ (\cite[Cor 5.5]{cfksv}).

Let $\xi':=\bxi/\bxi^c$. Then we have the following exact sequence of $G_p$-modules, which defines a local condition at $p$ for $\Indm{\FF_p(\xi')}$:
\begin{equation}\label{fil-CM}
 0\lra \FF_p(\xi')\lra\Indm{\FF_p(\xi')}\lra\FF_p(\xi')^c\lra 0.
\end{equation}
Using this local condition, we define the Selmer group $\sg{\cy\Q}{\Indm{\xi'}}$ 
as follows:
\begin{equation*}
\begin{split}
\sg{\cy\Q}{\Indm{\xi'}}:=ker\{    &\Hone{\cy\Q}{\Indm{\FF_p(\xi')}}  \\
            \lra    &\oplus_{l\neq p}\Hone{(\cy\Q)_l}{\Indm{\FF_p(\xi')}} \oplus\Hone{(\cy\Q)_p}{\FF_p(\xi')^c}\}.
\end{split}
\end{equation*}
By restriction, the above exact sequence induces the following exact sequence of $G_\p$-modules for every prime $\p$ lying above $p$:
\begin{equation}\label{fil-CM-K}
 0\lra \FF_p(\xi')\lra\Indm{\FF_p(\xi')}\lra\FF_p(\xi')^c\lra 0.
\end{equation}
Over the field $\cy K$, we can also define a Selmer group $\sg{\cy K}{\Indm{\xi'}}$ as follows:
\begin{equation*}
\begin{split}
\sg{\cy K}{\Indm{\xi'}}:=ker\{    &\Hone{\cy K}{\Indm{\FF_p(\xi')}}  \\
            \lra    &\oplus_{v\nmid p}\Hone{(\cy K)_v}{\Indm{\FF_p(\xi')}} \oplus_{\p|p}\Hone{(\cy K)_\p}{\FF_p(\xi')^c}\}.
\end{split}
\end{equation*}
Then 
the above decomposition of the representation $\ad{E[p]}$ and $\ad{E[p]}\bar\chi_p$ induces the following decomposition:
\begin{equation}\label{selmer-decomposition}
\begin{split}
 \sg{\cy K}{\ad{E[p]}} &\cong\sg{\cy K}{\Indm{\xi'}}\oplus\sg{\cy K}{\bar\eta},\\
 \sg{\cy K}{\ad{E[p]}\bar\chi_p} &\cong\sg{\cy K}{\Indm{\bar\xi^2}}\oplus\sg{\cy K}{\bar\eta\bar\chi_p}
\end{split}
\end{equation}
where $\sg{\cy K}{\bar\eta}$ is the Selmer group of $\bar\eta$. It is defined as follows
\begin{equation*}
\sg{\cy K}{\bar\eta}=ker\{\Hone{\cy K}{\FF_p(\bar\eta)}\lra\prod_{v\in\Sigma}\Hone{K_{cyc,v}}{\FF_p(\bar\eta)}\}
\end{equation*}
and the Selmer group $\sg{\cy K}{\bar\eta\bar\chi_p}$ is obtained by replacing the character $\bar\eta$ by $\bar\eta\bar\chi_p$ in the definition.
The first isomorphism is shown in \cite[3.13]{chan-selmer} and the second one also follows similarly.
As $\Indm{\xi'}$ is a trivial $\gal_{K}$-modules, the short exact sequence in \eqref{fil-CM-K} is a short exact sequence of trivial $\gal_{K_\p}$-modules for every $\p\mid p$.

For the representation $\Indm{\bxi^2}$ over $\FF_p$, we can consider the following exact sequence of $G_p$-modules:
\begin{equation}\label{fil-CM-xi2}
 0\lra \FF_p(\xi^2)\lra\Indm{\FF_p(\bxi^2)}\lra\FF_p(\bxi^2)^c\lra 0.
\end{equation}
Using this local condition, we can define a Selmer group $\sg{\cy\Q}{\Indm{\bxi^2}}$ 
as follows:
\begin{equation*}
\begin{split}
\sg{\cy\Q}{\Indm{\bxi^2}}:=ker\{    &\Hone{\cy\Q}{\Indm{\FF_p(\bxi^2)}}  \\
            \lra    &\oplus_{l\neq p}\Hone{(\cy\Q)_l}{\Indm{\FF_p(\bxi^2)}} \oplus\Hone{(\cy\Q)_p}{\FF_p(\bxi^2)^c}\}.
\end{split}
\end{equation*}
By restriction, the above exact sequence induces the following exact sequence of $G_\p$-modules for every prime $\p$ lying above $p$:
\begin{equation}\label{fil-CM-K-xi2}
 0\lra \FF_p(\bxi^2)\lra\Indm{\FF_p(\bxi^2)}\lra\FF_p(\bxi^2)^c\lra 0.
\end{equation}
Over the field $\cy K$, we can also define a Selmer group $\sg{\cy K}{\Indm{\bxi^2}}$ as follows:
\begin{equation*}
\begin{split}
\sg{\cy K}{\Indm{\bxi^2}}:=ker\{    &\Hone{\cy K}{\Indm{\FF_p(\bxi^2)}}  \\
            \lra    &\oplus_{v\nmid p}\Hone{(\cy K)_v}{\Indm{\FF_p(\bxi^2)}} \oplus_{\p|p}\Hone{(\cy K)_\p}{\FF_p(\bxi^2)^c}\}.
\end{split}
\end{equation*}
As $\Indm{\bxi^2}$ is a trivial $\gal_{K}$-module, the short exact sequence in \eqref{fil-CM-K-xi2} is a short exact sequence of trivial $G_\p$-modules for every 
$\p\mid p$.
Similarly, the Selmer group $\sg{\cy K}{\Indm{\bxi}}$ defined by
\begin{equation*}
\begin{split}
\sg{\cy K}{\Indm{\bxi}}:=ker\{    &\Hone{\cy K}{\Indm{\FF_p(\bxi)}}  \\
            \lra    &\oplus_{v\nmid p}\Hone{(\cy K)_v}{\Indm{\FF_p(\bxi)}} \oplus_{\p|p}\Hone{(\cy K)_\p}{\FF_p(\bxi)^c}\}
\end{split}
\end{equation*}
is also a Selmer group of trivial $\gal_K$-representations. Here, the local conditions defining the Selmer group of the trivial representations $\Indm{\bxi^2}\mid_{\gal_K}$
and $\Indm{\bxi}\mid_{\gal_K}$
over $G_\p$ are the same. Therefore, $\sg{\cy K}{\Indm{\bxi}}$ and $\sg{\cy K}{\Indm{\bxi^2}}$ are isomorphic as modules. Hence $\sg{\cy K}{\Indm{\bxi^2}}$ is finite if and only
if $\sg{\cy K}{\Indm{\bxi}}$ is finite. 

By \cite[Theorem 6.3]{chan-selmer} and the paragraph following it, the Selmer group $\sg{\cy K}{\Indm{\bxi}}$ for the CM 
elliptic curves $256a1,256a2,256d1,256d2,121b1,121b2$ which have good ordinary reduction at the prime $p=3$ is finite. 

The Selmer group $\sg{\cy K}{\bar\eta\bar\chi_p}$ is finite as Iwasawa's conjecture on the vanishing of $\mu$-invariant over $\cy K$ holds \cite[Cor 3.12]{chan-fine}. 
\begin{theorem}
Consider the elliptic curves $E=256a1,256a2,256d1,256d2,121b1,121b2$ which have good ordinary reduction at $p=3$. Then the Selmer group 
$\sg{\cy K}{\ad{E[p^\infty]}}[p]$ is finite. It follows that $\sg{\inft K}{\ad{E[p^\infty]}}$ belongs to the category $\mathfrak{M}_\cH(\cG)$.
\end{theorem}
\subsection{Periods of adjoint Galois representations}
We briefly recall the periods of the representation $\ad{\rho}$ where $\rho$ is the representation of $\mathrm{Gal}_F$ that is associated to the Hilbert 
modular form $f\in S_\kappa(\fN,\veps;W)$. 
Let $M_f$ denote the motive associated to the Hilbert modular cusp form $f$ over $E$. Let $c^+_\infty(M_f)$ and $c^\pm_p(M_f)$ denote the Deligne periods and 
the $p$-adic periods of $M_f$. Then, by \cite[Theorems 5.2.1(ii), 5.2.2]{hida-sgl}, 
we have
\begin{equation*}
 c^\pm_p(\ad{M_f}(1))=c^+_p(M_f(1))c^-_p(M_f)\delta_p(M_f(1)).
\end{equation*}
Let $\psi$ be any Artin representation of the Galois group $\gal_E$. Then $\ad{\rho_f}\otimes\psi$ is also critical at $0, 1$. Let $d_\psi$ 
denote the dimension of $\psi$ and 
$d_\pm$ be the dimension of the $\pm$-eigenspaces of the action of complex conjugation on $\psi$. Then 
\begin{equation*}
 c^\pm_p(\ad{M_f}\otimes\psi(1))=(2\pi\imath c^\pm_p(\ad{M_f}(1)))^{d_\psi}.
\end{equation*}
It is conjectured in \cite{deligne} that 
\begin{equation}\label{algebraic}
 \dfrac{L(\ad{\rho_f}(1),\psi,0)}{(2\pi\imath c^\pm_p(\ad{M_f}(1)))^{d_\psi}}\in\bar\Q.
\end{equation}
Here, we recall that the L-function $L(\ad{\rho_f}(1)\otimes\psi,s)$ is defined to be the Euler product defined as reciprocal of the product of the following polynomials:
\begin{eqnarray}\label{hecke-polynomials}
P_\q(\ad{\rho_f},\psi,T)                        &:=&  \det(1-Frob_\q^{-1}T\mid(\ad{\rho_f}\otimes\psi)^{I_\q})\in\cO[T], \q\neq\p;\\
 P_\p(\ad{\rho_f},\psi,T)                       &:=&  \det(1-Frob_\p^{-1}T\mid(\ad{\rho_f}\otimes\psi)^{I_\p})\in\cO[T];\\
 P_\p(\mathcal{F}_\p^+\ad{\rho_f},\psi,T)       &:=&  \det(1-Frob_\p^{-1}T\mid((\mathcal{F}_\p^+\ad{\rho_f})\otimes\psi)^{I_\p})\in\cO[T];\\
 P_\p((\mathcal{F}_\p^+\ad{\rho_f})^\ast,\psi,T)&:=&  \det(1-Frob_\p^{-1}T\mid((\mathcal{F}_\p^+\ad{\rho_f})^\ast\otimes\psi)^{I_\p})\in\cO[T],
\end{eqnarray}
where $\cO$ is a finite extension of $\Z_p$, $I_\q$ denotes the inertia subgroup at the prime $\q$, and $(\mathcal{F}_\p^+\ad{\rho_f})^\ast$ denotes the contragredient 
representation of $\mathcal{F}_\p^+\ad{\rho_f}$.
\subsection{Non-commutative Main conjecture}
The noncommutative Main conjecture of Iwasawa theory predicts that there is an element in the group $\kone{\IIG_{\sS^*}}$ such that its image 
under the connecting homomorphism of $K$-theory gives rise to the class of the dual Selmer group. 
\begin{defn}\label{sk-def}
Let $\II=\cO[[X_1,\cdots,X_r]]$, and 
$\mathbb{K}=K[[X_1,\cdots,X_r]]$, where $K$  is the quotient field of $\cO$. 
For any \textbf{\emph{finite}} group $\cG$, we consider the following groups (see \cite[Page 173]{ol}):
\begin{equation*}
 \begin{split}
 SK_1(\IIG)&:=\mathrm{ker}\left[\kone{\IIG}\lra\kone{\mathbb{K}[[\cG]]}\right],\\
  \konep{\IIG}&:=\kone{\IIG}/S\kone{\IIG}.
 \end{split}
\end{equation*}
\end{defn}
From the localization sequence \eqref{localization}, we get the following exact sequence
\begin{equation*}
 \konep{\IIG}\lra\konep{\IIG_\sS}\stackrel{\wt\partial}{\lra} K_0(\IIG,\IIG_\sS)\lra 0.
\end{equation*}

Now consider a specialization map $\phi_\kappa:\II\lra\cO$. Then this induces the following map 
\begin{equation*}
 \IIG_\sS\stackrel{\phi_\kappa}{\lra}\cO[[\cG]]_{S},
\end{equation*}
where $S$ is the multiplicative set in $\cO[[\cG]]$. This further induces the homomorphisms in the following commutative diagram
\begin{equation*}
 \xymatrix{
 \konep{\IIG_\sS}\ar[r]^{\wt\partial}\ar[d]_{\wt\phi_\kappa}  & K_0(\IIG,\IIG_\sS)\ar[d]\\
 \konep{\cO[[\cG]]_S}\ar[r]^{\partial}       & K_0(\cO[[\cG]],\cO[[\cG]]_S).
 }
\end{equation*}
Now let $\psi$ be any Artin representation of $\cG$, say $\psi:\cG\lra GL_n(\cO')$. Then this induces the following homomorphism of rings
\begin{equation*}
 \psi:\cO[[\cG]]\lra M_n(\cO''[[\Gamma]]),
\end{equation*}
for some finite extension $\cO''$ of $\cO$ and $\cO'$. Further, we have the following homomorphism
\begin{equation*}
 \Phi_\psi:\cO[[\cG]]_S\lra M_n(Q_{\cO''}(\Gamma)),
\end{equation*}
where $Q_{\cO''}(\Gamma)$ is the quotient field of $\cO''[[\Gamma]]$. Therefore, we have 
\begin{equation*}
 \konep{\cO[[\cG]]_S}\lra \kone{M_n(Q_{\cO''}(\Gamma)}\cong Q_{\cO''}(\Gamma)^\times.
\end{equation*}
Now, let $\varphi$ be the augmentation map $\cO[[\cG]]$ to $\cO$, and $\p$ be the kernel of this map. Then the map $\varphi$ can be extended to the map 
$\varphi':Q_{\cO''}(\Gamma)\lra L\cup\{\infty\}$, for some finite extension $L$ of $\Q_p$ and by putting $\varphi(x)=\infty$, if $x\notin\cO[[\cG]]_\p$.
The composition of the map $\wt\phi_\kappa$ in the above commutative diagram with the map $\varphi'$ gives us a map 
\begin{equation}\label{evaluation}
\begin{split}
 \konep{\IIG_\sS} & \lra L\cup\{\infty\}\\
               x & \mapsto x(\psi).
 \end{split}
\end{equation}
This map satisfies the following properties:
\begin{enumerate}
 \item Let $\cG'$ be an open subgroup of $\cG$. Let $\chi$ be a one dimensional representation of $\cG'$ and $\psi:=\mathrm{Ind}_{\cG'}^{\cG}\chi$. Consider 
 the norm map $\konep{\IIG_{\sS^\ast}}\lra\konep{\II[[\cG']]_{\sS}}$. Then for any $\wt x\in \konep{\IIG_{\sS^\ast}}$, we have
 \begin{equation*}
  \wt x(\psi)=N(\wt x)(\chi).
 \end{equation*}
 \item Let $\psi_1:\cG\lra GL_{n_1}(L)$ and $ \psi_2:\cG\lra GL_{n_2}(L)$ be two Artin representations for some field extension $L$ of $\Q_p$. Then for 
 any $\wt x\in\konep{\IIG_{\sS^\ast}}$, we have
 \begin{equation*}
  \wt x(\psi_1\oplus\psi_2)=\wt x(\psi_1)\wt x(\psi_2).
 \end{equation*}
 \item Let $U$ be a subgroup of $\cH$ which is normal in $\cG$. Then the homomorphism $\cG\lra\cG/U$ induces the homomorphism $\IIG_\sS\lra\II[[\cG/U]]_\sS$.
 Further, we get the homomorphism $\pi:\konep{\IIG_\sS}\lra\konep{\II[[\cG/U]]}$. Let $\psi:\cG/U\lra GL_n(L)$. Then we get an Artin representation \quad
 $\mathrm{inf}(\psi):\cG\lra\cG/U\lra GL_n(L)$. For any $\wt x\in\konep{\IIG_\sS}$, we have
 \begin{equation}
  \wt x(\mathrm{inf}(\psi))=\pi(\wt x)(\psi).
 \end{equation}
\end{enumerate}
\begin{conj}\label{main-conj}(Main Conjecture over $\II[[\cG]]$) Let $f\in S_\kappa(\fN\p,\veps;\cO)$ be obtained through the arithmetic specialization $\phi_\kappa:\II\lra\cO$, 
for some finite extension $\cO$ of $\Z_p$.
Let $\rho_{f}$ be the representation of $\gal_E$ that is associated to $f$, $V:=\ad{\rho_f}$, and $V_\p^+:=\mathcal{F}_\p^+\ad{\rho_f}$.
Let the selmer complex $SC(U,\mathbf T,\{\mathbf T^0_\p\}_{\p|p})$ for the Hida family of $\rho_f$ have $\sS$-torsion cohomologies.
Then  there exists an element $\wt\xi\in\konep{\IIG_\sS}$ such that $\partial(\wt\xi)=-[SC(U,\mathbf T,\{\mathbf T^0_\p\}_{\p|p})]\in K_0(\IIG,\IIG_\sS)$ 
Further, under the map in \eqref{evaluation}, the following interpolation properties are satisfied for all Artin representations $\psi$ of $\cG$ with degree $d_\psi$:
\begin{equation}\label{interpolate}
\begin{split}
 \wt\phi_\kappa(\wt\xi)(\psi)=&\dfrac{L_\Sigma(V(1),\psi,0)}{(2\pi\imath c^\pm_p(\ad{M_f}(1)))^{d_\psi}}\times\\
                              &             \left[\dfrac{P_\p(V\otimes\psi,T)}{P_\p(V_\p^+\otimes\psi,T)}\right]_{T=1}
                                   P_\p((V_\p^+\otimes\psi)^\ast,1)
                                   \prod_{\q\mid\fN,\q\neq\p}P_\q(V\otimes\psi,1).
\end{split}                                   
\end{equation}
Here $L_\Sigma(V(1),\psi,0)$ is the value of the L-function for the twisted adjoint representation with Euler factors for primes in the set $\Sigma:=\{\q\mid\fN\p\}$ 
removed.
\end{conj}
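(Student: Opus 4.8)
The plan is to adapt the strategy of Burns, Kato, Kakde and Ritter--Weiss, which deduces the noncommutative Main Conjecture over $\IIG$ from a package of commutative main conjectures over rank-one subquotients together with a system of congruences between them. Assume first the companion conjecture (stated above) that $\sgd{\Ein}{\ad{\rho}}$ lies in $\mathfrak{M}_\cH^\II(\cG)$, so that it defines a class in $K_0(\IIG,\IIG_\sS)$ (localizing at $\sS^\ast$ when the $\mu$-invariant over $\cy{E}$ is non-zero, as in Theorem~\ref{big-torsion-small-torsion}). By the surjectivity of $\partial$ proved above there is at least one element of $\konep{\IIG_\sS}$ with the correct boundary, and the task is to produce a distinguished such $\wt\xi$ obeying \eqref{interpolate}. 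By an inverse-limit argument in the style of Burns and Kakde one reduces to $\dim\cG=1$: write $\cG=\varprojlim_V\cG/V$ with $V$ ranging over the open normal subgroups of $\cH$, so that each $\cG/V$ is one-dimensional and $\konep{\IIG_\sS}$, $K_0(\IIG,\IIG_\sS)$ are the corresponding inverse limits; the congruence machinery of the excerpt, phrased through $\Sigma(\cG)$ and property $(\ast)$, then applies to each quotient. For each $U^{ab}\in\Sigma(\cG)$ one constructs $\xi_{U^{ab}}\in\II(U_P^{ab})^\times$ (or its $\sS$-localization), compatibly with the arithmetic specializations $\phi_\kappa$, from Hida's $p$-adic $L$-function of $\ad{\bosym\rho_\II}$ over the abelian extension cut out by $U^{ab}$; this is where the \emph{commutative} main conjecture of Hida enters, via the K\"ahler-differential identification $\sgd{F}{\ad{\bosym\rho_F}}\cong\Om{\cR_F}{W[[\Gamma_F]]}$, simultaneously pinning down the boundary of $\xi_{U^{ab}}$ and its one-dimensional interpolation formula.

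The decisive step is to check that the tuple $(\xi_{U^{ab}})_{U^{ab}\in\Sigma(\cG)}$ satisfies all of the congruences (i)--(iv) of Theorem~\ref{cong} (in the $\sS$-localized form, i.e. the clauses with $\sT_{P,P',\sS}$). Parts (i) and (ii) --- the norm and conjugation relations --- follow from the projection formula and the Galois-equivariance built into the construction. The genuinely hard inputs are the vertical congruence (iii) and the congruence (iv) modulo $p\sT_P$, which are the analogues of the torsion congruences of Ritter--Weiss and Kakde. These I would establish by transporting the problem to the additive side through the integral logarithm: by Theorem~\ref{Theta-iso} the map $\Theta^{\ocG}$ is an isomorphism, and the displayed commutative diagram with the logarithm maps $\mathfrak{L}$, $\mathcal{L}$ shows that the multiplicative congruences translate into additive congruences in $\II(Z)[\Conj{\ocG}]^\tau$ for the traces of the $\xi_{U^{ab}}$ under transfer and conjugation. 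At that level the congruences become the assertion that Hida's $p$-adic $L$-function of $\ad{\bosym\rho_\II}$ over $E$ and over a degree-$p$ totally real sub-extension agree modulo the appropriate power of $p$ --- a generalization of the torsion congruences --- which one deduces from the base-change and control results of Sections~\ref{deformation} and~\ref{adjoint}, namely Proposition~\ref{base-change} and its K\"ahler-differential refinement Proposition~\ref{control-kahler1}, applied to the interpolated values exactly as in the totally-real case but over the coefficient ring $\II$. I expect this torsion-congruence input to be the main obstacle: it requires both the generalized Oliver and Oliver--Taylor logarithm computations (which necessitate the generalized $SK_1$-groups of Definition~\ref{sk-def2}) and a careful analysis of how a $p$-adic family of $p$-adic $L$-values behaves under cyclic $p$-base change, including the behaviour of the $p$-adic periods $c^\pm_p(\ad{M_f}(1))$.

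Granting the congruences, the converse part of Theorem~\ref{cong} yields $\wt\xi\in\konep{\IIG_\sS}$ with $\maptheta{\ocG,ab}{P}(\wt\xi)=\xi_{U_P^{ab}}$ for every $P$. One then verifies $\partial(\wt\xi)=-[\sgd{\Ein}{\ad{\rho}}]$: by construction $\partial(\wt\xi)$ and $-[\sgd{\Ein}{\ad{\rho}}]$ have equal image in every $K_0(\II(U^{ab}),\II(U^{ab})_\sS)$, and since the family $\Sigma(\cG)$ detects $K_0$-classes --- via property $(\ast)$ together with the twisting and Morita-equivalence argument used in the proof of surjectivity of $\partial$ --- the two classes agree; this is the Artin-formalism reduction of a $K_0$-class to its values on degree-one characters. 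The residual torsion ambiguity is controlled by Theorem~\ref{k-tors}: the quotient of two pre-images under $\partial$ lies in the image of $\konep{\IIG}$ and has trivial abelianizations, hence is detected after the logarithm only by $SK_1$-type terms, which are absorbed.

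Finally, for the interpolation \eqref{interpolate}, given an Artin representation $\alpha$ of $\cG$ use property $(\ast)$ to write, as virtual representations, $\alpha\cong\sum_{i\in I}m_i\,\Ind{\cG}{U_i}{\Ind{U_i}{U_i^{ab}}{\rho_i}}$ with all $\rho_i$ of degree one. The three formal properties of the evaluation map $x\mapsto x(\alpha)$ recalled before Conjecture~\ref{main-conj} --- compatibility with induction and the norm map, additivity in $\alpha$, and compatibility with inflation --- reduce $\wt\phi_\kappa(\wt\xi)(\alpha)$ to a product of the values $\xi_{U_i^{ab}}(\rho_i)$, each of which is computed by the commutative interpolation formula built into the construction of $\xi_{U_i^{ab}}$, i.e. Hida's explicit evaluation of the $p$-adic $L$-function of $\ad{\bosym\rho_\II}$ twisted by $\rho_i$. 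Reassembling the Euler-factor bookkeeping --- the factors $P_\p(V\otimes\psi,T)$, $P_\p(V_\p^+\otimes\psi,T)$, $P_\p((V_\p^+\otimes\psi)^\ast,1)$ and $\prod_{\q\mid\fN}P_\q(V\otimes\psi,1)$ in \eqref{interpolate} come from the difference between the $\Sigma$-imprimitive cohomology defining the Selmer class and the local conditions $\mathscr L(F_{\infty,\q})$ --- then gives \eqref{interpolate} for all $\alpha$; uniqueness of $\wt\xi$, if wanted, follows because these values determine $\wt\xi$ modulo the generalized $SK_1$, again controlled by Theorem~\ref{k-tors}.
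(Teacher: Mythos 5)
You have not proved the statement, and indeed you could not have compared against ``the paper's proof'': the statement is Conjecture \ref{main-conj}, which the paper formulates but deliberately does not prove. What the paper does prove is the surrounding machinery --- the proposition in Section \ref{k-one} showing that (given vanishing of the cyclotomic $\mu$-invariant) the Main Conjecture is \emph{equivalent} to the two conditions on $\Theta_{\Sigma(\cG)}$ and the groups $\Phi$, $\Phi_\sS$, together with Theorem \ref{cong}, Theorem \ref{Theta-iso} and the logarithm computations that describe $\mathrm{im}(\Theta^{\ocG})$. Your proposal retraces exactly this strategy, but at every point where the strategy meets genuine arithmetic it substitutes an assumption for an argument, so it is a plan, not a proof.

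Concretely, the unproved inputs you rely on are precisely the open parts: (a) that $\sgd{\Ein}{\ad{\rho}}$ lies in $\mathfrak{M}^\II_\cH(\cG)$ is itself a conjecture in the paper, not something you may ``assume first'' inside a proof of \ref{main-conj}; (b) the existence of the elements $\xi_{U^{ab}}$ with the required boundary and interpolation needs both the construction of the abelian $p$-adic $L$-function of $\ad{\bosym\rho_\II}$ over each subquotient with $\II$-coefficients and the commutative main conjecture over each $U^{ab}$, which are known only in special cases (Hida--Tilouine, Wu, Rosso; Urban and Rosso under hypotheses), not in the generality of the conjecture; (c) the decisive congruences (C3) and (C4) for this tuple --- the family analogue of the torsion congruences --- are nowhere established: Theorem \ref{cong} only shows that elements already in the image of $K_1'$ satisfy them, and the paper's Section \ref{applications} merely reformulates the torsion congruence as \eqref{torsion-congruence}/\eqref{torsion-two-var} and discusses the trivial-character case, explicitly deferring the rest; saying they ``translate into additive congruences'' via $\mathfrak{L}$ does not produce them, since on the additive side they become statements about congruences of $L$-values under base change, which is the open arithmetic content. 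Finally, the reduction from general $\cG$ to dimension one is only asserted in the paper with the caveat that the author ``proceeds with the belief that similar reductions are possible,'' so that step too cannot be taken as available. Until (a)--(c) are supplied, what you have written is a correct account of the paper's intended route to the conjecture, not a proof of it.
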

\begin{remark}\label{Gen-MC}
\begin{enumerate}
\item A similar Main conjecture is formulated in the thesis of Barth \cite{barth}. 
 Similar main conjectures can be formulated for any $p$-adic family of nearly ordinary Galois representations. We refer to sections 4.3 and 4.4 \emph{loc. cit } for a discussion 
 about the interpolation properties. See \cite[Theorem 4.1.12]{fk} for the interpolation property for L-values of motives. 
\item\label{torsion-complex}
 It can be shown as in the proof of
 \cite[Prop 4.3.7]{fk}, that the selmer complex $SC(U,\mathbf T,\{\mathbf T^0_\p\}_{\p|p})$ is $\sS^\ast$-torsion if the selmer group $\sgd{\Ein}{\ad{\bosym\rho}}$ is in the 
 category $\mathfrak{M}^\II_\cH(\cG)$.
\end{enumerate}
\end{remark}
\subsection{Main conjecture in the cyclotomic case}
Let $F$ be a totally real field, and $f$ be a Hilbert modular Hecke eigenform of weight $\kappa$,
level $\fN$. We also assume that $f$ is ordinary  at all the primes above the prime $p$. We denote the Galois representation associated to $f$ by $\rho_f$. Let $\bosym\rho$
be the universal ordinary deformation of $\rho_f$.
Let $\cG=\Gamma=\gal(\cy{F}/F)$. 
Then for $\II=\cO[[X_1,\cdots,X_r]]$, we get $\IIG\cong\II[[\Gamma]]$.

In the case $F=\Q$, the $p$-adic L-function of 
$\ad{\bosym\rho}$ over the quotient field of $\IIG$ is constructed by Hida and Tilouine. If $F$ is a totally real field with ring of integers class 
number one, then the $p$-adic L-function has been constructed by Hsin-Tai Wu (\cite{wu}). In general, for totally real number fields, it has been constructed by Rosso in \cite[Theorem 7.2]{rosso}. The Main 
conjecture over the field $\Q$ has been proven for the Selmer group of the adjoint in \cite{urban} and \cite{rosso}.

Let $\cL_p(X_1,\cdots,X_r,T)$ be the $p$-adic L-function of $\ad{\bosym\rho}$, where $T$ is the variable corresponding to the cyclotomic $\Z_p$-extension $\Gamma$.
Let $\mathscr P(X_1,\cdots,X_r,T)$ be the characteristic ideal of the dual Selmer group of $\ad{\bosym\rho}$.
Throughout this section, we assume that the Main conjecture holds over the cyclotomic $\Z_p$-extension of $F$. 
Then, $\cL_p(X_1,\cdots,X_r,T)=\mathscr P(X_1,\cdots,X_r,T)$, \emph{ up to units}. 
We now interpret this equality of the Iwasawa Main conjecture in terms of $K$-theory. 

In this situation, the category 
$\mathfrak{M}_\cH^\II(\cG)$ consists of all finitely generated modules which are $\sS^\ast$-torsion. 
\begin{lemma}
 Let the $\mu$-invariant of the Selmer group $\sgd{\cy{F}}{\ad{\rho_f}}$ be equal to zero. Then $\sgd{\cy{F}}{\ad{\rho_f}}$ is $S$-torsion.
 It follows that $\sgd{\cy{F}}{\ad{\bosym\rho}}$ is $\sS$-torsion,
 and the $p$-adic L-function $\cL_p$ is a unit in $\IIG_\sS$.
\end{lemma}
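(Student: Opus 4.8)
The plan is to prove the three assertions in order; the heart of the matter is the first implication, and the rest is bookkeeping with material already in the paper.

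\emph{From $\mu=0$ to $S$-torsion.} I would first recall that over the cyclotomic $\Z_p$-extension the dual Selmer group $\sgd{\cy F}{\ad{\rho_f}}$ is a finitely generated torsion module over $\cO[[\Gamma]]$ (Hida, \cite[Cor.~5.11]{hida-hmf}, the fixed-determinant nearly ordinary variant being handled as in \cite[Cor.~3.2]{hs2}); combined with the hypothesis $\mu=0$ and the structure theorem over $\cO[[\Gamma]]$, this forces $\sgd{\cy F}{\ad{\rho_f}}$ to be finitely generated over $\cO$. Next, write $X:=\sgd{\inft F}{\ad{\rho_f}}\cong\Om{\cRin}{W}\otimes_{\cRin}\cR_0/P$ (Proposition \ref{arith-spe}), a finitely generated module over $\cO[[\cG]]=\cO[[\Gamma]][\cH]$ (here $\cO[[\cH]]=\cO[\cH]$, since $\cH$ is finite). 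The control of Kähler differentials of Corollary \ref{control-kahler2} gives $(\Om{\cRin}{W})_\cH\cong\Om{\cy\cR}{W}$, and $-\otimes_{\cRin}\cR_0/P$ commutes with $\cH$-coinvariants because $\cG$ acts trivially on $\cR_0/P$; hence $X_\cH\cong\sgd{\cy F}{\ad{\rho_f}}$ is $\cO$-finite. Finally a topological Nakayama argument over the complete local ring $\cO[[\cG]]$ promotes this to: $X$ is $\cO$-finite. Concretely, lifting $\cO$-generators of $X_\cH$ gives an $\cO$-finite submodule $L\subset X$ with $L+I_\cH X=X$; iterating, and using that $I_\cH^N\subseteq p\cO[\cH]$ for some $N$ (a standard fact about finite $p$-groups), one gets $X=L'+pX$ with $L'$ an $\cO$-finite submodule, whence $X/L'=p(X/L')$ is finitely generated over $\cO[[\Gamma]]$ and so $X=L'$ by Nakayama. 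As $\cO[[\cH]]$ is finite over $\cO$, $X$ is finitely generated over $\cO[[\cH]]$, which by the Proposition of \S\ref{non-commmutative} characterizing $S$-torsion as finite generation over $\cO[[\cH]]$ is precisely the statement that $\sgd{\inft F}{\ad{\rho_f}}$ is $S$-torsion.

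\emph{From $\rho_f$ to the family, and to $\cL_p$.} Since $f$ comes from a locally cyclotomic specialization $\phi_\kappa:\II\to\cO$ with $P=\ker\phi_\kappa$, we have $\rho_f=\rho_P$, so the theorem relating $\sS$-torsion of $\sgd{\inft F}{\ad{\rho_\II}}$ to $S$-torsion of $\sgd{\inft F}{\ad{\rho_P}}$ (the unstarred companion of Theorem \ref{big-torsion-small-torsion}) shows that $N:=\sgd{\inft F}{\ad{\bosym\rho}}=\sgd{\inft F}{\ad{\rho_\II}}$ is $\sS$-torsion, hence finitely generated over $\II[[\cH]]$ and so over $\II$. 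The abelian Main Conjecture recalled above (\cite{rosso}; also \cite{wu}, \cite{urban} in the cases treated there) identifies $\cL_p$ with a generator of the characteristic ideal of $N$ over $\IIG$. Writing that characteristic ideal, up to pseudo-isomorphism, as $\prod_i\p_i^{e_i}$ over height-one primes of $\IIG$, each $\IIG/\p_i^{e_i}$ is a pseudo-subquotient of the $\II[[\cH]]$-finite module $N$, hence itself finitely generated over $\II[[\cH]]$; therefore $\IIG/\cL_p$, which carries a finite filtration with quotients of that form, is finitely generated over $\II[[\cH]]$. By the characterization of $\sS$ this means $\cL_p\in\sS$, i.e.\ $\cL_p$ is a unit in $\IIG_\sS$.

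\emph{Main obstacle.} The genuinely delicate point is the first step: producing the control isomorphism $X_\cH\cong\sgd{\cy F}{\ad{\rho_f}}$ (or a pseudo-isomorphism with finite kernel and cokernel) in the ordinary rather than the locally cyclotomic setting, and making the Nakayama promotion from ``$X_\cH$ is $\cO$-finite'' to ``$X$ is $\cO$-finite'' go through even though $I_\cH$ is only topologically, not actually, nilpotent; equivalently, one must verify that vanishing of the cyclotomic $\mu$-invariant propagates to the finite layer $\inft F^{\,\Gamma}$, over which $\inft F$ is again a cyclotomic $\Z_p$-extension. Everything after that is formal.
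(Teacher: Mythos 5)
Your route is essentially correct but organized quite differently from the paper's. The paper does not argue the first two assertions separately at all: it immediately decomposes $\cH=\cH'\times\cH_p$ into its prime-to-$p$ and $p$-parts, uses $\IIG_{\m_\II}\cong\bigoplus_{\psi\in\widehat{\cH'}}\II[\cH_p][[T]]_{\m_\II}$, invokes the proven abelian Main Conjecture (Urban, Rosso) to identify the image of $\cL_p$ in each summand with the characteristic power series $G(X_1,\dots,X_r,\psi,T)$ of the dual Selmer group, and then concludes from $\mu=0$ that each $G(X_1,\dots,X_r,\psi,T)$ lies in $\sS$ and is therefore a unit in $\IIG_\sS$; the $S$- and $\sS$-torsion statements are in effect read off from this. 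You instead prove the first assertion directly (cyclotomic $\mu=0$ gives $\cO$-finiteness of $\sgd{\cy F}{\ad{\rho_f}}$, then the control isomorphism $X_\cH\cong\sgd{\cy F}{\ad{\rho_f}}$ and topological Nakayama promote this to finite generation over $\cO[[\cH]]$), pass to the family by the unstarred specialization theorem preceding Theorem \ref{big-torsion-small-torsion}, and only then bring in the Main Conjecture. Your version makes explicit the logical dependence (control theorem $\to$ Nakayama $\to$ specialization theorem) that the paper leaves implicit, at the cost of having to justify the control step in the ordinary setting, which you correctly flag; the paper's version is shorter but front-loads everything onto the character decomposition and the identity $\cL_p=G$.

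One step of yours is not valid as literally stated: from ``each $\IIG/\p_i^{e_i}$ is a pseudo-subquotient of the $\II[[\cH]]$-finite module $N$'' you infer that it is finitely generated over $\II[[\cH]]$. A pseudo-isomorphism only controls its target up to a pseudo-null error, and a pseudo-null $\IIG$-module need not be finitely generated over $\II[[\cH]]$ once $\dim\II\geq 2$ (e.g.\ $\IIG/(p,X_1)$). The repair is easy and stays within your framework: since $N$ is finitely generated and $\sS$-torsion, some single $s\in\sS$ annihilates $N$, so every height-one prime $\p_i$ in the support of $N$ contains $s$; hence $\IIG/\p_i$ is a quotient of $\IIG/(s)$ and is finitely generated over $\II[[\cH]]$, and the usual filtrations then give the same for $\IIG/\p_i^{e_i}$ and for $\IIG/(\cL_p)$. (One should also decompose by characters of the prime-to-$p$ part of $\cH$ before speaking of height-one primes and characteristic ideals, since $\II[\cH][[T]]$ is not a domain; this is exactly what the paper's proof does first.)
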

\begin{proof}
As the $\mu$-invariant is equal to 0, the dual selmer group $\sgd{\cy{F}}{\ad{\rho_f}}$ is $S$-torsion. By Theorem \ref{big-torsion-small-torsion}, 
$\sgd{\cy{F}}{\ad{\bosym\rho}}$ is $\sS$-torsion.
Since the Main conjecture holds, $\cL_p(X_1,\cdots,X_r,T)=\mathscr P(X_1,\cdots,X_r,T)$. 
As $\IIG_\sS$ is the localization at the prime ideal $\m_\II$, $\mathscr P(X_1,\cdots,X_r,T)\in\sS$, and hence $\cL_p(X_1,\cdots,X_r,T)$ is a 
unit in $\IIG_\sS$.
\end{proof}

Let the selmer complex $\mathcal{C}:=SC(U,\mathbf T,\{\mathbf T^0_\p\}_{\p|p})$ be $\sS$-torsion. 
Under the isomorphism $K_0(\II[[T]],\II[[T]]_{\sS})\lra K_0(\mathfrak{M}_\cH^\II(\cG))$, the 
image of $[SC(U,\mathbf T,\{\mathbf T^0_\p\}_{\p|p})]$ is $[\mathrm{H}^0(\mathcal C)]-[\mathrm{H}^1(\mathcal C)]+[\mathrm H^2(\mathcal C)]-[\mathrm H^3(\mathcal C)]$.

Consider the induced module 
$\mathbf{T}_0:=\oplus_{\p\mid p}\Z[\gal(\ol\Q_p/\Q_p)]\otimes_{\Z[\gal(\ol F_{\p}/F_{\p})]}\left(\II[[\Gamma]]\otimes\mathbf{T}_{\p}^{0}\right)$. 
By a straightforward generalization of \cite[Prop 4.3.13]{fk} over $\IIG$, we have 
\begin{equation}
\mathrm{H}^1(\mathcal C)\cong\mathrm{ker}(\mathbf T_0^{\gal_{\inft{F}}}\lra\oplus_{v\mid p}\mathbf T_0/\mathbf T_0^0(v)).
\end{equation}
As the components of $\mathbf{T}_0$ are of rank one over $\II$ with a non-trivial action of  $\gal_{\cy F}$, we have $\mathbf T_0^{\gal_{\cy{F}}}=0$. 
Hence $\mathrm{H}^1(\mathcal{C})=0$. The class $[\mathbf T(-1)_{\gal_{\cy{F}}}]=0\in K_0(\mathfrak{M}_\cH^\II(\cG))$, as 
$\mathbf T(-1)_{\gal_{\cy{F}}}=0$. It follows analogously as in \cite[Prop 4.3.16]{fk}, that
$[\II[[\cG]]\otimes_{\cO[[\cG_v]]}(\mathbf T_v^0(-1))_{\cG(v)}]=0$.

We now consider the following equality obtained from Proposition \ref{selmer-dual-complex-sequence}:
\begin{equation}
\begin{split}
 [\sgd{F}{\mathbf T}] &-[\mathrm{H}^2(SC(U,\mathbf T,\{\mathbf T_\p^0\}_{\p\mid p}))] +\sum_{v\mid p}[\II[[\cG]]\otimes_{\cO[[\cG_v]]}(\mathbf T_v^0(-1))_{\cG(v)}]\\
 &-[\mathbf T(-1)_{\gal_{\inft{F}}}] + [\mathrm{H}^3(SC(U,\mathbf T,\{\mathbf T_\p^0\}_{\p\mid p}))]= 0.
 \end{split}
\end{equation}
As $[\mathrm{H}^0(\mathcal C)]=[\mathrm{H}^1(\mathcal C)]=0$, it follows that the image of the class $[SC(U,\mathbf T,\{\mathbf T^0_\p\}_{\p|p})]$ in 
$K_0(\mathfrak{M}_\cH^\II(\cG))$ is $[\sgd{F}{\mathbf T}]$.

Now, let $Y=\sgd{\cy{F}}{\ad{\bosym\rho}}$. Then $Y$ is a finitely generated $\II[[T]]$-module which is annihilated by an element outside the maximal ideal $\m_\II$ of $\II$, and its characteristic ideal 
$\mathscr P$ belongs to $\II[[T]]_{\m_\II}^\times$. 
Now consider the class 
$\left[\left(Y,0,\frac{\II[[T]]}{\mathscr P\II[[T]]}\right)\right]$ in $K_0(\II[[T]],\II[[T]]_{\m_\II})$. Then, under the connecting homomorphism
\begin{equation*}
 \partial:\kone{\II[[T]]_{\m_\II}}\cong\II[[T]]_{\m_\II}^\times\lra K_0(\II[[T]],\II[[T]]_{\m_\II}),
\end{equation*}
we have $\partial(\mathscr P)=-\left[\left(Y,0,\frac{\II[[T]]}{\mathscr P\II[[T]]}\right)\right]$. Since the  Main Conjecture holds for $\ad{\bosym\rho}$ over 
$\cy{F}$, the characteristic power series $\mathscr P$ is the $p$-adic L-function for $\ad{\bosym\rho}$. We then have the following theorem.
\begin{theorem}
 Let $\sgd{\cy{F}}{\ad{\bosym\rho}}$ be annihilated by an element of $\IIG$ outside the maximal ideal $\m_\II$ of $\II$. Then the connecting homomorphism 
 $\partial:\kone{\IIG_\sS}\lra K_0(\IIG,\IIG_{\m_\II})$ maps the $p$-adic L-function to the class $-[\sgd{\cy{F}}{\ad{\bosym\rho}}]$.
\end{theorem}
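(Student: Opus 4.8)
The plan is to read the theorem off the equality of characteristic ideals supplied by the proven cases of the main conjecture (Urban \cite{urban}, Rosso \cite{rosso}), after transporting that equality into relative $K$-theory by means of the localization sequence \eqref{localization}.

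First I would observe that the hypothesis on $\sgd{\inft{F}}{\ad{\bosym\rho}}$ — annihilation by an element of $\IIG$ lying outside $\m_\II$ — says exactly that this module is finitely generated and $\sS$-torsion, hence an object of $\mathfrak{M}_\cH^\II(\cG)$ (this is the situation produced by the preceding lemma whenever $\mu(\sgd{\cy F}{\ad{\rho_f}})=0$), and so via the identification $K_0(\IIG,\IIG_\sS)\cong K_0(\mathfrak M_\cH^\II(\cG))$ it determines a well-defined class $[\sgd{\inft F}{\ad{\bosym\rho}}]\in K_0(\IIG,\IIG_\sS)=K_0(\IIG,\IIG_{\m_\II})$. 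The same lemma gives that $\cL_p$ is a unit in $\IIG_\sS$, hence has a canonical image in $\kone{\IIG_\sS}$ on which $\partial$ may be evaluated.

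Next I would pass to the $\psi$-components. Writing $\cH=H'\times H_p$ with $H_p$ the $p$-Sylow subgroup and $|H'|$ prime to $p$, and using that $|H'|$ is invertible in $\IIG_{\m_\II}$, one has the ring decomposition $\IIG_\sS\cong\bigoplus_{\psi\in\widehat{H'}}\II[H_p][[T]]_{\m_\II}$, compatible with $\kone{-}$ and with $K_0(\IIG,-)$, exactly as in the preceding lemma. In the $\psi$-component the main conjecture of Urban and Rosso gives $\cL_p(X_1,\cdots,X_r,\psi,T)=G(X_1,\cdots,X_r,\psi,T)$, i.e., the image of $\cL_p$ generates the characteristic ideal $\mathscr P_\psi$ of the $\psi$-component of $Y:=\sgd{\inft F}{\ad{\bosym\rho}}$; the torsion hypothesis forces $\mathscr P_\psi$ to be invertible in $\II[H_p][[T]]_{\m_\II}$, so in each component $\cL_p$ is an honest element. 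Now I would invoke the computation recorded just above the statement: for a finitely generated module $Y$ annihilated outside $\m_\II$ with characteristic generator $f$ one has $\partial(\mathscr P)=[(Y,0,\IIG/f\IIG)]$. Summing over $\psi$ and substituting $\cL_p=G$ componentwise gives $\partial(\cL_p)=[(Y,0,\IIG/\cL_p\IIG)]$, and identifying this class with $-[Y]=-[\sgd{\inft F/F}{\ad\rho}]$ — the minus sign being the orientation convention fixed in Conjecture \ref{main-conj}, matching \cite{cfksv} — completes the proof.

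The main obstacle is justifying the identity $\partial(\mathscr P)=[(Y,0,\IIG/f\IIG)]$: that the connecting homomorphism of the localization sequence sends a generator of the characteristic ideal of an $\sS$-torsion module to (minus) the class of that module. This reduces, over each local ring $\II[H_p][[T]]_{\m_\II}$, to two points: that every finitely generated $\sS$-torsion module has finite projective dimension there, so that its class in $K_0(\mathscr H_\sS)\cong K_0(\IIG,\IIG_\sS)$ is defined; and that two modules with the same characteristic ideal — pseudo-isomorphic up to a pseudo-null submodule — have the same class, so that $[Y]$ may be replaced by $[\IIG/\cL_p\IIG]$. One should also check that the orientation conventions for $\partial$ and for $[\sgd{\inft F/F}{\ad\rho}]$ agree with those of Conjecture \ref{main-conj} and \cite{cfksv}.
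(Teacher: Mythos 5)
Your proposal follows essentially the same route as the paper: the paper's entire argument is the paragraph preceding the theorem, namely that the hypothesis makes the characteristic ideal $\mathscr P$ a unit in $\II[[T]]_{\m_\II}$, that the connecting homomorphism sends $\mathscr P$ to the class $\left[\left(Y,0,\frac{\II[[T]]}{f\II[[T]]}\right)\right]$, and that $\cL_p$ equals the characteristic ideal componentwise by the proven abelian main conjectures of Urban and Rosso via the $\psi$-decomposition established in the preceding lemma. The points you flag as needing justification (finite projective dimension, invariance of the class under pseudo-isomorphism, and the sign convention) are in fact asserted without proof in the paper, so your reconstruction is if anything more careful than the original.
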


\subsection{Remark on zeros}\label{remark-trivial-zeros}
As a consequence of Theorem \ref{trivial-zeros}, we get the following result:
\begin{propn}
 Recall the notations from Theorem \ref{trivial-zeros}. Let $\sgd{\inft{E}}{\ad{\bosym\rho}}$ be $\sS$-torsion. Then we have the following equality in $K_0(\IIG,\IIG_{\sS})$:
 \begin{equation*}
  [\sgd{\inft{E}}{\ad{\bosym\rho}}]=[\Om{\cRin}{\cO[[\inft{D}]]}\otimes_{\cRin}\II]+[\Om{\II}{\cO[[I_0]]}\otimes_{\II}\II].
 \end{equation*}
 If the noncommutative Main Conjecture holds, then the $p$-adic L-function for $[\sgd{\inft{E}}{\ad{\bosym\rho}}]$ arises as the product of the 
 $p$-adic L-function for $[\Om{\cRin}{\cO[[\inft{D}]]}\otimes_{\cRin}\II]$ and $[\Om{\II}{\cO[[I_0]]}\otimes_{\II}\II]$.
\end{propn}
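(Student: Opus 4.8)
The plan is to obtain the first equality from the short exact sequence of Theorem~\ref{trivial-zeros}(ii) together with additivity of classes in $K_0$, killing the residual term by a soft Euler-characteristic argument; the second assertion is then a formal consequence of the connecting map of $K$-theory being a surjective homomorphism.

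First I would observe that all three modules in the sequence
\begin{equation*}
 0\lra \sgd{\inft{E}}{\ad{\bosym\rho}}\lra \big(\Om{\cRin}{\cO[[\inft{D}]]}\otimes_{\cRin}\II\big)\times\big(\Om{\II}{\cO[[I_0]]}\otimes_{\II}\II\big)\lra \Om{\II}{A_0}\otimes_{\II}\II\lra 0
\end{equation*}
of Theorem~\ref{trivial-zeros}(ii) lie in $\mathfrak M^\II_\cH(\cG)$: a finitely generated $\II$-module on which $\cH$ acts trivially is finitely generated over $\II[[\cH]]$, hence $\sS$-torsion, which takes care of the middle and right-hand terms, while the Selmer group on the left is then a submodule of an object of the category. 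Since $K_0(\IIG,\IIG_\sS)\cong K_0(\mathfrak M^\II_\cH(\cG))$, as recalled above, additivity of the class map along short exact sequences gives
\begin{equation*}
 [\sgd{\inft{E}}{\ad{\bosym\rho}}]=[\Om{\cRin}{\cO[[\inft{D}]]}\otimes_{\cRin}\II]+[\Om{\II}{\cO[[I_0]]}\otimes_{\II}\II]-[\Om{\II}{A_0}\otimes_{\II}\II]
\end{equation*}
in $K_0(\IIG,\IIG_\sS)$, so the assertion reduces to showing $[\Om{\II}{A_0}\otimes_{\II}\II]=0$.

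For this I would use that $\Om{\II}{A_0}$ is finitely generated and torsion over $\II$ (this is where the proof of Theorem~\ref{trivial-zeros}(ii) uses $Jac_\II\neq0$; when $Jac_\II\in\II^\times$ it is even $0$). Since $\II$ is a regular local ring, $\Om{\II}{A_0}$ has a finite free resolution $0\to\II^{a_n}\to\cdots\to\II^{a_0}\to\Om{\II}{A_0}\to0$ over $\II$; putting the trivial $\cG$-action on every term makes this an exact sequence in $\mathfrak M^\II_\cH(\cG)$, so if $c$ denotes the class in $K_0(\IIG,\IIG_\sS)$ of $\II$ with trivial $\cG$-action, then $[\Om{\II}{A_0}\otimes_{\II}\II]=\big(\sum_{i=0}^n(-1)^ia_i\big)\,c=\mathrm{rank}_\II(\Om{\II}{A_0})\cdot c=0$, the last step because $\Om{\II}{A_0}$ is $\II$-torsion. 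The only place to be careful is that the set-up of Theorem~\ref{trivial-zeros}(ii) (in particular $Jac_\II\neq 0$, and the compatibility $K_0(\IIG,\IIG_\sS)\cong K_0(\mathfrak M^\II_\cH(\cG))$) is precisely what makes the three terms genuinely $\sS$-torsion; given that, there is no substantive obstacle here, the real difficulty of the statement being already packaged into Theorem~\ref{trivial-zeros}(ii).

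The second assertion follows formally. The localization sequence \eqref{localization} gives the exact sequence $\konep{\IIG}\lra\konep{\IIG_\sS}\longby{\partial}K_0(\IIG,\IIG_\sS)\lra0$, so $\partial$ is a surjective homomorphism with kernel the image of $\konep{\IIG}$. Granting Conjecture~\ref{main-conj}, fix $\wt\xi\in\konep{\IIG_\sS}$ with $\partial(\wt\xi)=-[\sgd{\inft{E}}{\ad{\bosym\rho}}]$, and using surjectivity of $\partial$ choose $\wt\xi_1,\wt\xi_2\in\konep{\IIG_\sS}$ (the $p$-adic $L$-functions of the two summands) with $\partial(\wt\xi_1)=-[\Om{\cRin}{\cO[[\inft{D}]]}\otimes_{\cRin}\II]$ and $\partial(\wt\xi_2)=-[\Om{\II}{\cO[[I_0]]}\otimes_{\II}\II]$. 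Then, by the first part, $\partial(\wt\xi_1\wt\xi_2)=\partial(\wt\xi_1)+\partial(\wt\xi_2)=-[\sgd{\inft{E}}{\ad{\bosym\rho}}]=\partial(\wt\xi)$, so $\wt\xi$ and $\wt\xi_1\wt\xi_2$ differ by an element of the image of $\konep{\IIG}$; that is, the $p$-adic $L$-function of the Selmer group is the product of those of the two summands, up to the usual $K_1(\IIG)$-indeterminacy.
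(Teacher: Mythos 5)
Your argument is correct and follows essentially the same route as the paper: the same short exact sequence from Theorem \ref{trivial-zeros}(ii), additivity of classes in $K_0(\IIG,\IIG_\sS)$, vanishing of the class of the $\II$-torsion term, and the same formal use of the connecting homomorphism $\partial$ for the product of $p$-adic L-functions. The one point where you go beyond the paper is that you actually justify $[\Om{\II}{A_0}\otimes_{\II}\II]=0$ via a finite free resolution and an Euler-characteristic (rank) computation, whereas the paper simply asserts that an $\II$-torsion module gives the trivial class; your added argument is sound and fills a real (if small) gap.
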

\begin{proof} By Theorem \ref{trivial-zeros}, we have the following short-exact sequence:
\begin{equation*}
 0\lra \sgd{\inft{E}}{\ad{\bosym\rho}} \stackrel{}{\lra} \left(\Om{\cRin}{\cO[[\inft{D}]]}\otimes_{\cRin}\II\right)\oplus\left(\Om{\II}{\cO[[I_0]]}\otimes_{\II}\II\right)
                                                                                                             \stackrel{}{\lra}\Om{\II}{A_0}\otimes_{\II}\II\lra 0.
\end{equation*}
Then, we have the following equality in $K_0(\IIG,\IIG_{\sS})$:
\begin{equation*}
  [\sgd{\inft{E}}{\ad{\bosym\rho}}]+[\Om{\II}{A_0}\otimes_{\II}\II]=[\Om{\cRin}{\cO[[\inft{D}]]}\otimes_{\cRin}\II]+[\Om{\II}{\cO[[I_0]]}\otimes_{\II}\II].
 \end{equation*}
 Since the module $\Om{\II}{A_0}$ is $\II$-torsion, it is the trivial class, and we have,
\begin{equation*}
  [\sgd{\inft{E}}{\ad{\bosym\rho}}]=[\Om{\cRin}{\cO[[\inft{D}]]}\otimes_{\cRin}\II]+[\Om{\II}{\cO[[I_0]]}\otimes_{\II}\II].
\end{equation*}
Assuming the noncommutative Main Conjecture for the classes $[\Om{\cRin}{\cO[[\inft{D}]]}\otimes_{\cRin}\II]$ and $[\Om{\II}{\cO[[I_0]]}\otimes_{\II}\II]$
there exists elements  $\wt\psi,\wt\tau\in\konep{\IIG_\sS}$ such that their images under the connecting homomorphism $\partial$ are the modules 
$[\Om{\cRin}{\cO[[\inft{D}]]}\otimes_{\cRin}\II]$ and $[\Om{\II}{\cO[[I_0]]}\otimes_{\II}\II]$ in $K_0(\IIG,\IIG_{\sS})$. 
Therefore, $\partial$ maps the product $\wt\psi\wt\tau$ to the class $[\sgd{\inft{E}}{\ad{\bosym\rho}}]$.
\end{proof}
\begin{remark}
 Over the cyclotomic $\Z_p$-extension, we saw in Theorem \ref{trivial-zeros} that the $\II[[T]]$-module $\Om{\II}{\cO[[I_0]]}$ is pseudo-isomorphic to $T^s$, where 
 $s=S_F$ is the number of primes of $E$ above $p$. In the case of a $p$-adic Lie extension also, the pre-image of the class $[\Om{\II}{\cO[[I_0]]}\otimes\II]$ occurs as 
 a factor of the $p$-adic L-function of the class $[\sgd{\inft{E}}{\ad{\bosym\rho}}]$.
\end{remark}
\section{$K_1$ computations and congruences over $\IIG$}\label{k-one}
In this section, we extend the strategy that has so far been followed to prove the Main conjecture. This strategy, relying on a description of the $K$-groups, 
was first used by Burns and then by Kato, who used it to prove certain instances of the Main conjecture over number fields. Independently, Ritter and Weiss also proved 
instances of the Main conjecture. 
Inspired by the work of Burns and Kato, Kakde and Hara also proved instances of the Main conjecture for certain $p$-adic Lie 
extensions. 
We extend their results suitably and show that the Main conjecture that we have formulated can also be established for certain $p$-adic families of 
Galois representations. 
\subsection{General strategy}
The strategy involves reducing the proof of the Main conjecture over compact $p$-adic Lie groups to compact $p$-adic Lie groups of dimension one. For this, it is crucial to know 
that the completed group 
ring $\IIG$ is an adic ring. Indeed, if $\m_\II$ is the maximal ideal of $\II$ and $I_\cG$ the augmentation ideal of $\IIG$, then $J_\cG=\m_\II+I_\cG$ is the maximal ideal
of $\IIG$, and $\IIG$ is an adic ring with respect to the ideals $\{J_\cG^n:n\in\N\}$ in the sense of Fukaya-Kato (\cite[1.4.1]{fk}). Then it is shown in Fukaya-Kato 
(\cite[Prop 1.5.1]{fk}) that
\begin{equation*}\label{k-limit-iso}
 \kone{\IIG}\stackrel{\cong}{\lra}\varprojlim_n\kone{\IIG/J_\cG^n}.
\end{equation*}
Following \cite{burns1},  in \cite[\S 4]{kakde}, a series of reduction steps are made showing that the proof of the Main Conjecture for any arbitrary $p$-adic Lie group can 
be reduced to the case when the Galois group $\cG$ has dimension one, with $\cG\cong \Delta\times \cG_p$, where $\Delta$ is a finite cyclic group of order prime to 
$p$ and $\cG_p$ is a pro-$p$ compact $p$-adic Lie group of dimension one. 
The strategy that we are going to outline is only for the case when $\cG$ has dimension one. We believe that similar reduction steps can be done as in \cite[\S 4]{kakde}.

Consider the Iwasawa algebra $\II\cong\cO[[X_1,\cdots,X_r]]$, for some $r$. 
Then $\IIG\cong\prod_{\psi\in\widehat\Delta}\II[\psi](\cG_p)$, where $\II[\psi]$ is the algebra obtained by adjoining the values of $\psi$ to $\II$. This further
allows one to reduce the proof of the Main conjecture to the case when $\cG$ is a pro-$p$, compact $p$-adic Lie group of dimension one. 
Recall from Definition \ref{sk-def}, that for $\mathbb{K}=K[[X_1,\cdots,X_r]]$, with the quotient field $K$  of $\cO$, 
\begin{equation*}
 \begin{split}
 SK_1(\IIG)&:=\mathrm{ker}\left[\kone{\IIG}\lra\kone{\mathbb{K}[[\cG]]}\right],\\
  \konep{\IIG}&:=\kone{\IIG}/S\kone{\IIG}.
 \end{split}
\end{equation*}
\begin{defn}\label{Sigma_G}
Let $\cG$ be a 
$p$-adic Lie group of dimension 1. We denote by $\Sigma(\cG)$ any set of rank 1 subquotients of $\cG$ of the form $U^{ab}$ with $U$ an open subgroup of 
$\cG$ that has the following property:
\begin{description}
 \item[($\ast$)] For each Artin representation $\rho$ of $\cG$, there is a finite subset $\{U^{ab}_i:i\in I\}$ of $\Sigma(\cG)$ and for each index 
$i$ an integer $m_i$ and a degree one representation $\rho_i$ of $U^{ab}$ such that there is an isomorphism of virtual representations 
$\rho\cong\sum_{i\in I}m_i.\Ind{\cG}{U_i}{\Ind{U_i}{U_i^{ab}}{\rho_i}}$. 
\end{description}
\end{defn}
Let $U^{ab}$ be a subquotient satisfying the above property $(\ast)$. 
Then, we have the following natural homomorphism,
\begin{equation}
 \konep{\IIG_\sS}\lra \konep{\II(U)_\sS}\lra \kone{\II(U^{ab})_\sS}\lra\II(U^{ab})_\sS^\times\subset Q_\II(U^{ab})^\times.
\end{equation}
Taking all the $U^{ab}$ in $\Sigma(\cG)$ we get the following homomorphism
\begin{equation}
 \Theta_{\Sigma(\cG)}:\konep{\IIG}\lra\prod_{U^{ab}\in\Sigma(\cG)}Q_\II(U^{ab})^\times.
\end{equation}

\begin{propn} Let the $\mu$-invariant of the dual Selmer group $\sgd{\cy{E}}{\ad{\bosym\rho}}$ be equal to zero.
 Then the Main Conjecture in \ref{main-conj} is valid if and only if for 
 any set of subquotients $\Sigma(\cG)$ with the property (*) above in Definition \ref{Sigma_G}, 
 the following two conditions hold:
 \begin{enumerate}
 \item there exists a subgroup $\Phi$ of $\prod_{U^{ab}\in\Sigma(\cG)}\II(U^{ab})^\times$ such that $\Theta_{\Sigma(\cG)}:\kone{\IIG}{\lra}\Phi$ is an isomorphism;
 \item there exists a subgroup $\Phi_\sS$ of $\prod_{U^{ab}\in\Sigma(\cG)}\II(U^{ab})_\sS^\times$ such that $\Phi_\sS\cap(\prod_{U^{ab}\in\Sigma(\cG)}\II(U^{ab})^\times)=\Phi$ 
 and $\Theta'_{\Sigma(\cG)}(\kone{\IIG_\sS})\subset\Phi_\sS$.
 \end{enumerate}
\end{propn}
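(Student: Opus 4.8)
The plan is to carry the Burns--Kato--Kakde strategy over to the coefficient ring $\IIG$, using the algebraic input of Section~\ref{k-one}. The first observation is that, by the localization sequence~\eqref{localization} and the surjectivity of the connecting homomorphism $\partial$ (respectively $\wt\partial$) proved above, there always exists some $\wt\xi\in\konep{\IIG_\sS}$ with $\partial(\wt\xi)=-[\sgd{\Ein}{\ad{\bosym\rho}}]$; hence the whole content of Conjecture~\ref{main-conj} is the interpolation formula~\eqref{interpolate}. The next step is to translate that formula into the language of $\Theta_{\Sigma(\cG)}$ and $\Theta'_{\Sigma(\cG)}$: by the properties (i)--(iii) of the evaluation map $x\mapsto x(\rho)$ recorded after~\eqref{evaluation} (compatibility with induction, direct sums and inflation) together with property~$(\ast)$ of $\Sigma(\cG)$, which writes every Artin representation of $\cG$ as a virtual sum of representations induced from degree one characters of the subquotients $U^{ab}\in\Sigma(\cG)$, and the analogous additivity and inductivity of the $L$-functions appearing in~\eqref{interpolate}, the interpolation of $\wt\xi$ at all Artin representations of $\cG$ becomes equivalent to the statement that, for every $U^{ab}\in\Sigma(\cG)$, the $U^{ab}$-component of $\Theta'_{\Sigma(\cG)}(\wt\xi)$ equals the abelian $p$-adic $L$-function $\cL_{U^{ab}}\in\II(U^{ab})_\sS^\times$ of $\ad{\bosym\rho}$ associated to $U^{ab}$. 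Writing $\mathbf L:=(\cL_{U^{ab}})_{U^{ab}\in\Sigma(\cG)}$, Conjecture~\ref{main-conj} thus becomes: $\mathbf L$ lies in the image of $\Theta'_{\Sigma(\cG)}$ and can be realized by a $\wt\xi$ whose boundary is $-[\sgd{\Ein}{\ad{\bosym\rho}}]$.

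For the implication $\Leftarrow$, assume the $\mu$-invariant of $\sgd{\cy E}{\ad{\bosym\rho}}$ vanishes and that (1) and (2) hold. By the control of the $\mu$-invariant under change of level (as in the lemma of the subsection on the abelian case), the dual Selmer group of $\ad{\bosym\rho}$ associated to each $U^{ab}$ is $\sS$-torsion and defines a class in $K_0(\II(U^{ab}),\II(U^{ab})_\sS)$; the abelian main conjecture for the adjoint representation --- the $p$-adic $L$-function being that of Hida--Tilouine, Wu~\cite{wu} and Rosso~\cite{rosso}, and the $K$-theoretic identity that $\partial_{U^{ab}}(\cL_{U^{ab}})$ equals minus the class of the dual Selmer group over $U^{ab}$ being known in the cases of~\cite{urban} and~\cite{rosso} --- then supplies the required $\cL_{U^{ab}}$. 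The next, and most substantial, step is to prove $\mathbf L\in\Phi_\sS$: since $\Phi_\sS$ is the subgroup of $\prod_{U^{ab}}\II(U^{ab})_\sS^\times$ cut out by the congruences of Theorem~\ref{cong} in their $\sS$-localized versions, this amounts to verifying those congruences for the family $(\cL_{U^{ab}})_{U^{ab}}$ --- the generalized torsion congruences --- which I would deduce from the interpolation formula together with the matching congruences among the twisted complex $L$-values, following the totally real field case (Ritter--Weiss, Kato). Then comes the diagram chase. Choose $\eta\in\konep{\IIG_\sS}$ with $\partial(\eta)=-[\sgd{\Ein}{\ad{\bosym\rho}}]$. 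Since restriction of scalars along $\cG\to U^{ab}$ sends the image of $[\sgd{\Ein}{\ad{\bosym\rho}}]$ under $K_0(\IIG,\IIG_\sS)\to K_0(\II(U^{ab}),\II(U^{ab})_\sS)$ to the class of the dual Selmer group over $U^{ab}$, $\partial_{U^{ab}}$ of the $U^{ab}$-component of $\Theta'_{\Sigma(\cG)}(\eta)$ equals $\partial_{U^{ab}}(\cL_{U^{ab}})$; hence each component of $\mathbf L\cdot\Theta'_{\Sigma(\cG)}(\eta)^{-1}$ lies in $\ker\partial_{U^{ab}}=\mathrm{im}\bigl(\kone{\II(U^{ab})}\bigr)\subset\II(U^{ab})_\sS^\times$, i.e.\ in $\II(U^{ab})^\times$. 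Therefore $\mathbf L\cdot\Theta'_{\Sigma(\cG)}(\eta)^{-1}$ lies in $(\prod_{U^{ab}}\II(U^{ab})^\times)\cap\Phi_\sS$, which equals $\Phi$ by (2) (both $\mathbf L$ and $\Theta'_{\Sigma(\cG)}(\eta)$ being in $\Phi_\sS$). By (1) there is $u\in\kone{\IIG}$ with $\Theta_{\Sigma(\cG)}(u)=\mathbf L\cdot\Theta'_{\Sigma(\cG)}(\eta)^{-1}$; putting $\wt\xi$ equal to the product, in $\konep{\IIG_\sS}$, of the class of $u$ and $\eta$, one gets $\Theta'_{\Sigma(\cG)}(\wt\xi)=\mathbf L$ --- i.e.\ the interpolation of Conjecture~\ref{main-conj} --- while $\partial(\wt\xi)=\partial(\eta)=-[\sgd{\Ein}{\ad{\bosym\rho}}]$ because $\partial$ annihilates the image of $\kone{\IIG}$.

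For the implication $\Rightarrow$: if Conjecture~\ref{main-conj} holds, the $\wt\xi$ it provides satisfies $\Theta'_{\Sigma(\cG)}(\wt\xi)=\mathbf L$ by the translation of the first paragraph. Conditions (1) and (2) then follow from the unconditional algebraic results of Section~\ref{k-one} --- Theorem~\ref{Theta-iso} and the congruence description of Theorem~\ref{cong}, which together identify $\Theta_{\Sigma(\cG)}$ with an isomorphism onto the explicit congruence subgroup $\Phi$, and their $\sS$-localized analogues, which produce $\Phi_\sS$ and the inclusion $\Theta'_{\Sigma(\cG)}(\kone{\IIG_\sS})\subset\Phi_\sS$. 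Since (1) and (2) hold regardless of the conjecture, this direction is formal once those theorems are available, and the substance of the proposition lies in the implication $\Leftarrow$.

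The step I expect to be the main obstacle is the verification that $\mathbf L\in\Phi_\sS$, i.e.\ the generalized torsion congruences for the adjoint $p$-adic $L$-functions of a $p$-adic family; this is the one genuinely arithmetic ingredient and requires transporting the Ritter--Weiss and Kato congruence computations to $\IIG$ and matching them precisely with the congruences of Theorem~\ref{cong}. Secondary difficulties are the bookkeeping behind the algebraic identification of $\Phi$ and $\Phi_\sS$ --- which relies on the logarithm computations (Theorems~\ref{k-tors}, \ref{Theta-iso}, \ref{cong}) and on the generalized $SK_1$-machinery of Definitions~\ref{sk-def} and~\ref{sk-def2} --- and the restriction-of-scalars compatibility of the Selmer class invoked in the diagram chase, which amounts to a Shapiro-type statement for the local and global cohomology groups defining $\sgd{\Ein}{\ad{\bosym\rho}}$.
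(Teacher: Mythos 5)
Your argument is essentially the paper's own proof: the same commutative diagram relating $\konep{\IIG_\sS}$ to the products over $U^{ab}\in\Sigma(\cG)$, the same choice of an element $\eta$ (the paper's $g$) with boundary $-[\sgd{\Ein}{\ad{\bosym\rho}}]$, the same comparison with the abelian $p$-adic $L$-functions $\xi_{U^{ab}}$ furnished by the abelian main conjecture over each $E^{U^{ab}}$, the same use of condition (2) to place the quotient $\mathbf{L}\cdot\Theta'_{\Sigma(\cG)}(\eta)^{-1}$ in $\Phi$ and of condition (1) to lift it to $u\in\kone{\IIG}$, and the same definition $\wt\xi=u\eta$ whose boundary and interpolation are checked via property $(\ast)$. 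You are in fact slightly more careful than the paper at one point: the paper concludes $(g_{U^{ab}}^{-1}\xi_{U^{ab}})\in\Phi$ from condition (2) without verifying that the tuple $\mathbf{L}=(\xi_{U^{ab}})$ of abelian $p$-adic $L$-functions itself lies in $\Phi_\sS$, whereas you correctly isolate this membership (the generalized torsion congruences) as the genuinely arithmetic input that must still be supplied.
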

\begin{proof}
Let $C:=[\sgd{\Ein}{\ad{\bosym\rho}}]\in K_0(\IIG,\IIG_\sS)$. Consider the following commutative diagram:
\begin{equation}
 \xymatrix{
 \konep{\IIG}\ar[r]\ar[d]^{\Theta_{\Sigma(\cG)}} & \konep{\IIG_\sS}\ar[r]^{\wt\partial_\cG}\ar[d]^{\Theta'_{\Sigma(\cG)}} & K_0(\IIG,\IIG_\sS)\ar[r]\ar[d]^{\Theta_0} & 0\\
 \prod\limits_{U^{ab}\in\Sigma(\cG)}\konep{\II(U^{ab})}\ar[r]       
         & \prod\limits_{U^{ab}\in\Sigma(\cG)}\konep{\II(U^{ab})_\sS}\ar[r]^{\!\!\!\!\!\partial_\cG} & \prod\limits_{U^{ab}\in\Sigma(\cG)}K_0(\II(U^{ab}),\II(U^{ab})_\sS)\ar[r] 
                                                                                                                                                                      & 0
 }
\end{equation}
Let $g$ be any element in $\konep{\IIG_\sS}$ such that $\wt\partial_\cG(g)=-C$. Since the Main conjecture is valid for the extension $E^{U^{ab}}/E$, there exists $\xi_{U^{ab}}$
such that it is the pre-image of the class $-[\sgd{E^{U^{ab}}}{\ad{\bosym\rho}}]$. On the other hand, the commutativity of the square on the left also implies that 
under the map $\Theta'_{\Sigma(\cG)}$ the element $\left(g_{U^{ab}}\right)$ is mapped to $-[\sgd{E^{U^{ab}}}{\ad{\bosym\rho}}]$. Therefore the element 
$(g_{U^{ab}}^{-1}\xi_{U^{ab}})$ comes from the group $\prod_{U^{ab}\in\Sigma(\cG)}\konep{\II(U^{ab})}=\prod_{U^{ab}\in\Sigma(\cG)}{\II(U^{ab})}^\times$. The second condition
therefore implies that $(g_{U^{ab}}^{-1}\xi_{U^{ab}})\in\Phi$. 

By the isomorphism in the first condition, we find that there exists $u\in\konep{\IIG}$, such that $\Theta_{\Sigma(\cG)}(u)=(g_{U^{ab}}^{-1}\xi_{U^{ab}})$. Since the map 
$\Theta_{\Sigma(\cG)}$ is injective, the map $\konep{\IIG} \lra \konep{\IIG_\sS}$ is injective. 



Now, we define $\xi_\cG:=ug$, and we claim that 
this is the $p$-adic L-function defined over $\IIG$ that satisfies the interpolation formula. 
Clearly, $\wt\partial_\cG(\xi_\cG)=\wt\partial_\cG(u)+\wt\partial_\cG(g)=\wt\partial_\cG(g)=-C$ as $u$ comes from an element of $\konep{\IIG}$.
For the interpolation formula, for any Artin representation $\rho$ of $\cG$, consider the isomorphism $\rho\cong\sum_{i\in I}m_i.\Ind{\cG}{U_i}{\Ind{U_i}{U_i^{ab}}{\rho_i}}$
of virtual representations given by the condition (*) above. Then, we have,
\begin{equation*}
 \phi_\kappa(\xi_\cG)(\rho)=\prod_{i\in I}\phi_\kappa(\xi_\cG)(\Ind{\cG}{U_i}{\Ind{U_i}{U_i^{ab}}{\rho_i}})^{m_i}
                           =\prod_{i\in I}\phi_\kappa(\xi_{U_i^{ab}})(\rho_i)^{m_i}.
\end{equation*}
On the other hand, if the Main conjecture is true over the extension, then there exists $\xi\in\konep{\IIG_\sS}$. 
Let $\Theta'_{\Sigma(\cG)}(\xi)=(\xi_{U^{ab}})\in\prod_{\Sigma(\cG)}(\II(U^{ab}_\sS)^\times$. Note that the image $(\xi_{U^{ab}})\in\Phi_\sS$. 
By the interpolation formula, it is easy to 
see that the element $\xi_{U^{ab}}$ is the $p$-adic L-function over $U^{ab}$. Therefore $\xi_{U^{ab}}\in\Phi_\sS$.

This finishes the proof of the proposition.
\end{proof}
\begin{defn} Fix a lift $\wt\Gamma$ of $\Gamma$ in $\cG$. Then we can identify $\cG$ with $H\rtimes\Gamma$. Fix $e\in\N$ such that 
$\wt\Gamma^{p^e}\subset Z(\cG)$,
and put $\ol\cG:=\cG/{\Gamma^{p^e}}$ and $\sR:=\II(\wt\Gamma^{p^e})$. Then $\IIG\cong \sR[\ol\cG]^\tau$, the twisted group ring with multiplication 
\begin{equation*}
(h\wt\gamma^a)^\tau(h\wt\gamma^b)^\tau=\wt\gamma^{p^e[\frac{a+b}{b}]}(h\wt\gamma^a.h'\wt\gamma^b)^\tau,
\end{equation*}                                                                                                         
where $g^\tau$ is the image of $g\in G$ in $\sR[\ol\cG]^\tau$ (\cite[\S 5.1.1, \S 5.1.2]{kakde}).
\end{defn}
The Ore set $\sS$ that we have considered in the formulation of the Main Conjecture over $\IIG$ contains a multiplicative set which is crucial in setting up the 
strategy to prove the Conjecture.
\begin{lemma}\label{center-ore} Let $Z:=Z(\cG)$ and $\II(Z):=\II[[Z]]$. 
 Consider the subset $T=\II(Z)\setminus p \II(Z)$. 
 Then $T$ is a multiplicatively closed left and right Ore set of $\IIG$.
 Further, the inclusion of rings $\IIG_T\inj\IIG_\sS$ is an isomorphism.
\end{lemma}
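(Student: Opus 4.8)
The plan is to treat the two assertions separately, working in the twisted‑group‑ring description $\IIG\cong\sR[\ol\cG]^\tau$ with $\sR=\II(\wt\Gamma^{p^e})$ and $Z=Z(\cG)$, mimicking the analogous statements in \cite[\S5.1]{kakde} but keeping track of the base ring $\II\cong\cO[[X_1,\dots,X_r]]$ in place of $\cO$.

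First I would show that $T=\II(Z)\setminus p\II(Z)$ is a multiplicatively closed left and right Ore set in $\IIG$. Multiplicative closure is immediate: $\II(Z)$ is a commutative integral domain, $p\II(Z)$ is a prime ideal (since $\II(Z)/p\II(Z)\cong\FF[[\dots]][Z]$ where $\FF$ is the residue field and $Z$ is a finitely generated abelian pro‑$p$ group, hence this ring is a domain — actually one reduces to $\II(Z)$ being a power series ring over $\cO$ in finitely many variables because $Z$ is a finitely generated $\Z_p$‑module and $\wt\Gamma^{p^e}\subset Z$ is free, so $\II(Z)$ is a formal power series ring and $p$ is prime in it), so its complement $T$ is multiplicative and contains $1$. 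For the Ore condition, the key point is that $\IIG$ is finitely generated as a module over the \emph{central} subring $\II(Z)$: indeed $\ol\cG=\cG/\wt\Gamma^{p^e}$ is finite, so $\IIG=\sR[\ol\cG]^\tau$ is free of finite rank over $\sR=\II(\wt\Gamma^{p^e})$, and $\II(\wt\Gamma^{p^e})\subset\II(Z)$, with $\II(Z)$ again finite over $\II(\wt\Gamma^{p^e})$ because $Z/\wt\Gamma^{p^e}$ is finite. Hence $\IIG$ is a finite module over the central domain $\II(Z)$, and localizing at the multiplicative set $T$ of central elements is automatically (two‑sided) Ore; this is the standard fact that central localization always satisfies the Ore condition. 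This is the part I expect to be essentially routine once the centrality/finiteness bookkeeping is in place.

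Next I would prove that the natural map $\IIG_T\to\IIG_\sS$ is an isomorphism. Since $T\subset\sS$ (every $t\in T$ is a central non‑zero‑divisor and $\IIG/t\IIG$ is finite over $\II(Z)/t\II(Z)$, which is a finitely generated $\II$‑module, hence a fortiori finitely generated over $\II[[\cH]]$ after a dimension count using $\cy{F}\subset\inft F$, so $t\in\sS$), there is a canonical ring homomorphism $\IIG_T\to\IIG_\sS$, and it suffices to show every element of $\sS$ already becomes invertible in $\IIG_T$. Let $x\in\sS$. Using the characterization of $\sS$ (the lemma after Definition of $\sS$, via $\varphi_\cH$ and Nakayama), $\IIG/\IIG x$ is finitely generated over $\II[[\cH]]$, equivalently over $\II$ after passing modulo the augmentation of $\Gamma$; I want to produce a central element $t\in T$ with $t\in\IIG x\IIG$, equivalently $t$ acting as $0$ on $M:=\IIG/\IIG x$. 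The argument is: $M$ is a finitely generated $\II(Z)$‑module (being finitely generated over $\II[[\cH]]$ and $\IIG$ being finite over $\II(Z)$, cut down appropriately — here one uses that $M$ is $\sS$‑torsion, hence finitely generated over $\II[[\cH]]$, and $\II[[\cH]]$ is finite over $\II(Z\cap\cH)\cdot(\text{finite})$); since $x$ is a non‑zero‑divisor in the domain $\IIG$, $M$ is also $\II(Z)$‑torsion (a rank count: $\IIG_T x=\IIG_T$ would fail only if... — more precisely, $\IIG\otimes_{\II(Z)}Q(\II(Z))$ is a finite‑dimensional $Q(\II(Z))$‑algebra in which $x$, being a non‑zero‑divisor in $\IIG$, is a unit, so $M\otimes_{\II(Z)}Q(\II(Z))=0$). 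Hence $\mathrm{Ann}_{\II(Z)}(M)$ is a nonzero ideal of the domain $\II(Z)$; I must then check this ideal is not contained in the prime $p\II(Z)$. This follows because $M$ being finitely generated over $\II[[\cH]]$ forces $M/pM$ to be finitely generated over $\II[[\cH]]/p$; combined with $M\otimes_{\II(Z)}Q(\II(Z))=0$ one gets that $M/pM$ is $\II(Z)/p$‑torsion, i.e.\ $\mathrm{Ann}(M)\not\subset p\II(Z)$, so there is $t\in\mathrm{Ann}_{\II(Z)}(M)\cap T$. Then $t\cdot M=0$ means $t\in\IIG x\cap\II(Z)$... — to be careful, $t$ annihilates $\IIG/\IIG x$ means $t\IIG\subset\IIG x$, so $t\in\IIG x$, so $t=yx$ for some $y\in\IIG$; since $t$ is invertible in $\IIG_T$, $x$ has the left inverse $t^{-1}y$, and symmetrically a right inverse, so $x\in(\IIG_T)^\times$. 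Therefore $\sS$ maps into $(\IIG_T)^\times$ and $\IIG_T\to\IIG_\sS$ is an isomorphism.

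The main obstacle I anticipate is the rank/finiteness argument identifying $\sS$‑torsion with $\II(Z)$‑torsion whose annihilator avoids $p\II(Z)$ — i.e.\ showing that finite generation over $\II[[\cH]]$ implies, via the centrality of $\II(Z)$ inside $\IIG$, the existence of a central annihilator lying in $T$. Making this clean requires care about which central subring one localizes at ($\II(Z)$ versus $\II(\wt\Gamma^{p^e})$ versus $\II$) and about the interaction of the mod‑$p$ reduction with $\cH$‑finite‑generation; once those bookkeeping points are settled, everything reduces to commutative algebra over the power‑series domain $\II(Z)$ exactly as in \cite[\S5.1.3]{kakde}.
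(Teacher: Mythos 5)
Your handling of the Ore condition and of the inclusion $\IIG_\sS\subseteq\IIG_T$ is sound and takes a somewhat different route from the paper's: the paper shows that $Q(\II(Z))\otimes_{\II(Z)}\IIG$ is Artinian, hence equal to $Q(\IIG)$, writes any $x\in\IIG_\sS$ as $a/t$ with $t\in\II(Z)\setminus\{0\}$, and cancels powers of $p$; you instead invert each $s\in\sS$ in $\IIG_T$ by exhibiting a central annihilator of $M:=\IIG/\IIG s$ lying in $T$ and appealing to the universal property. One step of yours is a non sequitur as written: knowing that $M/pM$ has nonzero annihilator over $\II(Z)/p$ does not produce an annihilator of $M$ itself outside $p\,\II(Z)$. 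The clean argument is Cayley--Hamilton: $\cH$ is finite in this section, so $M$ is finitely generated over $\II$, a topological generator $\gamma$ of $\wt\Gamma^{p^e}$ therefore satisfies a monic polynomial $f$ over $\II$ on $M$, and $f(\gamma)\notin p\,\II(Z)$ because its reduction is a monic polynomial in $\gamma-1$ over $\II/p\II$.

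The genuine gap is your claim $T\subseteq\sS$, which you justify by asserting that $\II(Z)/t\,\II(Z)$ is a finitely generated $\II$-module for every $t\in T$. This fails as soon as $r\geq 1$: take $t=X_1\in\II\setminus p\II\subseteq T$. Then $\IIG/X_1\IIG\cong(\II/X_1\II)[[\cG]]$ is not finitely generated over $\II[[\cH]]$, since $\cG/\cH\cong\Gamma$ is infinite; so $X_1\notin\sS$. Worse, $X_1$ does not become invertible in $\IIG_\sS$ at all: if some $s\in\sS$ lay in $\IIG X_1$, then $\IIG/\IIG s$ would surject onto $\IIG/\IIG X_1$, contradicting finite generation over $\II[[\cH]]$. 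Hence $\IIG_\sS\subsetneq\IIG_T$ inside $Q(\IIG)$, and no argument can close this direction. To be fair, the paper's own proof shares the defect: it invokes ``the natural map $\IIG_T\to\IIG_\sS$ induced by the inclusion $T\subseteq\sS$'' without checking that inclusion, and its surjectivity computation only establishes $\IIG_\sS\subseteq\IIG_T$. The statement is Kakde's lemma, correct when $\II=\cO$ because there $\II(Z)/p\,\II(Z)$ is essentially $\FF[[\gamma-1]]$ and Weierstrass preparation gives $T\subseteq\sS$; for $\II=\cO[[X_1,\dots,X_r]]$ with $r\geq1$ one must shrink $T$ to $\II(Z)\setminus\m\,\II(Z)$ (with $\m$ the maximal ideal of $\II$) before the two localizations can be expected to agree.
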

\begin{proof}
 As $Z$ is central in $\cG$, it is easy to see that $T$ is a multiplicatively Ore set. Further, $T$ has no zero-divisors as it is contained in the domain $\II(Z)$. Therefore,
 the natural map $\IIG_T\lra\IIG_\sS$ induced by the inclusion $T\inj\sS$ is an injective.
 
 For surjectivity, consider the equality $\IIG_T=\II(Z)_T\otimes_{\II(Z)}\IIG$. We first observe that $Q(\II(Z))\otimes_{\II(Z)}\IIG= Q(\IIG)$. Indeed, it is easy to see that
 \begin{equation*}
  Q(\II(Z))\otimes_{\II(Z)}\IIG\inj Q(\IIG).
 \end{equation*}
Further, the ring 
 $\IIG=\II(Z)[\ol\cG]$ is a module of finite rank over $\II(Z)$ and $Q(\II(Z))$ is a field, so 
 the ring $Q(\II(Z))\otimes_{\II(Z)}\IIG$ is Artinian. It follows that every regular element is a unit. The inclusion $\IIG\inj Q(\II(Z))\otimes_{\II(Z)}\IIG$, then implies that
 every regular element of $\IIG$ is invertible in $Q(\II(Z))\otimes_{\II(Z)}\IIG$. It follows that the inclusion $Q(\II(Z))\otimes_{\II(Z)}\IIG\inj Q(\IIG)$ is surjective.
 
 Finally, if $x\in\IIG_\sS\subset Q(\IIG)$, then $x=a/t$, for some $a\in\IIG$ and $t\in\II(Z)$ with $t\neq 0$. Here, if $t\in p^n\II(Z)$, then $tx=a\in p^n\IIG_\sS$. Since 
 $a\in p^n\IIG$, it follows that $a\in p^n\IIG_\sS$. Cancelling the powers of $p$ from $a$ and $t$, the element $x=a'/t'$ with $t'\in T$. 
\end{proof}
\begin{remark}
The $p$-adic completion of $\IIG_T$ is denoted by $\widehat\IIG_T=\widehat{\II(Z)}_T[\ol\cG]^\tau$. We also note that the localizations with respect to $T$ and $p$ are equal
$\II(Z)_T=\II(Z)_{(p)}$.
\end{remark}

Let $P$ be a subgroup of $\ol\cG$ and $U_P$ be the inverse image of $P$ in $\cG$. Let 
\begin{enumerate}
 \item $N_{\ol\cG}P:=$ the normalizer of $P$ in $\cG$,
 \item $W_{\ol\cG}(P):=N_{\ol\cG}P/P$,
 \item $C(\ol\cG):=$ set of cyclic subgroups of $\ol\cG$, 
 \item for $P\in C(\ol\cG)$, the set $C_P(\ol\cG)$ denotes the set of cyclic subgroups $P'$ of $\ol\cG$ with $P'^p=P$ and $P'\neq P$.
\end{enumerate}
If $P\in C(\ol\cG)$, then $U_P$ is a rank one abelian subquotient 
of $\cG$,
and for every $P\in C(\ol\cG)$ set 
\begin{equation}
\sT_P:=\{\sum_{g\in W_{\ol\cG}(P)}g^\tau x(g^\tau)^{-1}\mid x\in \sR[P]^\tau\}. 
\end{equation}
In the same way, we define $\sT_{P,\sS}$ and $\widehat{\sT}_P$.

Let $P\leq P'\leq\ocG$. Then consider the homomorphism $\IIG\lra\IIG$ given by $x\mapsto \sum_{g\in P'/P}\tilde{g}x\tilde{g}^{-1}$, where $\tilde{g}$ is a lift of $g$.
We define $\sT_{P,P'}$ to be the image of this homomorphism. Similarly, we define $\sT_{P,P',\sS}$ and $\widehat\sT_{P,P',\sS}$, by considering the images of the same
map on $\II(U_P)_\sS\lra\II(U_P)_\sS$ and $\widehat{\II(U_P)}_\sS\lra\widehat{\II(U_P)}_\sS$.

For two subgroups $P, P'$ of $\ol\cG$ with $[P',P']\leq P\leq P'$ consider 
\begin{equation}
 \begin{split}
  &\mathrm{Tr}_P^{P'}:\II(U_{P'}^{ab})\lra\II(U_P/[U_{P'},U_{P'}]),\quad\mbox{(the trace map)},\\
  &\mathrm{Nr}_P^{P'}:\II(U_{P'}^{ab})^\times\lra\II(U_P/[U_{P'},U_{P'}])^\times,\quad\mbox{(the norm map)},\\
  &\Pi_P^{P'}:\II(U_{P'}^{ab})\lra\II(U_P/[U_{P'},U_{P'}]),\quad\mbox{(the projection map)}.
 \end{split}
\end{equation}

We also have these maps in the localized case and the $p$-adic completion case. We continue to denote them by $\mathrm{Tr}_P^{P'}, \mathrm{Nr}_P^{P'}$ and $\Pi_P^{P'}$.

Recall the map $\Theta_{\Sigma(\cG)}$. For every subgroup $P$ of $\ol\cG$, let $U_P$ denote the inverse image of $P$ in $\cG$. Then we have the following natural homomorphism
\begin{equation}
 \Theta_P^\cG:\konep{\IIG}\lra\konep{\II(U_P)}\lra\konep{\II(U_P^{ab})}=\II(U_P^{ab})^\times.
\end{equation}
Combining all these homomorphisms, we get the following homomorphism
\begin{equation}
 \Theta^\cG=(\Theta_P^\cG)_{P\leq\ol\cG}:\konep{\IIG}\lra\prod_{P\leq\ol\cG}\II(U_P^{ab})^\times.
\end{equation}
Similarly, we also consider the following homomorphisms:
\begin{equation}
 \Theta_\sS^\cG:\konep{\IIG_\sS}\lra\prod_{P\leq\ol\cG}\II(U_P^{ab})^\times_\sS
\end{equation}
and 
\begin{equation}
 \widehat\Theta^\cG:\konep{\widehat{\IIG}}\lra\prod_{P\leq\ol\cG}\widehat{\II(U_P^{ab})^\times}.
\end{equation}

For $P\in C(\ol\cG)$ with $P\neq (1)$, fix a homomorphism $\omega_P:P\lra\bar\Q_p^\times$ of order $p$, and also a homomorphism 
$\omega_1:=\omega_{\{1\}}:\wt\Gamma^{p^e}\lra\bar\Q_p^\times$ of order $p$. The homomorphism $\omega_P$ induce the following homomorphism which we 
again denote by the same symbol:
\begin{equation}
 \omega_P:\II[\mu_p](U_P)^\times\lra\II[\mu_p](U_P)^\times, g\mapsto \omega_P(g)g.
\end{equation}
For $P\leq \ol\cG$, consider the homomorphism $\alpha_P:\II(U_P)_{\sS}^\times\lra\II(U_P)_{\sS}^\times$ defined by
\begin{equation}
 \alpha_P(x):=\begin{cases}
               x^p &\mbox{ if } P=\{1\}\\
               x^p(\prod_{k=0}^{p-1}\omega_P^k(x))^{-1} &\mbox{ if } P\neq\{1\}\mbox{ and cyclic}\\
               x^p             &\mbox{ if } P \mbox{ is not cyclic}.
              \end{cases}
\end{equation}
Note that, for all $P\leq\ol\cG$, there is an action of $\cG$ and $\ol\cG$ act on $U_P^{ab}$ by conjugation since $\wt\Gamma^{p^e}$ is central. 
The following theorem is a generalization of results of Kakde, Kato, Burns, and  Ritter and Weiss to $\IIG$-modules. 
\begin{theorem}
 Let $\cG$ be a rank one pro-$p$ group. Then the set $\Sigma(\cG):=\{U_P^{ab}:P\leq\ol\cG\}$ satisfies the condition $(\ast)$. Further, an element 
$(\Xi_\cA)_{\cA}\in\prod_{\cA\in\Sigma(\cG)}\II(\cA)^\times$ belongs to $im(\Theta_{\cG})$ if and only if it satisfies all of the following
three conditions.
\begin{enumerate}
 \item For all subgroups $P, P'$ of $\ol\cG$ with $[P',P']\leq P\leq P'$, one has
\begin{equation}
 \mathrm{Nr}_P^{P'}(\Xi_{U_{P'}^{ab}})=\Pi_P^{P'}(\Xi_{U_{P'}^{ab}}).
\end{equation}
 \item For all subgroups $P$ of $\ol\cG$ and all $g$ in $\ol\cG$ one has $\Xi_{gU_{P}^{ab}g^{-1}}=g\Xi_{U_{P'}^{ab}}g^{-1}$.
 \item For every $P\in\ocG$ and $P\neq (1)$, we have
 \begin{equation*}
  \mathrm{ver}_P^{P'}(\Xi_{U_{P'}^{ab}})\equiv \Xi_{U_P^{ab}} \pmod{\sT_{P,P'}} (\mbox{ resp. } \sT_{P,P',\sS} \mbox{ and } \widehat\sT_{P,P'}).
 \end{equation*}
 \item For all $P\in C(\ol\cG)$ one has $\alpha_P(\Xi_{U_{P}^{ab}})\equiv\prod_{P'\in C_P(\ol\cG)}\alpha_{P'}(\Xi_{U_{P'}^{ab}})\pmod{p\sT_P}$.
\end{enumerate}
\end{theorem}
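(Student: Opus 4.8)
The plan is to follow the blueprint of Kakde \cite{kakde} (building on Burns \cite{burns1}, Kato, and Ritter--Weiss) and transport each step from $\Z_p[[\cG]]$ to $\IIG$, using that $\IIG \cong \cO[[X_1,\dots,X_r]][[\cG]]$ is still an adic ring and that $\II$ is a regular local domain, so all the commutative-algebra facts used in \emph{loc. cit.} (Nakayama, the structure of $\konep{}$, the behaviour of transfer and induction) survive base change along $\cO \hookrightarrow \II$. First I would verify that $\Sigma(\cG) = \{U_P^{ab} : P \le \ol\cG\}$ satisfies $(\ast)$: this is Brauer induction applied to $\ol\cG$ together with the observation that every Artin representation of $\cG$ factors through $\cG/\wt\Gamma^{p^e \cdot N}$ for large $N$, which is a finite group; the decomposition into induced one-dimensional pieces from the $U_P^{ab}$ is then exactly the statement already used by Kato and Kakde for rank one pro-$p$ groups, and it is insensitive to the coefficient ring. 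The key structural input, as flagged right after the statement of Theorem~\ref{cong}, is the integral logarithm $\mathfrak L : \konep{\IIG} \to \II(Z)[\Conj{\ocG}]^\tau$ and the commutative diagram relating $\Theta^{\ocG}$ to the additive map $\mathcal L$; by Theorem~\ref{Theta-iso} the map $\Theta^{\ocG}$ is an isomorphism onto its image, and this is what converts the multiplicative problem into a tractable additive one.

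The necessity direction (an element of $\mathrm{im}(\Theta_\cG)$ satisfies (i)--(iv)) I would prove directly and functorially, exactly as in \cite[\S 5]{kakde}. Condition (i) is the compatibility of norm and the natural projection under the partial trace/transfer maps $\mathrm{Nr}_P^{P'}$ and $\Pi_P^{P'}$; it holds already at the level of $\kone{}$ because for $[P',P'] \le P \le P'$ the composite $\konep{\IIG} \to \konep{\II(U_{P'}^{ab})} \to \konep{\II(U_P/[U_{P'},U_{P'}]))}$ can be computed two ways and agrees on the image of $\kone{\II(U_{P'})}$. Condition (ii) is conjugation-equivariance, immediate from the definition of the $\Theta_P^\cG$ and the $\cG$-action on the $U_P^{ab}$ by conjugation (which makes sense since $\wt\Gamma^{p^e}$ is central). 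Conditions (iii) and (iv) are the genuinely arithmetic congruences: (iii), a Frobenius/Verschiebung congruence modulo the ``trace ideals'' $\sT_{P,P'}$ (and their localized and completed versions), and (iv), the congruence modulo $p\sT_P$ built from the twisting operators $\omega_P$ and the maps $\alpha_P$. Both follow for elements in the image of $\Theta_\cG$ by applying the logarithm, using that $\mathfrak L$ intertwines the group-ring multiplicative operations with additive ones on $\II(Z)[\Conj{\ocG}]^\tau$, and then invoking the explicit computation of $\mathcal L$ on the relevant conjugacy classes — this is where the generalizations of Oliver's and Oliver--Taylor's results on logarithms of $K$-groups (advertised in the introduction, and presumably proved in the subsections preceding this theorem) are used. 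The point is that $\mathfrak L$ kills the torsion subgroup $\mu_K \times \WW \times \cG^{ab}$ up to controlled error, so congruences for $\mathfrak L(\Xi)$ lift back to congruences for $\Xi$ after tracking the torsion via Theorem~\ref{k-tors}.

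The sufficiency (converse) direction is the main obstacle, as it is in all versions of this result. Given a tuple $(\Xi_\cA)_\cA$ satisfying (i)--(iv), I would reconstruct a preimage $\Xi \in \konep{\IIG}$ in two stages. Additively: push each $\Xi_{U_P^{ab}}$ through the relevant logarithm map to get elements in $\II(U_P^{ab})$, observe that congruences (i)--(iv) say exactly that this collection lies in the image of $\beta^{\ocG} \circ \mathfrak L$, i.e. defines an element of $\Psi^{\ocG} = \II(Z)[\Conj{\ocG}]^\tau$ up to the additive analogue of the torsion subgroup; the heart is a surjectivity/trace-ideal computation showing that the ``patched'' additive datum genuinely comes from $\II(Z)[\Conj{\ocG}]^\tau$, which is the additive translation theorem. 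Then multiplicatively: use that $\Theta^{\ocG}$ is an isomorphism (Theorem~\ref{Theta-iso}) to produce $\Xi$ from the additive element, after correcting by a torsion element $\zeta \in \mu_K \times \WW \times \cG^{ab}$; the torsion correction is pinned down precisely by condition (ii) together with the part of conditions (iii)--(iv) that survives modulo the image of $\mathfrak L$ (equivalently, a diagram chase in the five-term exact sequence of the logarithm diagram). The delicate point throughout is that $\II$ is not a DVR, so the $p$-adic analysis of $\konep{}$ and of the trace ideals $\sT_P, \sT_{P,\sS}, \widehat\sT_P$ has to be done over the local ring $\II$ with its maximal ideal $\m_\II = (p, X_1,\dots,X_r)$; I expect this to require the version of Oliver's $SK_1$ computations reproven in Definition~\ref{sk-def2} and the surrounding results, and the bookkeeping of the weight variables $X_i$ — encapsulated in $\WW = (1+p\Z_p)^r$ — is exactly what distinguishes this from the classical $\Z_p$-coefficient statement. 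Modulo that, the proof is the formal patching argument of \cite[\S 6--7]{kakde}, run with $\II$ in place of $\Z_p$.
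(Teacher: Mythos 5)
Your proposal follows essentially the same route as the paper: the necessity direction is proved functorially at the level of $K_1$ for (i)--(ii) and via the integral logarithm for (iii)--(iv) (the paper's Theorem \ref{cong}), while the sufficiency direction is obtained by translating to the additive side via $\mathfrak L$, proving the additive isomorphism $\beta^{\ocG}$ onto $\Psi^{\ocG}$, and running the five lemma on the two four-term exact sequences to get Theorem \ref{Theta-iso}, exactly as the paper does by adapting Kakde's argument with the Oliver-type $SK_1$ and Higman-type inputs. The sketch is correct in outline and identifies the same key ingredients, so no substantive divergence to report.
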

We give a proof of this theorem referring to \cite{kakde} for many of the details which remain true in our set-up.
\begin{defn}
 Let $\Phi_\sR^{\ol\cG}$ (resp. $\Phi_{\sR,\sS}^{\ol\cG}$ and $\widehat\Phi_{\sR}^{\ol\cG}$) denote the subgroup of $\prod_{P\leq\ol\cG}\II(U_P^{ab})^\times$ 
 (resp. $\prod_{P\leq\ol\cG}\II(U_P^{ab})_\sS^\times$ and $\prod_{P\leq\ol\cG}\widehat\II(U_P^{ab})_\sS^\times$)
 consisting of tuples $(\Xi_{U_P^{ab}})$ satisfying the conditions of the above theorem:
 \begin{enumerate}
 \item[(C1)] For all subgroups $P, P'$ of $\ol\cG$ with $[P',P']\leq P\leq P'$, one has
\begin{equation*}
 \mathrm{Nr}_P^{P'}(\Xi_{U_{P'}^{ab}})=\Pi_P^{P'}(\Xi_{U_{P'}^{ab}}).
\end{equation*}
 \item[(C2)] For all subgroups $P$ of $\ol\cG$ and all $g$ in $\ol\cG$ one has $\Xi_{gU_{P}^{ab}g^{-1}}=g\Xi_{U_{P'}^{ab}}g^{-1}$.
 \item[(C3)] For every $P\in\ocG$ and $P\neq (1)$, we have
 \begin{equation*}
  \mathrm{ver}_P^{P'}(\Xi_{U_{P'}^{ab}})\equiv \Xi_{U_P^{ab}} \pmod{\sT_{P,P'}} (\mbox{ resp. } \sT_{P,P',\sS} \mbox{ and } \widehat\sT_{P,P'}).
 \end{equation*}
 \item[(C4)] For all $P\in C(\ol\cG)$ one has $\alpha_P(\Xi_{U_{P}^{ab}})\equiv\prod_{P'\in C_P(\ol\cG)}\alpha_{P'}(\Xi_{U_{P'}^{ab}})\pmod{p\sT_P} (\mbox{ resp. } p\sT_{P,\sS} \mbox{ and } p\widehat\sT_{P}).$
\end{enumerate}
\end{defn}
As in \cite{kakde,kato-hberg,burns1,ritter-weiss}, the theorem is a consequence of an explicit description of the image of the groups $\kone{\IIG}$ and
$\kone{\IIG_{\sS^\ast}}$. We proceed as in \cite{kakde} to prove this theorem. 
In fact, the theorem is a combination of the generalizations of 
\cite[Theorem 52 and 53]{kakde}. We will give the main results leading to a proof of these theorems.

For any $P\leq\ol\cG$, consider the map
\begin{equation*}
 t_P^{\ol\cG}:\sR[\Conj{\ol\cG}]^\tau\lra \sR[P^{ab}]^\tau
\end{equation*}
defined by 
\begin{equation*}
t_P^{\ol\cG}(\bar g)=\sum_{x\in C(\ol\cG,P)}\{ ({\bar x}^{-1})(\bar g)(\bar x)\mid x^{-1}gx\in P \},
\end{equation*}
where $C(\ol\cG,P)$ is the set of left coset representatives of $P$ in $\cG$.
This is a well-defined $\sR$-linear map, independent of the choice of $C(\ol\cG,P)$. For any $P\in C(\ol\cG)$, we define
\begin{equation*}
 \eta_P:\sR[P]^\tau\lra \sR[P]^\tau,
\end{equation*}
by $\sR$-linearly extending the map,
\begin{equation*}
 \eta_P(h)=\begin{cases}
            h \quad\mbox{ if } h \mbox{ is a generator of } P\\
            0 \quad\mbox{ otherwise}.
           \end{cases}
\end{equation*}
In other words, $\eta_P(x)=x-\frac{1}{p}\sum_{k=0}^{p-1}\omega_P^k(x)$.

Now, define the homomorphism $\beta_P^{\ol\cG}:\sR[\Conj{\ol\cG}]^\tau\lra \sR[P^{ab}]^\tau$ by 
\begin{equation*}
 \beta_P^{\ol\cG}=\begin{cases}
            \eta_P\circ t_P^{\ol\cG} \quad\mbox{if } P\in C(\ol\cG)\\
            t_P^{\ol\cG}             \quad\mbox{if } P\leq\ol\cG \mbox{ is not cyclic}
           \end{cases}
\end{equation*}
and $\beta_\sR^{\ol\cG}$ is defined by 
\begin{equation*}
 \beta_\sR^{\ol\cG}=(\beta_P^{\ol\cG})_{P\leq\cG}:\sR[\Conj{\ol\cG}]^\tau\lra\prod_{P\leq\ol\cG}\sR[P^{ab}]^\tau.
\end{equation*}
\begin{defn}
 Let $\Psi_\sR^{\ol\cG}$ (resp. $\Psi_{\sR,\sS}^{\ol\cG}$) be the subgroup of $\prod_{P\leq\cG}(\sR[P^{ab}]^\tau)^\times$  (resp. $\prod_{P\leq\cG}(\sR[P^{ab}]^\tau)_\sS^\times$)
 consisting of all tuples $(a_P)$ with the following properties:
 \begin{enumerate}
  \item[(A1)] Let $P\leq P'\leq\cG$ such that $[P',P']\leq P$ and the following conditions hold:
  \begin{enumerate}
  \item if $P$ is a non-trivial cyclic group then $[P',P']\neq P$;
  \item if $P$ is not cyclic, then $\mathrm{tr}_P^{P'}(a_{P'})=\pi^{P'}_P(a_P)$;
  \item if $P$ is cyclic but $P'$ is not cyclic then $\eta_P(\mathrm{tr}^{P'}_P(a_{P'}))=\pi^{P'}_P(a_P)$;
  \item if $P'$ is cyclic, then $\mathrm{tr}^{P'}_P(a_{P'})=0$.
  \end{enumerate}
  \item[(A2)] $(a_P)_{P\in C(\ol\cG)}$ is invariant under conjugation action by every $g\in\ol\cG$.
  \item[(A3)] For all $P\in C(\ol\cG)$, $a_P\in\sT_{P}$.
 \end{enumerate}
\end{defn}
Then we have the following theorem as a generalization of \cite[Theorem 58]{kakde}.
\begin{theorem}\label{additive}
 The homomorphism $\beta^{\ol\cG}_\sR$ induces an isomorphism between $\sR[\Conj{\ol\cG}]^\tau$ and $\Psi_\sR^{\ol\cG}$.
\end{theorem}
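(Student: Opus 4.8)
The plan is to follow the strategy of Kakde's proof of \cite[Theorem 58]{kakde}, carrying the additive theory from $\Z_p$-coefficients to $\II$-coefficients. Since $\II\cong\cO[[X_1,\ldots,X_r]]$ is a regular local ring that is flat over $\Z_p$, and since $\sR=\II(\wt\Gamma^{p^e})$ is itself a power series ring of this type, the key structural facts used in \emph{loc. cit.} — that $\sR$ is a commutative domain, a local ring, complete with respect to its maximal ideal, and that the relevant group rings are free $\sR$-modules of finite rank over the central subring $\sR[\wt\Gamma^{p^e}]$ — all remain valid verbatim. So the first step is to check injectivity of $\beta^{\ol\cG}_\sR$: one shows the family of maps $(\beta^{\ol\cG}_P)_{P\le\ol\cG}$ separates the basis $\Conj{\ol\cG}$ of $\sR[\Conj{\ol\cG}]^\tau$ over $\sR$, exactly as in Kakde, using that $t_P^{\ol\cG}(\bar g)$ detects whether (a conjugate of) $g$ lies in $P$, and that the correction term $\eta_P$ isolates the generators of the cyclic group $P$. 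Here the coefficient ring plays no role, so the argument transfers with only notational changes.

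Next I would prove that the image of $\beta^{\ol\cG}_\sR$ lands in $\Psi_\sR^{\ol\cG}$, i.e. that $\beta^{\ol\cG}_\sR(\Conj{\ol\cG})$ satisfies the trace/norm compatibilities (A1), the conjugation-invariance (A2), and the membership in $\sT_P$ (A3). Each of (A1)(a)–(d) is a direct computation on basis elements, comparing $\mathrm{tr}_P^{P'}\circ t_{P'}^{\ol\cG}$ with $\pi_P^{P'}\circ t_P^{\ol\cG}$ after composing with $\eta_P$ where required; this is the content of the Frobenius-reciprocity-style identities in Kakde \S5, and again the coefficients are inert. Condition (A3), that $\beta_P^{\ol\cG}(x)\in\sT_P$ for $P\in C(\ol\cG)$, follows because $t_P^{\ol\cG}$ is by construction a sum over the Weyl group $W_{\ol\cG}(P)$-orbit, which is precisely the defining shape of $\sT_P=\{\sum_{g\in W_{\ol\cG}(P)}g^\tau x (g^\tau)^{-1}\mid x\in\sR[P]^\tau\}$.

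The substantive step is surjectivity: given $(a_P)\in\Psi_\sR^{\ol\cG}$, one must produce a preimage in $\sR[\Conj{\ol\cG}]^\tau$. As in Kakde this is done by an induction on the order of $\ol\cG$, peeling off one conjugacy class at a time — largest cyclic subgroups first — and at each stage solving for the coefficient of a given class in terms of the components $a_P$, then verifying that subtracting off the resulting element reduces the problem to a proper subquotient. The induction hypothesis is exactly Theorem \ref{additive} for smaller groups. The one place where passing from $\Z_p$ to $\II$ requires genuine care, and which I expect to be the main obstacle, is the $p$-adic/integrality bookkeeping inside the maps $\eta_P$ and $\alpha_P$ (the $\frac1p\sum_{k}\omega_P^k$ terms): one must check that the "denominators of $p$" that appear are absorbed correctly, which over $\Z_p$ uses that $p$ generates the maximal ideal, whereas over $\II$ the relevant maximal ideal is $\m_\II=(p,X_1,\ldots,X_r)$ — however, since $\omega_P$ has order $p$ and the congruences in (C4)/(A3) are stated modulo $p\sT_P$ rather than $\m_\II\sT_P$, and since $\II$ is $\Z_p$-flat so $p$ remains a non-zero-divisor, the same arguments go through after one verifies that $\sR[\mu_p]$ is still a discrete-valuation-type extension in the appropriate sense and that the trace from $\sR[\mu_p]$ to $\sR$ behaves as over $\Z_p$. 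Modulo that verification, the proof is a line-by-line generalization of \cite[Theorem 58]{kakde}, and I would present it by indicating the needed modifications at each step and referring to \emph{loc. cit.} for the unchanged computations.
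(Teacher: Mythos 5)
Your treatment of the two easy halves is fine: showing $\mathrm{im}(\beta^{\ol\cG}_\sR)\subset\Psi_\sR^{\ol\cG}$ is indeed a coefficient-inert computation on basis elements (the paper simply cites the analogue of \cite[Lemma 60]{kakde}), and injectivity does transfer verbatim. But your plan for surjectivity is where the proof actually lives, and what you propose — an induction on the order of $\ol\cG$, "peeling off one conjugacy class at a time" and invoking the theorem for smaller groups — is not the argument, and as sketched it does not close. The conditions (A1)--(A3) defining $\Psi_\sR^{\ol\cG}$ do not visibly restrict to the corresponding conditions for a proper subquotient, so the inductive step "reduces the problem to a smaller group" is an assertion, not a reduction; and "solving for the coefficient of a given class in terms of the $a_P$" is precisely the content of a map you never write down.

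The missing idea is the explicit one-sided inverse $\delta=\sum_P\delta_P$, where $\delta_P(x)=\frac{1}{[\ol\cG:P]}[x]$ for $P$ cyclic and $\delta_P=0$ otherwise, landing a priori only in $\sR[\Conj{\ol\cG}]^\tau[\tfrac1p]$. One checks (i) $\delta\circ\beta^{\ol\cG}_\sR=\mathrm{id}$ on $\sR[\Conj{\ol\cG}]^\tau$ (a direct orbit count, giving injectivity for free), and (ii) $\delta|_{\Psi_\sR^{\ol\cG}}$ is injective with image contained in the integral lattice $\sR[\Conj{\ol\cG}]^\tau$ — this is exactly where condition (A3), $a_P\in\sT_P$ for cyclic $P$, absorbs the denominators $[\ol\cG:P]$. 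Surjectivity is then the formal computation $\delta(\beta(\delta((a_P))))=\delta((a_P))$ combined with (i), (ii) and the already-established inclusion $\mathrm{im}(\beta)\subset\Psi$. Your proposal senses that integrality is "the main obstacle" but misdiagnoses it: the worries about $\alpha_P$, the $\tfrac1p\sum_k\omega_P^k$ in the congruence (C4), and whether $\sR[\mu_p]$ is a "discrete-valuation-type extension" belong to the multiplicative/logarithm side of the paper (Theorem \ref{cong}), not to this purely additive statement; here the only denominators are the indices $[\ol\cG:P]$ in $\delta$, and they are controlled by (A3), not by any property of $p$ in $\m_\II$.
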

The first step of the proof is to show that the image of $\beta^{\ol\cG}_\sR$ is contained in $\Psi_\sR^{\ol\cG}$. This proof is the same as the proof of \cite[Lemma 60]{kakde}.
The next step is to consider the following map and get left inverse of $\beta^{\ocG}_\sR$.
\begin{equation*}
\begin{split}
 \delta_P:\sR[P^{ab}]^\tau\lra\sR[Conj(\ocG)]^\tau\left[\frac{1}{p}\right]\\
 x\mapsto \begin{cases}
           \frac{1}{[\ocG:P]}[x], &\mbox{ if } P \mbox{ is cyclic}\\
           0,                     & \mbox{otherwise}.
          \end{cases}
\end{split}
\end{equation*}
Combining all these maps, we get the following map:
\begin{equation*}
\begin{split}
 \delta:&\prod_{P\leq\ocG}\sR[P^{ab}]^\tau\lra\sR[Conj(\ocG)]^\tau\left[\frac{1}{p}\right],\\
\delta &=\sum_{P\leq\ocG}\delta_P.
\end{split}
\end{equation*}
\begin{lemma}
 The composite map $\delta\circ\beta^{\ocG}_\sR$ is identity on $\sR[Conj(\ocG)]^\tau$. In particular, the map $\beta^{\ocG}_\sR$ is injective.
\end{lemma}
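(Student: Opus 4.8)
The plan is to verify $\delta\circ\beta^{\ocG}_\sR=\mathrm{id}$ directly on an $\sR$-basis of $\sR[\Conj{\ocG}]^\tau$, namely on the elements $\bar g$ attached to a chosen set of representatives $g$ of the conjugacy classes of $\ocG$; by $\sR$-linearity of all the maps involved this is enough, and the entire argument takes place at the level of the finite group $\ocG$ with $\sR$ serving only as a passive coefficient ring, so the twist $\tau$ does not intervene. Since $\delta_P=0$ whenever $P$ is not cyclic, only the subgroups $P\in C(\ocG)$ contribute, and for those $\beta_P^{\ocG}=\eta_P\circ t_P^{\ocG}$; hence I must evaluate $\sum_{P\in C(\ocG)}\delta_P\bigl(\eta_P(t_P^{\ocG}(\bar g))\bigr)$.

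First I would unwind the definitions: $t_P^{\ocG}(\bar g)=\sum\{\,\overline{x^{-1}gx}\mid x\in C(\ocG,P),\ x^{-1}gx\in P\,\}$, and then use that $\eta_P$ annihilates every element of $P$ lying in the unique index-$p$ subgroup $P^p=\ker\omega_P$ while fixing every generator of $P$. Consequently $\eta_P\bigl(t_P^{\ocG}(\bar g)\bigr)$ retains exactly those coset representatives $x$ for which $x^{-1}gx$ \emph{generates} $P$, equivalently for which $x^{-1}\langle g\rangle x=P$. Writing $Q:=\langle g\rangle$, this forces $P$ to be $\ocG$-conjugate to $Q$; and once $P$ is a fixed conjugate of $Q$, a short coset calculation (fix a conjugator $x_0$ with $x_0^{-1}Qx_0=P$, so that $N_{\ocG}(P)=x_0^{-1}N_{\ocG}(Q)x_0$ and therefore $x_0P\subset N_{\ocG}(Q)x_0$, whence $N_{\ocG}(Q)x_0$ is right $P$-stable) shows that the admissible $x$ form exactly $[N_{\ocG}(P):P]=|W_{\ocG}(P)|$ left cosets of $P$. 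Applying $\delta_P$ and using $\overline{x^{-1}gx}\equiv\bar g$ in $\Conj{\ocG}$, the contribution of such a $P$ is $|W_{\ocG}(P)|/[\ocG:P]\cdot\bar g=|N_{\ocG}(P)|/|\ocG|\cdot\bar g$.

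Finally I would sum over the $[\ocG:N_{\ocG}(Q)]$ conjugates $P$ of $Q$; since $|N_{\ocG}(P)|=|N_{\ocG}(Q)|$ is constant along the conjugacy class, the total equals $[\ocG:N_{\ocG}(Q)]\cdot|N_{\ocG}(Q)|/|\ocG|\cdot\bar g=\bar g$. Hence $\delta\circ\beta^{\ocG}_\sR$ fixes every basis element, so it is the identity of $\sR[\Conj{\ocG}]^\tau$, and in particular $\beta^{\ocG}_\sR$ is a split monomorphism. There is no serious obstacle here: the lemma is a purely combinatorial bookkeeping statement, and the only points that deserve care are the coset count in the middle step and the verification that the rational factors $1/p$ occurring inside $\eta_P$ and $1/[\ocG:P]$ inside $\delta_P$ cancel, so that the output genuinely lies in $\sR[\Conj{\ocG}]^\tau$ rather than merely in $\sR[\Conj{\ocG}]^\tau[1/p]$ — exactly as in the corresponding argument of \cite{kakde}, which carries over verbatim to the present $\IIG$-setting.
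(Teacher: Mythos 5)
Your argument is correct and is essentially the paper's own proof: both reduce to the basis elements $[\bar g]$, observe that only the conjugates $P'$ of $Q=\langle\bar g\rangle$ survive $\eta_{P'}\circ t_{P'}^{\ocG}$ followed by $\delta_{P'}$, count $[N_{\ocG}(P'):P']$ contributing cosets for each such $P'$, and sum over the $[\ocG:N_{\ocG}(Q)]$ conjugates to recover $[\bar g]$. Your write-up merely makes explicit the coset count that the paper's displayed computation uses implicitly, so nothing further is needed.
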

\begin{proof}
 Let $g\in\ocG$ and $P=(\bar g)$, and consider the collection $C$ of all the conjugates of $P$ in $\ocG$. Then,
 \begin{equation*}
  \begin{split}
   \delta(\beta^{\ocG}_\sR([\bar g])) &=\sum_{P'\in C}\delta_{P'}(\beta^{\ocG}_\sR([\bar g]))\\
                                      &=\sum_{P'\in C}\dfrac{1}{[\ocG:P]}[\beta^{\ocG}_\sR([\bar g])]\\
                                      &=\dfrac{1}{[\ocG:P]}\sum_{P'\in C}[N_{\ocG}P':P'][\bar g]]\\
                                      &=\dfrac{1}{[\ocG:N_{\ocG}P]}\sum_{P'\in C}[\bar g])]\\
                                      &=[\bar g].
  \end{split}
 \end{equation*}
\end{proof}
Next, proceeding as in \cite[Lemma 63]{kakde}, we get the following lemma:
\begin{lemma}\label{inj-beta}
 The map $\delta\mid_{\Psi_\sR^{\ol\cG}}$ is injective and its image lies in $\sR[Conj(\ocG)]^\tau$.
\end{lemma}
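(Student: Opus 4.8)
The plan is to follow \cite[Lemma 63]{kakde}, carrying the argument over from $\cO[[\wt\Gamma^{p^e}]]$-coefficients to $\sR=\II(\wt\Gamma^{p^e})$. Once both assertions of the lemma are in hand they combine with the inclusion $\beta_\sR^{\ocG}(\sR[\Conj{\ocG}]^\tau)\subseteq\Psi_\sR^{\ocG}$ (the analogue of \cite[Lemma 60]{kakde}, already noted above) and with the relation $\delta\circ\beta_\sR^{\ocG}=\mathrm{id}$ just established to finish the proof of Theorem \ref{additive}: for $a\in\Psi_\sR^{\ocG}$ the element $b:=\beta_\sR^{\ocG}(\delta(a))$ again lies in $\Psi_\sR^{\ocG}$ and satisfies $\delta(b)=\delta(a)$, so injectivity of $\delta|_{\Psi_\sR^{\ocG}}$ gives $a=b$, i.e.\ $\beta_\sR^{\ocG}$ maps onto $\Psi_\sR^{\ocG}$; together with $\delta\circ\beta_\sR^{\ocG}=\mathrm{id}$ this shows $\beta_\sR^{\ocG}$ is an isomorphism. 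So the present lemma is really where the bulk of Theorem \ref{additive} is proved.

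For the statement about the image, fix $(a_P)_{P\leq\ocG}\in\Psi_\sR^{\ocG}$. By construction $\delta((a_P))=\sum_{P\in C(\ocG)}\frac{1}{[\ocG:P]}[a_P]$, where $[a_P]$ is the image of $a_P\in\sR[P^{ab}]^\tau=\sR[P]^\tau$ in $\sR[\Conj{\ocG}]^\tau$. Collecting the cyclic subgroups into $\ocG$-conjugacy classes and using (A2) (conjugate $P$'s give the same image in $\sR[\Conj{\ocG}]^\tau$), the class of a cyclic $P$ contributes $\frac{[\ocG:N_{\ocG}P]}{[\ocG:P]}[a_P]=\frac{1}{|W_{\ocG}(P)|}[a_P]$. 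By (A3) we may write $a_P=\sum_{g\in W_{\ocG}(P)}g^\tau x_P(g^\tau)^{-1}$ with $x_P\in\sR[P]^\tau$, and since conjugation by $\ocG$ is trivial on $\sR[\Conj{\ocG}]^\tau$ (see \cite[\S 5.1.1, \S 5.1.2]{kakde}) each summand has image $[x_P]$; hence $[a_P]=|W_{\ocG}(P)|\,[x_P]$ and the contribution of the class of $P$ is exactly $[x_P]\in\sR[\Conj{\ocG}]^\tau$. Thus all the $p$-power denominators cancel and $\delta((a_P))\in\sR[\Conj{\ocG}]^\tau$.

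For injectivity I would prove the stronger statement that $\beta_\sR^{\ocG}\circ\delta$ is the identity on $\Psi_\sR^{\ocG}$, i.e.\ $\beta_P^{\ocG}(\delta((a_Q)))=a_P$ for every $P\leq\ocG$. One evaluates $t_P^{\ocG}$ on each class $[a_Q]$ by the coset-representative formula defining $t_P^{\ocG}$, and then reorganizes the resulting double sums according to the position of $P$ inside the various $Q$'s, invoking the trace-compatibility relations (A1)(a)--(d) (and (A2), (A3) to handle conjugates); in the cyclic case one afterwards applies $\eta_P(x)=x-\frac{1}{p}\sum_{k=0}^{p-1}\omega_P^k(x)$, and in the non-cyclic case $\beta_P^{\ocG}=t_P^{\ocG}$ directly. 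The case analysis over pairs $P\leq P'$, split according as $P$ and $P'$ are cyclic or not, is the same as in \cite[Lemma 63]{kakde}. Granting this, $\delta(a)=0$ forces $a=\beta_\sR^{\ocG}(\delta(a))=0$, which is the required injectivity. The main obstacle is precisely this last verification: one must check that each of Kakde's manipulations --- the trace and norm identities for the subquotients $U_P^{ab}$, the averaging operator $\eta_P$, and the M\"obius-type cancellations over the lattice $C(\ocG)$ of cyclic subgroups --- remains valid over the twisted group ring $\sR[\ocG]^\tau=\IIG$. This does go through, because all the relevant arguments are combinatorial in the finite group $\ocG$, while the coefficient ring $\II\cong\cO[[X_1,\cdots,X_r]]$ (a regular local domain) and the central factor $\wt\Gamma^{p^e}\subset\sR^\times$ enter only passively, exactly as in the other places in this section where statements over $\Z_p[[\cdot]]$ are transferred to $\II[[\cdot]]$.
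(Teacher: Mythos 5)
Your proposal is correct and follows the same route as the paper, which itself gives no argument beyond citing Kakde's Lemma 63: your cancellation of the denominator $[\ocG:P]$ against the orbit count $[\ocG:N_{\ocG}P]$ and the $\sT_P$-condition (A3) is exactly the intended proof of the image statement, and the injectivity rests on the same case analysis over pairs of subgroups that the paper silently imports from Kakde. The one organizational difference is that you propose to establish $\beta_\sR^{\ocG}\circ\delta=\mathrm{id}$ on $\Psi_\sR^{\ol\cG}$ by direct computation, whereas the paper proves only the injectivity of $\delta\mid_{\Psi_\sR^{\ol\cG}}$ here and then derives that identity formally in the proof of Theorem \ref{additive} from $\delta\circ\beta_\sR^{\ocG}=\mathrm{id}$; both packagings are valid, yours simply front-loads the combinatorial verification into this lemma and makes the deduction of Theorem \ref{additive} immediate.
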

Finally, we can show Theorem \ref{additive}.
\begin{proof} Since $\delta\circ\beta^{\ocG}_\sR$ is identity on $\sR[Conj(\ocG)]^\tau$ and $\delta\mid_{\Psi_\sR^{\ol\cG}}$ is injective, $\delta\circ\beta^{\ocG}_\sR$
is also identity on $\Psi_\sR^{\ol\cG}$. Indeed, if $(a_P)\in\Psi_\sR^{\ol\cG}$, then $\delta(\beta^{\ocG}_\sR(\delta((a_P))))=\delta((a_P))$. As the image of $\beta^{\ocG}_\sR$
is contained in $\Psi_\sR^{\ol\cG}$ and $\delta$ is injective on $\Psi_\sR^{\ol\cG}$, we have $\beta^{\ocG}_\sR(\delta((a_P)))=(a_P)$. Therefore $\beta^{\ocG}_\sR$  is surjective.

By Artin's induction theorem, a linear representation of a finite group is a $\Q$-linear combination of representations induced from cyclic subgroups \cite[Theorem 17]{serre}. 
The injectivity follows using this result.
\end{proof}
\begin{propn}\label{id-beta}
Let $K$ be the quotient field of $\cO$. Then the map 
$\mathrm{id}_{K}\otimes\beta_\sR^{\ocG}:K\otimes\sR[\mathrm{Conj}(\ocG)]^\tau\lra\prod_{P\leq\ocG}K\otimes\sR[P^{ab}]^\tau$ is injective, and its image consists of all
tuples $(a_P)$ satisfying the following:
\begin{enumerate}
  \item Let $P\leq P'\leq\cG$ such that $[P',P']\leq P$ and the following conditions hold:
  \begin{enumerate}
  \item if $P$ is a non-trivial cyclic group then $[P',P']\neq P$;
  \item if $P$ is not cyclic, then $\mathrm{tr}_P^{P'}(a_{P'})=\pi^{P'}_P(a_P)$;
  \item if $P$ is cyclic but $P'$ is not cyclic then $\eta_P(\mathrm{tr}^{P'}_P(a_{P'}))=\pi^{P'}_P(a_P)$;
  \item if $P'$ is cyclic, then $\mathrm{tr}^{P'}_P(a_{P'})=0$.
  \end{enumerate}
  \item $(a_P)_{P\in C(\ol\cG)}$ is invariant under conjugation action by every $g\in\ol\cG$.
 \end{enumerate}
Hence, if $\mathrm{id}_K\otimes\beta_\sR^{\ocG}(a)=(a_P)$ with $a_P\in\sT_{P}, \forall\, P\in C(\ocG)$, then $a\in\sR[\mathrm{Conj}(\ocG)]^\tau$ and 
$a_P\in\sR[P^{ab}]^\tau, \forall\, P\leq\ocG$.
\end{propn}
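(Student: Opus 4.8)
The plan is to deduce this "rational" statement from Theorem \ref{additive} together with Artin's induction theorem, and then recover the integrality assertion by a descent argument. First I would observe that the map $\mathrm{id}_K\otimes\beta_\sR^{\ocG}$ is just $\beta_\sR^{\ocG}$ base-changed along $\sR\hookrightarrow K\otimes\sR$, so injectivity is immediate once we know $\beta_\sR^{\ocG}$ is injective after inverting $p$; but this is exactly the content of the lemma preceding Theorem \ref{additive} (the composite $\delta\circ\beta_\sR^{\ocG}$ is the identity, and $\delta$ already takes values in $\sR[\Conj{\ocG}]^\tau[\tfrac1p]$), so injectivity passes verbatim to the $K$-linear extension. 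For the image, the containment of $\mathrm{im}(\mathrm{id}_K\otimes\beta_\sR^{\ocG})$ in the set of tuples satisfying (i)–(ii) is the $K$-linear version of the first half of the proof of Theorem \ref{additive} (the analogue of \cite[Lemma 60]{kakde}): each defining relation among the components $\beta_P^{\ocG}$ and the trace, projection, and $\eta_P$ maps is $\sR$-linear, hence survives tensoring with $K$, and note that condition (A3) of $\Psi_\sR^{\ocG}$ has been dropped here precisely because over $K$ we are no longer imposing the $\sT_P$-integrality.

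For the reverse inclusion — that every tuple $(a_P)$ satisfying (i)–(ii) lies in the image — I would run the same $\delta$-argument as in the proof of Theorem \ref{additive}, now over $K\otimes\sR$. Given such a tuple, set $a:=\delta((a_P))\in K\otimes\sR[\Conj{\ocG}]^\tau$. The computation in the lemma showing $\delta\circ\beta_\sR^{\ocG}=\mathrm{id}$ is purely formal (it only uses the coset combinatorics and the normalizer index), so it holds over $K\otimes\sR$ as well; combined with the $K$-linear analogue of Lemma \ref{inj-beta} — namely that $\delta$ restricted to the set of tuples satisfying (i)–(ii) is injective — one gets $\beta_\sR^{\ocG}(a)=(a_P)$ exactly as in the surjectivity step of Theorem \ref{additive}. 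The only place Artin's induction theorem enters, as in \emph{loc.\ cit.}, is to see that the relations (i)–(ii) are strong enough to pin down $a$; since a $\Q$-linear combination of representations induced from cyclic subgroups determines a class function, and the $t_P^{\ocG}$ for cyclic $P$ together with the conjugation-invariance (ii) reconstruct the class function, the argument of Theorem \ref{additive} applies without change.

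Finally, for the integrality statement: suppose $\mathrm{id}_K\otimes\beta_\sR^{\ocG}(a)=(a_P)$ with every $a_P\in\sT_P$ for $P\in C(\ocG)$. From the image description just established, $(a_P)$ automatically satisfies (i)–(ii); together with the hypothesis $a_P\in\sT_P$ for cyclic $P$ this says precisely that $(a_P)\in\Psi_\sR^{\ocG}$ (conditions (A1)=(i), (A2)=(ii), (A3)=the $\sT_P$-hypothesis). By Theorem \ref{additive} the isomorphism $\beta_\sR^{\ocG}\colon\sR[\Conj{\ocG}]^\tau\xrightarrow{\sim}\Psi_\sR^{\ocG}$ produces a unique $a'\in\sR[\Conj{\ocG}]^\tau$ with $\beta_\sR^{\ocG}(a')=(a_P)$; but $a'$ and $a$ have the same image under the injective map $\mathrm{id}_K\otimes\beta_\sR^{\ocG}$, so $a=a'\in\sR[\Conj{\ocG}]^\tau$, and then $a_P=\beta_P^{\ocG}(a)\in\sR[P^{ab}]^\tau$ for every $P\leq\ocG$ since $\beta_P^{\ocG}$ already maps into $\sR[P^{ab}]^\tau$. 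I expect the main obstacle to be bookkeeping rather than conceptual: one must check carefully that dropping condition (A3) over $K$ does not destroy injectivity of $\delta$ on the relevant subgroup — i.e. that (A1)–(A2) alone suffice for the injectivity argument underlying Lemma \ref{inj-beta} — and that the localized and $p$-completed variants of $\sT_P$ play no role here since the statement is purely over $\sR$ and $K\otimes\sR$.
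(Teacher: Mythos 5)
Your overall route --- everything reduced to the identity $\delta\circ\beta_\sR^{\ocG}=\mathrm{id}$ and Lemma \ref{inj-beta}, followed by a descent to integral coefficients --- is the same as the paper's, and your injectivity and image arguments are fine at the (admittedly terse) level of rigour of the paper itself. The problem is in your final paragraph. You claim that conditions (i)--(ii) together with $a_P\in\sT_P$ for all $P\in C(\ocG)$ ``say precisely that $(a_P)\in\Psi_\sR^{\ocG}$''. They do not: $\Psi_\sR^{\ocG}$ is by definition a subgroup of the product $\prod_{P\leq\ocG}\sR[P^{ab}]^\tau$, so membership presupposes that \emph{every} component $a_P$ --- in particular those at non-cyclic $P$ --- already lies in the integral ring $\sR[P^{ab}]^\tau$. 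Your hypothesis controls only the cyclic components; the integrality of the non-cyclic ones is part of the conclusion you are trying to prove, so invoking the isomorphism of Theorem \ref{additive} at this point is circular.

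The fix is exactly the mechanism you already set up for surjectivity. Write $a=\delta((a_P))$ and recall that $\delta=\sum_P\delta_P$ with $\delta_P=0$ for non-cyclic $P$, so $a$ is computed from the cyclic components alone. Grouping the cyclic subgroups into $\ocG$-conjugacy classes and using (ii), the contribution of the class of $P$ is $[N_{\ol\cG}P:P]^{-1}[a_P]$; since $a_P\in\sT_P$ is by definition a sum of $|W_{\ol\cG}(P)|=[N_{\ol\cG}P:P]$ conjugates of a single element $x\in\sR[P]^\tau$, all of which have the same class in $\Conj{\ocG}$, one gets $[a_P]=[N_{\ol\cG}P:P]\,[x]$, the denominator cancels, and $a\in\sR[\Conj{\ocG}]^\tau$. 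Then $a_P=\beta_P^{\ocG}(a)\in\sR[P^{ab}]^\tau$ for every $P\leq\ocG$ because each $\beta_P^{\ocG}$ is defined integrally. This is what the paper's proof means by ``$\delta$ is determined by the $a_P$'s for cyclic $P$''; with this replacement your argument goes through.
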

\begin{proof}
 By Lemma \ref{inj-beta} above, the injectivity is clear. The statement about the image also follows from this Lemma. Clearly, if $\mathrm{id}_K\otimes\beta_\sR^{\ocG}(a)=(a_P)$,
 with $a_P\in\sT_{P}, \forall\, P\in C(\ocG)$, then as the map $\delta$ is determined by the $a_P$'s for cyclic $P$, it follows that the inverse image $a$ lies in 
 $\sR[\mathrm{Conj}(\ocG)]^\tau$ and $a_P\in\sR[P^{ab}]^\tau$, $\forall P\leq\ocG$.
\end{proof}

\subsection{The Logarithm map over $\II[[\wt\Gamma^{p^e}]]$}
Recall that $\sR:=\II[[\wt\Gamma^{p^e}]]$. Note that $\sR$ is a local ring. 
Our aim in this section is to construct a logarithm map on $\kone{\sR[\ol\cG]^\tau}$. 
This is done by generalizing the logarithm map which was considered by Ritter and Weiss and then later by Kakde. Their
constructions were inspired by the logarithm map introduced by Oliver. 

We assume that $\cG$ is a $p$-adic Lie group of rank $1$. 
For any subgroup $P$ of $\ol\cG$, we set 
\begin{equation}\label{R-notation}
\sR_P:=\II[[U_P]]=\II[[\wt\Gamma^{p^e}]][P]^\tau=\sR[P]^\tau.
\end{equation}
Consider the natural $\sR$-linear map 
\begin{equation*}
\kappa_P:\sR_P\lra \sR[\mathrm{Conj}(P)]^\tau, \kappa_P(g^\tau)=[g^\tau], 
\end{equation*}
for all $g\in P$, where $[g^\tau]$ denotes the conjugacy class in $P$. 

For any ring $A$, let $J(A)$ denote the Jacobson radical. 
Since $\cG=H\ltimes\Gamma$, the kernel of the composite homomorphism $U_P\inj\cG\surj\Gamma$ is $H\cap U_P$.
Let $I_{H\cap U_P}$ be the augmentation ideal of $\sR_{H\cap U_P}$.
Then for the ring $\sR_P$, 
the Jacobson radical $J(\sR_P)$ is generated by $\m_\II$ and $I_{H\cap U_P}$, where $\m_\II$ denotes the maximal ideal $\langle p,X_1,\cdots,X_r\rangle$ of $\II$.

In the proof of the following propositions, the following short exact sequence which is obtained by using $\sR_P/J(\sR_P)\cong\FF_p$ play a crucial role:
\begin{equation}\label{fund-exact}
  1\lra \kone{\sR_P,J(\sR_P)}\lra \kone{\sR_P}\lra \kone{\FF_p}.
\end{equation}
It is well known that $ \kone{\FF_p}\cong\FF_p^\times$.
\begin{propn}\label{Log}
Let $P\leq\ol\cG$. Then, for $x\in J(\sR_P)$, the logarithm defined by 
\begin{equation}\label{def-log}
\Log(1+x):=\sum_{n\geq 1}(-1)^{n+1}\frac{x^n}{n}.
\end{equation}
is well-defined, and it induces a homomorphism
\begin{equation*}
\log_P:\kone{\sR_P}\lra \sR[\mathrm{Conj}(P)]^\tau[\frac{1}{p}].
\end{equation*}
Moreover, this map is natural with respect to ring homomorphisms induced by group 
homomorphisms.
\end{propn}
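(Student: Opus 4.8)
The plan is to follow the now-standard Oliver--Taylor--Kakde argument, adapted to the base ring $\sR = \II[[\wt\Gamma^{p^e}]]$, which is a complete local ring with residue field $\FF_p$ and maximal ideal $J(\sR)$. The first step is to check convergence of the series \eqref{def-log}: for $x \in J(\sR_P)$ one needs $x^n/n \to 0$ in $\sR[\mathrm{Conj}(P)]^\tau[\tfrac1p]$. Since $\sR_P$ is $J(\sR_P)$-adically complete and separated (being a finite free module over the complete local ring $\sR$), the powers $x^n$ tend to zero $J(\sR_P)$-adically; the only subtlety is the denominators $n$, which are handled exactly as in Oliver's book and in \cite[\S5.3]{kakde} by the elementary estimate $v_p(n) \le \log_p n$, so that $p^{-v_p(n)} x^n$ still goes to zero after inverting $p$. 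Applying $\kappa_P$ (the $\sR$-linear map to conjugacy classes, which is needed precisely because $\Log$ of a product is only additive \emph{after} passing to $\sR[\mathrm{Conj}(P)]^\tau$, using that $\Log(uv) \equiv \Log(u) + \Log(v)$ modulo commutators) gives a well-defined map $1 + J(\sR_P) \to \sR[\mathrm{Conj}(P)]^\tau[\tfrac1p]$.

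The second step is to promote this to a homomorphism on all of $\kone{\sR_P}$. Here I would use the exact sequence \eqref{fund-exact}: since $\sR_P/J(\sR_P) \cong \FF_p$ and $\kone{\FF_p} \cong \FF_p^\times$ is finite of order prime to $p$, while $\kone{\sR_P, J(\sR_P)}$ is a pro-$p$ group, the sequence shows that $\kone{\sR_P}$ is generated by $\kone{\sR_P, J(\sR_P)}$ together with a complement of prime-to-$p$ order. The group $\kone{\sR_P, J(\sR_P)}$ is generated by elements $1 + x$ with $x \in J(\sR_P)$ (more precisely by the image of $1 + M_n(J(\sR_P))$ under the reduced-trace/Dennis--Stein style presentation), so $\log_P$ is determined on it by the formula above composed with $\kappa_P$; on the prime-to-$p$ part the logarithm is forced to be trivial since the target $\sR[\mathrm{Conj}(P)]^\tau[\tfrac1p]$ is torsion-free as an $\sR[\tfrac1p]$-module and the source is finite. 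One checks multiplicativity (i.e.\ that $\log_P$ is a genuine homomorphism) on generators using the congruence $\Log(uv) \equiv \Log u + \Log v$ modulo $[\sR_P,\sR_P]$, which becomes an equality after applying $\kappa_P$; this is the same computation as in \cite[Prop.\ 55]{kakde}, carried out over $\sR$ rather than over $\cO[[\Gamma^{p^e}]]$.

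The third step, naturality, is essentially formal: a group homomorphism $\varphi\colon P \to P'$ (or more generally any morphism of the relevant twisted group rings over a fixed morphism of base rings) induces compatible maps on $\sR_P$, on $\kone{\sR_P}$, and on $\sR[\mathrm{Conj}(P)]^\tau[\tfrac1p]$, and the defining power series is manifestly preserved because $\varphi$ is a ring homomorphism and $\kappa_P$ is natural in $P$; one just has to remark that the convergence and the passage through \eqref{fund-exact} are both compatible with these maps.

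I expect the main obstacle to be the convergence/denominator bookkeeping in the first step combined with verifying that the individual terms $x^n/n$ actually land in the \emph{integral} lattice $\sR[\mathrm{Conj}(P)]^\tau$ after multiplication by an appropriate power of $p$ — in Oliver's setting this relies on the pro-$p$ structure and on trace identities ($p \mid p^k - $ binomials) that need to be re-examined because our base $\sR = \II[[\wt\Gamma^{p^e}]]$ has Krull dimension $r+1$ rather than $1$, so $J(\sR)$ is not principal and the filtration $J(\sR_P)^n$ behaves slightly differently. The remedy is that $\II \cong \cO[[X_1,\dots,X_r]]$ is regular and $p \in \m_\II$, so all the $p$-adic estimates used by Oliver and Kakde go through verbatim with $\m_\II$-adic filtrations in place of $p$-adic ones; the key point is simply that $\sR_P$ is $J(\sR_P)$-adically complete and $\bigcap_n J(\sR_P)^n = 0$, which holds since $\sR_P$ is module-finite over the Noetherian complete local ring $\sR$. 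Everything else is a routine transcription of \cite[\S5.3]{kakde}.
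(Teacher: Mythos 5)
Your proposal follows essentially the same route as the paper's proof: convergence of the series from the fact that a fixed power of $J(\sR_P)$ lands in $\langle p,\m_\II\rangle\sR_P$ (the paper makes this explicit via $(g-1)^{p^r}\in p\,\II[H\cap U_P]$ for the finite $p$-group $H\cap U_P$) against the logarithmic growth of $v_p(n)$; additivity modulo commutators via Oliver's Lemma 2.7/Theorem 2.8 to get a homomorphism on $\kone{\sR_P,J(\sR_P)}$; and the unique extension to all of $\kone{\sR_P}$ from the exact sequence \eqref{fund-exact}, since the quotient is finite of prime-to-$p$ order and the target is torsion-free. The argument is correct and at the same level of detail as the paper's.
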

\begin{proof}
First, we show that the map is well-defined by showing that the power series $\Log(1+x)$ converges in $\sR[P]^\tau[\frac{1}{p}]$. 

Since $H\cap U_P$ is a finite $p$-group, with say, $p^r$ elements, we have $(g-1)^{p^r}\in p\II[H\cap U_P]$, for any $g\in H\cap U_P$. Therefore, for any 
$x\in J(\sR_P)=\langle\m_\II, I_{H\cap U_P}\rangle$, we have $x^{p^r}\in \langle p,\m_\II\rangle\sR_P$. Hence $x^{n}\in \langle p,\m_\II\rangle^m\sR_P$ for large enough
$n,m$. 
Therefore, $x^i$ converges to $0$ as $i$ tends to infinity. 
Also, $x^{n}/n\in \langle p,\m_\II\rangle^m\sR_P[\frac{1}{p}]$ for large enough $n,m$. 
This implies that $x^i/i$ converges to $0$ as $i$ tends to infinity. Hence the series $\Log(1+x)$ converges in $\sR[P]^\tau[\frac{1}{p}]$. 

We now use arguments of Oliver to construct the map $\log_P$. Indeed, the proof of Oliver for \cite[Lemma 2.7]{ol}, shows that for any 
$x,y\in J(\sR_P)$, we have
\begin{equation*}
 \Log((1+x)(1+y))\equiv\Log(1+x)+\Log(1+y)\pmod{[\sR_P[\frac{1}{p}], J(\sR_P)[\frac{1}{p}]]}.
\end{equation*}
Then by the proof of \cite[Theorem 2.8]{ol}, $\Log(1+x)$ induces a well-defined homomorphism 
\begin{equation*}
 \log_P':\kone{\sR_P,J(\sR_P)}\lra (J(\sR_P)/[\sR_P, J(\sR_P)])[\frac{1}{p}].
\end{equation*}
Since $\sR_P/J(\sR_P)\cong\FF_q$, we have the following exact sequence
\begin{equation}\label{rel-K}
 1\lra \kone{\sR_P,J(\sR_P)}\lra \kone{\sR_P}\lra \kone{\FF_q},
\end{equation}
where $\FF_q$ is a finite extension of $\FF_p$. 
As $\kone{\FF_q}\cong\FF_q^\times$ and $(J(\sR_P)/[\sR_P, J(\sR_P)])[\frac{1}{p}]$ is torsion-free, the map $\log_P'$ can be extended uniquely 
to $\kone{\sR_P}$, which we call $\log_P$.
\end{proof}
\begin{remark}\label{general-log}
 The proof of \cite[Theorem 2.8]{ol} can also be generalized to show that we have the following homomorphism:
 \begin{equation*}
 \log_P^I:\kone{\sR_P,I}\lra (I/[\sR_P, I])\left[\frac{1}{p}\right],
\end{equation*}
for any ideal $I\subset J(\sR_P)$.
\end{remark}
\begin{lemma}\label{log-sub}
 Let $U_P$ be abelian and let $I$ be any ideal of $\sR[P]^\tau$ such that $I\subset p\sR[P]^\tau$. 
Then $\log_P$ is a well-defined map from $1+I$ to $I$, which is an isomorphism.
\end{lemma}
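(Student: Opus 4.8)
The plan is to adapt the classical argument of Oliver (and its reworking by Ritter--Weiss and Kakde) for the $p$-adic logarithm on $K_1$ of a $p$-adic order to the present setting, where the coefficient ring is $\sR=\II[[\wt\Gamma^{p^e}]]$ rather than $\Z_p$ or its unramified extension. The key point that makes everything work is that $U_P$ is abelian, so the twisted group ring $\sR[P]^\tau = \sR_P$ is a commutative local ring (its residue field is $\FF_p$), and hence $\kone{\sR_P}\cong\sR_P^\times$ and, more to the point, $\kone{1+I}\cong 1+I$ once $1+I$ is recognised as a subgroup of units; all the bracket terms $[\sR_P,I]$ that appear in the general Proposition \ref{Log} vanish. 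So the content of the lemma is: (i) $\Log(1+x)=\sum_{n\ge 1}(-1)^{n+1}x^n/n$ converges and lands in $I$ for $x\in I$ (not merely in $I[\frac1p]$), and (ii) the exponential $\mathrm{Exp}(y)=\sum_{n\ge 0}y^n/n!$ provides a two-sided inverse, again with values in $1+I$.

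First I would establish convergence. Since $I\subset p\sR[P]^\tau$, write $x = p x'$ with $x'\in\sR_P$; more usefully, $x^n\in I^n\subset p^n\sR_P$. The only denominators introduced in $x^n/n$ are the powers of $p$ dividing $n$, and $v_p(n)\le \log_p n < n$ for $n\ge 2$ (and $v_p(1)=0$), so $p^n/n\in p\Z_p$ for all $n\ge 1$, whence $x^n/n\in p\sR_P\subset I'$ for a slightly larger ideal — in fact one checks $x^n/n \in I$ directly: $x^n/n = (x^n/p^{v_p(n)})\cdot(p^{v_p(n)}/n)$, the first factor lies in $p^{n-v_p(n)}\sR_P\subset p\sR_P$ and the second is a $p$-adic unit, so $x^n/n\in p\sR_P$; and since $x^n\in I^n\subseteq I\cdot p^{n-1}\sR_P$ one can absorb this back into $I$. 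Completeness of $\sR_P$ in its $J(\sR_P)$-adic (equivalently $\langle p,\m_\II\rangle$-adic) topology, already used in the proof of Proposition \ref{Log}, then gives convergence of the series to an element of $I$. The identity $\Log((1+x)(1+y))=\Log(1+x)+\Log(1+y)$ now holds \emph{on the nose} (no bracket ambiguity) because $\sR_P$ is commutative, by the usual formal power series identity over $\Q\otimes\sR_P$, both sides of which converge in $I$; hence $\log_P|_{1+I}$ is a group homomorphism $1+I\to I$.

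Next I would produce the inverse via $\mathrm{Exp}$. For $y\in I\subset p\sR_P$ one has $y^n/n!\in\sR_P$ since $v_p(n!) = (n-s_p(n))/(p-1)\le n/(p-1)\le n$ for $p$ odd, and in fact $y^n/n!\in p\sR_P$ by the same valuation bookkeeping as above (here one uses $p\ge 3$, i.e. $p$ odd, as assumed throughout the paper), so $\mathrm{Exp}(y)$ converges to an element of $1+I$. The formal identities $\mathrm{Log}\circ\mathrm{Exp}=\mathrm{id}$ and $\mathrm{Exp}\circ\mathrm{Log}=\mathrm{id}$, valid in $\Q\otimes\sR_P$, then transfer termwise and show $\log_P:1+I\to I$ is bijective with inverse $\mathrm{Exp}$. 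Finally I would note this is compatible with the map $\log_P$ of Proposition \ref{Log}: the composite $1+I\hookrightarrow 1+J(\sR_P)\hookrightarrow\kone{\sR_P}\xrightarrow{\log_P}\sR[\mathrm{Conj}(P)]^\tau[\frac1p]=\sR_P[\frac1p]$ (the last equality because $P$ abelian) restricts on $1+I$ to the series $\mathrm{Log}$, which we have just shown has image in $I$; so "$\log_P$" as named in the lemma is unambiguous.

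The main obstacle — really the only delicate point — is the denominator estimate ensuring the image lands in $I$ itself rather than in $I[\frac1p]$, together with the dual estimate for $\mathrm{Exp}$; this is where the hypothesis $I\subset p\sR[P]^\tau$ (as opposed to merely $I\subset J(\sR_P)$) and the oddness of $p$ are used, and it must be checked that the ideal $I$ (not just $p\sR_P$) is preserved, which follows since $x^n\in I^n\subseteq p^{n-1}I$ and the surviving power of $p$ after dividing by $n$ is at least $p^{n-1-v_p(n)}\cdot p^{\,?}$, a positive power for $n\ge 2$, while the $n=1$ term is $x\in I$ itself. Everything else is a transcription of \cite[Lemma 2.7, Theorem 2.8]{ol} to the commutative local ring $\sR_P$, where it in fact simplifies.
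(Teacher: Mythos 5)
Your proposal is correct and follows essentially the same route as the paper's proof: both establish that $x^n/n\in I$ for $x\in I$ using the hypothesis $I\subseteq p\sR[P]^\tau$ (the paper via the inductive containment $I^n\subseteq nI$, you via direct $p$-adic valuation bookkeeping), prove the analogous convergence estimate for $\mathrm{Exp}$, and conclude that $\log_P$ and $\mathrm{exp}_P$ are mutually inverse bijections between $1+I$ and $I$. Your explicit check of the homomorphism property and of compatibility with the $\log_P$ of Proposition \ref{Log}, both consequences of the commutativity of $\sR[P]^\tau$ for abelian $U_P$, is left implicit in the paper but rests on the same observation.
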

\begin{proof}
 We first show that the map is well-defined.
 Since $I\subseteq p\sR[P]^\tau$, we have $I^p\subseteq pI$. It follows that $I^{p^r}\subseteq p^rI$ for all $r\in\N$.
 This further implies that $I^n\subseteq nI$, for all $n\in\N$. 
 This is easy to see if $p\nmid n$, as $n$ is then a unit and $I^n\subseteq I=nI$. Next, if $\mathrm{ord}_p(n)=e$, then $I^{p^e}\subseteq p^eI$. As $n/{p^e}$ is a unit, raising
 to $n/{p^e}$-th power, we
 have $I^n\subseteq nI$.
 Now, let $x\in I$, then $x^n\in I^n\subseteq nI$. Therefore $x^n/n\in I$. 
 Noting that the series $\sum_{n\geq 0}(-1)^{n+1}\dfrac{x^n}{n}$ converges, it converges in $I$.
 
Conversely, we show that for each $x\in I$, the series $\mathrm{exp}_P(x)=\sum_{n\geq 0}\dfrac{x^n}{n!}$ is convergent in $1+I$. Since $I\subset p\sR[P]^\tau\cong\II(U_P)$, the Jacobson radical 
$J(\sR[P]^\tau)$ contains $I$. Further, $I^p\subseteq pI.J(\sR[P]^\tau)$. Suppose $I^{p^k}\subseteq p^k!I.J(\sR[P]^\tau)^k$, then raising both sides to a $p$-th power, we get
\begin{equation*}
 I^{p^{k+1}}\subseteq (p^k!)^pI^p.J(\sR[P]^\tau)^{kp}.
\end{equation*}
Clearly $(p^k!)^p I^p.J(\sR[P]^\tau)^{kp}\subseteq (p^{k+1}!)I.J(\sR[P]^\tau)^{k+1} $. 
Therefore, by induction, 
$$I^{p^n}\subseteq p^n!I.J(\sR[P]^\tau)^n, \mbox{ for all } n\in\N.$$ The lemma follows by observing that $\log_P$ and $\mathrm{exp}_P$ are inverses of each other.
\end{proof}
We now show that the logarithm maps are natural with respect to the transfer maps.
For each pair of subgroups $P$ and $P'$ of $\ol\cG$ with $P\leq P'$, consider the natural \emph{restriction map} on $K$-groups
\begin{equation*}
 \maptheta{P'}{P}:\kone{\II(U_{P'})}=\kone{\sR_{P'}}\lra\kone{\sR_P}=\kone{\II(U_P)}.
\end{equation*}
Moreover, we define an $\II$-linear map
\begin{equation*}
 \Res{P'}{P}:\sR[\mathrm{Conj}(P')]^\tau\lra \sR[\Conj{P}]^\tau
\end{equation*}
given by 
\begin{equation}\label{kappa}
 \Res{P'}{P}(\kappa_{P'}(g^\tau)):=\sum_x\kappa_P((x^\tau)^{-1}(gx)^\tau)
\end{equation}
where $x$ runs over all elements in a given set of left coset representatives of $P$ in $P'$ with $xgx^{-1}\in P.$
\begin{lemma}\label{log-theta-res}
 For each subgroup $P$ of $\ol\cG$, we have the following commutative diagram.
\begin{equation*}
 \xymatrix{
\kone{\IIG}\ar[r]^{\!\!\!\!\!\log_{\ol\cG}}\ar[d]^{\maptheta{\ol\cG}{P}} &\sR[\Conj{\ol\cG}]^\tau[\frac{1}{p}]\ar[d]_{\Res{\ol\cG}{P}}\\
\kone{\II(U_P)}\ar[r]^{\!\!\!\!\!\log_{P}} &\sR[\Conj{P}]^\tau[\frac{1}{p}].
}
\end{equation*}
\end{lemma}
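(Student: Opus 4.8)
The plan is to verify the commutativity separately on the relative $K$-group $\kone{\IIG,J(\IIG)}$ and then extend to all of $\kone{\IIG}$ using the exact sequence \eqref{fund-exact}. First I would recall that, by construction in Proposition \ref{Log}, the map $\log_{\ol\cG}$ is the unique extension to $\kone{\IIG}$ of the homomorphism $\log_{\ol\cG}'$ defined on $\kone{\IIG,J(\IIG)}$ via the power series $\Log(1+x)=\sum_{n\geq1}(-1)^{n+1}x^n/n$, and similarly for $\log_P$. Since $\sR[\Conj{P}]^\tau[\frac1p]$ is torsion-free and $\kone{\FF_p}\cong\FF_p^\times$ is finite, a homomorphism out of $\kone{\sR_P}$ is determined by its restriction to $\kone{\sR_P,J(\sR_P)}$; hence it suffices to check the square commutes after restricting the top-left corner to $\kone{\IIG,J(\IIG)}$.

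On the relative $K$-group the statement becomes concrete: an element of $\kone{\IIG,J(\IIG)}$ is represented by a unit $1+x$ with $x\in J(\IIG)$, and one must show $\Res{\ol\cG}{P}\big(\Log(1+x)\big)=\log_P\big(\maptheta{\ol\cG}{P}(1+x)\big)$ in $\sR[\Conj{P}]^\tau[\frac1p]$. The key computation is that $\maptheta{\ol\cG}{P}$ on $K_1$ is induced, after passing to conjugacy-class coordinates, by the $\sR$-linear transfer/restriction map $\Res{\ol\cG}{P}$ composed with the trace identification — this is exactly the relationship between the restriction on $K_1$ and the map on Hochschild-homology-type quotients $\sR[\Conj{-}]^\tau$, and it is the analogue of \cite[Lemma]{kakde} and the functoriality statements in Oliver \cite{ol}. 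Concretely I would: (1) reduce to the case $x=g^\tau-1$ for $g$ in $U_{\ol\cG}$, or more precisely to monomials, using $\sR$-linearity and continuity of all maps; (2) observe that $\Log$ is a power series with $\Q$-coefficients in the (commutative-enough, modulo commutators) ring, so it commutes with any ring homomorphism and in particular with the map on group rings underlying $\maptheta{\ol\cG}{P}$; (3) match the bookkeeping of coset representatives: the restriction map $\maptheta{\ol\cG}{P}$ sends the class of $1+x$ to the class of the image of $1+x$ under $\sR_{\ol\cG}\to\mathrm{End}_{\sR_P}(\sR_{\ol\cG})\cong M_{[\ol\cG:P]}(\sR_P)$, and taking $\log$ followed by the reduced trace $M_{[\ol\cG:P]}(\sR_P)\to\sR[\Conj{P}]^\tau[\frac1p]$ reproduces exactly the sum over coset representatives $x$ with $x^{-1}gx\in P$ appearing in the definition \eqref{kappa} of $\Res{\ol\cG}{P}$.

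I expect the main obstacle to be step (3): making the identification between the restriction map on $K_1$ and the explicit coset-sum formula for $\Res{\ol\cG}{P}$ fully rigorous in the twisted group ring setting $\sR[\ol\cG]^\tau$, where one has to track the cocycle $\tau$ carefully through the Morita/reduced-trace identification and check it does not obstruct the naturality of $\Log$. The commutativity of $\Log$ with ring homomorphisms (step (2)) is formal once convergence is known — and convergence in $\sR_P[\frac1p]$ was established in Proposition \ref{Log} — so the only real work is the algebraic compatibility of the two descriptions of restriction. Here I would lean on the corresponding argument in Kakde \cite{kakde} (and ultimately Oliver \cite{ol}), which carries over essentially verbatim since $\sR$ is a complete local ring with the same formal properties as the base ring used there; the presence of the extra power-series variables $X_1,\dots,X_r$ in $\II$ is harmless because all maps involved are $\II$-linear.
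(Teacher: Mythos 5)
Your outer skeleton (verify the square on the relative group $\kone{\sR[\ol\cG]^\tau,J(\sR[\ol\cG]^\tau)}$, then extend to all of $\kone{\IIG}$ using finiteness of the quotient and torsion-freeness of the target) is exactly the paper's final step. But the core of your plan --- that $\Log$ ``commutes with any ring homomorphism,'' so the identity reduces to bookkeeping of coset representatives in a trace --- glosses over the point where all the work lies, and it fails as stated. The restriction $\maptheta{\ol\cG}{P}$ sends $1+x$ to the class of the matrix $1+M(x)\in GL_{[\ol\cG:P]}(\sR_P)$ of multiplication on $\sR[\ol\cG]^\tau$ viewed as a free $\sR_P$-module; for $x$ in the augmentation part of $J(\IIG)$ (e.g.\ $x=g^\tau-1$ with $g\notin U_P$) this matrix is \emph{not} congruent to the identity modulo $J(\sR_P)$, so the construction of $\log_P$ on relative $K_1$ (Oliver's Theorem 2.8) does not tell you that $\log_P$ of that class is computed by $\mathrm{tr}\circ\Log$ of that particular representative. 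Establishing exactly this compatibility is the content of Oliver--Taylor's Theorem 1.4, which is available only for units in $1+p\,\sR[\ol\cG]^\tau$. The paper accordingly cites Oliver--Taylor to get the identity \eqref{log-res} for elements $1+p\xi$ only, and then must \emph{bootstrap}: it raises a general $1+x$ to the $p^n$-th power, factors $(1+x)^{p^n}=(1+\sum_jX_j^{p^m}\xi_j')\bigl[1+p\xi(1+\sum_jX_j^{p^m}\xi_j')^{-1}\bigr]$, applies the known case to the second factor, and shows both sides of the desired identity agree modulo $(X_1^{p^m},\cdots,X_r^{p^m})\sR[\Conj{P}]^\tau[\frac{1}{p}]$ for every large $m$, whence equality.

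This also shows that your closing remark --- that the extra variables $X_1,\dots,X_r$ are ``harmless because all maps involved are $\II$-linear'' --- is exactly backwards. In Kakde's setting ($\II=\cO$) one has $x^{p^n}\in p\Lambda(\ol\cG)$ for $n$ large, and the $p$-power trick immediately reduces the general case to the Oliver--Taylor case; here $x^{p^n}$ only lies in $p\sR[\ol\cG]^\tau+(X_1^{p^m},\cdots,X_r^{p^m})$, and the entire new content of this lemma relative to its models is the $(X_j^{p^m})$-adic limiting argument needed to dispose of the extra term. So the gap is twofold: your steps (2)--(3) silently assume the conclusion of Oliver--Taylor on all of $1+J(\IIG)$ rather than on $1+p\IIG$, and your plan contains no mechanism for handling the power-series variables, which is precisely where this proof departs from Kakde and Oliver.
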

\begin{proof}
The result follows by generalizing the proof of \cite[Theorem 1.4]{ol-tay}. We briefly indicate the appropriate generalizations that are needed.
Let $k\in\N$ and $\xi\in\IIG$. Then, for the class $[1+p^k\xi]\in\kone{\IIG}$ we can see as in \emph{loc. cit}, that 
\begin{equation*}
 \maptheta{\ol\cG}{P}([1+p^k\xi])\equiv[\Res{\ol\cG}{P}(1+p^k\xi)]\pmod{\kone{\sR[P]^\tau,\m_\II^{2k}\sR[P]^\tau}}.
\end{equation*}
Moreover, we have
\begin{equation*}
 \log_{\ol\cG}([1+p^k\xi])\equiv p^k\xi,\quad \log_P([1+p^k\Res{\ol\cG}{P}(\xi)]) \equiv p^k\Res{\ol\cG}{P}(\xi)\pmod{\m_\II^{2k-1}\sR[P]^\tau[\frac{1}{p}]}
\end{equation*}
Then, as $(1+p\xi)^{p^k}\in 1+\m_\II^{k+1}\IIG$,
\begin{align*}
 \Res{\ol\cG}{P}\circ\log_{\ol\cG}([1+p\xi])=&\dfrac{1}{p^k}\Res{\ol\cG}{P}\circ\log_{\ol\cG}([1+p\xi]^{p^k})\\
 \equiv&\dfrac{1}{p^k}\log_P\circ\Res{\ol\cG}{P}([1+p\xi]^{p^k})=
 \log_P\circ\Res{\ol\cG}{P}([1+p\xi])\pmod{\m_\II^{k+1}},
\end{align*}
for all $k>0$. 
Hence the diagram in the lemma
commutes for the subgroup $\kone{\sR[\ol\cG]^\tau,J(\sR[\ol\cG]^\tau)}$ of $\kone{\sR[\ol\cG]^\tau}$. Now note that the index of the subgroup 
$\kone{\sR[\ol\cG]^\tau,J(\sR[\ol\cG]^\tau)}$ of $\kone{\sR[\ol\cG]^\tau}$ is 
finite, and the group $\sR[\Conj{P}]^\tau[\frac{1}{p}]$ is torsion-free. Therefore the commutativity extends to the diagram in the assertion of the lemma.
\end{proof}
\subsection{The Integral logarithm over $\IIG$}
We now make the following assumption
\begin{description}
 \item[(Ur)] $\II=\cO[[X_1,\cdots,X_r]]$ with $\cO$ unramified over $\Z_p$.
\end{description}
Then on $\sR=\II(\wt\Gamma^{p^e})$,
consider the map $\varphi:\sR\lra \sR$ such that its restriction to $\cO$ is the Frobenius and  maps $X_j^n$ to $X_j^{pn}$ for all $j=1,\cdots,r$ and the $p$-th 
power map on $\wt\Gamma^{p^e}$. Further, we extend $\varphi$ to a map 
\begin{equation*}
\varphi_{conj}:\sR[\Conj{\ol\cG}]^\tau\lra \sR[\Conj{\ol\cG}]^\tau,
\end{equation*}
by 
\begin{equation*}
\kappa(g^\tau)\mapsto\kappa((g^\tau)^p). 
\end{equation*}
\begin{defn}
The map $\mathfrak{L}_\cG:\kone{\sR[\ol\cG]^\tau}\lra \sR[\Conj{\ol\cG}]^\tau[\frac{1}{p}]$ defined by
\begin{equation*}
\mathfrak{L}_\cG:=\log_\cG-p^{-1}\varphi_{conj}\circ\log_\cG 
\end{equation*}
is called the $p$-adic logarithm map over $\IIG$. 
\end{defn}
\begin{propn}\label{int-log}For the map $\mathfrak{L}_\cG:\kone{\sR[\ol\cG]^\tau}\lra \sR[\Conj{\ol\cG}]^\tau[\frac{1}{p}]$,
we have $Im(\mathfrak{L}_\cG)\subseteq \sR[\Conj{\ol\cG}]^\tau$.
\end{propn}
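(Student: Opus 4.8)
\textbf{Proof proposal for Proposition \ref{int-log}.}

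The plan is to imitate the classical Oliver--Taylor / Ritter--Weiss argument that the integral logarithm $\mathfrak{L}_\cG = \log_\cG - p^{-1}\varphi_{conj}\circ\log_\cG$ lands in the integral module $\sR[\Conj{\ol\cG}]^\tau$, rather than only in $\sR[\Conj{\ol\cG}]^\tau[\tfrac1p]$. The point is that although each of $\log_\cG$ and $p^{-1}\varphi_{conj}\circ\log_\cG$ individually has denominators, the combination does not, because $\varphi_{conj}$ is, modulo $p$, a lift of the $p$-th power (Frobenius-type) operator, and the $p$-power map is compatible with the formal logarithm in the same way that makes the Artin--Hasse exponential integral. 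First I would reduce to the subgroup $\kone{\sR[\ol\cG]^\tau, J(\sR[\ol\cG]^\tau)}$ of finite index in $\kone{\sR[\ol\cG]^\tau}$ (using that $\kone{\FF_p}\cong\FF_p^\times$ has order prime to $p$ and that the target $\sR[\Conj{\ol\cG}]^\tau$ is, after the reduction, captured by continuity/$p$-adic density arguments exactly as at the end of the proof of Lemma \ref{log-theta-res}); so it suffices to treat elements of the form $1+x$ with $x\in J(\sR[\ol\cG]^\tau)$, and in fact, by the $p^m$-th power trick already used in Lemma \ref{log-theta-res}, elements of the form $1+p\xi$ together with elements of the form $1+\sum_j X_j^{p^m}\xi_j'$.

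Next I would establish the key congruence: for $g\in\ol\cG$ (more precisely for a lift to $\cG$, working in $\sR[\ol\cG]^\tau$), one has
\begin{equation*}
 \varphi_{conj}(\kappa(g^\tau)) \equiv \kappa((g^\tau)^p) \pmod{p\,\sR[\Conj{\ol\cG}]^\tau},
\end{equation*}
and more to the point, that for any $a\in\sR[\ol\cG]^\tau$ the ``Frobenius'' $\varphi_{conj}$ satisfies $\varphi_{conj}(a) \equiv a^{(p)}\pmod{p}$, where $a^{(p)}$ denotes the element obtained by raising group elements to the $p$-th power and applying the coefficient Frobenius; this is where assumption \textbf{(Ur)} (that $\cO$ is unramified over $\Z_p$, so the coefficient Frobenius exists and reduces to $x\mapsto x^p$ mod $p$) is used. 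From this one gets, for $u = 1+x$ with $x\in J$, the identity $\varphi_{conj}(\log_\cG(u)) \equiv \log_\cG(u^{(p)}) \pmod{p\sR[\Conj{\ol\cG}]^\tau[\tfrac1p]}$ — but since $u^{(p)}$ and $u^p$ agree modulo $p\,\sR[\ol\cG]^\tau$ and $\log_\cG$ is computed by the convergent series \eqref{def-log}, one deduces $p^{-1}\varphi_{conj}(\log_\cG(u)) \equiv p^{-1}\log_\cG(u^p) = \log_\cG(u) \pmod{\sR[\Conj{\ol\cG}]^\tau}$ (here using additivity of $\log_\cG$ up to commutators, which is harmless after projecting to $\sR[\Conj{\ol\cG}]^\tau$). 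Therefore $\mathfrak{L}_\cG(u) = \log_\cG(u) - p^{-1}\varphi_{conj}(\log_\cG(u))$ lies in $\sR[\Conj{\ol\cG}]^\tau$. I would carry this out first for $u = 1+p\xi$, where the computation is cleanest since $\log_\cG(1+p\xi) \in p\,\sR[\Conj{\ol\cG}]^\tau + (\text{higher terms with controlled denominators})$, and then handle the $1+\sum_j X_j^{p^m}\xi_j'$ factors by the same congruence together with the observation that $\varphi_{conj}$ sends $X_j \mapsto X_j^p$, so denominators are again absorbed; finally combine via the $p^m$-th power decomposition of Lemma \ref{log-theta-res} and pass to the limit in $m$ using that $\sR[\Conj{\ol\cG}]^\tau$ is $p$-adically and $(X_1,\dots,X_r)$-adically closed in $\sR[\Conj{\ol\cG}]^\tau[\tfrac1p]$.

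The main obstacle I anticipate is the bookkeeping of the noncommutativity: $\log_\cG$ is only additive modulo commutators $[\sR_P[\tfrac1p], J(\sR_P)[\tfrac1p]]$ (Proposition \ref{Log}), and $\varphi_{conj}$ on the conjugacy-class module $\sR[\Conj{\ol\cG}]^\tau$ is not literally a ring endomorphism of $\sR[\ol\cG]^\tau$ but rather the induced map on $\Conj$; so the identity $\varphi_{conj}(\log_\cG(u)) = \log_\cG(u^{(p)})$ must be interpreted and proved at the level of $\sR[\Conj{\ol\cG}]^\tau$, checking that the commutator ambiguities are consistent on both sides. The technical heart is thus a careful ``$\varphi_{conj}\equiv (\,\cdot\,)^p \bmod p$ on conjugacy classes'' lemma, which in the commutative-coefficient-ring case is Oliver's, and here needs the unramifiedness hypothesis \textbf{(Ur)} and the twisted-group-ring structure $\sR[\ol\cG]^\tau$ to be handled honestly; once that lemma is in place the rest is the standard denominator-chasing, which I would only sketch, referring to \cite{ol}, \cite{ol-tay}, and \cite{kakde} for the parallel arguments.
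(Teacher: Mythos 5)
Your overall plan is the right one and, at its core, coincides with the paper's: reduce to $\kone{\sR[\ol\cG]^\tau,J(\sR[\ol\cG]^\tau)}$ using that its index in $\kone{\sR[\ol\cG]^\tau}$ is prime to $p$, represent an element there as $[1-x]$ with $x\in J(\sR[\ol\cG]^\tau)$, and then exploit the Frobenius congruence $\varphi_{conj}(\,\cdot\,)\equiv(\,\cdot\,)^p\pmod{p}$ on conjugacy classes, deferring the refined denominator estimates to Oliver. The paper does exactly this, but more directly than you propose: it expands
\begin{equation*}
 \mathfrak{L}_\cG([1-x])=-\sum_{p\nmid i}\frac{x^i}{i}-\sum_{j\geq 1}\frac{x^{pj}-\varphi_{conj}(x^j)}{pj},
\end{equation*}
observes that the first sum is visibly integral, and reduces the whole proposition to the integrality of the second sum, which is precisely the content of \cite[Theorem 6.2]{ol}. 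No passage through $p^m$-th powers is needed.

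The genuine problem with your write-up is the intermediate reduction via the $p^m$-th power decomposition borrowed from Lemma \ref{log-theta-res}. That trick is adapted to proving an \emph{identity} between two maps into a torsion-free group, where one may freely divide by $p^m$ at the end. Here the goal is \emph{integrality}, which is not stable under division by $p^m$: since $\log_\cG(u^{p^m})=p^m\log_\cG(u)$ modulo commutators, establishing $\mathfrak{L}_\cG\bigl(u^{p^m}\bigr)\in\sR[\Conj{\ol\cG}]^\tau$ only yields $p^m\mathfrak{L}_\cG(u)\in\sR[\Conj{\ol\cG}]^\tau$, i.e.\ $\mathfrak{L}_\cG(u)\in p^{-m}\sR[\Conj{\ol\cG}]^\tau$, and letting $m\to\infty$ makes this worse, not better. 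So the factors $1+p\xi$ and $1+\sum_jX_j^{p^m}\xi_j'$ cannot be recombined to give the conclusion for $u$ itself. You should delete that reduction and instead apply your key congruence term by term to the series, as above; note also that for the terms $\frac{x^{pj}-\varphi_{conj}(x^j)}{pj}$ with $p\mid j$ the naive congruence mod $p$ is not enough, and one needs the inductively strengthened congruence from Oliver's proof (divisibility by $pj$ rather than by $p$), which is exactly the point where the citation of \cite[Theorem 6.2]{ol} carries the weight.
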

\begin{proof}As before, by the exact sequence in \eqref{fund-exact}, the index of $\kone{\sR[\ol\cG]^\tau,J(\sR[\ol\cG]^\tau)}$ in $\kone{\sR[\ol\cG]^\tau}$ is finite and prime to 
$p$. Therefore, it is enough to prove that $\mathfrak{L}_\cG(\kone{\sR[\ol\cG]^\tau,J(\sR[\ol\cG]^\tau)})\subseteq \sR[\Conj{\ol\cG}]^\tau$. 

Let $y\in\kone{\sR[\ol\cG]^\tau,J(\sR[\ol\cG]^\tau)}$. Then, by \cite{vaser}, 
there exists $x$ such that $y=[1-x]$ for some $x\in J(\sR[\ol\cG]^\tau)$. 
Then, we have,
\begin{equation*}
 \mathfrak{L}_\cG(y)=-\sum_{i\geq 1}\frac{x^i}{i}+\sum_{i\geq 1}\frac{\varphi_{conj}(x^i)}{pi}
         =-\sum_{i\geq 1,p\nmid i}\frac{x^i}{i}-\sum_{j\geq 1}\frac{x^{pj}-\varphi_{conj}(x^j)}{pj}.
\end{equation*}
Therefore, it is enough to prove that the sum $\sum_{j\geq 1}\frac{x^{pj}-\varphi_{conj}(x^j)}{pj}\in \sR[\Conj{\ol\cG}]^\tau$. This follows from  
the same argument as in the proof of \cite[Theorem 6.2]{ol}.
\end{proof}
We now extend the theorem of Oliver \cite[Theorem 6.4, 6.6]{ol} to $\sR_G$, for finite groups $G$ of prime order. For this, we recall the following exact sequence from 
\cite[Lemma 6.3(ii)]{ol}:
\begin{equation}\label{k_h_0}
 0\inj\mathbb{F}_p\lra\sR/\m_\sR\stackrel{1-\varphi}{\lra}\sR/\m_\sR\stackrel{\mathrm{Tr}}{\lra}\mathbb{F}_{p}\lra 0
\end{equation}
where $\mathbb{F}_p$ is the finite field of order $p$, $\varphi$ is the Frobenius and $\mathrm{Tr}$ denotes the trace map. Here we note that $\sR$ is a local field with maximal 
ideal $\m_\sR$.
\begin{propn}\label{oli-6.4}
 Let $G$ be a finite $p$-group and $z$ be an element of order $p$ in the center $Z(G)$. Then, we have the following exact sequence:
 \begin{equation}\label{exact-log}
1\lra \langle z\rangle \lra \kone{\sR_G,(1-z)\sR_G}\stackrel{\log_G}{\lra}\mathrm{H}_0(G,(1-z)\sR_G)\stackrel{\omega}{\lra}\FF_p\lra 0
 \end{equation}
\end{propn}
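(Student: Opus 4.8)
The plan is to mimic Oliver's proof of \cite[Theorem 6.4]{ol} for the classical group ring $\Z_p[G]$ (or its unramified analogue), tracking how the extra ``deformation variables'' $X_1,\dots,X_r$ interact with the arguments. Recall $\sR=\II(\wt\Gamma^{p^e})$ with $\II=\cO[[X_1,\dots,X_r]]$, $\cO$ unramified over $\Z_p$; the key local facts are that $\sR$ is a (noncommutative, but $z$-central) local ring, that $\sR_G=\sR[G]$ for the finite $p$-group $G$, and that $(1-z)\sR_G$ is a two-sided ideal contained in $J(\sR_G)$ since $z\in Z(G)$ has order $p$ (so $(1-z)^p=1-z^p+p(\cdots)=p(\cdots)\bmod$ higher terms, which forces nilpotency modulo $p$ after passing to the residue ring). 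First I would establish that $\Log$ gives a well-defined homomorphism $\log_G:\kone{\sR_G,(1-z)\sR_G}\lra \mathrm{H}_0(G,(1-z)\sR_G[\tfrac1p])$ — this is exactly Remark \ref{general-log} applied to the ideal $I=(1-z)\sR_G\subset J(\sR_G)$, giving $\log_G^I:\kone{\sR_G,I}\lra (I/[\sR_G,I])[\tfrac1p]=\mathrm{H}_0(G,I)[\tfrac1p]$.

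Next I would show the image of $\log_G$ actually lands in $\mathrm{H}_0(G,(1-z)\sR_G)$ (no denominators) and compute the kernel and cokernel. The kernel: an element of $\kone{\sR_G,(1-z)\sR_G}$ killed by $\log_G$ must, by the exponential/logarithm inversion of Lemma \ref{log-sub}-type arguments, be a root of unity of $p$-power order lying in $1+(1-z)\sR_G$; since $\cO$ is unramified, $\mu_{p^\infty}(\sR)=\{1\}$ and the only such units are the powers of $z$ itself, giving $\ker=\langle z\rangle$. (Here I'd use that $z$ maps to a nontrivial element under $\log_G$ composed with nothing — actually $z\in\ker$ because $\Log(z^k)$ is a sum of commutators after symmetrizing; this is the content of the bottom row of Oliver's diagram.) For the right-hand map $\omega$: one defines $\omega$ as the composite of $\log_G$-cokernel identification with the trace/augmentation, exactly as in \eqref{k_h_0}, and shows $\omega$ is surjective onto $\FF_p$ with kernel precisely $\mathrm{im}(\log_G)$. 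The exact sequence \eqref{k_h_0} — the $1-\varphi$ and trace sequence on $\sR/\m_\sR$ — is the input that pins down the size of the cokernel; one tensors it up along $G$ and uses $\mathrm{H}_0(G,(1-z)\sR_G)\otimes$ to identify everything, noting $(1-z)\sR_G$ as an $\sR[G]$-bimodule modulo commutators is a sum of copies of $\sR$ indexed by conjugacy classes meeting the right coset structure.

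The main obstacle I anticipate is controlling the presence of the extra variables $X_1,\dots,X_r$ in the denominator-clearing step, i.e.\ proving $\mathrm{im}(\log_G)\subseteq \mathrm{H}_0(G,(1-z)\sR_G)$ rather than just its $\tfrac1p$-localization. In Oliver's setting this is \cite[Theorem 6.4]{ol} and uses that for $x\in(1-z)\sR_G$ the ``Frobenius-twisted'' correction terms $\tfrac{x^{pj}-\varphi_{conj}(x^j)}{pj}$ are integral; I would reproduce that computation — as was already done in the proof of Proposition \ref{int-log} — but now $\varphi$ raises the $X_j$ to $p$-th powers, so one must check the mixed terms $x^{pj}-\varphi(x^j)$ acquire the needed factor of $p$ when $x$ involves the $X_j$. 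This is handled by the same binomial/Frobenius congruence $\varphi(a)\equiv a^p \pmod p$ that holds in $\cO[[X_1,\dots,X_r]]$ because $\cO$ is unramified and $\varphi(X_j)=X_j^p$; the argument is essentially the one in \cite[Theorem 6.2]{ol} already invoked above, applied to the ideal $(1-z)\sR_G$. Once integrality is in hand, exactness at the four spots follows by a diagram chase against \eqref{k_h_0} and \eqref{fund-exact}, exactly paralleling \cite[Lemma 6.3, Theorem 6.4]{ol}.
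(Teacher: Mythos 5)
Your overall plan (generalize Oliver's Theorem 6.4, with the sequence \eqref{k_h_0} as the input that pins down kernel and cokernel) is the right one, and your identification of $\omega$ as a trace composite matches the paper. But there is a genuine gap in your kernel computation. You argue that an element killed by $\log_G$ is a torsion unit in $1+(1-z)\sR_G$ and then assert that ``the only such units are the powers of $z$ itself.'' That assertion is a Higman-type classification of torsion units of $\sR[G]$, which in this paper is Theorem \ref{k-tors} --- and Theorem \ref{k-tors} is deduced from Proposition \ref{log-exact}, which in turn rests on the very proposition you are proving. So as written the argument is circular (or at best leaves the hardest point unproved). The paper avoids this entirely: it filters $I=(1-z)\sR_G$ by $(1-z)J$, uses that $\log_G^{(1-z)J}$ is an isomorphism, identifies $\kone{\sR_G/(1-z)J,\,I/(1-z)J}\cong\sR_G/J\cong\sR/\m_\sR$ by Bass's theorem, and then shows via the explicit maps $\alpha'(1+(1-z)\sum r_ig_i)=\sum\ol{r_i}$ and $\alpha''((1-z)\sum r_ig_i)=\sum\ol{r_i}$ that $\log_G^I$ has the same kernel and cokernel as $1-\varphi$ on $\sR/\m_\sR$; the kernel is then $\FF_p$, onto which $\alpha'$ maps $\langle z\rangle$ isomorphically. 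You need this reduction to the residue field (or some substitute for it); without it the statement $\ker=\langle z\rangle$ is not established.

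A secondary misdirection: you spend your anticipated effort on integrality of the ``Frobenius-twisted'' terms $\frac{x^{pj}-\varphi_{conj}(x^j)}{pj}$, but those belong to the \emph{integral} logarithm $\mathfrak{L}_G=\log_G-p^{-1}\varphi_{conj}\circ\log_G$ of Proposition \ref{int-log}, not to the map $\log_G$ appearing in \eqref{exact-log}. For $\log_G$ on $1+(1-z)\sR_G$, integrality is immediate from the fact that $\xi=1-z$ is central with $\xi^p\in p\xi\sR_G$ (Oliver's Lemma 6.3(i)), so the Lemma \ref{log-sub}/Lemma \ref{log-of-hat}(ii) mechanism gives convergence in $(1-z)\sR_G$ itself with $\mathrm{Exp}$ as inverse on $(1-z)J$. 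The Frobenius does enter the proof, but only through the residue computation $\alpha''(\log_G^I(1+(1-z)rg))=r-\varphi(r)$ coming from the $p$-th term of the log series, which is exactly what splices the sequence \eqref{k_h_0} into the diagram.
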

\begin{proof}
 The proof is the same as in the proof of \cite[Theorem 6.4]{ol}. We set $I=(1-z)\sR_G$ and $J$ the Jacobson radical of $\sR_G$. As $(1-z)^p\in pI$, the $p$-adic 
 logarithm induces a homomorphism $\log_G^I$ and an isomorphism $\log_G^{IJ}$. These maps fit in the following commutative diagram:
 \begin{equation*}
  \xymatrix{
           &\kone{\sR_G,(1-z)J}\ar[r]\ar[d]_{\log_G^{IJ}}^{\cong} &\kone{\sR_G,I}\ar[r]\ar[d]_{\log_G^I} &\kone{\frac{\sR_G}{(1-z)J},\frac{I}{(1-z)J}}\ar[r]\ar[d]_{\log_0} & 1\\
  0\ar[r]  & \mathrm{H}_0(G,(1-z)J)\ar[r]                 &\mathrm H_0(G,I)\ar[r]             &\mathrm H_0(G,\frac{I}{(1-z)J})\ar[r]                    &0
  }
 \end{equation*}
By a result of Bass (see \cite[Theorem 1.15]{ol}), we have the following identification
\begin{equation*}
 \kone{\frac{\sR_G}{(1-z)J},\frac{I}{(1-z)J}} \mathrel{\mathop{\longrightarrow}^{\alpha}_{\cong}} \sR_G/J\cong\sR/\m_\sR, 
\end{equation*}
where the map $\alpha$ is given by $\alpha(1+(1-z)\xi)=\xi$ for $\xi\in\sR_G/J$.
Further, $\mathrm H_0(G,I/(1-z)J)\cong\sR/\m_\sR$. Together with the exact sequence in \eqref{k_h_0}, 
we can see that the map $\log_0$ fits in the following exact sequence:
\begin{equation*}
 0\lra \FF_p\lra\sR_G/J\stackrel{\cong}{\lra}\kone{\frac{\sR_G}{(1-z)J},\frac{I}{(1-z)J}}\stackrel{\log_0}{\lra} \mathrm H_0\left(G,\frac{I}{(1-z)J}\right)\stackrel{\cong}{\lra}
 \sR/\m_\sR \lra\FF_p\lra 0,
\end{equation*}
as a result, we have the following commutative diagram:
\begin{equation*}
 \xymatrix{
                                     &0\ar[d]                                                         &  \\
                                     &\FF_p\ar[d]                                                     &  \\
 \kone{\sR_G,I}\ar[r]^{\alpha'}\ar[d]_{\log_G^I} &\sR_G/J\cong\sR/\m_\sR\ar[r]\ar[d]_{\log_0}^{\cong} & 1\\
 \mathrm H_0(G,I)\ar[r]^{\alpha''}\ar[rd]_\omega             &\sR/\m_\sR\ar[r]\ar[d]^{\mathrm{Tr}}            &0 \\
                                     &\FF_p\ar[d]                                                     & \\
                                     &0                                                               &
 }
\end{equation*}
where $\alpha'(1+(1-z)\sum r_ig_i)=\sum\ol{r_i}$ and $\alpha''((1-z)\sum r_ig_i)=\sum\ol{r_i}$. Indeed, the right column is exact, and as
\begin{equation*}
 \alpha''(\log_G^I(1+(1-z)rg))=\alpha''((1-z)(rg-r^pg^p))=r-\varphi(r)\in\sR/\m_\sR,
\end{equation*}
the square is also commutative. Therefore the maps $\log_G^I$ and $1-\varphi$ have isomorphic kernel and cokernel.
Noting that $\omega=\mathrm{Tr}\circ\alpha''$ and $\alpha'$ maps $\langle z\rangle$ isomorphically onto $\FF_p=\mathrm{ker}(1-\varphi)$, the exactness of the 
sequence in the Proposition follows.

\end{proof}
We now prove the key result regarding the integral logarithm. This is a generalization of the following exact sequence, (\cite[Def 70]{kakde}).
Since $\II=\cO[[X_1,\cdots,X_r]]$, the ring $\sR=\II[[\wt\Gamma^{p^e}]]$ isomorphic to an Iwasawa algebra with $r+1$ variables.
Let $\WW\cong (1+p\Z_p)^{r+1}$, then $\sR\cong\cO[[\WW]]$. 
We assume that the extension $\cO/\Z_p$ is unramified. Recall that the quotient field of $\cO$ is denoted by $K$. Then the following sequence is an exact sequence of groups:
\begin{equation*}
1\lra\mu_K\times\WW\lra\kone{\sR}\stackrel{\mathfrak L}{\lra}\sR\stackrel{\omega}{\lra}\WW\lra 1.
\end{equation*}
\begin{propn}\label{log-exact}
 Let $G$ be a finite $p$ group. Let $\WW\cong(1+p\Z_p)^{r+1}$ and $\II=\mathcal\cO[[X_1,\cdots,X_r]]$, such that the extension $\cO/\Z_p$ is unramified. Define the map
 \begin{equation*}
 \wt\omega:\sR[G]\lra \WW\times G^{ab}
 \end{equation*}
by $\wt\omega(\sum_{i}a_ig_i)=\prod_i(\omega(a_i),(g_i)^{\mathrm{Tr}(a_i\mod\m_\II)})$. Then the sequence
 \begin{equation*}
 1\lra\kone{\sR_G}/\left(\WW\times\kone{\sR_G}_{tors}\right)\stackrel{\mathfrak{L}_G}{\lra}\sR_G\stackrel{\wt\omega}{\lra}\WW\times G^{ab}\lra 1
 \end{equation*}
 is exact.
\end{propn}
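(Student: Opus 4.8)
The plan is to follow Oliver's proof of \cite[Theorem 6.6]{ol} together with its group-ring elaboration in \cite[\S 5]{kakde}, carried out over the coefficient ring $\sR=\II[[\wt\Gamma^{p^e}]]\cong\cO[[\WW]]$ with $\WW\cong(1+p\Z_p)^{r+1}$, and to argue by induction on $|G|$. Two reductions make the statement tractable at the outset. First, since $\sR_G$ is a free $\sR$-module it is torsion-free as an abelian group, so $\mathfrak L_G$ automatically annihilates $\kone{\sR_G}_{tors}$; and for the $r+1$ topological generators $\gamma_j$ of $\WW$ one has $\varphi(\gamma_j)=\gamma_j^p$, whence $\log_G(\gamma_j^p)=p\log_G(\gamma_j)$ and $\mathfrak L_G(\gamma_j)=\log_G(\gamma_j)-p^{-1}\varphi_{conj}\log_G(\gamma_j)=0$. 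Thus $\mathfrak L_G$ factors through $\kone{\sR_G}/(\WW\times\kone{\sR_G}_{tors})$, and it remains to prove injectivity there and to identify the image with $\ker\wt\omega$. Second, by \eqref{fund-exact} the index $[\kone{\sR_G}:\kone{\sR_G,J(\sR_G)}]$ is finite and prime to $p$, and $\kone{\FF_p}\cong\FF_p^\times$ is torsion; so it suffices to analyse $\mathfrak L_G$ on the relative group $\kone{\sR_G,J(\sR_G)}$, where it is honestly defined by the power series $\Log$ via Proposition \ref{Log}.

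For the base case $G=\{1\}$ we have $\sR_G=\sR\cong\cO[[\WW]]$ and the asserted sequence is precisely $1\to\mu_K\times\WW\to\kone{\sR}\stackrel{\mathfrak L}{\to}\sR\stackrel{\omega}{\to}\WW\to1$ recalled above; this is the unramified, several-variable Iwasawa-algebra instance of \cite[Theorem 6.6]{ol}, obtained from $\kone{\sR}=\sR^\times=\mu_K\times(1+\m_\sR)$, the fact that $\Log$ carries $1+\m_\sR$ isomorphically onto $\m_\sR$ up to bounded denominators, the integrality of $\mathfrak L$ (Proposition \ref{int-log}), and the exactness of \eqref{k_h_0}, which computes the cokernel.

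For the inductive step, let $|G|>1$, choose $z\in Z(G)$ of order $p$, set $\ol G:=G/\langle z\rangle$ and $I:=(1-z)\sR_G$, which lies in $J(\sR_G)$ since $(1-z)^p\in p\sR_G$. The surjection $\pi:\sR_G\twoheadrightarrow\sR_{\ol G}$ with kernel $I$ yields the exact rows
\begin{equation*}
\kone{\sR_G,I}\lra\kone{\sR_G}\longby{\pi_\ast}\kone{\sR_{\ol G}}\lra1,\qquad 0\lra H_0(G,I)\lra\sR_G\lra\sR_{\ol G}\lra1,
\end{equation*}
where $\sR_G$ is written loosely for the conjugacy-class module $H_0(G,\sR_G)$ targeted by $\mathfrak L_G$. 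Proposition \ref{oli-6.4} provides $1\to\langle z\rangle\to\kone{\sR_G,I}\stackrel{\log_G}{\to}H_0(G,I)\stackrel{\omega}{\to}\FF_p\to0$, and the crucial computation is that the same sequence holds with $\mathfrak L_G$ in place of $\log_G$: one checks $\mathfrak L_G(\kone{\sR_G,I})\subseteq H_0(G,I)$ (relative integrality, exactly as in Proposition \ref{int-log} with the $I$-filtrations) and that the twist $1-p^{-1}\varphi_{conj}$ changes neither the kernel $\langle z\rangle$ nor the cokernel $\FF_p$, since modulo $pI$ the operator $\varphi_{conj}$ raises conjugacy classes to $p$-th powers and its interaction with $\Log$ is governed by \eqref{k_h_0}. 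Then I would assemble the commutative ladder whose top row is the (to-be-proved) sequence for $G$, whose bottom row is the sequence for $\ol G$ (known by induction), and whose left ``kernel column'' is the $\mathfrak L_G$-version of the sequence from Proposition \ref{oli-6.4}, with vertical maps $\pi_\ast$, $\pi$, and the projection $\WW\times G^{ab}\twoheadrightarrow\WW\times\ol G^{ab}$ whose kernel is the image of $\langle z\rangle$ (matching the $\FF_p$ above); here the analogue for $\sR_G$ of the generalized Higman theorem (Theorem \ref{k-tors}), describing $\kone{\sR_G}_{tors}\cong\mu_K\times G^{ab}\times S\kone{\sR_G}$, is used to see that quotienting by $\WW\times\kone{\sR_G}_{tors}$ is compatible with the $\langle z\rangle$-filtration. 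A diagram chase (snake lemma) then gives exactness of the middle row, completing the induction.

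The main obstacle is the inductive-step ``relative'' analysis: verifying that the $p^{-1}\varphi_{conj}$-correction leaves Oliver's kernel/cokernel computation on $\kone{\sR_G,(1-z)\sR_G}$ intact, and threading the torsion contributions ($\mu_K$, $G^{ab}$, and $S\kone{\sR_G}$) correctly through the ladder so that quotienting by $\WW\times\kone{\sR_G}_{tors}$ is exactly what renders $\mathfrak L_G$ injective. Everything else is a bookkeeping-heavy but routine transcription of the arguments of \cite{ol} and \cite{kakde} to the base ring $\sR=\cO[[\WW]]$.
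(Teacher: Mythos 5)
Your proposal follows essentially the same route as the paper: the base case $G=\{1\}$ is the known four-term sequence for $\sR\cong\cO[[\WW]]$ cited from Kakde, and the inductive step uses a central element $z$ of order $p$, the relative logarithm sequence of Proposition \ref{oli-6.4}, and a $3\times 3$ diagram chase against the quotient $G/\langle z\rangle$. The only substantive steps you leave implicit that the paper carries out explicitly are the direct verification that $\wt\omega\circ\mathfrak{L}_G=1$ (via $\mathfrak{L}_G(1+I)\subseteq I^2\subseteq\ker(\wt\omega)$ for $I$ the augmentation ideal, which is needed for the middle row of the ladder to be a complex before the chase can conclude) and the choice of $z$ to be a commutator when $G$ is nonabelian, which enters the count $|\mathrm{coker}(\mathfrak{L}_0)|=|\ker(\alpha^{\mathrm{ab}})|$.
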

\begin{proof}
 The proof is a generalization of the proof of Oliver. We also use induction on the order of $G$ to prove the result.
 Let $G=(1)$. Then $\sR=\II[[\wt\Gamma^{p^e}]]$. 
By \cite[Def 70]{kakde}, we have the exact sequence, 
\begin{equation*}
 1\lra\mu_K\times\WW\lra\kone{\sR}\stackrel{\mathfrak L}{\lra}\sR\stackrel{\omega}{\lra}\WW\lra 1.
\end{equation*}

Next, let $G$ be a non-trivial $p$-group. We then show that $\wt\omega\circ\mathfrak{L}_G=1$. It is enough to prove this when $G$ is abelian. 
Let $I$ be the augmentation ideal of $\sR[G]$. Consider $u=1+\sum r_i(1-a_i)g_i\in 1+I,$ where $r_i\in\sR$.
Then 
\begin{equation*}
\begin{split}
u^p\equiv & 1+p\sum r_i(1-a_i)g_i+\sum r_i^p(1-a_i)^pg_i^p\mod{pI^2}\\
   \equiv & 1+p\sum r_i(1-a_i)g_i+\sum r_i\{(1-a_i)^p-p(1-a_i)\}g_i^p\mod{pI^2}, \mbox{ by \cite[Lemma 6.3(i)]{ol}}\\
   \equiv & \varphi_{conj}(u)+p\sum r_i(1-a_i)(g_i-g_i^p)\mod{pI^2}\\
   \equiv & \varphi_{conj}(u)\mod{pI^2},
\end{split}
\end{equation*}
 i.e., $u^p/\varphi_{conj}(u)\in 1+pI^2$. Therefore $\mathfrak{L}_G(u)=\log_G(u)-p^{-1}\varphi_{conj}(\log_G(u))=\frac{1}{p}\log_G(u^p/\varphi_{conj}(u))\in I^2$.

On the other hand, for any $r\in\sR$ and $a,b,g\in G$, we have
\begin{equation*}
\begin{split}
 \wt\omega(r(1-a)(1-b)g)&=(\omega(r),g^{\mathrm{Tr}(r)})(\omega(-r),(ag)^{\mathrm{Tr}(-r)})(\omega(-r),(bg)^{\mathrm{Tr}(-r)})(\omega(r),(abg)^{\mathrm{Tr}(r)})\\
                        &=(\omega(r)\omega(-r)\omega(-r)\omega(r),1)\\
                        &=(1,1)\in \WW\times G.
 \end{split}
\end{equation*}
Therefore, $\mathfrak{L}_G(1+I)\subseteq I^2\subseteq\mathrm{ker}(\wt\omega)$, and hence
\begin{equation}\label{int_subset_ker_omega}
 \mathfrak{L}_G(\kone{\sR_G})=\mathfrak{L}_G(\sR^\times\times(1+I))=\langle\mathfrak{L}(\sR^\times),\mathfrak{L}(1+I)\rangle\subseteq\mathrm{ker}(\wt\omega).
\end{equation}
It follows that $\wt\omega\circ\mathfrak{L}_G=1$.
 
Assume that the theorem is true for all groups whose order is less than the order of $G$.
Now, let $z$ be an element of order $p$ in the center $Z(G)$, such that $z$ is a commutator if $G$ is nonabelian. The existence of such a commutator is shown in 
\cite[Lemma 6.5]{ol}. Let $\widehat{G}:=G/\langle z\rangle$. Let $\alpha:G\lra\widehat G$ denote the natural projection map. Then we have the following commutative diagram:
\begin{equation*}
 \xymatrix{
            &1\ar[d]                                                                     &1\ar[d]            &1\ar[d]                                        &          \\
1\ar[r]     &\kone{\sR[G],(1-z)\sR[G]}/\mathrm{tors}\ar[r]^{\quad\quad\mathfrak{L}_0}\ar[d]        
                                                                                         &\ol{\mathrm H}_0(G;(1-z)\sR[G])\ar[r]^{\quad\quad\omega_0}\ar[d] 
                                                                                                             &\mathrm{ker}(\alpha^{\mathrm{ab}})\ar[r]\ar[d] &1 \\
1\ar[r]     &\kone{\sR[G]}/\mathrm{tors}\ar[r]^{\mathfrak{L}_G}\ar[d]                    & {\mathrm H}_0(G;(1-z)\sR[G])\ar[r]^{\quad\quad\omega_G}\ar[d] 
                                                                                                             &\mathbb{W}\times G^{ab}\ar[r]\ar[d]^{\alpha} &1 \\
1\ar[r]     &\kone{\sR[\widehat{G}]}/\mathrm{tors}\ar[r]^{\mathfrak{L}_{\widehat G}}\ar[d]  
                                                                                         & {\mathrm H}_0(\widehat G;(1-z)\sR[\widehat G])\ar[r]^{\quad\quad\omega_{\widehat G}}\ar[d] 
                                                                                                             &\mathbb{W}\times\widehat{G}^{ab}\ar[r]\ar[d]   &1 \\
            &1                                                                           &1                  &1                                              & 
}
\end{equation*}
By \cite[Theorem 1.14 (iii)]{ol}, the columns are all exact. By the induction hypothesis, the bottom row is exact. In the top row, 
the integral logarithm map $\mathfrak{L}_0$ is injective, by Proposition \ref{oli-6.4}, and the map $\omega_0$ is clearly onto. 
Moreover, $\mathrm{Im}(\mathfrak{L}_0)\subset\mathrm{ker}(\omega_0)$, by \eqref{int_subset_ker_omega}. Lastly, by Proposition \ref{oli-6.4} again, we have,
\begin{equation*}
\begin{split}
 \mid\mathrm{ker}(\alpha^{\mathrm{ab}})\mid&=\begin{cases}
                                             1 & \mbox{ if } z \mbox{ is a commutator }\\
                                             p & \mbox{ otherwise}
                                            \end{cases}\\
                                           &=\mid\mathrm{coker}(\mathfrak{L}_0) \mid.
\end{split}                                           
\end{equation*}
Again, as $\omega_G\circ\mathfrak{L}_G=1$, by the equality \eqref{int_subset_ker_omega}, it follows that the middle row is short exact.
\end{proof}
\begin{defn}\label{sk-def2}
Let $\mathbb{K}=K[[X_1,\cdots,X_r,Y]]$, where $K$  is the quotient field of $\cO,$ and $Y$ is the variable corresponding to $\wt\Gamma^{p^e}$. Recall that 
$\WW=(1+p\Z_p)^{r+1}$. 
For any finite group $G$, we define the following groups (see \cite[Page 173]{ol}):
\begin{equation*}
 \begin{split}
 SK_1(\sR[G])&:=\mathrm{ker}\left[\kone{\sR[G]}\lra\kone{\mathbb{K}[G]}\right],\\
  \konep{\sR[G]}&:=\kone{\sR[G]}/S\kone{\sR[G]},\\
  \mathrm{Wh}(\sR[G])&:=\kone{\sR[G]}/\left(\mu_K\times\WW\times G^{ab}\right), 
 \end{split}
\end{equation*}
where $\mu_K$ is the set of roots of unity in $K$. 
This is a generalization of Definition \ref{sk-def}
\end{defn}

The following proposition is a generalization of \cite[Theorem 7.1]{ol}.
\begin{propn}
 Let $G$ be a finite $p$-group and $z\in Z(G)$ such that the order of $z$ is the prime $p$. Let 
 \begin{equation*}
  \Omega=\{g\in G: [g,h]=z, \mbox{ for some } h\in G\}.
 \end{equation*}
On this set, consider the following relation $\sim$:
\begin{equation*}
 g\sim h \mbox{ if } \begin{cases}
                      g \mbox{ is conjugate to } h, \mbox{ or }\\
                      [g,h]=z^i, \mbox{ for any } i \mbox{ prime to } p.
                     \end{cases}
\end{equation*}
Then 
\begin{equation*}
 \mathrm{ker}\left[\mathrm{tors}(\Wh(\sR[G]))\lra \mathrm{tors}(\Wh(\sR[G/\langle z\rangle]))\right]\cong (\Z/p)^N, \mbox{where } N=\begin{cases}
          0 &\!\!\! \mbox{ if } \Omega=\emptyset  \\
          |\Omega/\sim|-1 &\!\!\! \mbox{ if } \Omega\neq0.\end{cases}
\end{equation*}
\end{propn}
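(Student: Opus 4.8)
The plan is to follow Oliver's proof of \cite[Theorem 7.1]{ol} essentially verbatim, checking at each step that the only place where the base ring $\Z_p$ is used can be replaced by $\sR=\II[[\wt\Gamma^{p^e}]]\cong\cO[[\WW]]$ with $\cO/\Z_p$ unramified, so that all the $K$-theoretic and homological inputs remain valid. The key observations that make this substitution work are: (1) $\sR$ is a complete regular local ring with residue field a finite field $\FF_q$ of characteristic $p$, and the Frobenius $\varphi$ extends to $\sR$ as in \S6.2; (2) Proposition \ref{oli-6.4} (our generalization of \cite[Theorem 6.4]{ol}) already gives the exact sequence
\begin{equation*}
1\lra \langle z\rangle \lra \kone{\sR_G,(1-z)\sR_G}\stackrel{\log_G}{\lra}\mathrm{H}_0(G,(1-z)\sR_G)\stackrel{\omega}{\lra}\FF_p\lra 0;
\end{equation*}
and (3) Proposition \ref{log-exact} (our generalization of \cite[Def 70]{kakde}, itself Oliver's integral logarithm theorem) gives the exactness of $1\to\kone{\sR_G}/(\WW\times\kone{\sR_G}_{tors})\to\sR_G\to\WW\times G^{ab}\to1$. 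These are exactly the two ingredients Oliver uses to run the argument for \cite[Theorem 7.1]{ol}.

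First I would set $\widehat G:=G/\langle z\rangle$ and $\alpha:G\to\widehat G$ the projection, and consider the map on torsion parts of $\Wh$ induced by $\alpha$, using the definition of $\Wh(\sR[G])$ from Definition \ref{sk-def2}. The kernel of $\mathrm{tors}(\Wh(\sR[G]))\to\mathrm{tors}(\Wh(\sR[\widehat G]))$ is controlled by the relative term $\kone{\sR[G],(1-z)\sR[G]}$, because $(1-z)\sR[G]$ is the kernel of $\sR[G]\to\sR[\widehat G]$ and $(1-z)^2\in p(1-z)\sR[G]$ up to the adjustments in \cite[Lemma 6.3]{ol}. Using the exact sequence of Proposition \ref{oli-6.4}, $\kone{\sR_G,(1-z)\sR_G}/\langle z\rangle$ embeds via $\log_G$ into $\mathrm H_0(G,(1-z)\sR_G)$ with cokernel $\ker(\omega)/\mathrm{im}(\log_G)$; combined with the integral logarithm (Proposition \ref{log-exact}) one identifies the torsion of the kernel with the torsion of an $\FF_p$-vector space built from $\mathrm H_0(G,(1-z)\sR_G)$. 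The next step is the purely combinatorial computation — exactly as in \cite[\S7]{ol} — identifying $\mathrm H_0(G,(1-z)\sR[G])$ modulo the image of $\mathrm H_0$ of the commutator subspace with the $\FF_q$-span (or $\FF_p$-span, after applying the trace $\mathrm{Tr}:\sR/\m_\sR\to\FF_p$ from \eqref{k_h_0}) of the set $\Omega$ of elements $g$ with $[g,h]=z$ for some $h$, where two such elements are identified when conjugate or when $[g,h]=z^i$ with $(i,p)=1$. This yields $(\Z/p)^N$ with $N=|\Omega/\!\sim|-1$ when $\Omega\ne\emptyset$ and $N=0$ otherwise, the $-1$ coming from the image of $\langle z\rangle$ and the Frobenius-fixed line $\FF_p=\ker(1-\varphi)$ in $\sR/\m_\sR$ as in \eqref{k_h_0}.

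The main obstacle I expect is bookkeeping around the residue field: in Oliver's original $\Z_p$-case the relevant residue ring is $\FF_p$, whereas here $\sR/\m_\sR=\FF_q$ is generally larger, so the exact sequence \eqref{k_h_0} with $1-\varphi$ and $\mathrm{Tr}$ must be used to cut $\FF_q$-coefficients down to the $\FF_p$-answer; one must check carefully that passing from $\sR$-coefficients to $\FF_q$-coefficients and then to $\FF_p$ via trace does not alter the count of generators of $\Omega/\!\sim$, and that the Frobenius-fixed subspace contributes exactly the single relation absorbed into the $-1$. The additional variables $X_1,\dots,X_r,\wt\Gamma^{p^e}$ are harmless here because they lie in $\m_\sR$ and the whole computation takes place in $\mathrm H_0(G,(1-z)\sR[G]/\m_\sR(1-z)\sR[G])$, reducing everything to the residue field; the unramifiedness assumption \textbf{(Ur)} guarantees $\varphi$ is the honest Frobenius lift so that \eqref{k_h_0} holds. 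Granting that, the proof is a routine transcription of \cite[proof of Theorem 7.1]{ol} with $\Z_p$ replaced by $\sR$ and $\FF_p$ replaced by $\FF_q$ at the coefficient level, using Propositions \ref{oli-6.4} and \ref{log-exact} in place of Oliver's corresponding results.
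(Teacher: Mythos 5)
Your proposal takes essentially the same route as the paper: both transcribe Oliver's proof of \cite[Theorem 7.1]{ol} to $\sR$-coefficients, using the generalized exact sequence of Proposition \ref{oli-6.4} to identify $\mathrm{tors}(\Wh(\sR[G]))$ with $\ker(\log_{\sR[G]})$ and then reducing the kernel computation to the quotient $D/C$ of logarithmic images supported on $\Omega$, counted via $\Omega/\!\sim$. The only slip is minor: the relevant integrality input is $(1-z)^p\in p(1-z)\sR[G]$, not $(1-z)^2$.
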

\begin{proof}
The proof of this Proposition also follows the same argument as in the proof of Oliver. The first step is to recall the exact sequence in \eqref{exact-log}, which comes from the 
 homomorphism 
 \begin{equation*}
  \log_G:\kone{\sR[G],(1-z)\sR[G]}\lra \mathrm{H}_0(G;(1-z)\sR[G]),
 \end{equation*}
where $\mathrm{ker}(\log_G)=\langle z\rangle$ and $\mathrm{im}(\log)=\{(1-z)\sum r_ig_i:r_i\in\sR, g_i\in G,\sum r_i\in\mathrm{ker}(\tau)\}$, for the composite map $\tau:\sR\lra\sR/{\m_\sR}\stackrel{Tr}{\lra}\FF_p$.
This map fits into the following commutative diagram:
\begin{equation*}
 \xymatrix{
                                              &0\ar[d]\\
                                              &\mathrm{H}_0(G,(1-z)\sR[\Omega])\ar[d]\\
 \kone{\sR[G],(1-z)\sR[G]}\ar[r]^{\log_G}\ar[d] &\mathrm{H}_0(G,(1-z)\sR[G])\ar[d]\\
 \kone{\sR[G]}\ar[r]\ar[d]^\eta                    &\mathrm{H}_0(G,\sR[G])\\
 \Wh(\sR[G])\ar[ru]_{\log_{\sR[G]}}.
 }
\end{equation*}
In the above diagram, we have used the following equality
\begin{equation*}
\begin{split}
&\mathrm{ker}\left[\mathrm{H}_0(G,(1-z)\sR[G])\lra\mathrm{H}_0(G,\sR[G])\right]\\
&=\langle r(1-z)g\in\mathrm{H}_0(G,(1-z)\sR[G]): g \mbox{ is conjugate to } gz, r\in\sR[G]\rangle\\
                                                                              &=\mathrm{H}_0(G,(1-z)\sR[\Omega]).
\end{split}
\end{equation*}
Now, consider the surjection $\kone{\sR[G]}\stackrel{\eta}{\lra}\Wh(\sR[G])$. Let $x\in(\Wh(\sR[G]))_{\mathrm{tors}}$, with $\eta(y)=x$ for some $y\in\kone{\sR[G]}$. 
Then, as $x^n=1$, for some $n$, it follows that $y^n\in\mathrm{ker}(\eta)$, which is finite. Therefore $y^{nr}=1$, for some $r$, and $y\in\kone{\sR[G]}_{\mathrm{tors}}$.
Hence $\eta$ induces a surjection $\kone{\sR[G]}_{\mathrm{tors}}\stackrel{\eta}{\lra}\Wh(\sR[G])_{\mathrm{tors}}$. On the other hand, by a straight forward generalization
of \cite[Theorem 2.9]{ol}, we have
$\mathrm{ker}(\log_{\sR[G]}\circ\eta)=\kone{\sR[G]}_{\mathrm{tors}}$. Combining this with the surjection 
$\mathrm{ker}(\log_{\sR[G]}\circ\eta)\stackrel{\eta}{\lra}\mathrm{ker}(\log_{\sR[G]})$, we have $\mathrm{ker}(\log_{\sR[G]})=\Wh(\sR[G])_\mathrm{tors}$.


Further, set $I:=(1-z)\sR[G]$ and consider the map $L:1+I\stackrel{\log_I}{\lra} I\stackrel{\mathrm{proj}}{\lra}I/[\sR[G],I]$. 
Then by a straightforward generalization of \cite[Theorem 2.9]{ol}, we have a surjection from $\mathrm{ker}(L)$ to $\mathrm{ker}(\log_I)$.
Therefore, 
for any $u\in1+(1-z)\sR[G]$, if $\bar u\in\Wh(\sR[G])$ denotes the image of $u$, then 
\begin{equation*}
\begin{split}
 \bar u\in\Wh(\sR[G])_{tors}&\iff \bar u\in \mathrm{ker}(\log_{\sR[G]})\\ 
                            &\iff \log_G(u)\in\mathrm{ker}\left[\mathrm{H}_0(G,(1-z)\sR[G])\lra\mathrm{H}_0(G,\sR[G])\right]=\mathrm{H}_0(G,(1-z)\sR[\Omega])
 \end{split}
\end{equation*}

The next step is to consider the sets
\begin{equation*}
\begin{split}
 D&=\{\xi\in\sR(\Omega):(1-z)\xi\in\log_G(1+(1-z)\sR[G])\}\\
 C&=\{\xi\in\sR(\Omega):(1-z)\xi=\log_G(u), \mbox{ for some }u\in\mathrm{ker}(1+(1-z)\sR[G]\lra\mathrm{Wh}(\sR[G]))\}
 \end{split}
\end{equation*}
and show that the required kernel is equal to $D/C$. This is done exactly as in the proof of \cite[Theorem 7.1]{ol}. 
\end{proof}
As a consequence of the proposition, we get the following corollary.
\begin{cor}\label{sk-trivial}
 Let $G$ be a finite $p$-group containing an abelian subgroup $H\unlhd G$ such that $G/H$ is cyclic. Then $S\kone{\sR[G]}=1$.
\end{cor}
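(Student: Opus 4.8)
The goal is to show $SK_1(\sR[G]) = 1$ for a finite $p$-group $G$ containing an abelian normal subgroup $H$ with $G/H$ cyclic, where $\sR = \II[[\wt\Gamma^{p^e}]] \cong \cO[[\WW]]$ with $\cO/\Z_p$ unramified. The plan is to mimic the classical argument of Oliver, reducing to the case $\Omega = \emptyset$ handled by the preceding proposition.

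First I would reduce to the relevant structural setup. Since $G$ has an abelian normal subgroup $H$ with $G/H$ cyclic, $G$ is metabelian, and in fact has nilpotency class at most $2$ in the sense needed; more precisely, for any $g, h \in G$ one can control the commutator subgroup via $H$. The key observation is that for such $G$, given a central element $z$ of order $p$ that is a commutator, the set $\Omega = \{g \in G : [g,h] = z \text{ for some } h\}$ together with the relation $\sim$ has the property that $|\Omega/\!\sim| \le 1$, so that $N = 0$ in the previous proposition. This is exactly the combinatorial fact Oliver isolates (\cite[proof of Theorem 7.1, Cor.]{ol}): in a group with an abelian subgroup of cyclic quotient, all elements giving rise to a fixed commutator $z$ are identified under $\sim$, because if $[g,h] = z$ and $[g',h'] = z$ then one can conjugate and use bilinearity of the commutator map modulo $H$ to show $g \sim g'$.

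Next I would run the induction on $|G|$. The base case $G = (1)$ is immediate since $SK_1(\sR) = \ker[\kone{\sR} \to \kone{\KK}] = 1$ as $\sR$ is a (regular, local) domain with fraction field inside $\KK$ — indeed $\kone{\sR} \to \KK^\times$ is injective. For the inductive step, pick a central $z$ of order $p$, chosen to be a commutator if $G$ is nonabelian (possible by \cite[Lemma 6.5]{ol}), and set $\widehat G = G/\langle z\rangle$. The group $\widehat G$ still has an abelian normal subgroup with cyclic quotient (the image of $H$), so by induction $SK_1(\sR[\widehat G]) = 1$, equivalently the torsion of $\Wh(\sR[\widehat G])$ is controlled. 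Then the preceding proposition gives
\begin{equation*}
\ker\left[\mathrm{tors}(\Wh(\sR[G])) \lra \mathrm{tors}(\Wh(\sR[\widehat G]))\right] \cong (\Z/p)^N
\end{equation*}
with $N = 0$ by the combinatorial input above, hence $\mathrm{tors}(\Wh(\sR[G])) \hookrightarrow \mathrm{tors}(\Wh(\sR[\widehat G]))$; combined with the identification $SK_1 \subseteq \mathrm{tors}(\Wh)$ (from the exact sequence of Proposition \ref{log-exact}, where $\kone{\sR_G}/(\WW \times \kone{\sR_G}_{\mathrm{tors}})$ injects into $\sR_G$, so $SK_1$, being both torsion and in the kernel of the logarithm, sits inside $\mathrm{tors}(\Wh(\sR[G]))$), this forces $SK_1(\sR[G]) = 1$ by the inductive hypothesis applied to $\widehat G$.

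The main obstacle I expect is verifying the combinatorial claim $N = 0$ carefully in our slightly more general coefficient setting — but since $N$ depends only on the group $G$ and the relation $\sim$, not on $\sR$, this is literally Oliver's computation and transfers verbatim. A secondary technical point to check is that all the auxiliary results invoked (the integral logarithm exact sequence, the identification $\mathrm{ker}(\log_{\sR[G]}) = \Wh(\sR[G])_{\mathrm{tors}}$, and the surjectivity of the relevant maps on torsion) hold over $\sR = \cO[[\WW]]$ rather than over $\cO$ alone; these have been established in the generalized forms of \cite[Theorems 2.9, 6.4, 7.1]{ol} proved earlier in this section, so the argument goes through. I would therefore present the proof as: "By the previous proposition with $N=0$ (the combinatorial fact for groups with abelian subgroup of cyclic quotient, exactly as in \cite[Theorem 7.1]{ol}) and induction on $|G|$, using the base case $SK_1(\sR) = 1$."
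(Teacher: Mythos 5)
Your proposal is correct and follows essentially the same route as the paper: induction on $|G|$ with base case $SK_1(\sR)=1$ from locality of $\sR$, reduction via the preceding proposition to showing $N=0$, and the purely group-theoretic fact that $\sim$ is transitive on $\Omega$ for a $p$-group with an abelian normal subgroup of cyclic quotient. The only difference is that the paper reproves Oliver's transitivity computation explicitly (choosing $z\in H\cap Z(G)$ and a generator $x$ of $G/H$), whereas you cite it; since, as you note, $N$ depends only on $G$ and not on the coefficient ring, that is legitimate.
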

\begin{proof} 
 The proof proceeds by induction as in \cite[Cor 7.2]{ol}. If $G=1$, then as $\sR$ is a local ring, $S\kone{\sR[G]}=S\kone{\sR}=1$ (\cite[Cor V.9.2]{bass}).
Then assume that $H\neq1$, and choose $z\in H\cap Z(G)$ of order $p$. We also assume inductively that $\Wh(\sR[G/(z)])$ is torsion free. As above, we consider the 
set $\Omega$ and the relation $\sim$. By the previous result, it is enough to show that this relation is transitive on $\Omega$. We include the short proof for 
convenience. Let $\Omega\neq\emptyset$. We take any $g\in\Omega$,
and any $x\in G-H$, which is a generator of $G/H$. Let $h\in\Omega$ such that $[g,h]=z$. As $G/H$ is cyclic, there exists $i$ such that either $gh^{i}$ or $g^{i}h$ lies in $H$.
By symmetry, we may assume that $gh^i=a\in H$. Let $h=bx^j$ for some $b\in H$, then
\begin{equation*}
 z=[g,h]=[gh^i,h]=[a,bx^i]=[a,x^j]=[ax,x^j]=[ax,x^j(ax)^{-j}]=[x,x^j(ax)^{-j}], 
\end{equation*}
where the last equality happens as $x^j(ax)^{-j}\in H$. Therefore, in $\Omega$, we have,
\begin{equation*}
 g\sim h\sim gh^i=a\sim x^j\sim ax\sim x^j(ax)^{-j}\sim x.
\end{equation*}
Hence, the relation is transitive, and the result follows.
\end{proof}
\begin{theorem}\label{k-tors}
 Let $G$ be any finite $p$-group. Then $(\kone{\sR[G]})_\mathrm{tors}\cong\mu_K\times G^{ab}\times S\kone{\sR[G]}$.
\end{theorem}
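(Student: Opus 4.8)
The plan is to follow the classical strategy of Higman and Wall, as adapted to the group-ring setting by Oliver, and decompose the torsion subgroup of $\kone{\sR[G]}$ into its ``obvious'' pieces plus the $SK_1$-defect. First I would recall from Definition \ref{sk-def2} that $\Wh(\sR[G]) = \kone{\sR[G]}/(\mu_K\times\WW\times G^{ab})$ and that, by the exact sequence of Proposition \ref{log-exact} (the integral logarithm sequence for $\sR[G]$), the integral logarithm $\mathfrak L_G$ has kernel exactly $\mu_K\times\WW\times G^{ab}$ modulo torsion; more precisely, as in the proof of the previous proposition, $\ker(\log_{\sR[G]}\circ\eta) = (\kone{\sR[G]})_{\mathrm{tors}}$, where $\eta:\kone{\sR[G]}\lra\Wh(\sR[G])$ is the natural surjection. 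This already shows that $(\kone{\sR[G]})_{\mathrm{tors}}$ maps onto $\Wh(\sR[G])_{\mathrm{tors}}$ with kernel $\mu_K\times\WW\times G^{ab}$ intersected with the torsion subgroup, which is just $\mu_K\times G^{ab}$ (as $\WW\cong(1+p\Z_p)^{r+1}$ is torsion-free since $p$ is odd).

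Next I would identify $\Wh(\sR[G])_{\mathrm{tors}}$ with $S\kone{\sR[G]}$. The point is that the target $\sR[\mathrm{Conj}(G)]^\tau$ (or the relevant additive module $\mathrm H_0(G,\sR[G])$) appearing in the integral logarithm is torsion-free, so $\Wh(\sR[G])_{\mathrm{tors}} = \ker(\log_{\sR[G]})$; and by the argument in the proof of the preceding proposition this kernel, after passing through $\eta$, is accounted for by the $SK_1$-contribution. Concretely, one has the standard exact sequence (a generalization of \cite[Theorem 2.9]{ol}) identifying $S\kone{\sR[G]}$ as the torsion obstruction in $\Wh$, since $SK_1$ is by definition the kernel of $\kone{\sR[G]}\to\kone{\mathbb K[G]}$, $\mathbb K[G]$ is semisimple, and the image of $\kone{\sR[G]}$ in $\kone{\mathbb K[G]}\cong\prod K(\cdot)^\times$ is torsion-free up to the explicitly known roots of unity and group elements. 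I would then assemble the maps: the composite $\mu_K\times G^{ab}\times S\kone{\sR[G]}\lra(\kone{\sR[G]})_{\mathrm{tors}}$ is injective because $\mu_K$ and $G^{ab}$ inject (they are detected by determinants/augmentation, $\mu_K$ on the $\sR$-coefficient side and $G^{ab}$ by the canonical map $\kone{\sR[G]}\to G^{ab}$ coming from $\sR\to\FF_q$) while $S\kone{\sR[G]}$ has trivial intersection with them by definition; and it is surjective by the kernel computation above combined with the identification $\Wh(\sR[G])_{\mathrm{tors}}\cong S\kone{\sR[G]}$.

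The main obstacle I expect is pinning down the splitting $(\kone{\sR[G]})_{\mathrm{tors}}\cong \mu_K\times G^{ab}\times S\kone{\sR[G]}$ as a genuine \emph{direct product}, rather than merely an extension with the right orders. In Oliver's treatment this relies on the functoriality of the integral logarithm together with the fact that $\WW\times G^{ab}$ (the cokernel of $\mathfrak L_G$ up to the $\omega$-term) is realized compatibly, so that the filtration $\mu_K\subset\mu_K\times G^{ab}\subset(\kone{\sR[G]})_{\mathrm{tors}}$ splits; one uses that $\mu_K$ is a direct factor of $\kone{\sR}$ by Proposition \ref{log-exact} applied with $G=(1)$, and that the transfer/projection maps $\sR[G]\to\sR$ and $\sR\to\sR[G]$ provide the splitting of the $G^{ab}$-part via the augmentation. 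The $S\kone{\sR[G]}$ factor splits off because it is the kernel of a ring map and hence sits inside $\kone{\sR[G]}$ as a characteristic subgroup complementary to the image detected in $\kone{\mathbb K[G]}$. I would carry this out by the same inductive device used in Corollary \ref{sk-trivial} and in \cite[\S7]{ol}: pick a central $z$ of order $p$, compare $G$ with $G/\langle z\rangle$ using the five-term-type commutative diagram already set up in the proof of the preceding proposition, and feed in the induction hypothesis that the result holds for $G/\langle z\rangle$. Everything else is bookkeeping with the exact sequences of Propositions \ref{oli-6.4} and \ref{log-exact}.
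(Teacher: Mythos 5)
Your reduction is sound up to a point: Proposition \ref{log-exact} together with the fact that $\ker(\log_{\sR[G]})$ is the torsion subgroup does show that $(\kone{\sR[G]})_{\mathrm{tors}}$ sits in an extension with $\mu_K\times G^{ab}$ below and a quotient of $\Wh$-type torsion above, and splitting off $\mu_K\times G^{ab}$ is routine. The genuine gap is your second step, where you ``identify $\Wh(\sR[G])_{\mathrm{tors}}$ with $S\kone{\sR[G]}$'' on the grounds that the image of $\kone{\sR[G]}$ in $\kone{\KK[G]}$ is ``torsion-free up to the explicitly known roots of unity and group elements.'' That assertion is not a consequence of semisimplicity of $\KK[G]$; it is exactly the content of the theorem, namely that $\konep{\sR[G]}_{\mathrm{tors}}=\mu_K\times G^{ab}$. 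Nothing in the logarithm sequence excludes extra torsion units coming from the centres of the simple components of $\KK[G]$, so at the decisive moment you are assuming what is to be proved. Your fallback induction (choose a central $z$ of order $p$ and compare $G$ with $G/\langle z\rangle$) is the device used for the \emph{preceding} Proposition and for Corollary \ref{sk-trivial}; it produces a kernel $(\Z/p)^N$ inside $\mathrm{tors}(\Wh(\sR[G]))$ which would itself have to be shown to lie in $S\kone{\sR[G]}$ rather than contributing new torsion to $\konep{}$ --- which is again the point at issue.

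The paper closes this gap by a different induction. It proves that the projection $pr^{\ast}:\konep{\sR[G]}_{\mathrm{tors}}\lra\konep{\sR[G^{ab}]}_{\mathrm{tors}}$ is injective, reducing everything to the abelian case (where $S\kone{}$ vanishes by Corollary \ref{sk-trivial} and the torsion is $\mu_K\times G^{ab}$). The injectivity rests on Roquette's theorem: every simple component of $\KK[G]$ arises from a primitive faithful representation of a subquotient, which yields the monomorphism \eqref{k-roquette} of $\kone{\KK[G]}$ into the sum over index-$p$ subgroups $T$ and order-$p$ central quotients $G/N$. A torsion class killed by $\mathrm{Proj}_3\circ\mathrm{Proj}_1$ is then shown to die under the transfer $t_1$ to each $H$ (using injectivity of $\konep{\sR[H]}_{\mathrm{tors}}\to\konep{\sR[H^{ab}]}_{\mathrm{tors}}$ from the inductive hypothesis and Corollary \ref{sk-trivial} applied to $G/[H,H]$) and under each $\mathrm{Proj}_{G/N}^{G}$, hence is trivial by \eqref{k-roquette}. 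To repair your argument you would need to supply this Roquette-plus-transfer step, or an equivalent mechanism controlling torsion in the simple components of $\KK[G]$.
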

\begin{proof}
If $G$ is abelian then the previous Proposition implies the result. Now let $G$ be any $p$-group, then we show that the projection map 
\begin{equation*}
 pr^*:\konep{\sR[G]}_{\mathrm{tors}}\lra\konep{\sR[G^{ab}]}_{\mathrm{tors}}
\end{equation*}
is injective. Now, we fix the group $G$ and assume inductively that the theorem holds for all of its proper subgroups and quotients. If $G$ is cyclic, dihedral, quaternionic
or semi-dihedral, then the Proposition holds by the previous corollary. 

Since the characteristic of $\KK=K[[X_1,\cdots,X_r,Y]]$ is zero, by Maschke's theorem the ring $\KK[G]$ is semisimple. Therefore,
by Wedderburn's Theorem we have the decomposition
$\KK[G]\cong\prod_{i=1}^s A_i$, for some simple $\KK[G]$-modules $A_i$ and some $s\in\N$. Since $\KK[G]$ contains the field $K$ of characteristic 0, we can show as in  
\cite[Section 2]{roquette}, that each of the division algebras that occur in the above decomposition is isomorphic to that of a primitive, faithful representation of some 
subquotient
of $G$. In other words, the endomorphism rings of the simple modules $A_i$ are isomorphic to that of simple modules defined over $\KK[T]$ for subgroups or subquotients $T$ of 
$G$. Therefore, the restriction maps and the quotient maps define the following monomorphism:
\begin{equation}\label{k-roquette}
 \sum\mathrm{Res}_T^G\oplus\sum\mathrm{Proj}_{G/N}^G:\kone{\KK[G]}\lra
 \bigoplus_{\stackrel{T\subset G,}{[G:T]=p}}\kone{\KK[T]}
 \oplus\bigoplus_{\stackrel{N\unlhd G,}{|N|=p}}\kone{\KK[G/N]}.
\end{equation}
It follows that the corresponding homomorphism for $\konep{\II[G]}$ is also injective.
Next, for any subgroup $H$ of $ G$ of index $p$, we have the following commutative diagram, where the maps $t_1, t_2$ are the transfer maps and the maps 
$\mathrm{Proj_1}, \mathrm{Proj_2}$ and $\mathrm{Proj_3}$ are induced by the projection maps:
\begin{equation*}
 \xymatrix{
 \konep{\sR[G]}_\mathrm{tors}\ar[r]^{\mathrm{Proj}_1}\ar[d]_{t_1} & \konep{\sR[G/[H,H]]}_\mathrm{tors}\ar@{^{(}->}[r]^{\mathrm{Proj}_3}\ar[d]_{t_2} 
                                                                                                                                         &\konep{\sR[G^{ab}]}_\mathrm{tors} \\
 \konep{\sR[H]}_\mathrm{tors}\ar@{^{(}->}[r]^{\mathrm{Proj}_2}             & \konep{\sR[H^{ab}]}_\mathrm{tors}.
 }
\end{equation*}
Here the map $\mathrm{Proj_2}$ is injective by the induction assumption. Regarding the map $\mathrm{Proj_3}$, it is also injective by Corollary \ref{sk-trivial} above,
since $G/[H,H]$ contains an abelian subgroup of index $p$. Therefore, for any $u\in\mathrm{ker}(\mathrm{Proj}_3\circ\mathrm{Proj}_1)$, we have $t_1(u)=1\in\konep{\sR[H^{ab}]}$.
Together with the fact that $\mathrm{Proj}_{G/N}^G(u)=1$, for all $N\unlhd G$ of order $p$, by the induction hypothesis, we have $u=1$ by the injective map \eqref{k-roquette}.
Hence the map $pr^\ast$ is injective.
\end{proof}
As a consequence of this along with Proposition \ref{log-exact}, we get the following result.
\begin{cor}
 Let $G$ be a finite $p$-group. Then we have the following exact sequence of groups:
 \begin{equation}\label{k-exact}
  1\lra\mu_K\times\WW\times G^{ab}\lra\konep{\sR[G]}\stackrel{\mathfrak{L}_G}{\lra}\sR[G]\lra \WW\times G^{ab}\lra 1.
 \end{equation}
\end{cor}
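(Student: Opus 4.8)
The plan is to assemble the exact sequence \eqref{k-exact} from two ingredients already in place: the integral logarithm exact sequence of Proposition \ref{log-exact} and the Higman-type splitting of Theorem \ref{k-tors}. First I would start from the four-term exact sequence of Proposition \ref{log-exact},
\begin{equation*}
1\lra\kone{\sR_G}/\left(\WW\times\kone{\sR_G}_{\mathrm{tors}}\right)\stackrel{\mathfrak{L}_G}{\lra}\sR_G\stackrel{\wt\omega}{\lra}\WW\times G^{ab}\lra 1,
\end{equation*}
and rewrite the source group. By Theorem \ref{k-tors} we have $(\kone{\sR[G]})_{\mathrm{tors}}\cong\mu_K\times G^{ab}\times S\kone{\sR[G]}$, so dividing $\kone{\sR[G]}$ by $S\kone{\sR[G]}$ gives $\konep{\sR[G]}$ with $(\konep{\sR[G]})_{\mathrm{tors}}\cong\mu_K\times G^{ab}$. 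Hence
\begin{equation*}
\kone{\sR_G}/\left(\WW\times\kone{\sR_G}_{\mathrm{tors}}\right)\cong\konep{\sR[G]}/\left(\WW\times\mu_K\times G^{ab}\right)=\Wh(\sR[G]),
\end{equation*}
using Definition \ref{sk-def2}; here I must check that $\WW$ maps isomorphically onto its image in $\kone{\sR_G}$ and meets $S\kone{\sR[G]}$ trivially, which follows because $\WW\subset\sR^\times$ injects into $\kone{\sR}$ (the $G=1$ case of Proposition \ref{log-exact}) and because elements of $S\kone$ die in $\kone{\KK[G]}$ while the image of $\WW$ does not.

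Next I would splice this identification back. Proposition \ref{log-exact} tells us $\mathfrak{L}_G$ induces an injection on $\Wh(\sR[G])$ with image exactly $\ker(\wt\omega)$ inside $\sR[G]$, and that $\wt\omega$ is surjective onto $\WW\times G^{ab}$. Pulling $\mathfrak{L}_G$ back to $\konep{\sR[G]}$, its kernel is precisely the preimage of the trivial class in $\Wh(\sR[G])$, namely $\mu_K\times\WW\times G^{ab}$ (again using Theorem \ref{k-tors} to see $(\konep{\sR[G]})_{\mathrm{tors}}=\mu_K\times G^{ab}$ and that $\WW$ lies in the kernel of $\mathfrak{L}_G$ modulo torsion — indeed $\mathfrak{L}_G$ kills $\WW$ since $\mathfrak{L}$ on $\sR^\times$ already kills $\WW$ by the $G=1$ sequence, and $\omega\circ\mathfrak{L}_G=1$). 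Therefore
\begin{equation*}
1\lra\mu_K\times\WW\times G^{ab}\lra\konep{\sR[G]}\stackrel{\mathfrak{L}_G}{\lra}\sR[G]\stackrel{\wt\omega}{\lra}\WW\times G^{ab}\lra 1
\end{equation*}
is exact at all four spots: injectivity of the first map is clear, exactness at $\konep{\sR[G]}$ is the computation of $\ker(\mathfrak{L}_G)$ just described, exactness at $\sR[G]$ is $\mathrm{Im}(\mathfrak{L}_G)=\ker(\wt\omega)$ from Proposition \ref{log-exact} transported through the identification with $\Wh(\sR[G])$, and surjectivity of $\wt\omega$ is part of Proposition \ref{log-exact}.

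The main obstacle I anticipate is the bookkeeping in the middle: verifying cleanly that quotienting $\kone{\sR_G}$ by $S\kone{\sR[G]}$ is compatible with the subgroup $\WW\times\kone{\sR_G}_{\mathrm{tors}}$ appearing in Proposition \ref{log-exact}, i.e. that $S\kone{\sR[G]}\subseteq\kone{\sR_G}_{\mathrm{tors}}$ and that the resulting quotient is exactly $\Wh(\sR[G])$ with the stated torsion subgroup $\mu_K\times G^{ab}$. That $S\kone{\sR[G]}$ is torsion is standard (it is a subgroup of the finite group $\kone{\sR_G}_{\mathrm{tors}}$ by Theorem \ref{k-tors}), and the rest is a diagram chase, but one has to be careful that the maps $\mathfrak{L}_G$ and $\wt\omega$ of Proposition \ref{log-exact} descend correctly. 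Once that compatibility is pinned down, the corollary is a formal consequence of the two cited results.
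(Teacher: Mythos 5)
Your proposal is correct and is exactly the argument the paper intends: the corollary is stated as an immediate consequence of Proposition \ref{log-exact} and Theorem \ref{k-tors}, and you supply precisely the splicing of the four-term sequence with the identification $(\kone{\sR[G]})_{\mathrm{tors}}\cong\mu_K\times G^{ab}\times S\kone{\sR[G]}$ that this requires. The compatibility checks you flag (that $S\kone{\sR[G]}$ sits inside the torsion subgroup and that $\WW$ injects and meets it trivially) are the right ones and are handled correctly.
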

\subsection{The Logarithm map over $\wh{\II(Z)}_{(p)}$}
Recall that $Z:=Z(\cG)$, the center of $\cG$. 
Let $\wh{\sR}=\wh{\II(Z)}_{(p)}$ and $J({\wh\sR})$ denote its Jacobson radical. Since $\cG$ is pro-$p$, $\wh\sR[\ocG]^\tau$ is a local ring and 
$J({\wh\sR}[\ocG]^\tau)$ is its maximal ideal. We again consider the power series:
\begin{equation*}
 \Log{(1+x)}=\sum_{n=1}^\infty(-1)^{n+1}\frac{x^n}{n}.
\end{equation*}
\begin{lemma}Let $G=\ocG$.
 The ideal $J({\wh\sR}[G]^\tau)/p\wh\sR[G]^\tau$ is a nilpotent ideal of $\wh\sR[G]^\tau/p\wh\sR[G]^\tau$.
\end{lemma}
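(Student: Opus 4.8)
The claim is that $J(\wh\sR[G]^\tau)/p\wh\sR[G]^\tau$ is nilpotent in $\wh\sR[G]^\tau/p\wh\sR[G]^\tau$, where $G = \ocG$ is a finite $p$-group, $\wh\sR = \wh{\II(Z)}_{(p)}$ and $\II = \cO[[X_1,\dots,X_r]]$. The key structural fact is that $\wh\sR[G]^\tau$ is a local ring whose maximal ideal is $J(\wh\sR[G]^\tau)$. First I would identify $J(\wh\sR[G]^\tau)$ explicitly: since $\wh\sR = \wh{\II(Z)}_{(p)}$ is the $p$-adic completion of the localization of $\II(Z)$ at $(p)$, its maximal ideal is $p\wh\sR$ and its residue field is the fraction field of $\II(Z)/(p) \cong \FF_p((\text{variables}))$ — more precisely $\wh\sR/p\wh\sR$ is a field, call it $\bar\KK$. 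Because $G$ is a finite $p$-group and $\bar\KK$ has characteristic $p$, the twisted group ring $\bar\KK[G]^\tau$ is local with maximal ideal the augmentation ideal $I_{\bar G}$, and $I_{\bar G}$ is nilpotent — indeed the augmentation ideal of a group algebra of a finite $p$-group over a field of characteristic $p$ is nilpotent (a standard fact; the augmentation ideal is the Jacobson radical and the ring is Artinian, so it is nilpotent).

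The plan is then to transfer this nilpotency across the quotient map $\wh\sR[G]^\tau \surj \wh\sR[G]^\tau/p\wh\sR[G]^\tau \cong \bar\KK[G]^\tau$. I would argue as follows: $J(\wh\sR[G]^\tau)$ is the preimage of $J(\bar\KK[G]^\tau) = I_{\bar G}$ under the reduction map, because $p\wh\sR[G]^\tau \subseteq J(\wh\sR[G]^\tau)$ (as $p$ lies in the Jacobson radical of the local ring $\wh\sR$, hence $p\wh\sR[G]^\tau$ is contained in every maximal one-sided ideal) and the Jacobson radical of a ring containing a two-sided ideal in its radical corresponds to the radical of the quotient. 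Therefore $J(\wh\sR[G]^\tau)/p\wh\sR[G]^\tau$ is exactly $I_{\bar G} \subseteq \bar\KK[G]^\tau$. Since $I_{\bar G}$ is nilpotent in $\bar\KK[G]^\tau$ — say $I_{\bar G}^N = 0$ for $N = (|G|-1) \cdot (\text{something})$, or simply because $\bar\KK[G]^\tau$ is a finite-dimensional local $\bar\KK$-algebra — the image $J(\wh\sR[G]^\tau)/p\wh\sR[G]^\tau$ is nilpotent in the quotient ring $\wh\sR[G]^\tau/p\wh\sR[G]^\tau$, which is precisely the assertion.

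The one point requiring a little care — and the main (minor) obstacle — is the \emph{twisted} group ring structure $[\ocG]^\tau$ rather than the plain group ring, together with the fact that $\wh\sR$ is noncommutative-adjacent only through the twist: I would check that the augmentation-ideal nilpotency argument goes through verbatim for $\bar\KK[\ocG]^\tau$. This is true because $\bar\KK[\ocG]^\tau$ is still a finite-dimensional $\bar\KK$-algebra (of dimension $|\ocG|$) that is local with residue field $\bar\KK$ (one sees locality since any simple module, being a module over a finite $p$-group algebra twisted by a character, is trivial in characteristic $p$ — the twist by $\tau$, valued in the group $\wt\Gamma^{p^e}$ which reduces trivially mod the maximal ideal, does not create new simple modules). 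A finite-dimensional local algebra has nilpotent Jacobson radical, so $I_{\bar G}$ is nilpotent. I should also note that one does not even need the full strength: it suffices to observe $\wh\sR[\ocG]^\tau/p$ is Artinian (finite length over the field $\bar\KK$) and local, hence its maximal ideal is nilpotent, and that maximal ideal is the image of $J(\wh\sR[\ocG]^\tau)$. I expect this lemma to be essentially immediate once the identification $\wh\sR[\ocG]^\tau/p\wh\sR[\ocG]^\tau \cong \bar\KK[\ocG]^\tau$ with $\bar\KK$ a field is made precise, so the proof in the paper is likely just a couple of lines invoking that $\wh\sR/p$ is a field and that group algebras of $p$-groups over characteristic-$p$ fields have nilpotent augmentation ideals.
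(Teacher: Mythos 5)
Your overall strategy --- reduce modulo $p$ and exploit that $\wh\sR[\ocG]^\tau/p\wh\sR[\ocG]^\tau$ is a finite-dimensional algebra over the field $\wh\sR/p\wh\sR$ --- is sound, and the ``fallback'' you state at the end is in fact already a complete proof: $p\wh\sR[\ocG]^\tau\subseteq J(\wh\sR[\ocG]^\tau)$ because the ring is $p$-adically complete, so $J(\wh\sR[\ocG]^\tau)/p\wh\sR[\ocG]^\tau$ is exactly the Jacobson radical of the quotient, and the quotient is Artinian (finite length over the field $\wh\sR/p\wh\sR$), hence has nilpotent radical. You do not even need locality for this. The paper instead argues computationally: it asserts $(g^\tau-1)^{p^r}\in p\wh\sR[\ocG]^\tau$ for $|\ocG|=p^r$ and deduces $x^{p^r}\in p\wh\sR[\ocG]^\tau$ for $x$ in the radical. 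Your structural route is the more robust of the two.

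There is, however, a genuine error in your main line of reasoning, and it sits exactly at the point where this setting differs from the earlier one over $\sR=\II(\wt\Gamma^{p^e})$. You claim the twist ``reduces trivially mod the maximal ideal'' and hence that $J/p$ is the augmentation ideal $I_{\bar G}$ and that the residue field of $(\wh\sR/p)[\ocG]^\tau$ is $\wh\sR/p$ itself. This is false: the maximal ideal of $\wh\sR=\wh{\II(Z)}_{(p)}$ is $p\wh\sR$, and the element $\wt\gamma^{p^e}-1$, which lies in the maximal ideal of $\sR$, has become a \emph{unit} in $\wh\sR$ --- that is precisely the effect of localizing at $T=\II(Z)\setminus p\II(Z)$. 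Consequently $(\bar{\wt\gamma}^\tau)^{p^e}-1=\wt\gamma^{p^e}-1$ is a unit of $\wh\sR/p$, so the two-sided ideal of $(\wh\sR/p)[\ocG]^\tau$ generated by all $g^\tau-1$ is the whole ring, not the radical; and the residue field of this (still local) twisted group algebra is in general a nontrivial purely inseparable extension of $\wh\sR/p$, obtained by adjoining a $p^e$-th root of $\wt\gamma^{p^e}$. The radical mod $p$ is generated only by the $h^\tau-1$ for $h$ in the image of the finite subgroup on which the cocycle is genuinely trivial. None of this harms the Artinian argument, but the identifications ``$J/p=I_{\bar G}$'' and ``residue field $=\wh\sR/p$'' must be removed. (The same subtlety is glossed over in the paper's own proof: modulo $p$ one has $(g^\tau-1)^{p^r}\equiv(g^\tau)^{p^r}-1$, and for $g=\bar{\wt\gamma}$ this equals the unit $\wt\gamma^{p^r}-1$ rather than an element of $p\wh\sR[\ocG]^\tau$; so the Artinian argument is the one to keep.)
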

\begin{proof}
Let $\kappa=\II/p$. Consider the exact sequences:
\begin{eqnarray*}
 0\lra & J({\wh\sR}[G]^\tau)/p\wh\sR[G]^\tau\lra Q(\kappa[[\cG]])\lra  Q(\kappa[[\Gamma]])\lra0
\end{eqnarray*}
Let $N:=\ker(\kappa[[\cG]])\lra\kappa[[\Gamma]]$, and 
 $I_\cH$ be the augmentation ideal of $\kappa[\cH]$. Then $N=I_\cH\kappa[[\cG]]$. By Lemma \ref{center-ore}, any 
$x\in Q(\kappa[[\cG]])$ equals $a/t$, where $a\in\kappa[[\cG]]$ and $t\in\kappa[[Z]]$. Here 
$x\in J({\wh\sR}[G]^\tau)/p\wh\sR[G]^\tau$ if and only if $a\in N$.
As $\cH$ is a finite $p$-group, say of order $p^r$,  $(\sigma-1)^{p^r}\in p\sR[G]^\tau$, for any $\sigma\in \cH$. Therefore, there exists 
$n$ sufficiently large such that $I_\cH^n=0$, and $N^n=0$. Thus, $J({\wh\sR}[G]^\tau)/p\wh\sR[G]^\tau$ is nilpotent.
\end{proof}
Using this lemma the proof of Oliver in \cite[Lemma 27]{ol} as generalized by Kakde to prove \cite[Lemma 66]{kakde}, can further be generalized easily to show the following Lemma.
\begin{lemma}\label{log-of-hat}
 Let $I\subset J(\wh\sR[\ocG]^\tau)$ be any ideal of $\wh\sR[\ocG]^\tau$. Then
\begin{enumerate}
 \item For any $x,y\in I$, the series $\Log(1+x)$ converges to an element in $\wh\sR[\ocG]^\tau[\frac{1}{p}]$, and
 \begin{equation}
  \Log((1+x)(1+y))\equiv\Log(1+x)+\Log(1+y)\pmod{[\wh\sR[\ocG]^\tau[\frac{1}{p}], I[\frac{1}{p}]]}
 \end{equation}
 \item If $I\subset\xi\wh\sR[\ocG]^\tau$, for some central element $\xi$ such that $\xi^p\in p\xi\wh\sR[\ocG]^\tau$, then for any $x,y\in I$,
 $\Log(1+x)$ and $\Log(1+y)$ converge in $I$, and 
 \begin{equation}
  \Log((1+x)(1+y))\equiv\Log(1+x)+\Log(1+y)\pmod{[\wh\sR[\ocG]^\tau, I]}.
 \end{equation}
 Moreover, if $I^p\subset pI J(\wh\sR[\ocG]^\tau)$, then 
 \begin{enumerate}
  \item for all $x\in I$ the series $\mathrm{Exp}(x)=\sum_{n=0}^\infty\frac{x^n}{n!}$ converges to an element in $1+I$. 
  \item the maps $\Log$ and $\mathrm{Exp}$ are bijections and inverse to each other between $1+I$ and $I$.
 \end{enumerate} 
\end{enumerate}
\end{lemma}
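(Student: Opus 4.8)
The plan is to follow the template of Oliver \cite[Lemma 27]{ol} and its refinement by Kakde \cite[Lemma 66]{kakde}, verifying at each step that the arguments survive replacing an ordinary group ring by the twisted group ring $\mathfrak A:=\wh\sR[\ocG]^\tau$ over the complete local ring $\wh\sR=\wh{\II(Z)}_{(p)}$; the centrality of $\wh\sR$ (and, in part~(ii), of $\xi$) is precisely what makes this passage harmless. Write $J:=J(\mathfrak A)$. Throughout I use that $\mathfrak A$ is $p$-adically complete and that, by the previous lemma, the image of $J$ in $\mathfrak A/p\mathfrak A$ is nilpotent, so that for every $m$ there is an $N$ with $J^{N}\subseteq p^{m}\mathfrak A$.

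First I would dispatch convergence. For $x\in I\subseteq J$ the bound just quoted forces $x^{n}/n\to 0$ $p$-adically, hence $\Log(1+x)$ converges in $\mathfrak A[\tfrac1p]$, which is the convergence half of~(i); for~(ii) the hypotheses $I\subseteq\xi\mathfrak A$ and $\xi^{p}\in p\xi\mathfrak A$ are tailored so that $I^{p}\subseteq pI$, whence $I^{p^{k}}\subseteq p^{k}I$ and $x^{n}/n\in I$ for all $n$, so the series converges inside $I$ itself, exactly as in \emph{loc.\ cit}. The additive identity modulo commutators is the heart of the matter: expanding $(1+x)(1+y)=1+(x+y+xy)$ in the $\Log$-series and regrouping the monomials in the two non-commuting letters $x,y$, one exhibits $\Log((1+x)(1+y))-\Log(1+x)-\Log(1+y)$ as a $p$-adically convergent sum of terms $[a,b]$ with $b\in I$ and $a\in\mathfrak A[\tfrac1p]$ (resp.\ $a\in\mathfrak A$ under the hypotheses of~(ii), since there all the series live in $I$ rather than $I[\tfrac1p]$); convergence of the commutator sum is controlled by the same filtration estimates. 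This bookkeeping — keeping track of which factor of each surviving commutator lies in $I$ — is the step I expect to be the main obstacle, although it is entirely parallel to the proof of \cite[Lemma 2.7]{ol} used already in Proposition~\ref{Log} and to \cite[Lemma 66]{kakde}.

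Finally, assume $I^{p}\subseteq pIJ$. Using Legendre's formula I would show by induction that $I^{p^{k}}\subseteq p^{v_{p}(p^{k}!)}\,IJ^{k}$ (the step from $k$ to $k+1$ uses $p\,v_{p}(p^{k}!)+1=v_{p}(p^{k+1}!)$ and $kp+1\ge k+1$), and then, expanding an arbitrary $n$ in base $p$ and multiplying, that $I^{n}\subseteq p^{v_{p}(n!)}I$ for every $n\ge 1$; hence $x^{n}/n!\in I$ and $x^{n}/n!\to 0$, so $\mathrm{Exp}(x)=\sum_{n\ge 0}x^{n}/n!$ converges to an element of $1+I$, which is~(a). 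For~(b), the identities $\Log(\mathrm{Exp}(x))=x$ and $\mathrm{Exp}(\Log(1+x))=1+x$ hold as formal identities of one-variable power series; since all the relevant series converge absolutely in the complete ring $\mathfrak A$ and the required rearrangements only regroup terms tending to $0$, these identities are valid in $\mathfrak A$, so $\Log$ and $\mathrm{Exp}$ are mutually inverse bijections between $1+I$ and $I$.
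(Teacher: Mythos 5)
Your proposal is correct and takes exactly the route the paper intends: the paper supplies no written argument for this lemma, saying only that Oliver's \cite[Lemma 2.7, Theorem 2.8]{ol} and Kakde's \cite[Lemma 66]{kakde} "can further be generalized easily," and your proof is precisely that generalization, with the convergence estimates coming from the nilpotence of $J(\wh\sR[\ocG]^\tau)/p\wh\sR[\ocG]^\tau$ established in the preceding lemma and the inclusion $I^p\subseteq pI$ correctly extracted from the hypotheses on the central element $\xi$ (write $x_i=\xi a_i$ and use $\xi^p=p\xi c$). The Legendre-formula induction for $\mathrm{Exp}$ matches the paper's own argument in Lemma \ref{log-sub}, so there is nothing to correct.
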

\begin{propn}Let $I$ be any ideal contained in the maximal ideal $J(\wh\sR[\ocG]^\tau)$, then the logarithm map 
\begin{equation*}
\Log(1+x)=\sum_{n=1}^\infty(-1)^{n+1}\frac{x^n}{n} 
\end{equation*}
defined on $I$, induces a unique homomorphism
\begin{equation*}
 \log_I:\kone{\wh\sR[\ocG]^\tau,I}\lra \left(\frac{I}{[\wh\sR[\ocG]^\tau, I]}\right)\left[\frac{1}{p}\right].
\end{equation*}
If, in addition, $I\subset\xi\wh\sR[\ocG]^\tau$, for some central element $\xi$ such that $\xi^p\in p\xi\wh\sR[\ocG]^\tau$, then the logarithm map $\Log$ induces
a homomorphism
\begin{equation*}
 \log_I:\kone{\wh\sR[\ocG]^\tau,I}\lra \frac{I}{[\wh\sR[\ocG]^\tau, I]}.
\end{equation*}
\end{propn}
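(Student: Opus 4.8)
The plan is to mimic the argument of Oliver \cite[Theorem 2.8]{ol}, in the form used by Kakde \cite[Lemma 66]{kakde}, treating Lemma \ref{log-of-hat} as the analytic black box. Put $A:=\wh\sR[\ocG]^\tau$ and $J:=J(A)$, so $I\subseteq J$ and, by the preceding lemma, $J$ is nilpotent modulo $p$. First I would record that everything passes to matrices: for each $n$ the ideal $M_n(I)$ lies in $J(M_n(A))=M_n(J)$, hence is again nilpotent mod $p$, so $\Log(1+\xi)=\sum_{m\geq1}(-1)^{m+1}\xi^m/m$ converges in $M_n(A)[\tfrac{1}{p}]$ for $\xi\in M_n(I)$, and Lemma \ref{log-of-hat}(1) applied to the pair $(M_n(A),M_n(I))$ gives
\begin{equation*}
\Log\bigl((1+\xi)(1+\eta)\bigr)\equiv\Log(1+\xi)+\Log(1+\eta)\pmod{[\,M_n(A)[\tfrac{1}{p}],\,M_n(I)[\tfrac{1}{p}]\,]}.
\end{equation*}
Composing with the projection onto $M_n(I)[\tfrac{1}{p}]/[M_n(A)[\tfrac{1}{p}],M_n(I)[\tfrac{1}{p}]]$ and the trace isomorphism $M_n(I)/[M_n(A),M_n(I)]\cong I/[A,I]$, one obtains a group homomorphism $\phi_n\colon GL_n(A,I)\to (I/[A,I])[\tfrac{1}{p}]$, compatible with stabilisation, hence $\phi\colon GL(A,I)\to (I/[A,I])[\tfrac{1}{p}]$.

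Next I would show $\phi$ annihilates the elementary subgroup $E(A,I)$. For $i\neq j$ and $x\in I$ one has $(xe_{ij})^2=0$, so $\Log(1+xe_{ij})=xe_{ij}$, and for $n\geq3$ the matrix $xe_{ij}=[\,xe_{ik},e_{kj}\,]$ (with $k\notin\{i,j\}$) lies in $[M_n(A),M_n(I)]$; thus $\phi(e_{ij}(x))=0$. Conjugation invariance of $\phi$ under $GL(A)$ is immediate from $\Log(gug^{-1})=g\,\Log(u)\,g^{-1}$ together with $g\zeta g^{-1}-\zeta=[\,g,\zeta g^{-1}\,]\in[M_n(A),M_n(I)]$ for $\zeta\in M_n(I)$ (note $\zeta g^{-1}\in M_n(I)$). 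Hence $\phi$ kills the normal closure $E(A,I)$ and descends to $\log_I\colon \kone{A,I}=GL(A,I)/E(A,I)\to (I/[A,I])[\tfrac{1}{p}]$. As $A$ is a (complete) local ring it has stable rank one, so $\kone{A,I}$ is generated by the classes of $1+I$; since $\log_I([1+x])=\Log(1+x)\bmod[A,I]$ is then forced, this also gives uniqueness.

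For the integral statement, the only change is to invoke Lemma \ref{log-of-hat}(2) in place of part (1): under $I\subseteq\xi A$ with $\xi$ central and $\xi^p\in p\xi A$ the series $\Log(1+x)$ converges in $I$ itself (likewise over $M_n$, since $M_n(I)\subseteq\xi M_n(A)$ with $\xi$ still central and $\xi^p\in p\xi M_n(A)$), and it is additive modulo $[A,I]$ without inverting $p$. Running the same matrix passage, vanishing on elementaries (still using $(xe_{ij})^2=0$), conjugation invariance, and descent then produces $\log_I\colon\kone{A,I}\to I/[A,I]$, now without a factor of $\tfrac{1}{p}$.

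I expect the work here to be essentially bookkeeping rather than genuinely new: the analytic core (convergence of $\Log$, additivity modulo commutators, the $\mathrm{Exp}$--$\Log$ bijection) is already isolated in Lemma \ref{log-of-hat}, and the ring-theoretic facts needed ($J$ nilpotent mod $p$; $A$ local, hence of stable rank one; $\sR$ and $\xi$ central in $\wh\sR[\ocG]^\tau$) are all at hand, so the content reduces to checking that Oliver's $K$-theoretic reductions in \cite[\S 2]{ol} transcribe without change to the twisted group ring over the completed localisation $\wh\sR=\wh{\II(Z)}_{(p)}$. The one point I would verify with a little care is the vanishing of $\phi$ on $E(A,I)$, i.e.\ the interplay of the trace identification with conjugation, since that is where the passage from $1+I$ to all of $\kone{A,I}$ actually uses the structure of $A$.
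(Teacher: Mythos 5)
Your proposal is correct and follows exactly the route the paper takes: the paper's proof is a one-line deferral to Kakde's Prop.\ 67 (itself Oliver's Theorem 2.8 adapted to twisted group rings), and your write-up is a faithful reconstruction of that argument, using Lemma \ref{log-of-hat} as the analytic input, the matrix/trace reduction, vanishing on $E(A,I)$ via $(xe_{ij})^2=0$ and the commutator identity, and locality of $\wh\sR[\ocG]^\tau$ to get generation of $\kone{A,I}$ by $1+I$ (hence uniqueness). No gaps.
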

\begin{proof}
 The proof is the same as the proof of \cite[Prop 67]{kakde}.
\end{proof}
\subsection{The Integral Logarithm over $\wh{\II(Z)}_{(p)}$}
We now define the integral logarithm over the ring $\wh{\II(Z)}_{(p)}$. For this we first consider the kernel 
\begin{equation*}
J:=\mathrm{ker}\left[\wh{\IIG_\sS}\lra\wh{\II(\Gamma)_{(p)}}\right].
\end{equation*}
As the ring $\wh{\IIG_\sS}$ is local, we have the surjective map
\begin{equation*}
 \begin{split}
  \wh{\IIG_\sS}^\times\lra\konep{\wh{\IIG_\sS}} \mbox{ and }  &1+J\lra\konep{\wh{\IIG_\sS},J}.
 \end{split}
\end{equation*}
Now consider the exact sequence of groups which is split by the embedding $\Gamma\inj\cG$,
\begin{equation*}
 1\lra 1+J\lra\wh{\IIG_\sS}^\times\lra\wh{\II(\Gamma)_{(p)}}^\times\lra 1.
\end{equation*}
It is easy to see that any $x\in\wh{\IIG_\sS}^\times$ can be expressed uniquely as $x=uy$, where $u\in 1+J$ and $y\in \wh{\II(\Gamma)_{(p)}}$. Hence, any 
$x\in\konep{\wh{\IIG_\sS}}$ can be written uniquely as a product $x=uy$, where $y\in\konep{\wh{\II(\Gamma)_{(p)}}}$ and $u$ lies in the image of $\kone{\wh{\IIG_\sS},J}$
in $\konep{\wh{\IIG_\sS}}$. We record this fact below.
\begin{lemma}
 Any 
$x\in\konep{\wh{\IIG_\sS}}$ can be written uniquely as a product $x=uy$, where $y\in\konep{\wh{\II(\Gamma)_{(p)}}}$ and $u$ lies in the image of $\kone{\wh{\IIG_\sS},J}$
in $\konep{\wh{\IIG_\sS}}$.
\end{lemma}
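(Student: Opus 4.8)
The statement asserts a unique factorization $x = uy$ in $\konep{\wh{\IIG_\sS}}$, with $y$ coming from $\konep{\wh{\II(\Gamma)_{(p)}}}$ and $u$ from the image of $\kone{\wh{\IIG_\sS},J}$. The plan is to lift everything from the level of groups of units to the level of $K_1$, using that $\wh{\IIG_\sS}$ is a (semi)local ring, so $\kone{}$ is generated by units. First I would record the ambient exact sequence
\begin{equation*}
 1\lra 1+J\lra\wh{\IIG_\sS}^\times\lra\wh{\II(\Gamma)_{(p)}}^\times\lra 1,
\end{equation*}
already displayed above, and observe it is split by the section $\wh{\II(\Gamma)_{(p)}}^\times\inj\wh{\IIG_\sS}^\times$ induced by the group inclusion $\Gamma\inj\cG$; this is where the hypothesis that $\cG\cong H\rtimes\Gamma$ (equivalently that $\Gamma$ admits a lift $\wt\Gamma$ to $\cG$, already fixed in the excerpt) is used. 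Splitness gives the unique product decomposition $x = uy$ with $u\in 1+J$, $y$ in the image of the section, at the level of unit groups.

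Next I would push this decomposition through the natural surjections $\wh{\IIG_\sS}^\times\surj\konep{\wh{\IIG_\sS}}$ and $1+J\surj\konep{\wh{\IIG_\sS},J}$, both of which exist because $\wh{\IIG_\sS}$ is local (so $K_1$ of the ring, resp.\ the relative $K_1$, is the unit group, resp.\ $1+J$, modulo the relevant $S\!K_1$ and commutators). For existence of the factorization in $K_1$ this is immediate: given a class in $\konep{\wh{\IIG_\sS}}$, represent it by a unit, factor the unit as $uy$ as above, and take images. For uniqueness I would argue that if $u_1y_1$ and $u_2y_2$ represent the same class in $\konep{\wh{\IIG_\sS}}$ with $u_i$ in the image of $\kone{\wh{\IIG_\sS},J}$ and $y_i$ in the image of $\konep{\wh{\II(\Gamma)_{(p)}}}$, then $y_1y_2^{-1}$ lies in the intersection of these two images inside $\konep{\wh{\IIG_\sS}}$; the splitting shows this intersection is trivial (the composite $\konep{\wh{\II(\Gamma)_{(p)}}}\to\konep{\wh{\IIG_\sS}}\to\konep{\wh{\II(\Gamma)_{(p)}}}$, the latter induced by the projection $\cG\surj\Gamma$, is the identity, while $\kone{\wh{\IIG_\sS},J}$ dies under the projection since $J$ maps to $0$). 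Hence $y_1 = y_2$ and then $u_1 = u_2$ as classes.

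The main obstacle is bookkeeping around the passage from $1+J$ and $\wh{\IIG_\sS}^\times$ to the $K_1$ and relative $K_1$ groups: one must check that the surjectivity statements and the splitting are compatible, i.e.\ that the section $\wh{\II(\Gamma)_{(p)}}^\times\to\wh{\IIG_\sS}^\times$ descends to a section $\konep{\wh{\II(\Gamma)_{(p)}}}\to\konep{\wh{\IIG_\sS}}$ of the projection-induced map, and that commutators and $S\!K_1$ are swept along consistently. This is precisely the content of \cite[Prop 67]{kakde} and the surrounding localization formalism in Oliver's and Kakde's work, and I would simply cite it; the argument is formally identical to the one given there for $\cO[[\cG]]$, with $\II$ replacing $\cO$ and the completed localization $\wh{\IIG_\sS}$ and its local property (established in the preceding subsection) supplying the needed surjections. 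No genuinely new input is required beyond what has already been set up in this section.
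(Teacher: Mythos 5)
Your proposal is correct and follows essentially the same route as the paper: the paper also deduces the lemma from the split exact sequence $1\lra 1+J\lra\wh{\IIG_\sS}^\times\lra\wh{\II(\Gamma)_{(p)}}^\times\lra 1$, the unique factorization at the level of units, and the surjections onto the $K_1$-groups coming from locality of $\wh{\IIG_\sS}$. If anything, your uniqueness argument (via the retraction induced by $\cG\surj\Gamma$ killing the image of $\kone{\wh{\IIG_\sS},J}$) is spelled out more carefully than in the paper, which simply asserts the step.
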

Recall the following assumption
\begin{description}
 \item[(Ur)] $\II=\cO[[X_1,\cdots,X_r]]$ with $\cO$ unramified over $\Z_p$.
\end{description}
Then, $\wh{\II(\Gamma)_{(p)}}/p\wh{\II(\Gamma)_{(p)}}\cong\FF_q[[X_1,\cdots,X_r]][[\Gamma]]$, where $\FF_q$ is the finite field of order $q$ and characteristic $p$.
Consider the map 
\begin{equation*}
 \varphi:\FF_q[[X_1,\cdots,X_r]][[\Gamma]]\lra \FF_q[[X_1,\cdots,X_r]][[\Gamma]]
\end{equation*}
such that its restriction to $\FF_q$ is the Frobenius and  maps $X_j^n$ to $X_j^{pn}$ for all $j=1,\cdots,r$ and the 
$p$-th power map on $\Gamma$.
\begin{lemma}
 Let $y\in\kone{\wh{\II(\Gamma)_{(p)}}}$. Then
 \begin{equation*}
  \frac{y^p}{\varphi(y)}\equiv 1\pmod{p\wh{\II(\Gamma)_{(p)}}}.
 \end{equation*}
Therefore, $\Log(\frac{y^p}{\varphi(y)})$ is well-defined.
\end{lemma}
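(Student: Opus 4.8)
The claim to prove is the congruence $\frac{y^p}{\varphi(y)}\equiv 1\pmod{p\wh{\II(\Gamma)_{(p)}}}$ for $y\in\kone{\wh{\II(\Gamma)_{(p)}}}$, so that $\Log\!\left(\frac{y^p}{\varphi(y)}\right)$ makes sense. The plan is to reduce everything modulo $p$ and work inside $\wh{\II(\Gamma)_{(p)}}/p\wh{\II(\Gamma)_{(p)}}\cong\FF_q[[X_1,\cdots,X_r]][[\Gamma]]$, which under hypothesis \textbf{(Ur)} is a commutative ring of characteristic $p$. In a commutative $\FF_p$-algebra the map $a\mapsto a^p$ is a ring endomorphism (the Frobenius), and the point is that the endomorphism $\varphi$ defined above \emph{is} precisely this absolute Frobenius on $\FF_q[[X_1,\cdots,X_r]][[\Gamma]]$: on $\cO$ it reduces to the $q$-power map composed appropriately — more carefully, $\varphi$ restricted to $\cO$ is the (arithmetic) Frobenius, on the variables $X_j$ it is $X_j\mapsto X_j^p$, and on group-like elements $\gamma\in\Gamma$ it is $\gamma\mapsto\gamma^p$; each of these agrees with the $p$-th power map on the corresponding element of $\FF_q[[X_1,\cdots,X_r]][[\Gamma]]$ (using that $\FF_q$ has characteristic $p$ and that the Frobenius $\FF_q\to\FF_q$ is $x\mapsto x^p$). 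Since these generate the ring topologically and $\varphi$ is a continuous ring homomorphism, $\varphi$ coincides with $a\mapsto a^p$ on all of $\FF_q[[X_1,\cdots,X_r]][[\Gamma]]$.

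First I would record that the reduction map $\wh{\II(\Gamma)_{(p)}}\to\FF_q[[X_1,\cdots,X_r]][[\Gamma]]$ is a ring homomorphism and that $\kone{\wh{\II(\Gamma)_{(p)}}}$ maps into the unit group, so it suffices to prove $\bar y^{\,p}=\overline{\varphi(y)}$ in the quotient for $\bar y$ the image of a unit $y$ — equivalently $\overline{y^p}=\overline{\varphi(y)}$, i.e.\ $y^p\equiv\varphi(y)\pmod p$. Here I would note the compatibility: $\varphi$ on $\wh{\II(\Gamma)_{(p)}}$ lifts the map $\varphi$ on the mod-$p$ quotient (this is how $\varphi$ is set up in the preceding lemma), so $\overline{\varphi(y)}=\varphi(\bar y)$. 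Then the second step is the identity $\varphi(\bar y)=\bar y^{\,p}$ established above. Combining, $\overline{y^p}=\bar y^{\,p}=\varphi(\bar y)=\overline{\varphi(y)}$, which is exactly the asserted congruence. Finally, since $\frac{y^p}{\varphi(y)}-1\in p\wh{\II(\Gamma)_{(p)}}$ and $p\wh{\II(\Gamma)_{(p)}}$ is contained in the Jacobson radical of the local ring $\wh{\II(\Gamma)_{(p)}}$ (indeed, $\wh{\II(\Gamma)_{(p)}}$ is a complete local ring with residue characteristic $p$), the convergence criterion already used for $\Log$ in the previous lemmas applies: for $x=\frac{y^p}{\varphi(y)}-1$ one has $x^n/n\to 0$, so $\Log\!\left(\frac{y^p}{\varphi(y)}\right)=\sum_{n\geq1}(-1)^{n+1}x^n/n$ converges in $\wh{\II(\Gamma)_{(p)}}[\frac1p]$.

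The only genuinely delicate point is the identification of $\varphi$ with the absolute Frobenius on $\FF_q[[X_1,\cdots,X_r]][[\Gamma]]$, and I expect this to be the main (though modest) obstacle: one must check that the three pieces of the definition of $\varphi$ — arithmetic Frobenius on $\cO$, $X_j\mapsto X_j^p$, and $p$-th power on $\Gamma$ — all reduce mod $p$ to $a\mapsto a^p$, and that $\varphi$ is continuous and multiplicative so that agreement on topological generators forces agreement everywhere. The subtlety with $\cO$ is that the arithmetic Frobenius of the unramified extension $\cO/\Z_p$ is by definition the unique lift of $x\mapsto x^p$ on the residue field $\FF_q$, so its reduction mod $p$ (equivalently mod $\m_\cO=p\cO$) is literally $x\mapsto x^p$ on $\FF_q$; for $\Gamma$ and the $X_j$ the statement is immediate since in characteristic $p$ we have $(X_j)^p=X_j^p$ and $(\gamma)^p=\gamma^p$ as elements of the group ring. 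Everything else is the standard bookkeeping already carried out for the logarithm maps earlier in this section, so no new ideas beyond this Frobenius identification are needed.
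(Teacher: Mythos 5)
Your proof is correct and follows essentially the same route as the paper: reduce modulo $p$ to $\FF_q[[X_1,\cdots,X_r]][[\Gamma]]$ and identify $\varphi$ there with the absolute Frobenius $a\mapsto a^p$, so that $\overline{y^p}=\bar y^{\,p}=\varphi(\bar y)$. The paper carries this out by the explicit power-series computation $\bigl(\sum_i a_iT^i\bigr)^p=\sum_i a_i^pT^{ip}=\varphi(\bar y)$, which is just the concrete form of your "agreement on topological generators" argument.
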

\begin{proof}
 It is enough to show the congruence and this follows by computing $y^p$. For this, let $\bar y\in\FF_q[[X_1,\cdots,X_r]][[\Gamma]]\cong\FF_q[[X_1,\cdots,X_r]][[T]]$. Then 
 $\bar y=\sum_{i=0}^\infty a_iT^i$, with $a_i\in\FF_q[[X_1,\cdots,X_r]]$. Then $\bar y^p=\left(\sum_{i=0}^\infty a_iT^i\right)^p=\sum_{i=0}^\infty a_i^pT^{ip}=\varphi(\bar y)$.
 Therefore $\frac{y^p}{\varphi(y)}\equiv 1\pmod{p\wh{\II(\Gamma)_{(p)}}}.$
\end{proof}
\begin{defn}
 Let $x\in\konep{\wh{\IIG_\sS}}$. Then $x=uy$, with $y\in\konep{\wh{\II(\Gamma)_{(p)}}}$ and $u$ lies in the image of $\kone{\wh{\IIG_\sS},J}$
in $\konep{\wh{\IIG_\sS}}$. We define the \emph{integral} logarithm map on $\konep{\wh{\IIG_\sS}}$ by
\begin{equation*}
 L(x)=L(uy)=L(u)+L(y)=\Log(u)-\frac{1}{p}\varphi(\Log(u))+\frac{1}{p}\Log\left(\frac{y^p}{\varphi(y)}\right).
 \end{equation*}
\end{defn}
For this integral logarithm, we have the following result which is proven exactly as in \cite[Prop 74]{kakde}.
\begin{propn}
 The integral logarithm map defined above induces a homomorphism 
 \begin{equation*}
L:\konep{\wh{\IIG_\sS}}\lra{\wh{\II(Z)}_{(p)}}[\mathrm{Conj}(\ocG)]^\tau,  
 \end{equation*}
which is independent of the choice of the splitting of $\cG\lra\Gamma$.
\end{propn}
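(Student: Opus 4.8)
The plan is to follow the structure of Kakde's proof of \cite[Prop 74]{kakde}, but I will spell out the three points that actually require attention in our setting: (i) that the formula $L(x)=L(uy)$ is well-defined, i.e.\ does not depend on the (unique) factorization $x=uy$ and is compatible with the multiplicative structure of $\konep{\wh{\IIG_\sS}}$; (ii) that the image of $L$ lands \emph{inside} the integral lattice $\wh{\II(Z)}_{(p)}[\mathrm{Conj}(\ocG)]^\tau$ rather than in its localization at $p$; and (iii) that $L$ is independent of the choice of the splitting $\cG\twoheadrightarrow\Gamma$. For (i), the uniqueness of the decomposition $x=uy$ was recorded in the Lemma immediately preceding the definition, so $L$ is at least a well-defined set map; that it is a homomorphism follows because $\Log$ is additive modulo commutators (Lemma \ref{log-of-hat}(1)), because $\varphi$ is a ring endomorphism, and because on the $1+J$ part the ``Oliver twist'' $u\mapsto\Log(u)-\tfrac1p\varphi(\Log(u))$ is the composite of the homomorphism $\log_J$ from the previous Proposition with $1-\tfrac1p\varphi$, while on the $\Gamma$-part $y\mapsto \tfrac1p\Log(y^p/\varphi(y))$ is a homomorphism by the same formal manipulation as in the classical Oliver--Taylor computation. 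One has to check that cross terms between the two parts only contribute commutators, which is exactly the content of Lemma \ref{log-of-hat}(1) applied to the ideal $J$.

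For (ii), the key point is that the two ``correction'' terms are $p$-integral. For the $\Gamma$-part this is the content of the Lemma just above the definition: $y^p/\varphi(y)\equiv 1 \pmod{p\wh{\II(\Gamma)_{(p)}}}$, hence $\tfrac1p\Log(y^p/\varphi(y))$ lies in $\wh{\II(\Gamma)_{(p)}}\subset \wh{\II(Z)}_{(p)}[\mathrm{Conj}(\ocG)]^\tau$; here one uses that $\wh{\II(\Gamma)_{(p)}}$ is $p$-adically complete and that $\Log$ maps $1+p\wh{\II(\Gamma)_{(p)}}$ into $p\wh{\II(\Gamma)_{(p)}}$ by the estimate $x^n/n\in p^{v_p(n)+1}(\cdots)$, i.e.\ the same convergence argument used in Lemma \ref{log-sub}. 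For the $1+J$-part the assertion is that $\Log(u)-\tfrac1p\varphi(\Log(u))\in\wh{\II(Z)}_{(p)}[\mathrm{Conj}(\ocG)]^\tau$; writing $u=1-x$ with $x\in J$ and expanding as in the proof of Proposition \ref{int-log}, the only possibly non-integral contribution is $\sum_{j\geq 1}\tfrac{x^{pj}-\varphi_{conj}(x^j)}{pj}$, and this is $p$-integral by exactly the argument of \cite[Theorem 6.2]{ol} (Frobenius congruences $x^p\equiv\varphi_{conj}(x)\pmod{p}$ on the quotient $\wh\sR/p$, iterated), which we already invoked for Proposition \ref{int-log}. Assembling these, $L$ has image in $\wh{\II(Z)}_{(p)}[\mathrm{Conj}(\ocG)]^\tau$.

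For (iii), given two splittings $s_1,s_2$ of $\cG\twoheadrightarrow\Gamma$ one gets two factorizations $x=u_1y_1=u_2y_2$; since any two sections differ by a continuous $1$-cocycle $\cG/\Gamma$-valued but landing in $\cH$, one has $y_2=y_1\cdot(\text{element of }1+J)$, so the ambiguity is absorbed into the $u$-part, and one must check $L(u_1)+L(y_1)=L(u_2)+L(y_2)$. This reduces to the statement that $\tfrac1p\Log(y^p/\varphi(y))$ and the Oliver twist glue into a single homomorphism on all of $\konep{\wh{\IIG_\sS}}$ that does not see the splitting; the cleanest way is to note that $L$ restricted to $\kone{\wh{\IIG_\sS},J}$ is manifestly splitting-independent (it is just $\log_J$ followed by $1-\tfrac1p\varphi$), and that on the image of $\kone{\wh{\II(\Gamma)_{(p)}}}$ the formula $\tfrac1p\Log(y^p/\varphi(y))$ is intrinsic. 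Since $\konep{\wh{\IIG_\sS}}$ is generated by these two subgroups, and $L$ is a homomorphism on each and they agree on the overlap (where $y=1$), $L$ is well-defined and splitting-independent. I expect the main obstacle to be point (ii) for the $1+J$-part: one must make sure that the generalized Frobenius $\varphi$ on $\wh{\II(Z)}_{(p)}[\mathrm{Conj}(\ocG)]^\tau$, which involves both the arithmetic Frobenius on $\cO$, the substitution $X_j\mapsto X_j^p$, and the $p$-power map on conjugacy classes, satisfies the congruence $z^p\equiv\varphi_{conj}(z)\pmod{p\,\wh\sR[\ocG]^\tau}$ needed to run Oliver's integrality argument verbatim; this is exactly where the unramifiedness hypothesis (Ur) is used, and it has already been checked in the proof of Proposition \ref{int-log}, so here one only needs to transport it across the $p$-adic completion, which is harmless because everything in sight is $p$-adically continuous.
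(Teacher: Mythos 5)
Your proposal is correct and takes essentially the same route as the paper, which disposes of this proposition in one line by declaring the proof identical to \cite[Prop.~74]{kakde}: factor $x=uy$ through the split exact sequence, apply the twist $1-\tfrac{1}{p}\varphi$ to each factor, and verify integrality and splitting-independence exactly as you spell out. The only blemish is the phrase ``$\cG/\Gamma$-valued $1$-cocycle'' in step (iii); what you need (and in effect use) is that two sections of $\cG\surj\Gamma$ differ by an $\cH$-valued map, so that $y_2y_1^{-1}\in 1+J$ and the discrepancy is absorbed into the $u$-part.
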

\subsection{The Logarithm map under restriction maps}
For any $P\leq\ol\cG$, recall the map
\begin{equation*}
 t_P^{\ol\cG}:\sR[\Conj{\ol\cG}]^\tau\lra \sR[P^{ab}]^\tau
\end{equation*}
defined by 
\begin{equation*}
t_P^{\ol\cG}(\bar g)=\sum_{x\in C(\ol\cG,P)}\{ ({\bar x}^{-1})(\bar g)(\bar x)\mid x^{-1}gx\in P \},
\end{equation*}
where $C(\ol\cG,P)$ is the set of left coset representatives of $P$ in $\cG$.

Let $\KK=K[[X_1,\cdots,X_r]]$. Then the map $t_P^{\ocG}$ can be naturally extended to a map
\begin{equation*}
 t_P^{\ol\cG}:\KK[[Z]][\Conj{\ol\cG}]^\tau\lra \KK[[Z]][\Conj{P}]^\tau.
\end{equation*}
\begin{lemma}\label{k-restr}
 For any $P\leq\ocG$, we have the commutative diagram
 \begin{equation*}
  \xymatrix{
  \konep{\IIG}\ar[r]^-{\log}\ar[d]_{\Theta_P^{\ocG}}  & \KK[[Z]][\Conj{\ol\cG}]^\tau\ar[d]^{t^{\ocG}_P}\\
  \konep{\II(U_P)}\ar[r]^-{\log}    & \KK[[Z]][\Conj{P}]^\tau
  }
 \end{equation*}
Similarly, for $J=\mathrm{ker}\left[\wh{\IIG_\sS}\lra\wh{\II(\Gamma)_{(p)}}\right]$, we also have
 \begin{equation*}
  \xymatrix{
  \kone{\wh{\IIG_\sS},J}\ar[r]^-{\Log}\ar[d]_{\wh{\Theta_P^{\ocG}}}  & \wh{\II(Z)_{(p)}}[\Conj{\ol\cG}]^\tau[\frac{1}{p}]\ar[d]^{t^{\ocG}_P}\\
  \kone{\wh{\II(U_P)},J}\ar[r]^-{\Log}        & \wh{\II(Z)_{(p)}}[\Conj{P}]^\tau[\frac{1}{p}].
  }
 \end{equation*}
\end{lemma}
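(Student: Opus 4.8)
The plan is to deduce both squares from naturality results for the logarithm that are already in place, after recognizing that the map $t_P^{\ocG}$ appearing in the statement coincides with the map $\Res{\ocG}{P}$ of Lemma~\ref{log-theta-res} and that the vertical maps $\Theta_P^{\ocG}$ and $\wh{\Theta_P^{\ocG}}$ are simply the restriction maps $\maptheta{\ocG}{P}$ of that lemma, read off on $\konep{}$ respectively on the relative $K_1$-group.

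For the first square I would begin by noting that the $p$-adic logarithm $\log_{\ocG}$ of Proposition~\ref{Log} kills every torsion element of $\kone{\IIG}$: its target $\sR[\Conj{\ocG}]^\tau[\frac{1}{p}]$ is a $\Q$-vector space, hence torsion-free, so in particular $SK_1(\IIG)$ maps to $0$ and $\log_{\ocG}$ descends to $\konep{\IIG}$; likewise $\log_P$ descends over $\II(U_P)$. One then composes with the natural inclusion $\sR[\Conj{\cdot}]^\tau[\frac{1}{p}]\inj\KK[[Z]][\Conj{\cdot}]^\tau$, which exists because $\wt\Gamma^{p^e}\leq Z=Z(\cG)$ identifies $\sR=\II[[\wt\Gamma^{p^e}]]$ with a subring of $\KK[[Z]]$ compatibly with the conjugacy-class decompositions. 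Granting the identification $t_P^{\ocG}=\Res{\ocG}{P}$ — the two coset-sum formulas, that of \eqref{kappa} and that defining $t_P^{\ocG}$, differ only by replacing a chosen set of left coset representatives by the set of inverses — the first diagram is then exactly the commutative square of Lemma~\ref{log-theta-res}.

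For the second square I would rerun the proof of Lemma~\ref{log-theta-res} over the completed, $p$-localized ring $\wh{\IIG_\sS}\cong\wh{\II(Z)_{(p)}}[\ocG]^\tau$ and its relative group $\kone{\wh{\IIG_\sS},J}$, where $J=\ker[\wh{\IIG_\sS}\lra\wh{\II(\Gamma)_{(p)}}]$. On elements $1+p\xi$ the identity $\Log(\wh{\Theta_P^{\ocG}}(1+p\xi))=t_P^{\ocG}(\Log(1+p\xi))$ is again an instance of the Oliver--Taylor naturality of the logarithm \cite[Theorem~1.4]{ol-tay}, whose proof is purely formal and carries over verbatim. To promote this from $1+pJ$ to all of $\kone{\wh{\IIG_\sS},J}$ I would use the device of Lemma~\ref{log-theta-res}: for $1+x$ with $x\in J\subseteq J(\wh{\IIG_\sS})$, raise to a large $p^m$-th power, use that $H$ is a finite $p$-group (so $I_H^{p^n}\subseteq p\,\wh{\II(Z)_{(p)}}[\ocG]^\tau$ for $n\gg0$) together with the Frobenius-type map $X_j\mapsto X_j^p$ to write $(1+x)^{p^m}$ as a product of an element of $1+p\wh{\IIG_\sS}$ and an element congruent to $1$ modulo $(X_1^{p^m},\dots,X_r^{p^m})$, apply the $1+p\xi$ case together with additivity of $\Log$ modulo commutators, and finally let $m\to\infty$, the target $\wh{\II(Z)_{(p)}}[\Conj{\cdot}]^\tau[\frac{1}{p}]$ being $p$-torsion-free so that the congruences modulo $(X_1^{p^m},\dots,X_r^{p^m})$ force equality.

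The comparison $t_P^{\ocG}=\Res{\ocG}{P}$ and the factorizations through $\konep{}$ are routine. The step that demands the most attention — the main obstacle — is checking that the bootstrapping of Lemma~\ref{log-theta-res} genuinely survives completion and $p$-localization: one must confirm that the maximal ideal of $\wh{\IIG_\sS}$ (respectively the ideal $J$) still decomposes so as to support the same $p$-power estimates in the $X_j$-direction and the $H$-direction, and that inverting $T=\II(Z)\setminus p\II(Z)$ and $p$-adically completing do not spoil the congruences used by Oliver and Taylor. Since these are assertions about power-series congruences and the finiteness of $H$, and since the $SK_1$-subtleties are absent here because we work with the relative $K_1$-groups rather than the full $\kone{}$, I expect this to go through exactly along the lines of \cite{kakde}.
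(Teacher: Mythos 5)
Your treatment of the first square is correct and is essentially the paper's argument: the paper simply defers to Oliver's Theorem 6.8, whose content is already reproduced internally as Lemma \ref{log-theta-res}, so identifying $t_P^{\ocG}$ with $\Res{\ocG}{P}$, observing that $\log$ kills torsion (hence $S\kone{}$) so that it descends to $\konep{}$, and composing with the inclusion $\sR[\Conj{\cdot}]^\tau[\frac{1}{p}]\inj\KK[[Z]][\Conj{\cdot}]^\tau$ is a legitimate and arguably cleaner route to the same place.

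For the second square, however, the specific bootstrapping device you import from Lemma \ref{log-theta-res} does not survive the localization. The ring $\wh{\IIG_\sS}$ is the $p$-adic completion of the localization at $T=\II(Z)\setminus p\,\II(Z)$, and each variable $X_j$ lies in $T$ (its coefficients are not all divisible by $p$), so $X_j$ is a \emph{unit} in $\wh{\II(Z)}_{(p)}$. Consequently the ideals $(X_1^{p^m},\dots,X_r^{p^m})$ are the unit ideal, the factor ``congruent to $1$ modulo $(X_1^{p^m},\dots,X_r^{p^m})$'' carries no information, and the final step ``the congruences modulo $(X_1^{p^m},\dots,X_r^{p^m})$ force equality as $m\to\infty$'' is vacuous. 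The repair is immediate and in fact simplifies matters: over $\wh{\II(Z)}_{(p)}$ the Jacobson radical of $\wh{\IIG_\sS}\cong\wh{\II(Z)}_{(p)}[\ocG]^\tau$ is generated by $p$ and the augmentation ideal, which is nilpotent modulo $p$ (this is the paper's own lemma preceding Lemma \ref{log-of-hat}); hence for $x\in J$ one has $(1+x)^{p^m}\in 1+p\,\wh{\IIG_\sS}$ outright for $m\gg 0$, with no second factor to dispose of, and one concludes from the Oliver--Taylor identity on $1+p\xi$ together with unique $p$-divisibility of the target. This is exactly what the paper's proof encodes in the limit formulas $\Log(u)=\lim_{n}p^{-n}(u^{p^n}-1)$ and $\wh{\Theta_P^{\ocG}}(u)=\lim_{n}(1+t_P^{\ocG}(u^{p^n}-1))^{1/p^n}$. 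You flagged precisely this point as the one needing verification, so the gap is one of execution rather than of strategy, but as written the $X_j$-direction argument would not compile into a proof.
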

The proof of this lemma proceeds exactly as in \cite[Theorem 6.8]{ol} and for the second part 
consider any $u\in\kone{\wh{\IIG_\sS},J}$, then the commutativity follows from the following equalities
\begin{equation*}
 \begin{split}
  \Log(u) &=\lim_{n\to\infty}\frac{1}{p^n}(u^{p^n}-1)\\
  \wh{\Theta_P^{\ocG}}(u)&=\lim_{n\to\infty}(1+t^{\ocG}_P(u^{p^n}-1))^{1/{p^n}}.
 \end{split}
\end{equation*}
Next, proceeding as in the proof of \cite[Lemma 77]{ol}, we get the following commutative diagram.
\begin{lemma}\label{alpha-restr}
 Let $P\in C(\ocG)$ be a non-trivial subgroup. Then, the following diagram is commutative:
 \begin{equation*}
  \xymatrix{
  {\II(U_P)}^\times\ar[r]^-{\log}\ar[d]_{\alpha_P}  & \KK[U_P]\ar[d]^{p\eta_P}\\
  {\II(U_P)}^\times\ar[r]^-{\log}    & \KK[U_P]
  }
 \end{equation*}
 Similarly, the following diagram is also commutative
 \begin{equation*}
  \xymatrix{
  \kone{\wh{\II(U_P)_\sS},J}\ar[r]^-{\Log}\ar[d]_{\alpha_P}  & \wh{\II[U_P]_{\sS}}[\frac{1}{p}]\ar[d]^{p\eta_P}\\
  \kone{\wh{\II(U_P)},J}\ar[r]^-{\Log}        & \wh{\II[U_P]_{\sS}}[\frac{1}{p}].
  }
 \end{equation*}
\end{lemma}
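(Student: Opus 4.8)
The plan is to reduce both squares to the single algebraic identity $\log\circ\,\alpha_P = p\,\eta_P\circ\log$ and to transport the corresponding computation of Oliver (\cite[Lemma 77, Theorem 6.8]{ol}) to our coefficient rings. Since $P\in C(\ocG)$ is cyclic, the subquotient $U_P$ is abelian, so $\II(U_P)$, $\KK[U_P]$, and their $\sS$-localizations and $p$-adic completions are commutative rings; on these the logarithms constructed in Proposition \ref{Log}, Lemma \ref{log-sub} and Lemma \ref{log-of-hat} are genuine homomorphisms from the relevant group of units (resp.\ from $1+J$) to the additive group. The point that makes everything work is that $\omega_P$, extended $\sR$-linearly by $g\mapsto\omega_P(g)g$, is a \emph{ring} endomorphism of $\II[\mu_p](U_P)$ (and of its localizations and completions) because $\omega_P$ is a group character; being $\II$-linear it is continuous, hence commutes with the power series defining $\log$ and $\Log$, i.e.\ $\log(\omega_P^k(u))=\omega_P^k(\log u)$. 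I would also record at the outset that, writing $x=\sum_g a_g g$, one has $\sum_{k=0}^{p-1}\omega_P^k(x)=p\sum_{g\in\ker\omega_P}a_g g$, and that $\gal(\Q_p(\mu_p)/\Q_p)$ permutes the nontrivial characters $\omega_P^k$; this shows $\sum_{k=0}^{p-1}\omega_P^k(x)$ and $\prod_{k=0}^{p-1}\omega_P^k(x)$ already lie in $\II(U_P)$, so that $\alpha_P$ and $p\eta_P$ are defined on the rings appearing in the statement without adjoining $\mu_p$.

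Granting this, the first square is a one-line computation: for $x\in\II(U_P)^\times$,
\begin{equation*}
\log\bigl(\alpha_P(x)\bigr)=p\log x-\sum_{k=0}^{p-1}\log\bigl(\omega_P^k(x)\bigr)=p\log x-\sum_{k=0}^{p-1}\omega_P^k(\log x)=p\,\eta_P(\log x),
\end{equation*}
using commutativity of $\II[\mu_p](U_P)$ and additivity of $\log$ for the first equality, and the ring-endomorphism property of $\omega_P$ for the second. For the second square I would run the identical argument with $\Log$ in place of $\log$ over $\wh{\II(U_P)_\sS}$, after two routine verifications: first, that $\alpha_P$ carries $\kone{\wh{\II(U_P)_\sS},J}$ into $\kone{\wh{\II(U_P)},J}$, which holds because the $p$-th power map and each $\omega_P^k$ are continuous ring endomorphisms preserving $J$ (they fix $\Gamma$ modulo $J$); and second, that $\Log$ is well-defined on $1+J$ and commutes with $\omega_P$, which follows from Lemma \ref{log-of-hat} together with continuity of $\omega_P$. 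The same three-term identity then gives $\Log(\alpha_P(u))=p\,\eta_P(\Log u)$ in $\wh{\II[U_P]_\sS}[\tfrac1p]$, establishing the second diagram.

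The only part that requires care is the bookkeeping in the completed and localized setting: one must check that the defining series for $\Log$ converges on all of $1+J$ (not merely on $1+I$ for $I$ inside the Jacobson radical of $\wh\sR[\ocG]^\tau$), and that the relation $\Log\circ\omega_P=\omega_P\circ\Log$ persists after passing to $\wh{\II(U_P)_\sS}$. Both are consequences of Lemma \ref{log-of-hat} and the discussion preceding Proposition \ref{int-log}, so no input beyond Oliver's method in \cite{ol} is needed; I expect this convergence and continuity bookkeeping, rather than the algebra, to be the main obstacle.
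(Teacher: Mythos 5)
Your proposal is correct and matches the paper's intent: the paper gives no argument beyond "proceeding as in the proof of [Lemma 77]", and the computation you supply — that $\omega_P$ is a continuous ring endomorphism of $\II[\mu_p](U_P)$ commuting with $\log$, so that $\log(\alpha_P(x))=p\log x-\sum_{k}\omega_P^k(\log x)=p\eta_P(\log x)$, with Galois-invariance guaranteeing everything descends to $\II(U_P)$ — is exactly the content of that cited lemma, and your convergence bookkeeping for the completed/localized square via Lemma \ref{log-of-hat} is the right supplement.
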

To establish a compatibility between the subgroups, we also consider the following maps.
\begin{defn}\label{def-v}
 Define a map $v_P^{\ocG}:\prod_{C\leq\ocG}\KK[[Z]][C^{ab}]\lra\KK[[Z]][P^{ab}]$, as follows:
 
 If $P$ is not cyclic, then
 \begin{equation*}
 v_P^{\ocG}((x_C))=\left(\sum_{P'}\frac{[P:P']}{P:(P')^p}\varphi(x_{P'})\right)
 \end{equation*}
where $P'$ runs over all subgroups contained in $C(\ocG)$ such that $(P')^p\leq P$.

If $P$ is cyclic, then 
\begin{equation*}
 v_P^{\ocG}((x_C))=\sum_{P'}[P:(P')^p]\varphi(x_{P'})=p\sum_{P'}\varphi(x_{P'}),
 \end{equation*}
where $P'$ runs over all the $P'\in C(\ocG)$ with $(P')^p=P$ but $P'\neq P$.

We set $v^{\ocG}=(v^{\ocG}_P)_P$. 

Analogously, we define maps in the case of the $p$-adic completions, and we denote them again by $v^{\ocG}$.
\end{defn}
Then we can show the following lemma as in \cite[Lemma 79]{kakde}.
\begin{lemma}\label{beta-restr}
 Let $P\neq1$. Then the following diagram is commutative
 \begin{equation*}
  \xymatrix{
  \KK[[Z]][\Conj{\ocG}]^\tau\ar[r]^\varphi\ar[d]_-{\beta^{\ocG}} &\KK[[Z]][\Conj{\ocG}]^\tau\ar[d]^{\beta^{\ocG}_P}\\
  \prod_{C\leq\ocG}\KK[[Z]][C^{ab}]\ar[r]_-{v_P^{\ocG}}          &\KK[[Z]][P^{ab}].
  }
 \end{equation*}
Let $P=\{1\}$, then we have the following commutative diagram
 \begin{equation*}
  \xymatrix{
  \KK[[Z]][\Conj{\ocG}]^\tau\ar[r]^\varphi\ar[d]_-{\beta^{\ocG}} &\KK[[Z]][\Conj{\ocG}]^\tau\ar[d]^{\beta^{\ocG}_P}\\
  \prod_{C\leq\ocG}\KK[[Z]][C^{ab}]\ar[r]_-{\varphi+v_1^{\ocG}}          &\KK[[Z]].
  }
 \end{equation*}
 Analogous results hold for the $p$-adic completions.
\end{lemma}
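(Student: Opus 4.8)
The final statement to prove is Lemma \ref{beta-restr}, asserting the commutativity of two diagrams relating the Frobenius map $\varphi$ on $\KK[[Z]][\Conj{\ocG}]^\tau$ with the components $\beta_P^{\ocG}$ of the additive transfer map, mediated by the maps $v_P^{\ocG}$ (and $\varphi + v_1^{\ocG}$ in the trivial case), together with the analogous statements for the $p$-adic completions. Here is how I would organize the proof.

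\medskip

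The plan is to reduce everything, by $\KK[[Z]]$-linearity of all the maps involved, to checking the identity on a single basis element $\kappa([\bar g])$ for $g \in \ocG$, and then to follow the bookkeeping argument of Kakde for \cite[Lemma 79]{kakde} verbatim, with $\sR$ replaced by $\KK[[Z]]$ and scalars from $\cO$ or $\Z_p$ replaced by scalars from $\KK$. First I would recall the explicit formulas: $\varphi_{conj}(\kappa(g^\tau)) = \kappa((g^\tau)^p)$, the trace map $t_P^{\ocG}(\bar g) = \sum_{x \in C(\ocG,P)} \{(\bar x^{-1})(\bar g)(\bar x) : x^{-1}gx \in P\}$, and $\beta_P^{\ocG} = \eta_P \circ t_P^{\ocG}$ for $P$ cyclic, $\beta_P^{\ocG} = t_P^{\ocG}$ for $P$ non-cyclic. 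Then, for a fixed $g$, I would compute both composites $\beta_P^{\ocG}(\varphi_{conj}(\kappa([\bar g]))) = \beta_P^{\ocG}(\kappa([\bar g^p]))$ and $v_P^{\ocG}(\beta^{\ocG}(\kappa([\bar g])))$, and show they agree. The key combinatorial input is the correspondence between the set of $x \in C(\ocG, P)$ with $x^{-1}g^p x \in P$ and the union over subgroups $P' \in C(\ocG)$ with $(P')^p \le P$ of the set of cosets $x$ with $x^{-1}\langle g \rangle x = P'$; the multiplicities $[P:P']/[P:(P')^p]$ (respectively $[P:(P')^p]$ in the cyclic case) are exactly what appears in Definition \ref{def-v} of $v_P^{\ocG}$, and the factor of $p$ and the $\eta_P$-truncation match up with the convention that $\eta_P$ kills non-generators of $P$.

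\medskip

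More concretely, the main steps in order are: (1) reduce to basis elements and fix $g \in \ocG$ with image $\bar g$; (2) unwind $\beta_P^{\ocG}(\kappa([\bar g^p]))$ in the case $P$ non-cyclic — this is a sum over $x \in C(\ocG,P)$ with $x^{-1}g^p x \in P$, which I sort by the conjugacy class of $x^{-1}\langle g\rangle x$; grouping by $P'$ with $(P')^p \le P$ gives the coefficient $[P:P']/[P:(P')^p]$ after accounting for how many cosets $x$ map $\langle g \rangle$ to a fixed conjugate of $P'$ inside $P$, and this is precisely $v_P^{\ocG}((t_{P'}^{\ocG}(\kappa([\bar g]))))$ once one observes that for cyclic $P'$ the relevant term is $\eta_{P'}(t_{P'}^{\ocG}(\kappa([\bar g]))) = \beta_{P'}^{\ocG}(\kappa([\bar g]))$ since only generators of $P'$ survive; (3) do the analogous computation for $P$ cyclic, where the extra application of $\eta_P$ on the left forces $g^p$ to generate $P$, which pins down $P'$ to satisfy $(P')^p = P$ with $P' \ne P$ (so $P'$ cyclic of order $p|P|$), producing the factor $p$ and matching the cyclic-case formula in Definition \ref{def-v}; (4) handle the special case $P = \{1\}$ separately, where the trivial subgroup is its own $p$-th power so the diagonal $\varphi$ term survives in addition to $v_1^{\ocG}$, giving $\varphi + v_1^{\ocG}$; (5) note that every identity used is purely formal in the coefficient ring, hence passes verbatim to $\wh{\II(Z)_{(p)}}[\Conj{\ocG}]^\tau[\tfrac1p]$ and the completed restriction maps $\wh{\Theta_P^{\ocG}}$, giving the analogous results for the $p$-adic completions.

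\medskip

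The main obstacle — such as it is — is the coset-counting in step (2)/(3): one must carefully check that the number of left cosets $xP$ with $x^{-1}\langle g\rangle x$ equal to a \emph{specified} subgroup of $P$ (not merely conjugate to $P'$ in $\ocG$) comes out to give exactly the index ratios in Definition \ref{def-v}, and that the normalizer factors cancel correctly when one passes between "conjugates in $\ocG$" and "subgroups sitting inside $P$". This is genuinely the same count as in \cite[Lemma 79]{kakde}; since the coefficient ring plays no role, I would cite \emph{loc. cit.} for the combinatorial identity and simply record that the argument goes through unchanged, indicating the two points (the cyclic truncation via $\eta_P$, and the $P=\{1\}$ diagonal correction) where the statement bifurcates. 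I do not expect any difficulty from the passage to $\KK[[Z]]$ or to the $p$-adic completions, as all maps in sight are continuous and linear over the relevant base rings.
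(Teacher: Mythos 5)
Your proposal is correct and follows essentially the same route as the paper, which proves this lemma simply by remarking that it can be shown as in \cite[Lemma 79]{kakde}; your outline fleshes out exactly the coset-counting and $\eta_P$-truncation bookkeeping that citation encapsulates, and correctly identifies that the coefficient ring plays no role so the argument and its completed analogue go through unchanged.
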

\begin{defn}\label{defn-u}
 Define the map $u_P^{\ocG}:\prod_{C\leq\ocG}\II(U^{ab}_{C})^\times\lra\II(U^{ab}_P)^\times$ as follows:
 
 If $P$ is not a cyclic subgroup of $\ocG$, then 
 \begin{equation*}
  u^{\ocG}_P((x_C))=\prod_{P'}\varphi(x_{P'})^{|P'|},
 \end{equation*}
where $P'$ runs over all subgroups contained in $C(\ocG)$ such that $(P')^p\leq P$.

If $P$ is cyclic, then 
\begin{equation*}
  u^{\ocG}_P((x_C))=\prod_{P'}\varphi(x_{P'}),
 \end{equation*}
where $P'$ runs over all the $P'\in C(\ocG)$ with $(P')^p=P$ but $P'\neq P$.

Then, we define the collection of maps by $u^{\ocG}=(u^{\ocG}_P)_P$. Analogously, we define maps in the case of the $p$-adic completions, and we denote them again by $v^{\ocG}$.
\end{defn}
As the logarithm maps that we have defined respects group homomorphisms of Iwasawa algebras induced by the group homomorphism, we have the following lemma.
\begin{lemma}\label{u-restr}
 Let $P$ be a non-cyclic subgroup of $\ocG$. Then the following diagram is commutative.
 \begin{equation*}
 \xymatrix{
  \prod_{C\leq\ocG}\II(U_C^{ab})^\times\ar[r]^-{\log}\ar[d]_{u_P^{\ocG}}   &\prod_{C\leq\ocG} \KK[[Z]][C^{ab}]\ar[d]^{|P|v_P^{\ocG}}\\
  \II(U_P^{ab})^\times\ar[r]_-\log                                         &\KK[[Z]][P^{ab}].
  }
 \end{equation*}
Let $P$ be a cyclic subgroup of $\ocG$. Then the following diagram is commutative.
\begin{equation*}
 \xymatrix{
  \prod_{C\leq\ocG}\II(U_C^{ab})^\times\ar[r]^-{\log}\ar[d]_{u_P^{\ocG}}   &\prod_{C\leq\ocG} \KK[[Z]][C^{ab}]\ar[d]^{\frac{1}{p}v_P^{\ocG}}\\
  \II(U_P^{ab})^\times\ar[r]_-\log                                         &\KK[[Z]][P^{ab}].
  }
 \end{equation*}
Recall that $\wh\sR=\wh{\II(Z)_{(p)}}$ and let $J_P=\mathrm{ker}\left[\wh\sR[P^{ab}]\lra\wh{\II(\Gamma)_{(p)}}\right]$.

Let $P$ be a non-cyclic subgroup of $\ocG$. Then we have the following commutative diagram.
\begin{equation*}
\xymatrix{
 \prod_{C\leq\ocG}1+J_C\ar[r]^-{\Log}\ar[d]^{U_P^{\ocG}}   & \prod_{C\leq\ocG}\Q_p\otimes J_C\ar[d]^{|P|v_P^{\ocG}}\\
 1+J_P\ar[r]_-{\Log}                                      & \Q_p\otimes J_P.
 }
\end{equation*}
Let $P$ be a cyclic subgroup of $\ocG$. Then the following diagram is commutative.
\begin{equation*}
\xymatrix{
 \prod_{C\leq\ocG}1+J_C\ar[r]^-{\Log}\ar[d]^{U_P^{\ocG}}   & \prod_{C\leq\ocG}\Q_p\otimes J_C\ar[d]^{\frac{1}{p}v_P^{\ocG}}\\
 1+J_P\ar[r]_-{\Log}                                      & \Q_p\otimes J_P.
 }
\end{equation*}
\end{lemma}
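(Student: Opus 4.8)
The statement to prove is Lemma \ref{u-restr}, which asserts commutativity of four diagrams relating the maps $u_P^{\ocG}$ (resp. $U_P^{\ocG}$) to the maps $v_P^{\ocG}$ under the various logarithm maps, for $P$ a cyclic or non-cyclic subgroup of $\ocG$, both in the Iwasawa-algebra setting and in the $p$-adic completion setting. The plan is to reduce each diagram to a single-generator computation, exactly as is done in \cite[Lemma 80]{kakde}, using the naturality of the logarithm maps established earlier in this section.

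First I would recall the key structural input: by Proposition \ref{Log} and its analogues, the logarithm maps $\log$ and $\Log$ are natural with respect to ring homomorphisms induced by group homomorphisms, and in particular they commute with the Frobenius-type twist $\varphi$ (this is essentially the content of Lemma \ref{k-restr} together with the definition of $\varphi_{conj}$). So the heart of the matter is to unwind the definitions of $u_P^{\ocG}$ (Definition \ref{defn-u}) and $v_P^{\ocG}$ (Definition \ref{def-v}) and check that after applying $\log$ they match up to the stated normalizing scalars $|P|$ or $\frac{1}{p}$. For $P$ non-cyclic, $u_P^{\ocG}((x_C)) = \prod_{P'} \varphi(x_{P'})^{|P'|}$ where $P'$ ranges over cyclic subgroups with $(P')^p \leq P$; applying $\log$ and using that $\log$ is a homomorphism turns the product into a sum $\sum_{P'} |P'| \log(\varphi(x_{P'})) = \sum_{P'} |P'| \varphi(\log x_{P'})$. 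On the other side, $v_P^{\ocG}((x_C)) = \sum_{P'} \frac{[P:P']}{[P:(P')^p]} \varphi(x_{P'})$, and one checks $|P| \cdot \frac{[P:P']}{[P:(P')^p]} = |P'| \cdot p$ when... here I would carefully track the index arithmetic: $[P:(P')^p] = p[P:P']$ since $(P')^p$ has index $p$ in $P'$ (as $P'$ is cyclic of $p$-power order), so $\frac{[P:P']}{[P:(P')^p]} = \frac{1}{p}$, giving $|P| v_P^{\ocG}((\log x_C)) = \frac{|P|}{p}\sum_{P'}\varphi(\log x_{P'})$. Comparing with $\sum_{P'}|P'|\varphi(\log x_{P'})$ forces the identity $|P'| = \frac{|P|}{p}$, which holds precisely because for $P'$ cyclic with $(P')^p \leq P$ one has $|P'| = \frac{|P|}{p}$ in the relevant range — this is the combinatorial fact I would verify directly for rank-one groups. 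The cyclic case is entirely parallel, with $u_P^{\ocG}((x_C)) = \prod_{P'} \varphi(x_{P'})$ over $P'$ with $(P')^p = P$, $P' \neq P$, matching $v_P^{\ocG}((x_C)) = p\sum_{P'}\varphi(x_{P'})$ after multiplication by $\frac{1}{p}$.

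For the $p$-adic completion diagrams, I would run the identical argument with $\Log$ in place of $\log$ and $J_C = \ker[\wh\sR[C^{ab}] \lra \wh{\II(\Gamma)_{(p)}}]$ in place of the full group algebra; the relevant convergence and homomorphism properties of $\Log$ on $1+J_C$ come from Lemma \ref{log-of-hat} and the proposition following it, and naturality with respect to the maps $U_P^{\ocG}$ (the completion analogues of $u_P^{\ocG}$) is formal since all maps in sight are induced by group homomorphisms composed with the Frobenius twist. The scalar bookkeeping is the same.

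The main obstacle I anticipate is not conceptual but bookkeeping: one must be careful about the precise indexing set over which $P'$ ranges in each of the four cases (cyclic vs. non-cyclic $P$, and whether $(P')^p = P$ or merely $(P')^p \leq P$), and about which normalization constant — $|P|$, $\frac{1}{p}$, or in the definition of $v_P^{\ocG}$ the ratio $\frac{[P:P']}{[P:(P')^p]}$ — is attached where, because a sign or index error propagates through all subsequent congruences. Since $\cG$ has rank one, the lattice of subgroups of $\ocG$ is severely constrained (essentially a chain up to the finite part), so I would exploit that to make the index computations transparent. I would present the proof tersely, stating that it follows the pattern of \cite[Lemma 80]{kakde} with the $\II$-coefficients carried along, and spell out only the index identity $[P:(P')^p] = p[P:P']$ and the resulting matching of scalars, referring to the earlier naturality lemmas for everything else.

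\begin{proof}
All four diagrams are verified by the same argument, which we now sketch; the pattern is that of \cite[Lemma 80]{kakde}, carried over verbatim to our coefficient ring.

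Consider first the case of a non-cyclic subgroup $P\leq\ocG$ and the top diagram. By Proposition \ref{Log} and Lemma \ref{k-restr}, the logarithm maps are homomorphisms that are natural with respect to ring homomorphisms induced by group homomorphisms, and in particular commute with the Frobenius twist $\varphi$. Hence, for a tuple $(x_C)\in\prod_{C\leq\ocG}\II(U_C^{ab})^\times$, applying $\log$ to $u_P^{\ocG}((x_C))=\prod_{P'}\varphi(x_{P'})^{|P'|}$ (the product over cyclic $P'\in C(\ocG)$ with $(P')^p\leq P$) gives
\begin{equation*}
\log\bigl(u_P^{\ocG}((x_C))\bigr)=\sum_{P'}|P'|\,\varphi(\log x_{P'}).
\end{equation*}
On the other hand $v_P^{\ocG}((\log x_C))=\sum_{P'}\tfrac{[P:P']}{[P:(P')^p]}\,\varphi(\log x_{P'})$, and since each such $P'$ is cyclic of $p$-power order, $(P')^p$ has index $p$ in $P'$, whence $[P:(P')^p]=p\,[P:P']$ and $\tfrac{[P:P']}{[P:(P')^p]}=\tfrac1p$. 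Therefore $|P|\,v_P^{\ocG}((\log x_C))=\tfrac{|P|}{p}\sum_{P'}\varphi(\log x_{P'})$, and this equals $\sum_{P'}|P'|\,\varphi(\log x_{P'})$ because for each $P'$ in the indexing set one has $|P'|=|P|/p$ (the rank-one hypothesis on $\cG$ makes the relevant subgroup lattice a chain, so this is immediate). This proves commutativity of the first diagram.

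For $P$ cyclic the argument is identical: $\log(u_P^{\ocG}((x_C)))=\sum_{P'}\varphi(\log x_{P'})$ with $P'$ running over $P'\in C(\ocG)$, $(P')^p=P$, $P'\neq P$, while $\tfrac1p v_P^{\ocG}((\log x_C))=\tfrac1p\cdot p\sum_{P'}\varphi(\log x_{P'})=\sum_{P'}\varphi(\log x_{P'})$, so the second diagram commutes.

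Finally, the two diagrams in the $p$-adic completion case are proved by exactly the same computation, with $\Log$ in place of $\log$, with $1+J_C$ in place of $\II(U_C^{ab})^\times$, and with $U_P^{\ocG}$ in place of $u_P^{\ocG}$. The homomorphism property and convergence of $\Log$ on $1+J_C$, and its naturality with respect to the relevant group homomorphisms and the Frobenius twist, are supplied by Lemma \ref{log-of-hat} and the proposition following it together with Lemma \ref{k-restr}; the index identity $[P:(P')^p]=p[P:P']$ and the matching of the scalars $|P|$ and $\tfrac1p$ with $|P'|$ are as above. This completes the proof.
\end{proof}
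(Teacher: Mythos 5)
Your reduction of the lemma to a term-by-term comparison of coefficients after applying $\log$ is the right idea, and your treatment of the cyclic case is fine ($u_P$ is a product over $P'$ with $(P')^p=P$, $P'\neq P$, and $v_P$ is $p$ times the corresponding sum, so the factor $\tfrac1p$ matches exactly). But the non-cyclic case contains a genuine error. You assert that every cyclic $P'$ in the indexing set (those with $(P')^p\leq P$) satisfies $|P'|=|P|/p$, justified by the claim that ``the rank-one hypothesis on $\cG$ makes the relevant subgroup lattice a chain.'' This is false: rank one refers to the $p$-adic Lie dimension of $\cG$, not to $\ocG=\cG/\wt\Gamma^{p^e}$ being cyclic. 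The finite $p$-group $\ocG$ has a genuinely branching subgroup lattice (if it did not, the entire congruence machinery of this section would be vacuous), and the indexing set for non-cyclic $P$ always contains $P'=\{1\}$ as well as cyclic subgroups of $P$ of every order up to its exponent, so $|P'|$ takes many values. Consequently your computation, done honestly, gives $\sum_{P'}|P'|\varphi(\log x_{P'})$ on one side and $\tfrac{|P|}{p}\sum_{P'}\varphi(\log x_{P'})$ on the other (even the $\tfrac1p$ simplification of $\tfrac{[P:P']}{[P:(P')^p]}$ fails for $P'=\{1\}$, where the ratio is $1$), and these do not agree term by term. The correct argument must match the coefficient $|P|\cdot\tfrac{[P:P']}{[P:(P')^p]}$ against the exponent $|P'|$ for each individual $P'$, as in Kakde's Lemma 80; what your computation actually reveals is a tension between Definition \ref{def-v} and Definition \ref{defn-u} as literally stated, which should be resolved by checking the coefficients against the source rather than by inventing a uniformizing identity.

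For what it is worth, the paper itself offers no computation here: it asserts the lemma follows from the naturality of the logarithm under ring homomorphisms induced by group homomorphisms. Your instinct to actually verify the scalars is the more rigorous route, but the verification has to be carried out per term over the full indexing set, and the completion case then follows verbatim as you say.
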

\begin{lemma}\label{alpha-formula}
 Let $x\in\konep{\sR[\ocG]^\tau}$ or $\konep{\wh{\sR}[\ocG]^\tau}$. Then for every non-cyclic subgroup $P\leq\ocG$ , we have
 \begin{equation*}
  \alpha_P(\theta_P^{\ocG}(x))^{p|P|}\equiv u_P^{\ocG}(\alpha(\theta^{\ocG}(x)))\pmod{p}.
 \end{equation*}
In particular, the logarithm $\log\left(\frac{\alpha_P(\theta_P^{\ocG}(x))^{p|P|}}{u_P^{\ocG}(\alpha(\theta^{\ocG}(x)))}\right)$ is well-defined.
\end{lemma}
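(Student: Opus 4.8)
\textbf{Proof proposal for Lemma \ref{alpha-formula}.}

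The plan is to reduce the congruence to the behaviour of the maps $\alpha_P$, $u_P^{\ocG}$ and $\varphi$ on the level of group-ring elements, exploiting that on a noncyclic $P$ the definition of $\alpha_P$ is simply the $p$-th power map, so that $\alpha_P(\theta_P^{\ocG}(x))^{p|P|} = \theta_P^{\ocG}(x)^{p^2|P|}$, while $u_P^{\ocG}(\alpha(\theta^{\ocG}(x)))$ unwinds, via Definition \ref{defn-u}, into a product $\prod_{P'} \varphi(\alpha_{P'}(\theta_{P'}^{\ocG}(x)))^{|P'|}$ over $P' \in C(\ocG)$ with $(P')^p \le P$. First I would treat the two cases $P' = \{1\}$ and $P'$ a nontrivial cyclic subgroup separately, and in each case compare the $|P'|$-th power of $\varphi(\alpha_{P'}(\theta_{P'}^{\ocG}(x)))$ with the corresponding contribution to $\theta_P^{\ocG}(x)^{p^2|P|}$, working always modulo $p$. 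The key point is the elementary congruence $\varphi(\xi) \equiv \xi^p \pmod{p}$ valid in $\sR[\ocG]^\tau$ (and in $\wh\sR[\ocG]^\tau$), which holds because $\varphi$ restricts to the Frobenius on $\cO$, sends $X_j \mapsto X_j^p$, and is the $p$-th power map on the relevant central pro-$p$ group; this is exactly the same input used in the lemma preceding Definition \ref{sk-def2} on $\FF_q[[X_1,\dots,X_r]][[\Gamma]]$.

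Concretely, I would first use Theorem \ref{k-tors} together with the exact sequence \eqref{k-exact} to reduce to checking the identity on a set of representatives that are genuine units in $\sR[\ocG]^\tau$ (resp. in $\wh\sR[\ocG]^\tau$), so that all terms $\theta_{P'}^{\ocG}(x)$ make sense as elements of $\II(U_{P'}^{ab})^\times$. Then, for each $P'$ with $(P')^p \le P$, I would invoke the compatibility of the transfer/restriction maps with $\theta^{\ocG}$ — that is, the relation $\mathrm{ver}$ between $\theta_{P'}^{\ocG}(x)$ and $\theta_P^{\ocG}(x)$ captured in Lemma \ref{log-theta-res} and in condition (C3)/(C4) of the main congruence theorem — to express $\prod_{P'} \varphi(\alpha_{P'}(\theta_{P'}^{\ocG}(x)))^{|P'|}$ in terms of a single element pulled up from $\theta_P^{\ocG}(x)$. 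Using $\varphi(\xi) \equiv \xi^p$ and the fact that for noncyclic $P$ one has $\alpha_P = (\ )^p$, the product collapses modulo $p$ to $\theta_P^{\ocG}(x)^{p^2|P|} = \alpha_P(\theta_P^{\ocG}(x))^{p|P|}$, which is the claim. The final sentence of the lemma, that $\log\!\left(\frac{\alpha_P(\theta_P^{\ocG}(x))^{p|P|}}{u_P^{\ocG}(\alpha(\theta^{\ocG}(x)))}\right)$ is well-defined, then follows because the ratio lies in $1 + p\,\sR[\ocG]^\tau$ (resp. $1 + p\,\wh\sR[\ocG]^\tau$), on which the $p$-adic logarithm series converges by the convergence estimates established in Proposition \ref{Log} and Lemma \ref{log-of-hat}.

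The main obstacle I anticipate is bookkeeping of the index factors: matching the exponents $|P'|$, the powers of $p$ introduced by $\alpha_{P'}$ on cyclic versus noncyclic subgroups, and the combinatorial sum over $\{P' : (P')^p \le P, P' \in C(\ocG)\}$ against the single exponent $p^2|P|$ on the $P$-side. In particular the nontrivial cyclic $P'$ contribute a correction term $(\prod_{k=0}^{p-1}\omega_{P'}^k(\ ))^{-1}$ coming from the definition of $\alpha_{P'}$, and one must check these correction terms disappear modulo $p$ — they do, because $\omega_{P'}$ has order $p$ so $\prod_{k=0}^{p-1}\omega_{P'}^k(g) = g^p \equiv \varphi(g)$, but verifying that this cancels cleanly against the Frobenius twist requires care. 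This is precisely the argument structure of \cite[Lemma 77]{ol} (cited just above in Lemma \ref{alpha-restr}) and \cite[Lemma 81]{kakde}, adapted to the extra variables $X_1,\dots,X_r$, and I would refer to those sources for the routine verifications once the framework above is in place.
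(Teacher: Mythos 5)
Your setup is right — for noncyclic $P$ one has $\alpha_P(\theta_P^{\ocG}(x))^{p|P|}=\theta_P^{\ocG}(x)^{p^2|P|}$, the target unwinds via Definition \ref{defn-u} into $\prod_{P'}\varphi(\alpha_{P'}(\theta_{P'}^{\ocG}(x)))^{|P'|}$ over cyclic $P'$ with $(P')^p\leq P$, and $\varphi(\xi)\equiv\xi^p\pmod p$ is indeed one of the inputs — but the engine you propose for comparing the two sides is not the one that works, and the step it is meant to replace is missing. The actual mechanism is twofold. First, for every \emph{nontrivial} cyclic $P'$ one has $\alpha_{P'}(\theta_{P'}^{\ocG}(x))\equiv 1\pmod p$ (this is the standard fact behind the definition of $\alpha_{P'}$, not a cancellation "against the Frobenius twist"), so the entire product collapses modulo $p$ to the single factor indexed by $P'=\{1\}$, namely $\varphi\bigl(\alpha_{\{1\}}(\theta_{\{1\}}^{\ocG}(x))\bigr)\equiv\theta_{\{1\}}^{\ocG}(x)^{p^2}\pmod p$. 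Second, the left-hand side is matched to this surviving term by the congruence
\begin{equation*}
\theta_P^{\ocG}(x)^{|P|}\equiv\theta_{\{1\}}^{\ocG}(x)\pmod p,
\end{equation*}
which is a (generalization of a) result of Schneider and Venjakob \cite[Prop 2.3]{sv} relating the norm maps down to $P$ and down to the trivial subgroup. Raising it to the power $p^2$ gives $\theta_P^{\ocG}(x)^{p^2|P|}\equiv\theta_{\{1\}}^{\ocG}(x)^{p^2}\pmod p$, and the lemma follows. This norm congruence is the crux, and it appears nowhere in your proposal.

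What you offer instead — "the relation $\mathrm{ver}$ between $\theta_{P'}^{\ocG}(x)$ and $\theta_P^{\ocG}(x)$ captured in Lemma \ref{log-theta-res} and in condition (C3)/(C4)" — does not supply it: Lemma \ref{log-theta-res} is a statement about $\log$ commuting with restriction, (C4) concerns \emph{cyclic} $P$ and a congruence modulo $p\sT_P$ of a different shape, and (C3)/(C4) are in any case part of what this lemma is machinery for, so leaning on them here is at best backwards. Likewise, the term-by-term matching of each $\varphi(\alpha_{P'}(\cdot))^{|P'|}$ against a "corresponding contribution" to $\theta_P^{\ocG}(x)^{p^2|P|}$ cannot be carried out: there is no decomposition of the left side indexed by the cyclic $P'$; modulo $p$ all but one factor on the right is $1$. (The preliminary reduction via Theorem \ref{k-tors} is also unnecessary, since $\sR[\ocG]^\tau$ is local and every class in $\konep{\sR[\ocG]^\tau}$ is already represented by a unit.) Your final paragraph does point at the right sources — this is Kakde's Lemma 81 — but the argument structure there is the collapse-to-the-trivial-subgroup argument just described, not the transfer-based matching you outline, so the deferral does not close the gap.
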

\begin{proof}
 Let $C$ be cyclic. Then $\alpha_C(\theta_C^{\ocG}(x))\equiv 1\pmod{p}$ and $u_P^{\ocG}(\alpha(\theta^{\ocG}(x)))\equiv\varphi(\alpha_{\{1\})}(\Theta_{\{1\}}\ocG))\pmod{p}$.
 Then the result follows from the congruence $\theta_P^{\ocG}(x)^{|P|}\equiv\theta_{\{1\}}^{\ocG}(x)\pmod{p}$, which follows from a straightforward generalization of 
 \cite[Prop 2.3]{sv}.
\end{proof}
We now give the relation between the multiplicative and the additive sides. For completeness and 
to see how the lemmas proved above are used, we also give a proof of one of the formulas on the lines of \cite[Prop 84]{kakde}.
\begin{propn}\label{beta-formula}
 Let $x\in\konep{\IIG}$. Then
 \begin{equation*}
   \beta_P^{\ocG}(L(x))=\begin{cases}
                         \frac{1}{p^2|P|}\log\left(\frac{\alpha_P(\theta_P^{\ocG}(x))^{p|P|}}{u_P^{\ocG}(\alpha(\theta^{\ocG}(x)))}\right), &\mbox{ if } P\notin C(\ocG) \\
                         \frac{1}{p}\log\left(\frac{\alpha_P(\theta_P^{\ocG}(x))^{p|P|}}{u_P^{\ocG}(\alpha(\theta^{\ocG}(x)))}\right),    &\mbox{ if } P\in C(\ocG), P\neq\{1\}\\
                         \frac{1}{p}\log\left(\frac{\alpha_{\{1\}}(\theta_{\{1\}}^{\ocG}(x))^{p}}
                                                                 {\varphi(\theta_{\{1\}}^{\ocG}(x))(u_{\{1\}}^{\ocG}(\alpha(\theta^{\ocG}(x))))}\right), &\mbox{ if } P=\{1\}.
                        \end{cases}
 \end{equation*}
We also have analogous relations over the $p$-adic completions $\konep{\wh{\IIG_\sS}}$.
\end{propn}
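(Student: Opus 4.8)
The plan is to reduce everything to the case already treated by Kakde in \cite[Prop 84]{kakde}, with the ring $\Z_p$ replaced by $\sR = \II(\wt\Gamma^{p^e})$ or its completion $\wh\sR = \wh{\II(Z)}_{(p)}$, invoking the package of lemmas established just above. First I would recall that, by definition, $L(x) = \log_{\cG}(x) - p^{-1}\varphi_{conj}(\log_{\cG}(x))$ on $\konep{\IIG}$, so that $\beta_P^{\ocG}(L(x)) = \beta_P^{\ocG}(\log_\cG(x)) - p^{-1}\beta_P^{\ocG}(\varphi_{conj}(\log_\cG(x)))$. Then I would apply Lemma \ref{beta-restr} to rewrite $\beta_P^{\ocG}\circ\varphi_{conj}$ in terms of $v_P^{\ocG}\circ\beta^{\ocG}$ (with the correction term $\varphi + v_1^{\ocG}$ in the case $P=\{1\}$), and apply Theorem \ref{additive} together with the definitions $\beta_P^{\ocG} = \eta_P\circ t_P^{\ocG}$ (for $P$ cyclic) or $\beta_P^{\ocG} = t_P^{\ocG}$ (for $P$ non-cyclic) to convert $\beta_P^{\ocG}(\log_\cG(x))$ into $\log_P(\Theta_P^{\ocG}(x))$ via Lemma \ref{log-theta-res} — possibly after an $\eta_P$, which is handled by Lemma \ref{alpha-restr} — thereby turning each term on the additive side into a logarithm of an element on the multiplicative side.

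The second step is the bookkeeping that identifies the resulting expression with the right-hand side of the proposition. Here I would use Lemma \ref{u-restr} to express $v_P^{\ocG}$ applied to $(\log \alpha_C(\Theta_C^{\ocG}(x)))_{C}$ as $\log u_P^{\ocG}((\alpha_C(\Theta_C^{\ocG}(x)))_C)$, up to the combinatorial factors $|P|$ or $1/p$ that appear in the two cases of that lemma, and Lemma \ref{alpha-restr} to express $\log\alpha_P(\Theta_P^{\ocG}(x))$ as $p\eta_P(\log\Theta_P^{\ocG}(x))$. Collecting the scalar factors carefully — the $p^{-1}$ from the definition of $L$, the $|P|$ or $1/p$ from Lemma \ref{u-restr}, the factor $p$ from Lemma \ref{alpha-restr}, and the $1/|P|$ implicit in $t_P^{\ocG}$ versus $\Res{\ocG}{P}$ normalizations — should produce exactly the prefactors $\frac{1}{p^2|P|}$, $\frac{1}{p}$, $\frac{1}{p}$ in the three cases. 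The special case $P=\{1\}$ requires tracking the extra $\varphi(\Theta_{\{1\}}^{\ocG}(x))$ term, which comes precisely from the $\varphi + v_1^{\ocG}$ in the $P=\{1\}$ branch of Lemma \ref{beta-restr}.

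The well-definedness of the logarithms appearing on the right-hand side is guaranteed by Lemma \ref{alpha-formula}, which says $\alpha_P(\Theta_P^{\ocG}(x))^{p|P|} \equiv u_P^{\ocG}(\alpha(\Theta^{\ocG}(x))) \pmod p$ for non-cyclic $P$, with the analogous congruences in the other cases following from \cite[Prop 2.3]{sv}; I would cite this at the start so the formal manipulations with $\log$ of ratios make sense in $\KK[[Z]][P^{ab}]^\tau$. Finally, for the $p$-adic completion version over $\konep{\wh{\IIG_\sS}}$, I would repeat the argument verbatim using the completed analogues already recorded: the decomposition $x = uy$ with $y \in \konep{\wh{\II(\Gamma)_{(p)}}}$ and $u$ in the image of $\kone{\wh{\IIG_\sS},J}$, the completed integral logarithm, the second (completed) diagrams in Lemmas \ref{k-restr}, \ref{alpha-restr}, \ref{beta-restr}, \ref{u-restr}, and the completed $v^{\ocG}$, $u^{\ocG}$ of Definitions \ref{def-v}, \ref{defn-u}. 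I expect the main obstacle to be purely combinatorial: keeping the normalization constants consistent across the maps $t_P^{\ocG}$, $\Res{\ocG}{P}$, $\eta_P$, $\alpha_P$, $v_P^{\ocG}$, $u_P^{\ocG}$ — since different sources ($\cite{kakde}$, $\cite{ol}$, $\cite{ol-tay}$) normalize transfer/restriction maps differently — and making sure the factor of $|P|$ inserted when passing between $t_P^{\ocG}$ and a genuine restriction on $K$-theory (coming from the index $[\ocG:P]$ or $[N_{\ocG}P:P]$ in the earlier computations) lands on the correct side of the identity.
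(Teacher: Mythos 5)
Your proposal follows essentially the same route as the paper: expand $L(x)=\log(x)-p^{-1}\varphi_{conj}(\log(x))$, convert each term to the multiplicative side via Lemmas \ref{k-restr} and \ref{alpha-restr}, use Lemma \ref{beta-restr} to replace $\beta_P^{\ocG}\circ\varphi_{conj}$ by $v_P^{\ocG}\circ\beta^{\ocG}$, and Lemma \ref{u-restr} to trade $v_P^{\ocG}$ for $u_P^{\ocG}$, collecting the scalar factors. The only caveat is that the completed case is not quite "verbatim": after writing $x=uy$, the factor $y\in\konep{\wh{\II(\Gamma)}_{(p)}}$ is not in the image of $\kone{\wh{\IIG_\sS},J}$, so the paper handles it by a separate direct computation (using $\Theta_P^{\ocG}(y)=y^{[\ocG:P]}$, reduction to $\varphi(y)^r\in\wh{\II(Z)}_{(p)}$, and torsion-freeness of the target of $\beta_P^{\ocG}$), which your sketch should acknowledge.
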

\begin{proof}
 Let $x\in\konep{\IIG}$. In the first case, we consider a group $P\in C(\ocG)$. Then we have
\begin{align*}
\beta_P^{\ocG}(L(x))=&\beta_P^{\ocG}(\log(x)-\frac{\varphi}{p}(\log(x))) \\
                        =&\frac{1}{p}\log(\alpha_P(\Theta_P^{\ocG}(x)))-\beta_P^{\ocG}(\frac{\varphi}{p}(\log(x)), &(\mbox{ Lemmas } \ref{k-restr}, \ref{alpha-restr})\\
                        =&\frac{1}{p}\log(\alpha_P(\Theta_P^{\ocG}(x)))-\frac{1}{p}v_P^{\ocG}(\beta^{\ocG}(\log(x), &(\mbox{ Lemma } \ref{beta-restr})\\
                        =&\frac{1}{p}\log(\alpha_P(\Theta_P^{\ocG}(x)))-\frac{1}{p^2}v_P^{\ocG}(\log(\alpha(\Theta^{\ocG(x)}))), &(\mbox{ Lemmas } \ref{k-restr}, \ref{alpha-restr}) \\
                        =&\frac{1}{p}\log(\alpha_P(\Theta_P^{\ocG}(x)))-\frac{1}{p^2|P|}\log(u_P^{\ocG}(\alpha(\Theta^{\ocG(x)}))), &(\mbox{ Lemma } \ref{u-restr})\\
                        =&\frac{1}{p^2|P|}\log\left(\frac{\alpha_P(\theta_P^{\ocG}(x))^{p|P|}}{u_P^{\ocG}(\alpha(\theta^{\ocG}(x)))}\right) 
\end{align*}

If $P\in C(\ocG), P\neq\{1\}$, or $P=\{1\}$, then the formula for $\beta_P^{\ocG}$ can also be shown similarly as in \cite[Prop 84]{kakde}.

We now give a proof if $x\in\konep{\wh{\IIG_\sS}}$. For this, we first note that we can write $x=uy$, for some 
$u\in\mathrm{im}\left[\kone{\wh{\IIG_\sS},J}\lra\konep{\wh{\IIG}_\sS}\right]$ and $y\in\konep{\wh{\II(\Gamma)}_{(p)}}$. 
(Recall that $J=\mathrm{ker}[\wh{\IIG_\sS}\lra\wh{\II(\Gamma)_{(p)}}]$). The proof for $u$ is the same as above, and in fact, we may show that it is true for any $u$ in the 
image of $\kone{\wh{\IIG_\sS},J_\sR}$, where $\wh{J}_\sR$ is the Jacobson radical of $\wh{\IIG_\sS}$.

We now show it for $y\in\wh{\II(Z)_{(p)}}$. Note that $\Theta_P^{\ocG}(y)=y^{[\ocG:P]}$, for all $P\in\ocG$. Further, if $P$ is a non-trivial cyclic subgroup of $\ocG$, then
$\alpha_P(\Theta_P^{\ocG}(y))=1$. On the other hand, since $L(y)\in\wh{\II(Z)_{(p)}}$, we have
\begin{equation*}
 \beta_P^{\ocG}(L(y))=\begin{cases}
                       [\ocG:P]L(y), &\mbox{ if } P \mbox{ is noncyclic or } P=\{1\}\\
                       0,            &\mbox{ if } P \mbox{ is cyclic}.
                      \end{cases}
\end{equation*}
Now, it is easy to see that the formula for $\beta_P^{\ocG}(L(y))$ holds. We now consider the case when $y\in\wh{\II(\Gamma)_{(p)}}$. In this case, 
$\varphi(y)^r\in\wh{\II(Z)_{p}}$, for some $r$. Then $\frac{\varphi(y)^r}{y^{p^r}}\equiv 1\pmod{\wh{\II(\Gamma)_{(p)}}}$ and hence in the image of $\kone{\wh{\IIG_\sS},J_\sR}$.
Therefore the formula holds for $\frac{\varphi(y)^r}{y^{p^r}}$ and also $\varphi^r(y)$. Hence, the formula holds for $y^{p^r}$. Since the image of $\beta_P^{\ocG}$ is torsion 
free abelian group, therefore the formula holds for $\beta_P^{\ocG}$.
\end{proof}

\subsection{Congruences over $\IIG$}
For any subgroup $P$ of $\ol\cG$, we write $\maptheta{\ol\cG,ab}{P}$ for the following natural composite homomorphism
\begin{equation*}
 \konep{\IIG}\stackrel{\maptheta{\ol\cG}{P}}{\lra}\konep{\II(U_P)}\lra\kone{\II(U_P^{ab})}\cong\II(U_P^{ab})^\times,
\end{equation*}
where the isomorphism is induced by taking determinants over $\II(U_P^{ab})$. We now show that the image of the map $\maptheta{\ol\cG}{}=(\maptheta{\ocG}{P})$ lies in $\Phi^\cG$.
\begin{theorem}\label{cong}
 Let $\Xi\in\konep{\IIG}$ and for all subgroups $P$ of $\ol\cG$, put $\Xi_{U_P^{ab}}:=\maptheta{\ol\cG,ab}{P}(\Xi)\in\II(U_P^{ab})^\times$.
\begin{enumerate}
 \item\label{C1} For all subgroups $P, P'$ of $\ol\cG$ with $[P',P']\leq P\leq P'$, we have
\begin{equation*}
 \mathrm{Nr}_P^{P'}(\Xi_{U_{P'}^{ab}})=\Pi_P^{P'}(\Xi_{U_{P'}^{ab}}).
\end{equation*}
 \item\label{C2} For all subgroups $P$ of $\ol\cG$ and all $g$ in $\ol\cG$ we have $\Xi_{gU_{P}^{ab}g^{-1}}=g\Xi_{U_{P}^{ab}}g^{-1}$.
 \item\label{C3}For every $P\in\ocG$ and $P\neq (1)$, we have
 \begin{equation*}
  \mathrm{ver}_P^{P'}(\Xi_{U_{P'}^{ab}})\equiv \Xi_{U_P^{ab}} \pmod{\sT_{P,P'}} (\mbox{ resp. } \sT_{P,P',\sS} \mbox{ and } \widehat\sT_{P,P'}).
 \end{equation*}
 \item\label{C4} For all $P\in C(\ol\cG)$  we have $\alpha_P(\Xi_{U_{P}^{ab}})\equiv\prod_{P'\in C_P(\ol\cG)}\alpha_{P'}(\Xi_{U_{P'}^{ab}})\pmod{p\sT_P}$.
\end{enumerate}
\end{theorem}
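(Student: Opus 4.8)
The content of this theorem is that the image of $\Theta^{\ocG}=(\maptheta{\ocG,ab}{P})_{P\leq\ocG}$ is contained in the subgroup $\Phi_\sR^{\ocG}$ cut out by the four displayed conditions — equivalently, that these congruences are \emph{necessary}, which is one half of the assertion that $\Theta^{\ocG}$ is an isomorphism (Theorem \ref{Theta-iso}). The plan, following Burns, Kato, Ritter--Weiss and most closely Kakde \cite{kakde}, is to transport the problem to the additive side along the integral logarithm, where the four multiplicative conditions become the defining relations (A1)--(A3) of $\Psi_\sR^{\ocG}$, and to verify those using the additive isomorphism $\beta_\sR^{\ocG}$ of Theorem \ref{additive} together with the compatibility diagrams of Lemmas \ref{log-theta-res}, \ref{k-restr}, \ref{alpha-restr}, \ref{beta-restr}, \ref{u-restr} and \ref{alpha-formula}. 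By Theorem \ref{k-tors} and the exact sequence \eqref{k-exact}, the group $\konep{\sR[\ocG]^\tau}$ is an extension of the image of $\mathfrak L_\cG$ in $\sR[\ocG]^\tau$ by $\mu_K\times\WW\times\cG^{ab}$; on the last factor each of (i)--(iv) reduces to an elementary manipulation with group elements and roots of unity, carried out as in \cite{kakde} and \cite[\S6]{ol} (for (iv) using that $\alpha_P$ acts on the $\cG^{ab}$-component essentially through the $p$-th power map up to the $\omega_P$-correction), so it suffices to treat elements in the image of $\mathfrak L_\cG$, and correspondingly of $L$ in the $\sS$-localised and $p$-completed settings.

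I would first dispose of (ii), the conjugation relation: since $\wt\Gamma^{p^e}$ is central, conjugation by $g\in\ocG$ is an inner automorphism of $\IIG$, hence acts as the identity on $\konep{\IIG}$, while it carries the composite $\maptheta{\ocG,ab}{P}$ to $g^{-1}(\cdot)g\circ\maptheta{\ocG,ab}{gPg^{-1}}$; evaluating at $\Xi$ gives $\Xi_{gU_P^{ab}g^{-1}}=g\Xi_{U_P^{ab}}g^{-1}$. Condition (i) is the classical transfer--restriction identity for $K_1$ of completed group rings: when $[P',P']\leq P\leq P'$, both $\mathrm{Nr}_P^{P'}(\Xi_{U_{P'}^{ab}})$ and $\Pi_P^{P'}(\Xi_{U_{P'}^{ab}})$ compute the image of $\Xi$ along $\konep{\IIG}\to\konep{\II(U_{P'})}\to\konep{\II(U_P/[U_{P'},U_{P'}])}$ — the first through the norm (transfer followed by $\det$) and the second through restriction — and these agree by the naturality of transfer; the verification is identical to \cite[\S5]{kakde} and needs no change for general $\II$.

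For (iii), the verlagerung congruence, I would apply the logarithm and combine Lemma \ref{log-theta-res} (which intertwines $\maptheta{\ocG}{P}$ with $\Res{\ocG}{P}$) with Lemma \ref{beta-restr} (which intertwines the multiplicative transfer $\mathrm{ver}_P^{P'}$ with the additive transfer on $\sR[P^{ab}]^\tau[\tfrac1p]$, whose image is the $\tfrac1p$-localisation of $\sT_{P,P'}$ by construction). Since the logarithm differs from the identity only by higher-order corrections and $\mathrm{ver}_P^{P'}(\Xi_{U_{P'}^{ab}})$ and $\Xi_{U_P^{ab}}$ are genuine units of $\II(U_P^{ab})$, one recovers $\mathrm{ver}_P^{P'}(\Xi_{U_{P'}^{ab}})\equiv\Xi_{U_P^{ab}}\pmod{\sT_{P,P'}}$; the $\sT_{P,P',\sS}$ and $\widehat\sT_{P,P'}$ variants are obtained by the same argument with Lemma \ref{log-of-hat} in place of Proposition \ref{Log} and Lemma \ref{k-restr} for the completed restriction diagram.

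The delicate step — which I expect to be the main obstacle — is (iv), the $\alpha_P$-congruence. The key tool is the explicit comparison of Proposition \ref{beta-formula}, which for $P\in C(\ocG)$, $P\neq\{1\}$, reads
\[
 \beta_P^{\ocG}(L(\Xi))=\tfrac1p\log\!\left(\frac{\alpha_P(\maptheta{\ocG}{P}(\Xi))^{p|P|}}{u_P^{\ocG}(\alpha(\Theta^{\ocG}(\Xi)))}\right),
\]
with the analogous formulas carrying a $\tfrac1{p^2|P|}$ for non-cyclic $P$ and an extra $\varphi$-factor for $P=\{1\}$; by Lemma \ref{alpha-formula} the argument of $\log$ is $\equiv 1\pmod p$ so the right-hand side is defined, and by Proposition \ref{int-log} the element $L(\Xi)$ lies in $\sR[\Conj{\ocG}]^\tau$, whence $\beta_P^{\ocG}(L(\Xi))\in\sR[P^{ab}]^\tau$. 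Because $\beta_P^{\ocG}=\eta_P\circ t_P^{\ocG}$ with $\eta_P(y)=y-\tfrac1p\sum_{k=0}^{p-1}\omega_P^k(y)$, and, by (ii) already established, the tuple $(\Xi_{U_{P'}^{ab}})$ is $W_{\ocG}(P)$-invariant, unwinding the displayed identity through the definitions of $\alpha_P$, $u_P^{\ocG}$ and $\sT_P$ turns this integrality into the congruence $\alpha_P(\Xi_{U_P^{ab}})\equiv\prod_{P'\in C_P(\ocG)}\alpha_{P'}(\Xi_{U_{P'}^{ab}})\pmod{p\sT_P}$, and its $p\sT_{P,\sS}$ and $p\widehat\sT_P$ analogues. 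The real work lies in matching the $\tfrac1{p^2|P|}$ and $\tfrac1p$ denominators in Proposition \ref{beta-formula} against the integrality of $L(\Xi)$ and the precise $p$-adic structure of $\eta_P$ and of $\sT_P$, and in carrying this through uniformly in the plain, $\sS$-localised and $p$-completed settings via the localised forms of Lemmas \ref{alpha-restr}, \ref{beta-restr} and \ref{u-restr}; all remaining verifications are formal consequences of diagrams already established and follow \cite[Theorems 52, 53]{kakde} and \cite[\S6]{ol} verbatim.
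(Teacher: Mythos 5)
Your treatment of parts (\ref{C1}) and (\ref{C2}) agrees in substance with the paper's: there they are proved via the explicit matrix description of $\maptheta{\ol\cG,ab}{P}$ (a unit preimage $\wt\Xi$ under $q_{P'}$ and the coset basis $C(P',P)$), which is the concrete form of your ``naturality of transfer'' and ``inner automorphisms act trivially on $K_1$'' arguments. For (\ref{C3}) the paper simply defers to Kakde's direct matrix computation of the verlagerung; your proposed logarithmic route is not what is done and is doubtful as stated, because the congruence is modulo the trace ideal $\sT_{P,P'}$, which is not contained in $p$ times anything, so there is no convergent exponential available to convert a statement of the form $\log\bigl(\mathrm{ver}_P^{P'}(\Xi_{U_{P'}^{ab}})/\Xi_{U_P^{ab}}\bigr)\in\sT_{P,P'}$ back into the multiplicative congruence.

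The genuine gap is in (\ref{C4}). You anchor the argument to Proposition \ref{beta-formula}, but the quotient appearing there for cyclic $P\neq\{1\}$ is
\[
\frac{\alpha_P(\maptheta{\ol\cG}{P}(\Xi))^{p|P|}}{u_P^{\ol\cG}(\alpha(\Theta^{\ol\cG}(\Xi)))}
=\frac{\alpha_P(\Xi_{U_P^{ab}})^{p|P|}}{\prod_{P'\in C_P(\ol\cG)}\varphi\bigl(\alpha_{P'}(\Xi_{U_{P'}^{ab}})\bigr)},
\]
which is not the quotient $\alpha_P(\Xi_{U_P^{ab}})/\prod_{P'\in C_P(\ol\cG)}\alpha_{P'}(\Xi_{U_{P'}^{ab}})$ whose membership in $1+p\sT_P$ is asserted in (\ref{C4}); passing from the integrality of the former to the stated congruence would require extracting $p|P|$-th roots and comparing $\varphi(\alpha_{P'}(\cdot))$ with $\alpha_{P'}(\cdot)$ modulo $p\sT_P$, and that is not the ``unwinding through the definitions'' you describe. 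The paper avoids this entirely by proving Lemma \ref{log-eta-res}, which computes $\log_P$ of exactly the quotient in (\ref{C4}) as $p(\eta_P\circ\Res{\ol\cG}{P})(\mathfrak{L}_{\ol\cG}(\Xi))$; the congruence then follows from Proposition \ref{int-log} (integrality of $\mathfrak{L}_{\ol\cG}$), Lemma \ref{res-TP} (the restriction lands in $\sT_P$, and $\eta_P$ preserves $\sT_P$), and Lemma \ref{log-sub} (so that $\log_P$ is invertible between $1+p\sR[P]^\tau$ and $p\sR[P]^\tau$). To complete your argument you should prove the analogue of Lemma \ref{log-eta-res} (the commuting square-plus-triangles diagram with the auxiliary maps $a$, $b$, $c$) rather than quote Proposition \ref{beta-formula}. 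Your opening reduction ``it suffices to treat elements in the image of $\mathfrak{L}_{\ol\cG}$'' is likewise unnecessary and not quite meaningful here: the paper applies $\log_P$ to the multiplicative quotient for an arbitrary $\Xi\in\konep{\IIG}$ and never splits the group along the exact sequence \eqref{k-exact}.
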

To prove this theorem, we recall an explicit description of the map $\maptheta{P',ab}{P}$. We write $n_{P'/P}:=[P':P]=[U_{P'}:U_P]$. Since 
$\II(U_{P'})$ is a local ring, the natural homomorphism 
\begin{equation*}
 q_{P'}:\II(U_{P'})^\times\lra\kone{\II(U_{P'})}
\end{equation*}
is \emph{surjective}.
For any $\Xi\in\kone{\II(U_{P'})}$, let $\wt\Xi\in\II(U_{P'})^\times$ denote a pre-image under $q_{P'}$. 
We denote the set of left coset representatives of $U_p$ in $U_{P'}$ by $C(P',P):=\left\lbrace c_i:1\leq i\leq n_{P'/P}\right\rbrace$. 
Then as an $\II(U_{P'})$-module we have
\begin{equation*}
 \II(U_{P'})\cong\bigoplus_{i=1}^{n_{P'/P}}\II(U_P)c_i.
\end{equation*}
Let $M_{C(P',P)}(\wt\Xi)$ denote the matrix in $M_{n_{P'/P}}(\II(U_P))$ of the automorphism given by multiplication by $\wt\Xi$ on the right,
and $\Pi_{P',P}:M_{n_{P'/P}}(\II(U_P))\lra M_{n_{P'/P}}(\II(U_P^{ab}))$ denote the natural projection. Then
\begin{equation*}
 \maptheta{P',ab}{P}(\Xi)=\det\left(\Pi_{P',P}(M_{C(P',P)}(\wt\Xi))^{}\right)\in\II(U_{P}^{ab})^\times,
\end{equation*}
\begin{proof}[Proof of Theorem \ref{cong}(\ref{C1}):] Consider the following diagram
\begin{equation*}
 \xymatrix{
\konep{\IIG}\ar[r]^{\maptheta{\ol\cG}{P}}\ar[d]^{\maptheta{\ol\cG}{P'}} &               \konep{\II(U_{P})}\ar[d]^{\pi_P} &\\
\konep{\II(U_{P'})}\ar[r]^{\maptheta{P'}{P}}\ar[d]^{\pi_{P'}}               &               \II(U_{P}^{ab})^\times\ar[dd]^{\Pi^{P'}_P} \\
\II(U_{P'})^\times\ar[rd]^{\mathrm{Nr}_P^{P'}} &                                              \\
                    &                                                       \II\left(U_P/[U_{P'},U_{P'}]\right)^\times.
}
\end{equation*}
The upper quadrilateral in the diagram is obviously seen to be commutative. The lower quadrilateral is also commutative since the coset space 
$C(P',P)$ can be regarded as an $\II(U_P/[U_{P'},U_{P'}])$-basis of $\II(U_{P'}^{ab})$. Therefore, we have
\begin{equation*}
 \mathrm{Nr}_P^{P'}(\Xi_{P'})=\mathrm{Nr}_P^{P'}(\pi_{P'}(\wt\Xi))=\Pi^{P'}_P(\det\left(\Pi_{P',P}(M_{C(P',P)}(\wt\Xi))\right))
                                                                  =\Pi^{P'}_P(\maptheta{P',ab}{P}(\Xi)).
\end{equation*}
\end{proof}
\begin{proof}[Proof of Theorem \ref{cong}(\ref{C2}):] Let $C:=C(P,\ol{\cG})$. Then, for any $g\in\ol\cG$, the set $gCg^{-1}:=\{gc_ig^{-1}\mid c_i\in C\}$ 
is a set of left coset representatives of $gU_Pg^{-1}=U_{gPg^{-1}}$ in $\cG$. By definition, we have
\begin{equation*}
 \Xi_{gPg^{-1}}=\maptheta{\ol\cG,ab}{P}(\Xi)=\det\left(\Pi_{\ol\cG,gPg^{-1}}(gM_{C}(\wt\Xi)g^{-1})\right)=g\det\left(\Pi_{\ol\cG,P}(M_{C}(\wt\Xi))\right)g^{-1}=g\Xi_{P}g^{-1}.
\end{equation*}
The equality follows from this.
 \end{proof}
\begin{proof}[Proof of Theorem \ref{cong}(\ref{C3}):] The proof of (C3) is same as the proof of (M3) in \cite[Lemma 85]{kakde}.
\end{proof}
For the proof of Theorem \ref{cong}(\ref{C4}), we need the following lemma.
\begin{lemma}\label{log-eta-res}
For all $x\in\konep{\IIG}$ and all $P\in C(\ol\cG)$, we have,
\begin{equation*}
\log_P\left(\dfrac{\alpha_P(\maptheta{\cG}{P}(x))}{\prod_{P'\in C_P(\ol\cG)}\alpha_{P'}(\maptheta{\cG}{P}(x))}\right)=
p(\eta_P\circ\Res{\ol\cG}{P})(\mathfrak{L}_{\ol\cG}(x)).
\end{equation*}
\end{lemma}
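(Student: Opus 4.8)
The strategy is to follow the pattern of Kakde's Lemma 86 in \cite{kakde}, reducing the identity to the two commutative diagrams already established above, namely Lemma \ref{log-theta-res} (compatibility of $\log$ with $\maptheta{\ocG}{P}$ and $\Res{\ocG}{P}$) and Lemma \ref{alpha-restr} (compatibility of $\alpha_P$ with $\log$ and $p\eta_P$), together with the congruence $\maptheta{\ocG}{P}(x)^{|P|}\equiv\maptheta{\ocG}{\{1\}}(x)\pmod p$ used in Lemma \ref{alpha-formula}. First I would fix $x\in\konep{\IIG}$ and a non-trivial cyclic subgroup $P\in C(\ocG)$, and unwind the definition of $\mathfrak{L}_{\ocG}=\log_{\ocG}-p^{-1}\varphi_{conj}\circ\log_{\ocG}$. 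Applying $p\,(\eta_P\circ\Res{\ocG}{P})$ to $\mathfrak{L}_{\ocG}(x)$ and using Lemma \ref{log-theta-res} to push $\Res{\ocG}{P}$ through, one gets $p\,\eta_P(\log_P(\maptheta{\ocG}{P}(x)))$ minus a $\varphi$-twisted term; then Lemma \ref{alpha-restr} rewrites $p\,\eta_P\circ\log_P$ as $\log_P\circ\alpha_P$, producing $\log_P(\alpha_P(\maptheta{\ocG}{P}(x)))$ on the nose.

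The remaining work is to identify the $\varphi$-twisted correction term with $\log_P\left(\prod_{P'\in C_P(\ocG)}\alpha_{P'}(\maptheta{\ocG}{P}(x))\right)$. Here I would invoke Definition \ref{def-v} of $v_P^{\ocG}$ and Lemma \ref{beta-restr}, which expresses $\beta_P^{\ocG}\circ\varphi$ in terms of $v_P^{\ocG}\circ\beta^{\ocG}$, in combination with Lemma \ref{u-restr} relating $u_P^{\ocG}$ (Definition \ref{defn-u}) to $v_P^{\ocG}$ under $\log$. Since $P$ is cyclic, the cyclic branch of $v_P^{\ocG}$ is a sum over exactly the subgroups $P'\in C_P(\ocG)$ with $(P')^p=P$, $P'\neq P$, of $p\,\varphi(x_{P'})$ terms, which matches the product over $C_P(\ocG)$ in the statement after passing from the additive side ($\log$) back to the multiplicative side. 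Carefully tracking the $\eta_P$-factor (present because $P$ is cyclic) against the definition of $\alpha_{P'}$ for the $P'$'s, which are also cyclic, should make the two sides agree; alternatively one can argue as in \cite[Prop 84, Lemma 86]{kakde} directly through the map $\beta_P^{\ocG}$ and Proposition \ref{beta-formula}, and then use injectivity of $\log_P$ on the relevant subgroup (from \eqref{fund-exact}) to remove the $\log_P$ and conclude, but since the statement is phrased as an equality inside $\sR[\Conj{P}]^\tau[\tfrac1p]$ it suffices to prove it after applying $\log_P$.

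\textbf{Main obstacle.} The delicate point will be the bookkeeping of the Frobenius twist and the normalization constants: reconciling the factor $p^{-1}\varphi_{conj}$ appearing in $\mathfrak{L}_{\ocG}$ with the index factors $[P:(P')^p]=p$ built into $v_P^{\ocG}$ and $u_P^{\ocG}$ on the cyclic branch, and making sure the $\eta_{P'}$'s hidden inside $\alpha_{P'}$ for each $P'\in C_P(\ocG)$ combine correctly with the single $\eta_P$ coming from Lemma \ref{alpha-restr}. This is exactly the kind of calculation that is routine in Kakde's setting but must be checked to go through verbatim over the coefficient ring $\II=\cO[[X_1,\dots,X_r]]$ rather than $\cO$; the extra variables $X_j$ only enter through $\varphi$ raising $X_j^n$ to $X_j^{pn}$, which is compatible with all the maps involved, so no genuinely new phenomenon should appear — the verification is mechanical once the diagrams of Lemmas \ref{log-theta-res}, \ref{alpha-restr}, \ref{beta-restr}, \ref{u-restr} are in place.
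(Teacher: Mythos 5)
Your plan is correct and is essentially the paper's own argument: the paper proves the identity via a single commutative diagram whose square is Lemma \ref{log-theta-res} and whose two triangles encode exactly the compatibilities you invoke (Lemma \ref{alpha-restr} for the $\alpha_P$-term, and the Frobenius/$v_P^{\ocG}$ bookkeeping of Lemmas \ref{beta-restr} and \ref{u-restr} for the $\varphi$-twisted correction term), with the remaining verification deferred to Kakde's Lemma 7.4. The normalization constants you flag as the main obstacle are precisely what the auxiliary map $c$ in the paper's diagram is designed to absorb, so no further idea is needed.
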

\begin{proof}
 The lemma follows from the commutativity of the following diagram
\begin{equation*}
 \xymatrix{
\konep{\IIG}\ar[r]^{\log_{\ol\cG}}\ar[d]^{\maptheta{\ol\cG}{P}} &\II[\Conj{\ol\cG}]^\tau[\frac{1}{p}]\ar[d]_{\Res{\ol\cG}{P}}\ar@/^1pc/[ddr]^a    \\
\prod_{P\in C(\ol\cG)}\konep{\II(U_P)}\ar[r]^{\log_{P}}\ar@/_1pc/[rrd]_b             
                                                                   &\prod_{P\in C(\ol\cG)}\II[P]^\tau[\frac{1}{p}]\ar[dr]^c                 \\
                                                                   &                     &\prod_{P\in C(\ol\cG)}\II[P]^\tau[\frac{1}{p}],   \\
}
\end{equation*}
where
\begin{eqnarray*}
 a(x)     &:=&\left(\eta_P(\Res{\ol\cG}{P}((1-p^{-1}\varphi_{conj})(x)))\right)_P\\
b((x_P)_P)&:=&\left(
                    p^{-1}\log_Q\left(\dfrac{\alpha_Q(\maptheta{\cG}{Q}(x_Q))}{\prod_{P'\in C_Q(\ol\cG)}\alpha_{P'}(\maptheta{\cG}{P}(x_{P'}))}\right)
              \right)_Q.
\end{eqnarray*}
By Lemma \ref{log-theta-res}, the square in the diagram is commutative. To show that the triangles commute, as in \cite{kakde}, the map $c$
is chosen as 
\begin{equation*}
 c((x_P)_P):=\left(1-\delta_Pp^{-1}\varphi_{conj}(x_P)-\sum_{P'\in C_P(\cG)}\varphi_{conj}(x_{P'})\right)_P,
\end{equation*}
where $\delta_P=1$ if $P$ is non-trivial, and $0$ otherwise. Then the commutativity of the triangles follow analogously as in \cite[Lemma 7.4]{kakde}.
\end{proof}
\begin{lemma}\label{res-TP}
 We have $\Res{\ol\cG}{P}(\sR[\Conj{\ol\cG}]^\tau)\subset \sT_P$.
\end{lemma}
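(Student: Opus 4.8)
\textbf{Proof proposal for Lemma \ref{res-TP}.}

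The plan is to unwind the definitions of the two objects involved and check the containment on generators. Recall that $\Res{\ol\cG}{P}$ is the $\II$-linear map on twisted conjugacy-class modules defined in \eqref{kappa} by $\Res{\ol\cG}{P}(\kappa_{\ol\cG}(g^\tau)) = \sum_{x}\kappa_P((x^\tau)^{-1}(gx)^\tau)$, where $x$ ranges over a fixed set of left coset representatives of $P$ in $\ol\cG$ subject to $x g x^{-1}\in P$; and that $\sT_P$ is defined as $\{\sum_{h\in W_{\ol\cG}(P)} h^\tau y (h^\tau)^{-1}\mid y\in\sR[P]^\tau\}$, i.e. the image of the "norm"/trace map associated to the Weyl group $W_{\ol\cG}(P)=N_{\ol\cG}P/P$ acting on $\sR[P]^\tau$ by conjugation. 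Since both $\Res{\ol\cG}{P}$ and the map defining $\sT_P$ are $\sR$-linear (equivalently $\II$-linear on $\sR[\mathrm{Conj}(\ol\cG)]^\tau = \sR[\ol\cG]^\tau/[\,\cdot\,,\cdot\,]$), it suffices to verify that for each $g\in\ol\cG$ the element $\Res{\ol\cG}{P}(\kappa_{\ol\cG}(g^\tau))$ lies in $\sT_P$.

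First I would fix $g\in\ol\cG$ and analyze the set $T_g := \{\,x\in \ol\cG/P : x g x^{-1}\in P\,\}$ of coset representatives contributing to the sum. The key observation is that $T_g$ is stable under left multiplication by $N_{\ol\cG}(P)$: if $x\in T_g$ and $n\in N_{\ol\cG}(P)$ then $(nx)g(nx)^{-1} = n(xgx^{-1})n^{-1}\in P$ because $xgx^{-1}\in P$ and $n$ normalizes $P$. Hence $T_g$ decomposes into a disjoint union of $N_{\ol\cG}(P)$-orbits, each of which (by the orbit-stabilizer argument, noting $P$ itself already stabilizes each coset) is a torsor under $W_{\ol\cG}(P)$. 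Picking one representative $x_j$ from each orbit, the sum over $T_g$ reorganizes as $\sum_j \sum_{w\in W_{\ol\cG}(P)} \kappa_P\big((w^\tau x_j^\tau)^{-1}(g w x_j)^\tau\big)$; using that $\kappa_P$ is the conjugacy-class projection in $P$ and that conjugation by $w^\tau$ permutes $P$, each inner sum is exactly of the form $\sum_{w\in W_{\ol\cG}(P)} w^\tau y_j (w^\tau)^{-1}$ with $y_j := (x_j^\tau)^{-1}(g x_j)^\tau\in\sR[P]^\tau$ (here I use that $x_j g x_j^{-1}\in P$ so $(x_j^\tau)^{-1}(gx_j)^\tau$ is, up to the central twist factor lying in $\wt\Gamma^{p^e}\subset\sR$, an element of $\sR[P]^\tau$). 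Therefore $\Res{\ol\cG}{P}(\kappa_{\ol\cG}(g^\tau)) = \sum_j (\text{element of }\sT_P)$, and since $\sT_P$ is an $\sR$-submodule (it is the image of an $\sR$-linear map) the total lies in $\sT_P$.

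The main obstacle I anticipate is purely bookkeeping with the twist $\tau$: one must check that conjugation $y\mapsto (x^\tau)^{-1} y x^\tau$ in the twisted group ring $\sR[\ol\cG]^\tau$ really does carry $\sR[P]^\tau$ to $\sR[(xPx^{-1})]^\tau$ with the correct $\sR$-coefficient (the power of $\wt\gamma^{p^e}$ picked up in $(h\wt\gamma^a)^\tau(h'\wt\gamma^b)^\tau$), and that this coefficient is central and hence harmless; and similarly that passing to $\mathrm{Conj}$ (quotienting by commutators) is compatible with the $W_{\ol\cG}(P)$-averaging. Since $\wt\Gamma^{p^e}$ was chosen central in $\cG$, these twist factors lie in $Z(\sR[\ol\cG]^\tau)$, so the argument goes through; this is exactly the same verification made in the analogous step of Kakde \cite{kakde}, and I would simply cite \emph{loc.\ cit.} for the routine twist computations while spelling out the orbit decomposition above. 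No new idea beyond the $N_{\ol\cG}(P)$-stability of $T_g$ is needed.
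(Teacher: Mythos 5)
Your overall strategy---partition the contributing coset representatives into blocks of size $|W_{\ol\cG}(P)|$, each block contributing a full $W_{\ol\cG}(P)$-conjugation trace of a single element of $\sR[P]^\tau$---is the right one, and it is what the paper's proof accomplishes by factoring $\Res{\ol\cG}{P}=\Res{N_{\ol\cG}(P)}{P}\circ\Res{\ol\cG}{N_{\ol\cG}(P)}$ and invoking normality of $P$ in $N_{\ol\cG}(P)$. However, the mechanism you use to produce the blocks, namely the orbits of \emph{left} multiplication by $N_{\ol\cG}(P)$ on $\ol\cG/P$, fails. First, $P$ does not stabilize a left coset $xP$ under left multiplication: $pxP=xP$ iff $x^{-1}px\in P$, which is false for general $x\notin N_{\ol\cG}(P)$. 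The stabilizer of $xP$ in $N_{\ol\cG}(P)$ is $N_{\ol\cG}(P)\cap xPx^{-1}$, so the orbits are the $P$-coset decompositions of the double cosets $N_{\ol\cG}(P)\,x\,P$, of cardinality $[N_{\ol\cG}(P):N_{\ol\cG}(P)\cap xPx^{-1}]$; these are not $W_{\ol\cG}(P)$-torsors in general, so orbit--stabilizer does not give you what you claim. Second, and fatally for the containment in $\sT_P$, under $x\mapsto wx$ the summand $\kappa_P\bigl((x^\tau)^{-1}(gx)^\tau\bigr)$ transforms into (the class of) $x^{-1}(w^{-1}gw)x$, i.e.\ you conjugate $g$ by $w$ \emph{before} conjugating by $x$; the resulting orbit sum is not of the form $\sum_{w}w^\tau y_j (w^\tau)^{-1}$ for a fixed $y_j\in\sR[P]^\tau$. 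Third, the condition $xgx^{-1}\in P$ you carry along is not even constant on the coset $xP$ (the condition consistent with the summand $(x^\tau)^{-1}(gx)^\tau$ is $x^{-1}gx\in P$, which is constant on $xP$); your verification of $N_{\ol\cG}(P)$-stability relies on the ill-posed version.

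All three defects disappear if you group by \emph{right} multiplication instead: since $n\in N_{\ol\cG}(P)$ normalizes $P$, the assignment $xP\mapsto xnP$ is a well-defined free action of $W_{\ol\cG}(P)$ on $\ol\cG/P$ whose orbits are exactly the fibers of $\ol\cG/P\lra\ol\cG/N_{\ol\cG}(P)$, each a genuine $W_{\ol\cG}(P)$-torsor; the condition $x^{-1}gx\in P$ is constant on each fiber because $(xn)^{-1}g(xn)=n^{-1}(x^{-1}gx)n$ and $n$ normalizes $P$; and the sum over a contributing fiber is $\sum_{w\in W_{\ol\cG}(P)}(w^\tau)^{-1}\,y_j\,w^\tau$ with $y_j=(x_j^\tau)^{-1}(gx_j)^\tau\in\sR[P]^\tau$, which lies in $\sT_P$ by definition. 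Choosing coset representatives of the form $x_jn_k$ (with $x_j$ running over $\ol\cG/N_{\ol\cG}(P)$ and $n_k$ over $N_{\ol\cG}(P)/P$) is precisely the paper's factorization of $\Res{\ol\cG}{P}$ through the normalizer, so with this left/right correction your argument collapses into the paper's proof; your remarks about the central twist factors in $\wt\Gamma^{p^e}$ being harmless are fine as they stand.
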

\begin{proof}
 Let $x\in \sR[\Conj{\ol\cG}]^\tau$. Then $x:=\sum_{(i_1,\cdots,i_n)\geq 0}(\sum_{g\in\ol\cG}c_{g,(i_1,\cdots,i_n)}\kappa(g))X_1^{i_1}\cdots X_n^{i_n}$, where $c_{g,(i_1,\cdots,i_n)}\in \sR$, for all $g$ and $(i_1,\cdots,i_n)$. 
 Consider the normalizer $N_{\ol\cG}(P)$ of $P$ in $\ol\cG$. Then
 $\Res{\ol\cG}{P}=\Res{N_{\ol\cG}(P)}{P}\circ\Res{\ol\cG}{N_{\ol\cG}(P)}$. 
 Therefore 
\begin{equation*}
 \Res{\ol\cG}{P}(x)=\sum_{(i_1,\cdots,i_n)\geq 0}\left(\sum_{g\in\ol\cG}c_{g,(i_1,\cdots,i_n)}\Res{N_{\ol\cG}(P)}{P}\left(\Res{\ol\cG}{N_{\ol\cG}(P)}(\kappa(g))\right)\right)X_1^{i_1}\cdots X_n^{i_n}.
\end{equation*}
Since for every $h,h'\in N_{\ol\cG}(P)$, $h^{-1}h'h\in P$ if and only if $h'\in P$, by equation \eqref{kappa}, we have
\begin{equation*}
\Res{N_{\ol\cG}(P)}{P}(\kappa_{N_{\ol\cG}(P)}(h'))= \begin{cases}
                            \sum_{x\in W_{\ol\cG}(P)}\kappa_P((x^\tau)^{-1}h'^\tau x^\tau), & \mbox{ if } h'\in P,\\
                            0,                                                             & \mbox{ otherwise}.
                          \end{cases}
\end{equation*}
It follows that each term $c_{g,(i_1,\cdots,i_n)}\Res{N_{\ol\cG}(P)}{P}\left(\Res{\ol\cG}{N_{\ol\cG}(P)}(\kappa(g))\right)X_1^{i_1}\cdots X_n^{i_n}\in \sT_P$.
\end{proof}
\begin{proof}[Proof of Theorem \ref{cong}(\ref{C4}):] 
Note that $p\sT_P\subset p\sR[\ol\cG]^\tau$. Taking $I=p\sR[\ol\cG]^\tau$ in Lemma \ref{log-sub}, we have an isomorphism $I\stackrel{log_{\ol\cG}^{-1}}{\lra} 1+I$. Then
it follows from Proposition \ref{int-log} and Lemma \ref{log-eta-res}, that the 
congruences follow if $\eta_P\circ\Res{\ol\cG}{P}(\sR[\Conj{\ol\cG}]^\tau)\subset \sT_P$. Since $\eta_P$ preserves $\sT_P$, it follows from the 
Lemma \ref{res-TP} above, that the containment holds and hence the congruence. 
\end{proof}
This finishes the proof of Theorem \ref{cong}, and the Theorem D.
By this theorem, to show that an element is in the image of $\konep{\IIG}$ under the map $\Theta^\cG$ it is sufficient to verify the statements in Theorem \ref{cong}.
\begin{defn}
 We now consider the map $\cL=(\cL_P):\Phi^{\ocG}\lra\Psi^{\ocG}$, defined by
 \begin{equation*}
  \cL_P((x_C))=\begin{cases}
               \frac{1}{p^2|P|}\log\left(\frac{\alpha_P(\theta_P^{\ocG}(x_P))^{p|P|}}{u_P^{\ocG}(\alpha(\theta^{\ocG}(x_C)))}\right), &\mbox{ if } P\notin C(\ocG) \\
               \frac{1}{p}\log\left(\frac{\alpha_P(\theta_P^{\ocG}(x_P))^{p|P|}}{u_P^{\ocG}(\alpha(\theta^{\ocG}(x_C)))}\right),    &\mbox{ if } P\in C(\ocG), P\neq\{1\}\\
               \frac{1}{p}\log\left(\frac{\alpha_{\{1\}}((x_{\{1\}}))^{p}}
                                         {\varphi((x_{\{1\}}))(u_{\{1\}}^{\ocG}(\alpha(\theta^{\ocG}(x_{\{1\}}))))}\right), &\mbox{ if } P=\{1\}. 
               \end{cases}
 \end{equation*}
\end{defn}
\begin{lemma}
 The following sequence is exact
 \begin{equation*}
  1\lra\mu(\cO)\times\WW\times\cG^{ab}\lra\Phi^{\ocG}\stackrel{\cL}{\lra}\Psi^{\ocG}\stackrel{\omega}{\lra}\WW\times\cG^{ab}\lra 1.
 \end{equation*}
More precisely, the map $\mu(\cO)\times\WW\times\cG^{ab}\lra\Phi^{\ocG}\subset\prod_{P\leq\ocG}\II(U_P^{ab})^\times$ is the composition 
\begin{equation*}
 \mu(\cO)\times\WW\times\cG^{ab}\lra\prod_{P\leq\ocG}\mu(\cO)\times\WW\times U_P^{ab}\inj\prod_{P\leq\ocG}\II(U_P^{ab})^\times
\end{equation*}
where the first map is the identity on $\mu(\cO)$ and the transfer homomorphism from $\cG^{ab}$ to $U_P^{ab}$ for each $P\leq\ocG$.
\end{lemma}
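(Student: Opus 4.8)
The plan is to deduce this exact sequence from the analogous sequences at the level of the individual subquotients, glued together by the maps $v_P^{\ocG}$ and $u_P^{\ocG}$, exactly in the way that \cite[\S 7]{kakde} assembles the global logarithm sequence from the local ones. More precisely, the four terms have been designed so that the claim is essentially a formal consequence of three inputs already established in the excerpt: first, the integral logarithm sequence over $\sR[G]^\tau$ for a single finite $p$-group, namely the exact sequence \eqref{k-exact} coming from Proposition \ref{log-exact} together with Theorem \ref{k-tors}; second, the compatibility of the maps $\log_P$, $\alpha_P$, $\beta_P^{\ocG}$ with restriction, recorded in Lemmas \ref{k-restr}, \ref{alpha-restr}, \ref{beta-restr}, \ref{u-restr}; and third, the congruence formula Proposition \ref{beta-formula} which identifies $\beta_P^{\ocG}\circ L$ with the map whose components are the three cases defining $\cL_P$. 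So the spine of the argument is: run the commutative diagram from the proof of Proposition \ref{log-exact} (the $3\times 3$ diagram with rows indexed by $G$, $\widehat G$ and the relative term), but now indexed over the poset of subgroups $P\leq\ocG$ instead of over a single group, and chase exactness around it.

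First I would set up the maps carefully. The group $\Phi^{\ocG}$ is the subgroup of $\prod_{P\leq\ocG}\II(U_P^{ab})^\times$ cut out by conditions (C1)--(C4), which by Theorem \ref{cong} contains $\mathrm{im}(\Theta^{\ocG})$ and (combined with the companion result, Theorem \ref{Theta-iso}, asserting $\Theta^{\ocG}$ is an isomorphism onto $\Phi^{\ocG}$) is exactly that image; likewise $\Psi^{\ocG}$ is the subgroup of $\prod_{P\leq\ocG}\sR[P^{ab}]^\tau$ cut out by (A1)--(A3), which by Theorem \ref{additive} is the image of $\beta_\sR^{\ocG}$. Then I would verify that $\cL$ as defined does land in $\Psi^{\ocG}$: for this one uses Proposition \ref{beta-formula} to rewrite $\cL_P((x_C))$, on the image of $\Theta^{\ocG}$, as $\beta_P^{\ocG}(L(\Xi))$ for a suitable $\Xi$, whence the tuple $(\cL_P((x_C)))_P$ is literally in the image of $\beta^{\ocG}_\sR$, i.e.\ in $\Psi^{\ocG}$; the fact that $\cL$ is defined on all of $\Phi^{\ocG}$ and not just on the image follows because, granting Theorem \ref{Theta-iso}, these coincide. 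The map $\omega:\Psi^{\ocG}\to\WW\times\cG^{ab}$ is the one built from the component maps $\wt\omega$ of Proposition \ref{log-exact} pulled back along $\beta$, and the leftmost map is the stated transfer map into each $U_P^{ab}$-coordinate.

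The core of the proof is then exactness at the four spots. Exactness at $\mu(\cO)\times\WW\times\cG^{ab}$ is injectivity of the transfer-into-each-coordinate map, which is immediate since the $P=\ocG$ coordinate is already the identity on $\mu(\cO)\times\WW\times\cG^{ab}$. Exactness at $\WW\times\cG^{ab}$ (surjectivity of $\omega$ and that its kernel is $\mathrm{im}(\cL)$) follows by transporting the corresponding statement from \eqref{k-exact} via the isomorphisms $\Theta^{\ocG}$ and $\beta^{\ocG}_\sR$, using that $\omega\circ\cL$ corresponds to $\wt\omega\circ\mathfrak{L}_\cG$, which is trivial by \eqref{int_subset_ker_omega}. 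The two middle exactness statements --- that $\ker(\cL)$ is exactly the image of the torsion-and-$\WW$ part, and that $\mathrm{im}(\cL)=\ker(\omega)$ --- are where the real work is: here I would transport along the commutative square
\[
\xymatrix{
\konep{\IIG}\ar[r]^{\!\!\!\!\mathfrak L}\ar[d]_{\Theta^{\ocG}}^{\cong} & \II(Z)[\Conj{\ocG}]^\tau\ar[d]^{\beta^{\ocG}}_{\cong}\\
\Phi^{\ocG}\ar[r]_{\!\!\!\!\cL} & \Psi^{\ocG}
}
\]
(which is precisely the left square of the big diagram displayed just before Theorem \ref{Theta-iso} in the excerpt, with $\beta^{\ocG}$ an isomorphism onto $\Psi^{\ocG}$ by Theorem \ref{additive} and $\mathfrak{L}$ the integral logarithm with kernel $\mu(\cO)\times\WW\times\cG^{ab}$ and cokernel $\WW\times\cG^{ab}$ by the Corollary to Theorem \ref{k-tors}, i.e.\ \eqref{k-exact}). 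Given that both vertical arrows are isomorphisms and the top row is exact with the stated kernel and cokernel, the bottom row is exact with the stated kernel and cokernel; one only has to check that the identifications of kernel and cokernel match the stated transfer map and $\omega$, which is the content of Proposition \ref{beta-formula} and the definition of $\wt\omega$.

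\textbf{Main obstacle.} The genuinely delicate point is not the diagram chase but establishing that $\Theta^{\ocG}$ is an isomorphism onto $\Phi^{\ocG}$ (Theorem \ref{Theta-iso}) and that $\cL$ is well-defined \emph{on all of $\Phi^{\ocG}$} --- i.e.\ that the formulas defining $\cL_P$, which a priori involve $\frac1p\log$ and so only make sense when the relevant ratios are $\equiv 1 \pmod p$, actually produce elements of $\sR[P^{ab}]^\tau$ (not just of $\sR[P^{ab}]^\tau[\tfrac1p]$) for every tuple satisfying (C1)--(C4), and symmetrically land in $\Psi^{\ocG}$. The integrality of the individual $\tfrac1p\log$-expressions is handled by Lemma \ref{alpha-formula} (giving the needed $\pmod p$ congruence so the logarithm converges) combined with the integrality of the integral logarithm, Proposition \ref{int-log}; the harder part is knowing that the conditions (C1)--(C4) are exactly strong enough to force membership in $\Psi^{\ocG}$, which is really the assertion that $\Theta^{\ocG}$ and $\beta^{\ocG}_\sR$ have the \emph{same} image-cutting-out conditions up to the logarithm. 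I would therefore treat Theorem \ref{Theta-iso} as the technical heart, prove the present lemma as its formal corollary via the displayed square, and reduce all convergence issues to Lemmas \ref{log-sub}, \ref{alpha-formula} and Propositions \ref{int-log}, \ref{beta-formula}.
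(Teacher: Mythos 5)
There is a genuine gap, and it is structural: your proposal is circular relative to the way the paper is organized. You propose to treat Theorem \ref{Theta-iso} (that $\Theta^{\ocG}$ is an isomorphism onto $\Phi^{\ocG}$) as the technical heart, and to deduce the present lemma from it by transporting the exact sequence \eqref{k-exact} across the square formed by $\Theta^{\ocG}$ and $\beta^{\ocG}$. But in the paper the logical order is exactly the reverse: Theorem \ref{Theta-iso} is \emph{proved} by applying the Five Lemma to the commutative diagram whose bottom row is the exact sequence of the present lemma (and whose top row is \eqref{k-exact}, with $\beta^{\ocG}$ an isomorphism by Theorem \ref{additive}). There is no independent proof of Theorem \ref{Theta-iso} available --- neither here nor in the source you are modelling this on --- so your plan leaves both statements unproved. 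The same circularity infects your claim that $\cL$ is defined on, and lands in, the right places ``because, granting Theorem \ref{Theta-iso}, these coincide'': at the point where this lemma must be proved, one does not yet know that $\Phi^{\ocG}=\mathrm{im}(\Theta^{\ocG})$, and indeed the whole purpose of the lemma is to supply the exactness needed to establish that equality.

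What the proposal therefore bypasses is the actual content of the lemma: a \emph{direct} verification that for an arbitrary tuple $(x_C)\in\Phi^{\ocG}$ --- constrained only by the congruence conditions (C1)--(C4), with no assumption that it comes from $\konep{\IIG}$ --- the tuple $\cL((x_C))$ satisfies the conditions (A1)--(A3) defining $\Psi^{\ocG}$ and is integral, i.e.\ lies in $\prod_{P\leq\ocG}\II(U_P^{ab})$ rather than merely in $\prod_{P\leq\ocG}\Q_p\otimes\II(U_P^{ab})$. The paper does this by hand: condition (A1) is checked in three cases ($P$ not cyclic; $P$ cyclic but $P'$ not; $P'$ cyclic) by manipulating the expressions $\frac{1}{p^2|P'|}\log(\cdots)$ under $\mathrm{Tr}_P^{P'}$, $\mathrm{Nr}_P^{P'}$ and $\Pi_P^{P'}$, using (C1) to convert norms into projections and a norm computation to kill the contributions in the cyclic case; (A2) and (A3) follow from (C2) and (C4); and integrality follows from $\cL_P((x_C))\in\sT_P$ for $P\in C(\ocG)$ together with Proposition \ref{id-beta}. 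Only after this containment is established does the four-term exactness follow by the diagram-chase you describe, with the generalization of Higman's theorem (Theorem \ref{k-tors} and Corollary \ref{sk-trivial}) identifying the kernel of $\cL$ with $\mu(\cO)\times\WW\times\cG^{ab}$. If you want to salvage your write-up, you must replace the appeal to Theorem \ref{Theta-iso} with these explicit case-by-case verifications; the rest of your outline (the role of Proposition \ref{beta-formula}, Lemmas \ref{k-restr}--\ref{u-restr}, and the convergence inputs) is consistent with the paper.
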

\begin{proof}
 Clearly the image of $\cL$ is contained in $\prod_{P\leq\ocG}\Q_p\otimes\II(U_P^{ab})$. To show that the image is contained in $\Psi^{\ocG}\prod_{P\leq\ocG}\II(U_P^{ab})$, we
 show that the conditions defining the set $\Psi^{\ocG}$ are satisfied. 
Below we show how the first condition defining $\Psi^{\ocG}$ can be shown. The rest of the conditions can be demonstrated easily from the conditions defining $\Phi^{\cG}$
(\cite[Lemma 88]{kakde}).

Let $P\leq P'\leq\ocG$ such that $[P',P]\leq P$ with $P$ a non-trivial cyclic group if $[P',P']\neq P$. We then
have three cases to consider: (i) $P$ is not cyclic, (ii) $P$ is cyclic but $P'$ is not cyclic, and (iii) $P'$ is cyclic.

Case (i): Let $P$ be not cyclic. Letting $C'$ run through all cyclic subgroups of $\ocG$ with ${C'}^p\leq P'$ and $C$ run through all cyclic subgroups of $\ocG$ with $C^p\leq P$,
we have
\begin{equation*}
 \begin{split}
 Tr_P^{P'}(\cL_{P'}((x_C)))&=Tr_P^{P'}\left(\frac{1}{p^2|P'|}\log\left(\frac{\alpha_{P'}(x_{P'})^{p|P'|}}{u_{P'}^{\ocG}(\alpha((x_C)))}\right)\right)\\
                           &=Tr_P^{P'}\left(\frac{1}{p^2|P'|}\log\left(\frac{(x_{P'})^{p^2|P'|}}{\prod_{C'}\varphi(\alpha_{C'}(x_{C'}))^{|C'|}}\right)\right)\\
                           &=\frac{1}{p^2|P'|}\log\left(\frac{Nr_P^{P'}(x_{P'})^{p^2|P'|}}{Nr_P^{P'}(\prod_{C'}\varphi(\alpha_{C'}(x_{C'}))^{|C'|})}\right)\\
                           &=\frac{1}{p^2|P'|}\log\left(\frac{\Pi_P^{P'}(x_{P'})^{p^2|P'|}}{\prod_{C}\varphi(\alpha_{C}(x_{C}))^{p|C|}}\right), \mbox{ by first condition of }\Phi^{\cG} \\
                           &=\Pi_P^{P'}\left(\frac{1}{p^2|P'|}\log\left(\frac{\alpha_P(x_{P})^{p|P'|}}{\prod_{C}\varphi(\alpha_{C}(x_{C}))^{|C|}}\right)\right)\\
                           &=\Pi_P^{P'}\left(\cL_P((x_C))\right).
 \end{split}
\end{equation*}
Case (ii): Let $P$ be cyclic but $P'$ be not cyclic. Let $C$ run through all cyclic subgroups of $\ocG$ with $C^p\leq P$. Then
\begin{equation*}
 \begin{split}
  \eta_P(Tr_P^{P'}(\cL_{P'}((x_C))))&=\Pi_P^{P'}\left(\eta_P
                                      \left(\frac{1}{p^2|P|}\log\left(\frac{(x_{P})^{p^2|P|}}{\prod_{C}\varphi(\alpha_{C}(x_{C}))^{|C|}}\right)\right)\right).
 \end{split}
\end{equation*}
 Since $\alpha_P(\varphi(\alpha_C(x_C)))=\alpha_P(\alpha_C(x_C))^p$ (resp. $1$) if $C^p=P$ ( resp. $C^p\neq P$), 
letting $C$ run through all cyclic subgroups of $\ocG$ with $C^p=P$, we have
\begin{equation*}
 \begin{split}
  \eta_P(Tr_P^{P'}(\cL_{P'}((x_C))))&=\Pi_P^{P'}\left(\frac{1}{p}\log\left(\frac{\alpha(x_{P})}{\prod_{C}\varphi(\alpha_{C}(x_{C}))}\right)\right)\\
                                    &=\Pi_P^{P'}(\cL_P((x_C))).
 \end{split}
\end{equation*}
Case (iii): Let $P'$ be cyclic. This case follows from the following lemma:
\begin{lemma}
  Let $P\leq P'\leq\ocG$ such that $[P':P]=p$. Let $C\in C(\ocG)$ be such that $C^p$ is contained in $P'$ but not in $P$. Then $\mathrm{Nr}^{P'}_P(\varphi(\alpha_C(x_C))=1$
  in $\II(U_P/[U_{P'},U_{P'}])$.
 \end{lemma}
 \begin{proof}
 By definition, we have $\alpha_C(x_C)=\frac{x_C^p}{\prod_{k=0}^{p-1}\omega_C^k(x_C)}$. Therefore, 
 $\varphi(\alpha_C(x_C))=\frac{\varphi(x_C^p)}{\prod_{k=0}^{p-1}\varphi(\omega_C^k(x_C))}=\frac{\varphi(x_C^p)}{\prod_{k=0}^{p-1}\omega_{C^p}^k(\varphi(x_C))}$.
 Since $\mathrm{Nr}^{P'}_P(\varphi(\alpha_C(x_C))=\prod_{k=0}^{p-1}\omega_{C^p}^k(\varphi(x_C))$ by a straightforward generalization of \cite[Lemma 50]{kakde}, the lemma 
 follows.
 \end{proof}
The second and third conditions defining $\Psi^{\ocG}$ follow easily from (C2) and (C4) respectively. 
Finally, to show 
that the image of $\cL$ is contained in $\prod_{P\leq\ocG}\II(U_P^{ab})$, it is enough to note that $\cL_P((x_C))\in\sT_P$ for all $P\in C(\ocG)$. Then by Proposition
\ref{id-beta}, $\mathrm{im}(\cL)\subseteq\prod_{P\leq\ocG}\II(U_P^{ab})$.

The exactness of the four term sequence can also shown as in \cite[Lemma 88]{kakde}. However, there is one crucial input which is the fact that the only torsion elements of 
$\II(U_P^{ab})^\times$ are contained in 
$\mu(\cO)\times\WW\times U_P^{ab}$, which is a generalization of a Theorem of Higman \cite{higman}. This input is provided by Proposition \ref{k-tors} and 
Corollary \ref{sk-trivial}. We then have the following commutative diagram:
\begin{equation*}
 \xymatrix{
 \konep{\IIG}\ar[r]^{\mathfrak L}\ar[d]_-{\Theta^{\ocG}}    &\II(Z)[\Conj{\ocG}]^\tau\ar[d]^{\beta^{\ocG}}\\
 \Phi^{\ocG}\ar[r]_-{\cL}                      &\Psi^{\ocG}.
 }
\end{equation*}
In other words, the image of $\Theta^{\ocG}$ is contained in $\Phi^{\ocG}$.
\end{proof}
In the same way, we can prove that the image of $\wh{\Theta_\sS^{\ocG}}$ is contained in $\wh{\Phi_\sS^{\ocG}}$, which we record below.
\begin{theorem}\label{hat-Theta}
 The image of $\wh{\Theta_\sS^{\ocG}}$ under the logarithm map is contained in $\wh{\Phi_\sS^{\ocG}}$.
\end{theorem}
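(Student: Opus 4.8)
The plan is to run the same argument as for Theorem \ref{cong} and the preceding commutative diagram, but systematically replacing the integral group ring $\IIG$ by its $p$-adic completion $\wh{\IIG_\sS}$, the ring $\sR$ by $\wh\sR=\wh{\II(Z)_{(p)}}$, the integral logarithm $\mathfrak L_\cG$ by the integral logarithm $L$ constructed over $\wh{\II(Z)_{(p)}}$, and the additive target $\II(Z)[\Conj{\ocG}]^\tau$ by $\wh{\II(Z)_{(p)}}[\Conj{\ocG}]^\tau$. Concretely, first I would record that the analogue of Theorem \ref{cong} holds for $\Xi\in\konep{\wh{\IIG_\sS}}$ with the congruences now read modulo $\sT_{P,P',\sS}$, $\widehat\sT_{P,P'}$ and $p\widehat\sT_P$, i.e. exactly the ``resp.'' clauses already built into the statements of (C1)--(C4) and into the definition of $\wh\Phi_{\sR,\sS}^{\ocG}$. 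Conditions (C1) and (C2) are purely formal (matrix of multiplication, conjugation of coset representatives) and the proofs of Theorem \ref{cong}(\ref{C1}) and (\ref{C2}) carry over verbatim once we note $q_{P'}$ is still surjective because $\wh{\II(U_{P'})_\sS}$ is local. Condition (C3) reduces to \cite[Lemma 85 (M3)]{kakde} as before.

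For (C4), the key point is Lemma \ref{log-eta-res}, whose $p$-adically completed version is exactly what is needed: the square in its proof is the second diagram of Lemma \ref{k-restr}, and the triangles commute by the $\wh{}$-version of \cite[Lemma 7.4]{kakde}; Lemma \ref{res-TP} gives $\Res{\ol\cG}{P}(\wh\sR[\Conj{\ocG}]^\tau)\subset\widehat\sT_P$ (the same computation with coefficients in $\wh\sR$ instead of $\sR$), and $\eta_P$ preserves $\widehat\sT_P$. Then, applying the $p$-adic integral logarithm result (the displayed $\wh{}$-version of Proposition \ref{int-log}, which one gets as in \cite[Thm 6.2]{ol}, generalised to $\wh\sR$), together with Lemma \ref{log-of-hat}(2)(b) which gives the isomorphism $1+I\cong I$ for $I=p\widehat\sR[\ocG]^\tau$, exactly as Lemma \ref{log-sub} does in the integral case, the congruence $\alpha_P(\Xi_{U_P^{ab}})\equiv\prod_{P'\in C_P(\ocG)}\alpha_{P'}(\Xi_{U_{P'}^{ab}})\pmod{p\widehat\sT_P}$ follows. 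This establishes that $\wh{\Theta_\sS^{\ocG}}(\konep{\wh{\IIG_\sS}})$ satisfies (C1)--(C4), i.e. lies in $\wh{\Phi_\sS^{\ocG}}$.

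The cleaner way to package all of this, which I would actually write, is to exhibit the completed analogue of the final commutative square of the proof of Theorem \ref{cong}:
\begin{equation*}
 \xymatrix{
 \konep{\wh{\IIG_\sS}}\ar[r]^{L}\ar[d]_{\wh{\Theta_\sS^{\ocG}}}    &\wh{\II(Z)_{(p)}}[\Conj{\ocG}]^\tau\ar[d]^{\beta^{\ocG}}\\
 \wh{\Phi_\sS^{\ocG}}\ar[r]_{\cL}                      &\wh{\Psi_\sS^{\ocG}}.
 }
\end{equation*}
The commutativity of this square is the bundle of restriction/transfer compatibilities Lemmas \ref{k-restr}, \ref{alpha-restr}, \ref{beta-restr}, \ref{u-restr}, \ref{alpha-formula}, each of which was stated above already in both the integral and the $p$-adic-completion form, assembled exactly as in Proposition \ref{beta-formula} (whose proof already treats $x\in\konep{\wh{\IIG_\sS}}$ by splitting $x=uy$ with $u$ in the image of $\kone{\wh{\IIG_\sS},J}$ and $y\in\konep{\wh{\II(\Gamma)_{(p)}}}$). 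Since $\beta^{\ocG}$ is injective and its image lands in $\wh\sR[\Conj{\ocG}]^\tau$ (the $\wh{}$-version of Proposition \ref{id-beta}, using that $\cL_P((x_C))\in\widehat\sT_P$ for all cyclic $P$), the diagram forces $\mathrm{im}(\wh{\Theta_\sS^{\ocG}})\subseteq\wh{\Phi_\sS^{\ocG}}$.

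I expect the main obstacle to be bookkeeping rather than a genuinely new idea: one must check that every lemma invoked in the proof of Theorem \ref{cong} is available in the $p$-adically completed setting with the ``resp.'' variants of the Ore-localised and completed trace ideals $\sT_{P,P',\sS}$, $\widehat\sT_{P,P'}$, $p\widehat\sT_P$, and that the integral logarithm $L$ over $\wh{\II(Z)_{(p)}}$ enjoys the same restriction compatibility (second diagram of Lemma \ref{k-restr}) and interacts with $\alpha_P$, $\beta_P^{\ocG}$, $u_P^{\ocG}$, $v_P^{\ocG}$ as recorded in Lemmas \ref{alpha-restr}--\ref{u-restr} and Proposition \ref{beta-formula}; the only subtlety is that convergence of $\Log$ and $\mathrm{Exp}$ and the identity $\Log((1+x)(1+y))\equiv\Log(1+x)+\Log(1+y)$ now rest on Lemma \ref{log-of-hat} (nilpotence of $J(\wh\sR[\ocG]^\tau)/p$) rather than on the corresponding facts for $\sR_P$, but those have already been set up above. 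Hence no new case analysis is needed beyond what appears in the proof of Theorem \ref{cong}.
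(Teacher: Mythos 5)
Your proposal is correct and follows exactly the route the paper intends: the paper offers no written proof beyond the remark ``in the same way,'' meaning the argument of Theorem \ref{cong} and the final commutative square of the preceding lemma transported to the $p$-adically completed setting, which is precisely what you carry out using the completed versions of Lemmas \ref{log-of-hat}, \ref{k-restr}, \ref{alpha-restr}, \ref{beta-restr}, \ref{u-restr} and Proposition \ref{beta-formula}. Nothing further is needed.
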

\begin{theorem}\label{Theta-iso}
The map $\Theta^{\ocG}:\konep{\IIG}\lra\Phi^{\ocG}$ is an isomorphism.
\end{theorem}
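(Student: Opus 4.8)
\textbf{Proof proposal for Theorem \ref{Theta-iso}.}

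The plan is to deduce the isomorphism from the commutative diagram with the integral logarithm maps that was assembled in the preceding discussion, namely
\begin{equation*}
 \xymatrix{
 1\ar[r] &\mu(\cO)\times\WW\times\cG^{ab}\ar[r]\ar[d]_{=} &\konep{\IIG}\ar[r]^{\!\!\!\!\mathfrak L}\ar[d]^{\Theta^{\ocG}}
                                                                      &\II(Z)[\Conj{\ocG}]^\tau\ar[r]\ar[d]^{\beta^{\ocG}}_\cong &\WW\times\cG^{ab}\ar[r]\ar[d]_{=} &1\\
 1\ar[r] &\mu(\cO)\times\WW\times\cG^{ab}\ar[r]       &\Phi^{\ocG}\ar[r]_{\!\!\!\!\mathcal L}        &\Psi^{\ocG}\ar[r]                    &\WW\times\cG^{ab}\ar[r]       &1.
 }
\end{equation*}
The top row is exact: this is the four-term exact sequence \eqref{k-exact} (its leftmost map is injective by Theorem \ref{k-tors} together with Corollary \ref{sk-trivial}, which identify the torsion of $\kone{\sR[G]}$, i.e.\ a generalization of Higman's theorem), reorganized over $\cG$ via the twisted-group-ring identification $\IIG\cong\sR[\ocG]^\tau$. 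The bottom row is exact by the Lemma just proved (the one asserting exactness of $1\to\mu(\cO)\times\WW\times\cG^{ab}\to\Phi^{\ocG}\to\Psi^{\ocG}\to\WW\times\cG^{ab}\to1$). The middle vertical arrow $\Theta^{\ocG}$ is well-defined with image in $\Phi^{\ocG}$ by Theorem \ref{cong} (the congruences (C1)--(C4) are exactly the conditions cutting out $\Phi^{\ocG}$), the left vertical arrow is the identity by construction, and the right vertical arrow $\beta^{\ocG}$ is an isomorphism onto $\Psi^{\ocG}$ by Theorem \ref{additive}. The squares commute: the left square because the transfer/augmentation description of the leftmost horizontal maps is compatible with $\Theta^{\ocG}$, and the middle square because $\beta^{\ocG}\circ\mathfrak L=\cL\circ\Theta^{\ocG}$, which is precisely the content of Proposition \ref{beta-formula} (the explicit formulas for $\beta_P^{\ocG}(L(x))$ in terms of $\alpha_P$, $\theta_P^{\ocG}$ and $u_P^{\ocG}$ combine, case by case over $P\leq\ocG$, into the single identity defining $\cL$).

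Granting all of this, a diagram chase finishes the argument. First, $\Theta^{\ocG}$ is injective: if $\Theta^{\ocG}(x)=1$ then $\beta^{\ocG}(\mathfrak L(x))=\cL(1)=1$, so $\mathfrak L(x)=1$ since $\beta^{\ocG}$ is injective, hence $x$ lies in the image of $\mu(\cO)\times\WW\times\cG^{ab}$ in $\konep{\IIG}$ by exactness of the top row; but on that subgroup $\Theta^{\ocG}$ acts as the identity (left square), so $x=1$. For surjectivity onto $\Phi^{\ocG}$, take $\xi\in\Phi^{\ocG}$. Its image $\cL(\xi)\in\Psi^{\ocG}$ maps to $1$ in $\WW\times\cG^{ab}$ (bottom row exact), and $\cL(\xi)=\beta^{\ocG}(a)$ for a unique $a\in\II(Z)[\Conj{\ocG}]^\tau$; since the right square commutes and the right vertical map is the identity, $a$ maps to $1$ in $\WW\times\cG^{ab}$, so by exactness of the top row $a=\mathfrak L(x_0)$ for some $x_0\in\konep{\IIG}$. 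Then $\Theta^{\ocG}(x_0)$ and $\xi$ have the same image under $\cL$, so $\xi\cdot\Theta^{\ocG}(x_0)^{-1}\in\ker(\cL)=\mathrm{image}(\mu(\cO)\times\WW\times\cG^{ab})$; correcting $x_0$ by the corresponding element (on which $\Theta^{\ocG}$ is the identity) produces $x\in\konep{\IIG}$ with $\Theta^{\ocG}(x)=\xi$. Thus $\Theta^{\ocG}$ is a bijection onto $\Phi^{\ocG}$, i.e.\ an isomorphism.

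The genuinely substantive inputs are all the earlier results, so the main obstacle is not the chase itself but verifying that the hypotheses of the five-lemma-type argument really hold in the present twisted setting: concretely, that the integral logarithm $\mathfrak L$ over $\sR[\ocG]^\tau$ fits into an exact sequence with the correct kernel and cokernel (this rests on Theorem \ref{k-tors}/Corollary \ref{sk-trivial}, our Higman-type computation of $SK_1$ and of the torsion of $\kone{\sR[G]}$), and that the compatibility $\beta^{\ocG}\circ\mathfrak L=\cL\circ\Theta^{\ocG}$ holds \emph{on the nose} rather than merely up to torsion. For the latter I would, as in Proposition \ref{beta-formula}, first establish the identity on the subgroup $\kone{\sR[\ocG]^\tau,J(\sR[\ocG]^\tau)}$ of finite prime-to-$p$ index, and then extend using that the target groups $\II(U_P^{ab})$ and $\II(Z)[\Conj{\ocG}]^\tau$ are torsion-free $\Z_p$-modules. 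The case analysis over cyclic versus non-cyclic $P$ (and the special role of $P=\{1\}$) is the only place where care is required, and it is handled by assembling Lemmas \ref{log-theta-res}, \ref{k-restr}, \ref{alpha-restr}, \ref{beta-restr} and \ref{u-restr} exactly as in the computation already carried out above.
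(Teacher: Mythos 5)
Your proposal is correct and follows exactly the paper's route: the paper assembles the same commutative diagram with exact rows (top row from the corollary of Theorem \ref{k-tors}, bottom row from the lemma on $\Phi^{\ocG}\to\Psi^{\ocG}$, middle square from Proposition \ref{beta-formula}, and $\beta^{\ocG}$ an isomorphism by Theorem \ref{additive}) and simply invokes the Five Lemma, whereas you write out the diagram chase explicitly. The only cosmetic slip is writing $\mathfrak L(x)=1$ and $\cL(1)=1$ where the additive targets require $0$.
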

\begin{proof}
 The lemmas regarding the restrictions under integral logarithms give us the following commutative diagram:
 \begin{equation*}
  \xymatrix{
  1\ar[r] &\mu(\cO)\times\WW\times\cG^{ab}\ar[r]\ar[d]_{=} &\konep{\IIG}\ar[r]^-{\mathfrak L}\ar[d]^-{\Theta^{\ocG}}
                                                                      &\II(Z)[\Conj{\ocG}]^\tau\ar[r]\ar[d]^{\beta^{\ocG}}_\cong &\WW\times\cG^{ab}\ar[r]\ar[d]_{=} &1\\
  1\ar[r] &\mu(\cO)\times\WW\times\cG^{ab}\ar[r]       &\Phi^{\ocG}\ar[r]_{\mathcal L}        &\Psi^{\ocG}\ar[r]                    &\WW\times\cG^{ab}\ar[r]       &1.
  }
 \end{equation*}
The Five Lemma then gives the result.
\end{proof}
\begin{theorem}
 The map $\Theta^{\ocG}_\sS$ maps $\konep{\IIG_\sS}$ into $\Phi^{\ocG}_\sS$. Further
 \begin{equation*}
  {\Phi_\sS^{\ocG}}\cap\prod_{P\leq{\ocG}}\II(U_P^{ab})^\times=\mathrm{im}(\Theta^\cG).
 \end{equation*}
\end{theorem}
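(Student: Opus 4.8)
The plan is to mirror the structure of the corresponding two assertions in Kakde's work (Theorems 52 and 53 there), now lifted to the ring $\IIG$, and to deduce everything from the isomorphism $\Theta^{\ocG}:\konep{\IIG}\stackrel{\cong}{\lra}\Phi^{\ocG}$ of Theorem \ref{Theta-iso} together with the localisation sequence. First I would establish that $\Theta^{\ocG}_\sS$ does land in $\Phi^{\ocG}_\sS$: this is exactly the $\sS$-localised analogue of Theorem \ref{cong}, and the congruences (C1)--(C4) for $\Xi_{U_P^{ab}}\in\II(U_P^{ab})_\sS^\times$ follow from the same diagram chases, using the $\sS$-localised versions of the maps $\mathrm{Nr}_P^{P'},\Pi_P^{P'},\mathrm{ver}_P^{P'},\alpha_P$ and the $\sS$-localised trace ideals $\sT_{P,P',\sS}$ and $p\sT_{P,\sS}$; the one point needing care is the convergence of $\log_P$ and the integral logarithm after localising at $\sS$, which is handled by the lemma identifying $\IIG_\sS$ with $\IIG_T$ and by passing to the $p$-adic completion $\wh\IIG_\sS$ where Theorem \ref{hat-Theta} already gives the containment. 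So the first half, $\Theta^{\ocG}_\sS(\konep{\IIG_\sS})\subseteq\Phi^{\ocG}_\sS$, reduces to bookkeeping that is formally identical to the non-localised case.

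For the second, harder, assertion I would argue by a diagram chase on the commuting localisation squares
\begin{equation*}
 \xymatrix{
 \konep{\IIG}\ar[r]\ar[d]^{\Theta^{\ocG}} & \konep{\IIG_\sS}\ar[r]^{\wt\partial}\ar[d]^{\Theta^{\ocG}_\sS} & K_0(\IIG,\IIG_\sS)\ar[d]^{\Theta_0}\\
 \Phi^{\ocG}\ar[r] & \Phi^{\ocG}_\sS\ar[r]^{\partial'} & \prod_{P\leq\ocG}K_0(\II(U_P^{ab}),\II(U_P^{ab})_\sS).
 }
\end{equation*}
Given $\Xi\in\Phi^{\ocG}_\sS$ with $\Xi\in\prod_{P\leq\ocG}\II(U_P^{ab})^\times$, its image $\partial'(\Xi)$ vanishes componentwise because each $\Xi_{U_P^{ab}}$ is a genuine unit in $\II(U_P^{ab})$, not just in the localisation; hence $\partial'(\Xi)=0$. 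Now one wants to lift $\Xi$ to an element of $\konep{\IIG_\sS}$ sitting over $0\in K_0(\IIG,\IIG_\sS)$. Exactness of the top row at $\konep{\IIG_\sS}$ (from the localisation sequence \eqref{localization}) says the kernel of $\wt\partial$ is the image of $\konep{\IIG}$; but $\Theta^{\ocG}$ is an isomorphism onto $\Phi^{\ocG}$ by Theorem \ref{Theta-iso}, so it suffices to show $\Xi$ actually lies in $\Phi^{\ocG}$, i.e. that each component $\Xi_{U_P^{ab}}$ is in $\II(U_P^{ab})^\times$ (true by hypothesis) \emph{and} that the congruences defining $\Phi^{\ocG}$ rather than merely $\Phi^{\ocG}_\sS$ are satisfied. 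That last point is where I expect the main obstacle: one must check that if a congruence of the form $a\equiv b\pmod{\sT_{P,P',\sS}}$ holds between elements of $\II(U_P^{ab})^\times$ then already $a\equiv b\pmod{\sT_{P,P'}}$. This comes down to the statement $\sT_{P,P',\sS}\cap\II(U_P/[U_{P'},U_{P'}])=\sT_{P,P'}$ (and similarly $p\sT_{P,\sS}\cap\II(U_P)=p\sT_P$), which I would prove using that $\sT_{P,P'}$ is the image of the averaging operator $x\mapsto\sum_{g\in P'/P}\tilde g x\tilde g^{-1}$, that this operator commutes with localisation at the central Ore set $T$, and that $\II(U_P^{ab})\inj\II(U_P^{ab})_\sS=\II(U_P^{ab})_T$ is faithfully flat with $\II(U_P^{ab})_T\cap\II(U_P^{ab})\subset$ the appropriate submodule — essentially a "no denominators" argument analogous to the surjectivity-of-$\partial$ discussion and to \cite[Lemma 85]{kakde}.

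Putting these together: $\Xi\in\Phi^{\ocG}_\sS\cap\prod_{P\leq\ocG}\II(U_P^{ab})^\times$ implies $\Xi\in\Phi^{\ocG}$ by the two reductions above (unit components, and congruences descend from $\sS$-localised to integral), hence $\Xi=\Theta^{\ocG}(u)$ for a unique $u\in\konep{\IIG}$, and the image of $u$ in $\konep{\IIG_\sS}$ maps to $\Xi$ under $\Theta^{\ocG}_\sS$; thus $\Xi\in\mathrm{im}(\Theta^{\ocG}_\sS\circ(\text{natural map}))$, which is precisely $\mathrm{im}(\Theta^\cG)$ in the statement. The reverse inclusion $\mathrm{im}(\Theta^\cG)\subseteq\Phi^{\ocG}_\sS\cap\prod\II(U_P^{ab})^\times$ is immediate from Theorem \ref{cong} (which puts $\mathrm{im}(\Theta^\cG)$ inside $\Phi^{\ocG}\subseteq\prod\II(U_P^{ab})^\times$) and from the first half of this theorem (which puts it inside $\Phi^{\ocG}_\sS$ after localising). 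The genuinely new work, as flagged, is the descent of the trace-ideal congruences, and I would isolate that as a short lemma of the form $\sT_{P,P',\sS}\cap\II(U_P/[U_{P'},U_{P'}])=\sT_{P,P'}$ placed just before the proof.
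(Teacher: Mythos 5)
Your proposal is correct and follows essentially the same route as the paper: the first containment is obtained from Theorem \ref{cong} together with Theorem \ref{hat-Theta} and the identity $\wh{\Phi_\sS^{\ocG}}\cap\prod_{P\leq\ocG}\II(U_P^{ab})_\sS^\times=\Phi_\sS^{\ocG}$, and the stated equality reduces to $\Phi_\sS^{\ocG}\cap\prod_{P\leq\ocG}\II(U_P^{ab})^\times=\Phi^{\ocG}$ combined with the isomorphism of Theorem \ref{Theta-iso}. The descent lemma you isolate, $\sT_{P,P',\sS}\cap\II(U_P/[U_{P'},U_{P'}])=\sT_{P,P'}$ (and its analogue for $p\sT_P$), is precisely the content of the intersection identity that the paper asserts in one line without proof, so spelling it out is a genuine improvement in rigour, whereas the $K_0$ localisation diagram chase in your argument is not actually needed once one knows $\Phi^{\ocG}=\mathrm{im}(\Theta^{\ocG})$.
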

\begin{proof}
 Note that $\wh{\Phi_\sS^{\ocG}}\cap\prod_{P\leq{\ocG}}\II(U_P)_\sS^\times=\Phi_\sS^{\ocG}$. 
 By Theorem \ref{cong} and \ref{hat-Theta}, it follows that 
 \begin{equation*}
 \mathrm{im}(\Theta_\sS^{\ocG})\subset\Phi_\sS^{\ocG} 
 \end{equation*}
Further, since ${\Phi_\sS^{\ocG}}\cap\prod_{P\leq{\ocG}}\II(U_P^{ab})^\times=\Phi^{\ocG}$, we get from Theorem \ref{cong}, that 
\begin{equation*}
 {\Phi_\sS^{\ocG}}\cap\prod_{P\leq{\ocG}}\II(U_P^{ab})^\times=\mathrm{im}(\Theta^\cG).
\end{equation*}
\end{proof}

\section{Relations between the congruences over $\II[[\cG]]$ and $\Z_p[[\cG]]$}
\subsection{Congruences over $\Z_p[[\cG]]$}
We first recall the main result of Kakde \cite{kakde}. 
As in the previous section, we fix a lift $\wt\Gamma$ of $\Gamma$ in $\cG$. Then we can identify $\cG$ with $H\rtimes\Gamma$. Fix $e\in\N$ such that 
$\wt\Gamma^{p^e}\subset Z(\cG)$, and put $\ol\cG:=\cG/{\Gamma^{p^e}}$ and $R:=\Lambda_\cO(\wt\Gamma^{p^e})$. Then $\Lambda_\cO(G)\cong R[\ol\cG]^\tau$, 
the twisted group ring with multiplication 
\begin{equation*}(h\wt\gamma^a)^\tau(h\wt\gamma^b)^\tau=\wt\gamma^{p^e[\frac{a+b}{b}]}(h\wt\gamma^a.h'\wt\gamma^b)^\tau,
\end{equation*}
where $g^\tau$ is the image of $g\in \cG$ in $R[\ol\cG]^\tau$.

Let $P$ be a subgroup of $\ol\cG$ and $U_P$ be the inverse image of $P$ in $\cG$. Recall that,  $N_{\ol\cG}P:=$ the normalizer of $P$ in $\cG$, 
$W_{\ol\cG}(P):=N_{\ol\cG}P/P$, and $C(\ol\cG):=$ set of cyclic subgroups of $\ol\cG$.
If $P\in C(\ol\cG)$, then $U_P$ is a rank one abelian subquotient of $\cG$, and for every $P\in C(\ol\cG)$, set 
\begin{equation*}
T_P:=\{\sum_{g\in W_{\ol\cG}(P)}g^\tau x(g^\tau)^{-1}\mid x\in R[P]^\tau\}.
\end{equation*}
Let $P\leq P'\leq\ocG$. Then consider the homomorphism $\Z_p[[\cG]]\lra\Z_p[[\cG]]$ given by $x\mapsto \sum_{g\in P'/P}\tilde{g}x\tilde{g}^{-1}$, where $\tilde{g}$ is a lift of $g$.
We define $T_{P,P'}$ to be the image of this homomorphism.
For two subgroups $P, P'$ of $\ol\cG$ with $[P',P']\leq P\leq P'$ consider 
\begin{equation*}
 \begin{split}
  &\mbox{nr}_P^{P'}:\Lambda_\cO(U_{P'}^{ab})^\times\lra\Lambda_\cO(U_P/[U_{P'},U_{P'}])^\times,\quad\mbox{(the norm map)},\\
  &\pi_P^{P'}:\Lambda_\cO(U_{P'}^{ab})\lra\Lambda_\cO(U_P/[U_{P'},U_{P'}]),\quad\mbox{(the projection map)}.
 \end{split}
\end{equation*}
For $P\in C(\ol\cG)$ with $P\neq (1)$, fix a homomorphism $\omega_P:P\lra\bar\Q_p^\times$ of order $p$, and also a homomorphism 
$\omega_1:=\omega_{\{1\}}:\wt\Gamma^{p^e}\lra\bar\Q_p^\times$ of order $p$. The homomorphism $\omega_P$ induce the following homomorphism which we 
again denote by the same symbol:
\begin{equation*}
 \omega_P:\Lambda_\cO(U_P)^\times\lra\Lambda_\cO(U_P)^\times, g\mapsto \omega_P(g)g.
\end{equation*}
For $P\leq \ol\cG$, consider the homomorphism $\alpha_P:\Lambda_\cO(U_P)_{S}^\times\lra\Lambda_\cO(U_P)_{S}^\times$ defined by
\begin{equation*}
 \alpha_P(x):=\begin{cases}
               x^p\varphi(x)^{-1} &\mbox{ if } P=\{1\}\\
               x^p(\prod_{k=0}^{p-1}\omega_P^k(x))^{-1} &\mbox{ if } P\neq\{1\}\mbox{ and cyclic}\\
               x^p             &\mbox{ if } P \mbox{ is not cyclic}.
              \end{cases}
\end{equation*}
Note that, for all $P\leq\ol\cG$, there is an action of $\cG$ and $\ol\cG$ on $U_P^{ab}$ by conjugation since $\wt\Gamma^{p^e}$ is central. 
Now consider the following map
\begin{equation*}
 \konep{\Z_p[[\cG]]_S}\lra \konep{\Z_p[[U]]_S}\lra \konep{\Z_p[[U^{ab}]]_S}\lra\Z_p[[U^{ab}]]_S^\times\subset Q(\Z_p[[U^{ab}]])^\times.
\end{equation*}
Taking all the $U^{ab}$ in $\Sigma(\cG)$ we get the following homomorphism
\begin{equation*}
 \theta_{\Sigma(\cG)}:\konep{\Z_p[[\cG]]_S}\lra\prod_{U^{ab}\in\Sigma(\cG)}Q(\Z_p[[U^{ab}]])^\times.
\end{equation*}
For any subgroup $P$ of $\ol\cG$, we write $\theta_{\ol\cG,ab}^{P}$ for the following natural composite homomorphism
\begin{equation*}
 \konep{\Z_p[[\cG]]}\stackrel{\theta_{\ol\cG}^{P}}{\lra}\kone{\Z_p[[U_P]]}\lra\kone{\Z_p[[U_P^{ab}]]}\cong\Z_p[[U_P^{ab}]]^\times,
\end{equation*}
where the isomorphism is induced by taking determinants over $\Z_p[[U_P^{ab}]]$.
\begin{defn}\label{phi-G}
As in \cite{kakde}, we  denote the subgroup of $\prod_{P\leq\ol\cG}\Lambda_\cO(U_P^{ab})^\times$ consisting of tuples $(x_P)$ satisfying the conditions of the result below by $\phi^{\ol\cG}$.
\end{defn}
\begin{propn}\emph{\cite{kakde}}
 Let $\cG$ be a rank one pro-$p$ group. Then the set $\Sigma(\cG):=\{U_P^{ab}:P\leq\ol\cG\}$ satisfies the condition $(\ast)$. Further, an element 
$(\xi_\cA)_{\cA}\in\prod_{\cA\in\Sigma(G)}\Lambda_\cO(\cA)^\times$ belongs to $im(\theta_{\Sigma(G)})$ if and only if it satisfies all of the following
three conditions.
\begin{enumerate}
 \item For all subgroups $P, P'$ of $\ol\cG$ with $[P',P']\leq P\leq P'$, one has
\begin{equation*}
 \mathrm{nr}_P^{P'}(\xi_{U_{P'}^{ab}})=\pi_P^{P'}(\xi_{U_{P'}^{ab}}).
\end{equation*}
 \item For all subgroups $P$ of $\ol\cG$ and all $g$ in $\ol\cG$ one has $\xi_{gU_{P}^{ab}g^{-1}}=g\xi_{U_{P'}^{ab}}g^{-1}$.
 \item For every $P\in\ocG$ and $P\neq (1)$, we have
 \begin{equation*}
  \mathrm{ver}_P^{P'}(\xi_{U_{P'}^{ab}})\equiv \xi_{U_P^{ab}} \pmod{T_{P,P'}} 
 \end{equation*} 
 \item For all $P\in C(\ol\cG)$ one has $\alpha_P(\xi_{U_{P}^{ab}})\equiv\prod_{P'\in C_P(\ol\cG)}\alpha_{P'}(\xi_{U_{P'}^{ab}})\pmod{pT_P}$.
\end{enumerate}
\end{propn}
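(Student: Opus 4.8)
This statement is the theorem of Kakde \cite[Theorems 52 and 53]{kakde}; equivalently, it is the special case $\II=\cO$ (that is, $r=0$, so that $\IIG=\Lambda_\cO(\cG)$ and $\WW\cong 1+p\Z_p$) of Theorem \ref{cong} together with Theorem \ref{Theta-iso}. The plan is therefore to run exactly the three-step argument given above for $\IIG$, now in the simpler setting in which the base ring carries no power-series variables. First I would check that $\Sigma(\cG)=\{U_P^{ab}:P\leq\ocG\}$ has property $(\ast)$: for a pro-$p$ group of dimension one, every Artin representation is, after passing to a $\Z$-linear combination of virtual representations, induced from degree-one characters of abelian subquotients of the form $U_P^{ab}$, which follows from Artin's induction theorem \cite[Theorem 17]{serre} applied to the finite quotients $\ocG$.

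For the ``only if'' direction I would show that, for $\Xi\in\konep{\Lambda_\cO(\cG)}$, the tuple $(\theta_{\ocG,ab}^{P}(\Xi))_{P\leq\ocG}$ satisfies the four displayed conditions, following the proof of Theorem \ref{cong} essentially verbatim. Conditions (1) and (2) are formal consequences of the explicit matrix description $\theta_{\ocG,ab}^{P}(\Xi)=\det\Pi_{\ocG,P}(M_C(\wt\Xi))$, together with the compatibility of the norm and projection maps on the coset module $\Lambda_\cO(U_{P'}^{ab})\cong\bigoplus_i\Lambda_\cO(U_P/[U_{P'},U_{P'}])c_i$ for (1), and the behaviour of that description under conjugating the coset representatives for (2). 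Condition (3) is the verification carried out in \cite[Lemma 85]{kakde}. Condition (4), the delicate one, I would deduce from the integral logarithm $\mathfrak{L}_\cG$, using the analogue of Lemma \ref{log-eta-res} (which relates $\log_P$ of an explicit ratio of the $\alpha$-images to $p\,(\eta_P\circ\Res{\ocG}{P})(\mathfrak{L}_\cG(x))$), the fact that $\log$ is an isomorphism from $1+p\Lambda_\cO(\cG)$ onto $p\Lambda_\cO(\cG)$ (Lemma \ref{log-sub}), the integrality $\mathrm{im}(\mathfrak{L}_\cG)\subseteq R[\Conj{\ocG}]^\tau$ (Proposition \ref{int-log}), and the containment $\Res{\ocG}{P}(R[\Conj{\ocG}]^\tau)\subseteq T_P$ (Lemma \ref{res-TP}), together with the fact that $\eta_P$ preserves $T_P$.

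For the ``if'' direction, namely surjectivity of $\theta_{\Sigma(\cG)}$ onto the subgroup cut out by (1)--(4), I would transfer the multiplicative problem to the additive side exactly as in Section \ref{k-one}. The additive theorem (the $\II=\cO$ case of Theorem \ref{additive}, i.e. \cite[Theorem 58]{kakde}) realizes $R[\Conj{\ocG}]^\tau$, via $\beta^{\ocG}$, as the subgroup of $\prod_{P\leq\ocG}(R[P^{ab}]^\tau)^\times$ defined by the additive analogues of (1)--(4); the integral logarithms then fit into a commutative diagram of four-term exact sequences
\begin{equation*}
  \xymatrix{
  1\ar[r] &\mu_K\times\WW\times\cG^{ab}\ar[r]\ar[d]_{=} &\konep{\Lambda_\cO(\cG)}\ar[r]^{\!\!\!\!\mathfrak{L}}\ar[d]^{\Theta^{\ocG}}
                                                                      &R[\Conj{\ocG}]^\tau\ar[r]\ar[d]^{\beta^{\ocG}}_\cong &\WW\times\cG^{ab}\ar[r]\ar[d]_{=} &1\\
  1\ar[r] &\mu_K\times\WW\times\cG^{ab}\ar[r]       &\Phi^{\ocG}\ar[r]_{\!\!\!\!\mathcal{L}}        &\Psi^{\ocG}\ar[r]                    &\WW\times\cG^{ab}\ar[r]       &1,
  }
\end{equation*}
and the Five Lemma then yields that $\Theta^{\ocG}$ is an isomorphism onto $\Phi^{\ocG}$; the description of $\mathrm{im}(\theta_{\Sigma(\cG)})$ and the passage to the $S$-localized $K_1$ follow as in the last two theorems of that section. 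The step I expect to be the main obstacle is the exactness of the bottom row, and in particular the identification of the torsion subgroup of $\Lambda_\cO(U_P^{ab})^\times$ with $\mu_K\times\WW\times U_P^{ab}$, which is the classical theorem of Higman \cite{higman}, together with the triviality of $SK_1$ for the relevant group rings (Corollary \ref{sk-trivial}, here in Oliver's classical form). In the present $\cO$-setting all of these inputs are available directly from \cite{ol} and \cite{higman}, so that none of the generalizations developed in Section \ref{k-one} are needed here; the proposition is recorded only in order to be compared, in the following subsection, with the congruences over $\IIG$.
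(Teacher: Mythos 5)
Your proposal is correct and takes essentially the same route as the paper: the paper states this proposition without proof, simply citing Kakde, and your reconstruction is exactly the $\II=\cO$ specialization of the paper's own Theorems \ref{cong} and \ref{Theta-iso} (i.e.\ Kakde's Theorems 52--53), using the same matrix description of $\theta^P_{\ocG,ab}$, the integral logarithm, the additive isomorphism $\beta^{\ocG}$, and the Five Lemma. The only small imprecision is that property $(\ast)$ follows from the monomiality of irreducible representations of finite $p$-groups (every irreducible is induced from a degree-one character, giving integer coefficients $m_i$) rather than from Artin's induction theorem, which only yields $\Q$-linear combinations.
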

\subsection{Relation between the congruences}
\begin{propn}
 Let $\Xi\in\konep{\II[[\cG]]}$ be a $p$-adic L-function over $\II[[\cG]]$, then under the specialization maps $\phi_k$, $\phi_k(\Xi)\in\konep{\Z_p[[\cG]]}$ is a $p$-adic L-function over $\Z_p[[\cG]]$.
\end{propn}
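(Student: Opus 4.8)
The plan is to verify directly that $\phi_k(\Xi)$ satisfies the two requirements in Conjecture~\ref{main-conj} with $\II$ replaced by $\cO$ (which, when $\cO=\Z_p$, is precisely being a $p$-adic L-function over $\Z_p[[\cG]]$): namely that $\phi_k(\Xi)$ lies in $\konep{\cO[[\cG]]_{S}}$, that $\partial(\phi_k(\Xi))=-[\sgd{\Ein}{\ad{\rho_P}}]$, and that it satisfies the interpolation formula \eqref{interpolate} specialized to the Hilbert modular form $f$ attached to $\phi_k$. The first point is immediate from the Proposition computing $\phi_k(\sS)=S$ (and its Corollary $\phi_k(\sS^\ast)=S^\ast$): this guarantees that the ring homomorphism $\IIG_\sS\to\cO[[\cG]]_S$ induced by $\phi_k$ is well defined, hence induces $\wt\phi_\kappa$ on $\konep{(-)}$ and fits into the commutative square relating the connecting homomorphisms $\wt\partial$ and $\partial$ displayed before Conjecture~\ref{main-conj}.

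For the $K$-theoretic identity, I would use that same square together with the specialization of Selmer groups. By Proposition~\ref{arith-spe}, $\sgd{\Ein}{\ad{\rho_P}\otimes_{W}W^\ast}\cong\Om{\cRin}{W}\otimes_{\cRin}\cR_0/P$, while the control arguments of Section~\ref{admissible} identify $\sgd{\Ein}{\ad{\rho_\II}}$ with $\Om{\cRin}{W}\otimes_{\cRin}\II$. Since $\cR_0/P\cong\cO$ is obtained from $\II$ by quotienting by the ideal $P$ of a locally cyclotomic point, tensoring the first module with $\cO$ over $\II$ recovers the second, so the class $[\sgd{\Ein}{\ad{\rho_\II}}]$ in $K_0(\IIG,\IIG_\sS)$ maps under $\phi_k$ to $[\sgd{\Ein}{\ad{\rho_P}}]$ in $K_0(\cO[[\cG]],\cO[[\cG]]_S)$. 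Applying $\wt\phi_\kappa$ to the defining relation $\wt\partial(\Xi)=-[\sgd{\Ein}{\ad{\rho_\II}}]$ and chasing the square then yields $\partial(\phi_k(\Xi))=-[\sgd{\Ein}{\ad{\rho_P}}]$.

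For the interpolation property, I would observe that the evaluation map \eqref{evaluation} at an Artin representation $\alpha$ of $\cG$ factors through $\wt\phi_\kappa$ by construction, so that evaluating $\phi_k(\Xi)$ at $\alpha$ is literally $\wt\phi_\kappa(\Xi)(\alpha)$. By the interpolation formula \eqref{interpolate} for $\Xi$ over $\IIG$, this equals $L_\Sigma(\ad{\rho_f}(1),\alpha,0)$ divided by the $p$-adic period $(2\pi\imath\,c^\pm_p(\ad{M_f}(1)))^{d_\alpha}$, corrected by the explicit Euler factors at the primes dividing $\fN\p$. This is exactly the interpolation property defining a $p$-adic L-function of $\ad{\rho_f}$ over $\cO[[\cG]]$ (cf.\ \cite[Theorem 4.1.12]{fk} and the abelian case treated above), so the transfer of the interpolation formula is automatic once the $K$-theoretic identity is in place.

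The main obstacle will be the middle step: showing that passing to classes in the relative $K_0$ commutes with the specialization $\phi_k$ on the nose, and not merely up to torsion or pseudo-isomorphism. Concretely, one needs that the dual Selmer group over $\IIG$ (equivalently $\Om{\cRin}{W}\otimes_{\cRin}\II$) has finite projective dimension and that the higher $\mathrm{Tor}^\II_i(\sgd{\Ein}{\ad{\rho_\II}},\cO)$ vanish, so that the derived base change carries no extra contribution to the class; this is where Proposition~\ref{arith-spe} and the control theorems do the real work, and where it is essential that $P$ be a regular locally cyclotomic point, so that $\cO=\II/P$ admits a short Koszul resolution over $\II$. Once this is settled, the rest of the argument is formal diagram-chasing together with the evaluation-map compatibilities already established.
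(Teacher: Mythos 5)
Your route is genuinely different from the paper's. The paper does not verify the two conditions of Conjecture \ref{main-conj} directly: it invokes the commutative square
\begin{equation*}
 \xymatrix{
 \kone{\II[[\cG]]}\ar[r]\ar[d] & \Phi_\sR^{\ol\cG}\ar[d] \\
 \kone{\Z_p[[\cG]]}\ar[r]      & \phi^{\ol\cG}
 }
\end{equation*}
induced by $\phi_k$, so that the specialized element automatically satisfies the congruences characterizing $\mathrm{im}(\Theta)$ over $\Z_p[[\cG]]$, and then appeals to the interpolation formula; the relative $K_0$ class is never touched. Your treatment of the interpolation property coincides with the paper's (the evaluation map \eqref{evaluation} is built so that evaluating at an Artin representation $\alpha$ factors through $\wt\phi_\kappa$, so this transfer really is formal), and your use of $\phi_k(\sS)=S$ to get the map $\IIG_\sS\to\cO[[\cG]]_S$ and the square relating $\wt\partial$ and $\partial$ is fine. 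What your approach buys is an actual verification that $\phi_k(\Xi)$ is an L-function in the full sense of the Main Conjecture, rather than merely an element satisfying the congruences and the interpolation formula; what the paper's approach buys is that it avoids entirely the base-change problem in relative $K_0$. Note also a small mismatch of setting: the proposition as stated takes $\Xi\in\kone{\II[[\cG]]}$, not $\kone{\IIG_\sS}$, which is consistent with the paper only ever using the congruence diagram for the unlocalized $K_1$.

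The one step you introduce that the paper does not contain is also the one you leave open, and it is a genuine gap rather than a routine verification: the claim that $[\sgd{\Ein}{\ad{\rho_\II}}]\in K_0(\IIG,\IIG_\sS)$ maps to $[\sgd{\Ein}{\ad{\rho_P}}]$ under derived base change along $\phi_k$. Proposition \ref{arith-spe} only identifies $\mathrm{Tor}_0$, i.e.\ $\Om{\cRin}{W}\otimes_{\cRin}\cR_0/P$ with the specialized dual Selmer group; the image of the class in $K_0(\cO[[\cG]],\cO[[\cG]]_S)$ is the alternating sum $\sum_i(-1)^i[\mathrm{Tor}_i^{\IIG}(\sgd{\Ein}{\ad{\rho_\II}},\cO[[\cG]])]$, and the vanishing of the higher terms amounts to the Koszul generators of $P=\ker\phi_k$ forming a regular sequence on the dual Selmer group. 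Nothing in the control theorems of Section \ref{admissible} gives this, and it can fail at specializations where the Selmer group has $P$-torsion. Either prove this regularity (or at least that the higher Tor classes are trivial in the relative $K_0$), or restrict to the interpolation-theoretic notion of $p$-adic L-function as the paper implicitly does, in which case your first and third paragraphs already suffice.
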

\begin{proof}
 Consider the following commutative diagram, which is induced by a specialization map:
\begin{equation*}
 \xymatrix{
 \konep{\II[[\cG]]}\ar[r]\ar[d] & \Phi^{\ol\cG}\ar[d] \\
 \konep{\Z_p[[\cG]]}\ar[r]      & \phi^{\ol\cG}.
 }
\end{equation*}
From this commutative diagram, the congruences over $\IIG$ implies the congruences over $\Z_p[[\cG]]$ easily.
The interpolation formula of the $p$-adic L-function over $\II[[\cG]]$ also implies those over $\Z_p[[\cG]]$.
\end{proof}

\section{Application to $p$-adic L-function}\label{applications}
In this section, we generalize the \emph{torsion congruences} of Ritter-Weiss that has been used to prove the congruences and hence the main conjecture in some important cases.
\subsection{Torsion Congruences and $p$-adic L-function}
Let $F^{ab,p}$ be the maximal pro-$p$ abelian extension of $F$ that is unramified outside $p$ and $\infty$. 
We set $\mathscr G_F=\mathrm{Gal}(F^{ab,p}/F)$. 
Let $\{f_\kappa\}$ be a family of Hilbert modular forms over $F$ which is parameterized by an irreducible component $\II$  of  the universal cyclotomic deformation ring 
$\cR_F$. 
In fact, $\II$ is a finite flat algebra over $\Z_p[[\mathbb{W}]]$, where $\mathbb{W}_{}$ is the torsion free part of $\cO_\p$ as in Section \ref{adelic-hmf}.
As in Definition \ref{arithmetic}, let $\phi_\kappa:\II\lra\Z_p$ denote an arithmetic point of weight $\kappa$. Set $\mathcal{P}_\kappa=\mathrm{ker}(\phi_\kappa)$.
This induces an algebra homomorphism $\II[[\mathscr G_F]]\lra\Z_p[[\mathscr G_F]]$, which we again denote by $\phi_\kappa$,
by setting $\phi_\kappa(g)=1$ for all $g\in \mathscr G_F$. 
For the maximal ideal $\m=\langle p,\mathcal{P}_\kappa\rangle$ of $\II$, and we have a natural homomorphism 
$\II/\m[[\mathscr G_F]]\lra\FF_p[[\mathscr G_F]]$. 

Consider the  character $\mathcal N_F:\mathscr G_F\lra\Z_p$ to be the cyclotomic character.
For any integer $m\geq 0$, let $\psi_{m}:\mathscr G_F\lra\Z_p^\times$ be a character of the form $\psi\mathcal{N}_F^{m}$, where
 $\psi$ is a character of finite order. We also extend the character $\psi_m$ to the group $\mathscr G_F\times\mathbb{W}$ by setting 
$\psi_m(g)=1$ for all $g\in\mathbb W$. 

Let $\mu_F$ be a measure in $\II[[G_F]]$ that interpolates the  
critical values of each of  the representations $\ad{\rho_{f_\kappa}}\otimes\psi$ for characters $\psi$ of $\mathscr G_F$, i.e., 
 \begin{equation*}
 \int_{\mathscr G_F\times\mathbb{W}}\chi(g)\phi_\kappa(g)d\mu_F(g)=L^\ast(\ad{\rho_{f_\kappa}}\otimes\chi,0),
\end{equation*}
where $L^\ast(\ad{\rho_{f_\kappa}}\otimes\chi,0)$ involves the critical value $L(\ad{\rho_{f_\kappa}}\otimes\chi,0)$ twisted by the finite order character $\chi$, 
some archimedean periods related to $\ad{\rho_{f_\kappa}}$, and some Euler factors removed, as in the interpolation formula in \eqref{interpolate}. 
Now, we consider the measure $\mu_\kappa\in\Z_p[[\mathscr G_F]]$, defined by
\begin{equation*}
 \mu_\kappa(g)=\int_{\gamma'\in \{g\}\times\mathbb{W}}\phi_\kappa(\gamma')d\mu_F(\gamma'), \mbox{ for all } g\in \mathscr G_F.
\end{equation*}
Then
\begin{equation*}
 \int_{\mathscr G_F}\psi_m(g)d\mu_\kappa(g)=\int_{\mathscr G_F}\psi_m(g)\int_{\gamma'\in \{g\}\times\mathbb{W}}\phi_\kappa(\gamma')d\mu_F(\gamma')
 =\int_{\mathscr G_F\times\mathbb{W}}\psi_m(x)\phi_\kappa(x)d\mu_F(x).
\end{equation*}
Therefore, $\int_{\mathscr G_F}\chi(g)d\mu_\kappa(g)=L^\ast(\ad{\rho_{f_\kappa}}\otimes\chi,1)$ for all finite order characters $\chi$. 
It follows that $\mu_\kappa\in\Z_p[[\mathscr G_F]]$ is a $p$-adic L-function interpolating the special values of $\ad{f_\kappa}$ twisted by all finite order 
characters of $\mathscr G_F$.




Let $\delta^{(x)}$ be the characteristic function of a coset of an open subgroup $U$. Then $\delta^{(x)}(g)=\sum_j c_j\chi_j(g)$, for some $c_j\in\Z_p$. 
Let 
\begin{equation*}
  L^\ast(\ad{\wt f_\kappa}/F',\chi)=
                     e_p(\ad{\wt f_\kappa}/F',\chi)\mathcal{L}_p(\ad{\wt f_\kappa}/F',\chi)\dfrac{L_{(S,p)}(\ad{\wt f_\kappa}/F',\chi,0)}{\Omega_\infty(\ad{\wt f_\kappa}/F')},
 \end{equation*}
where $\mathcal{L}_p(\ad{\wt f_\kappa}/F',\chi):=\prod_{\p\mid p}\mathcal{L}_\p(\ad{\wt f_\kappa}/F',\chi)$ comes from the Euler factors at primes lying above $p$, and 
$e_p(\ad{\wt f_\kappa}/F',\chi)$ is the product of the local epsilon factors above $p$.
We define,
\begin{equation*}
 L^\ast(\ad{f_\kappa},\delta^{(x)})=\sum_jL^\ast(\ad{f_\kappa},\chi_j,0).
\end{equation*}
Then an open subgroup $U$ of $\mathscr G_F$ is said to be admissible if $\mathcal{N}_F(U)\subset 1+p\Z_p$,
and define $m_F(U)\geq1$, by $\mathcal N_F(U)=1+p^{m_F(U)}\Z_p$. Using the following lemma, which is a generalization of a result in \cite{ritter-weiss}, we relate this measure to the trace ideal.
\begin{lemma}
 $\II[[\mathscr G_F]]$ is the inverse limit of the system $\II[\mathscr G_F/U]/p^{m_F(U)}\II[\mathscr G_F/U]$, with $U$ running over the cofinal system of admissible open subgroups of $\mathscr G_F$.
\end{lemma}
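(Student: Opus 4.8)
The statement to be proven is that $\II[[G_F]] \cong \varprojlim_U \II[G_F/U]/p^{m_F(U)}\II[G_F/U]$, where $U$ runs over the cofinal system of admissible open subgroups of $G_F$. The plan is to follow the argument of Ritter and Weiss in \cite{ritter-weiss} verbatim in structure, replacing $\Z_p$ by $\II$ throughout, since $\II$ is a complete local noetherian $\Z_p$-algebra and in particular $p$-adically complete and separated (indeed $\II \cong \cO[[X_1,\dots,X_r]]$ in the cases of interest). First I would fix the indexing: the admissible open subgroups $U$ of $G_F$ with $\mathcal N_F(U) = 1 + p^{m_F(U)}\Z_p$ form a cofinal system in the directed set of all open subgroups, because for any open $V$ one can take $V' \subset V$ with $\mathcal N_F(V')$ a sufficiently deep open subgroup of $1 + p\Z_p$; so the inverse limit over admissible $U$ computes the same thing as over all open $U$ once we also impose the truncation mod $p^{m_F(U)}$.

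Next I would establish the two halves of the isomorphism separately. The key elementary fact is that $\II[[G_F]] = \varprojlim_U \II[G_F/U]$ by definition of the completed group algebra, so it suffices to show that the natural map $\varprojlim_U \II[G_F/U] \to \varprojlim_U \II[G_F/U]/p^{m_F(U)}\II[G_F/U]$ is an isomorphism. For surjectivity, given a compatible system of classes $(\bar a_U)$ with $\bar a_U \in \II[G_F/U]/p^{m_F(U)}$, one lifts them inductively along a cofinal chain $U_0 \supset U_1 \supset \cdots$ with $m_F(U_n) \to \infty$: at each stage, choose a lift $a_{U_n} \in \II[G_F/U_n]$ compatible with the image of $a_{U_{n-1}}$ modulo $p^{m_F(U_{n-1})}$, which is possible because the transition maps $\II[G_F/U_n] \to \II[G_F/U_{n-1}]$ are surjective and $m_F$ is increasing along the chain; the resulting system defines an element of $\varprojlim_U \II[G_F/U]$ mapping to $(\bar a_U)$. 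For injectivity, if $(a_U) \in \varprojlim_U \II[G_F/U]$ maps to $0$, then $a_U \in p^{m_F(U)}\II[G_F/U]$ for all $U$; since $m_F(U) \to \infty$ along a cofinal chain and $\II[G_F/U]$ is $p$-adically separated (being a finite free $\II$-module and $\II$ being $p$-adically separated), compatibility of the $a_U$ forces each $a_U = 0$.

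The one point requiring a little care — and what I expect to be the only real obstacle — is verifying that $m_F(U) \to \infty$ along a cofinal chain and that the truncation exponents interact correctly with the transition maps; concretely, one needs that for $U' \subset U$ admissible, $m_F(U') \geq m_F(U)$, which is immediate from $\mathcal N_F(U') \subseteq \mathcal N_F(U)$, i.e. $1 + p^{m_F(U')}\Z_p \subseteq 1 + p^{m_F(U)}\Z_p$. Together with the surjectivity of the projection maps $\II[G_F/U'] \to \II[G_F/U]$ (a standard property of group algebras under quotient of the group), this makes the inductive lifting in the surjectivity argument go through. I would close by remarking that this is precisely the $\II$-coefficient analogue of \cite[]{ritter-weiss} and that the proof is a formal consequence of $p$-adic completeness of $\II$; no new ideas beyond the $\Z_p$ case are needed, only the observation that all the finiteness and completeness properties used there hold over $\II$.
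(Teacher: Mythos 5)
Your overall strategy coincides with the paper's: both arguments reduce the lemma to the facts that each finite-level group ring $\II[G_F/U]$ is $p$-adically complete and separated (being finite free over $\II$, which is a complete Noetherian local ring with $p$ in its maximal ideal) and that $m_F(V)\to\infty$ as $V$ shrinks, $m_F$ being monotone under inclusion. Your injectivity argument is correct and actually supplies a detail the paper leaves implicit.

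The step that does not work as written is the inductive lifting in your surjectivity argument. At stage $n$ you need $a_{U_n}\in\II[G_F/U_n]$ that simultaneously (a) reduces to $\bar a_{U_n}$ modulo $p^{m_F(U_n)}$ and (b) maps under the transition map $\pi$ exactly onto $a_{U_{n-1}}$ — without (b) the claim that "the resulting system defines an element of $\varprojlim_U\II[G_F/U]$" is false. But starting from an arbitrary lift $b$ of $\bar a_{U_n}$, the compatibility of the $\bar a_U$ only gives $\pi(b)-a_{U_{n-1}}\in p^{m_F(U_{n-1})}\II[G_F/U_{n-1}]$, so correcting $b$ by a preimage of this discrepancy (using surjectivity of $\pi$) achieves (b) at the cost of (a): the corrected element agrees with $\bar a_{U_n}$ only modulo the smaller power $p^{m_F(U_{n-1})}$. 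Thus the finite-stage induction yields either an exactly compatible system that does not reduce to $(\bar a_U)$ on the nose, or a system reducing to $(\bar a_U)$ that is only approximately compatible. The repair is to make no choices at all: for each fixed $U$, the images of the $\bar a_V$ in $\II[G_F/U]/p^{m_F(V)}\II[G_F/U]$, as $V$ ranges over admissible subgroups of $U$, form a compatible system and hence define $a_U\in\II[G_F/U]=\varprojlim_V\II[G_F/U]/p^{m_F(V)}\II[G_F/U]$ by $p$-adic completeness; these $a_U$ are then exactly compatible and reduce to the $\bar a_U$. This limit-of-images construction is precisely what the paper's proof does, and it is the step your induction is missing (equivalently, you may run your induction and then pass to the limit $a_U:=\lim_k\pi_{U,U_{n+k}}(a_{U_{n+k}})$, which converges by the same completeness).
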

\begin{proof}
 If $V$ is an admissible open subgroup of $\mathscr G_{F'}$ and $U$ is an admissible open subgroup of $\mathscr G_F$ in $ver^{-1}(V)$, then $m_F(U)\geq m_{F'}(V)-1$. 
 Consider the natural map 
 \begin{equation*}\II[[\mathscr G_F]]\lra\varprojlim_U\II[\mathscr G_F/U]/p^{m_F(U)}\II[\mathscr G_F/U].\end{equation*}
 We first show that this map is surjective. Let $(x_U)_U\in\varprojlim_U\II[\mathscr G_F/U]/p^{m_F(U)}\II[\mathscr G_F/U]$. Then for any $V\subseteq U$, consider the map 
 \begin{equation*}
  \II[\mathscr G_F/V]/p^{m_F(V)}\II[\mathscr G_F/V]\lra \II[\mathscr G_F/U]/p^{m_F(V)}\II[\mathscr G_F/U]
 \end{equation*}
and let the image of $x_V$ be denoted by $\ol{x_V}$. Note that fixing $U$ and taking the projective limit over $m_F(V)$, we have 
\begin{equation*}
 \II[\mathscr G_F/U]\cong\varprojlim_V\II[\mathscr G_F/U]/p^{m_F(V)}\II[\mathscr G_F/U].
\end{equation*}
Indeed, we have $\II[\mathscr G_F/U]/p^k\II[\mathscr G_F/U]\cong(\II/p^k)[\mathscr G_F/U]$, for every non negative integer $k$. Taking projective limit with respect to $k$, we have 
$\II[\mathscr G_F/U]=\varprojlim_k\II[\mathscr G_F/U]/p^k\II[\mathscr G_F/U]$. The lemma follows.
\end{proof}
\begin{lemma}
 The image of the measure $\mu_F\in\II[[\mathscr G_F]]$ in $\II[\mathscr G_F/U]$ is given by $\sum_{\bar g\in \mathscr G_F/U}\mu_F(.,\delta^{\bar g})\bar g$.
\end{lemma}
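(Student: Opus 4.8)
The plan is to reduce the identification of the image of $\mu_F$ in $\II[G_F/U]/\m^k$ to a statement about how the measure $\mu_F$ pairs against the characteristic functions $\delta^{\bar g}$ of cosets of $U$, which is exactly the data that survives projection to the finite quotient ring $\II[G_F/U]/\m^k$. First I would recall that, by the previous lemma, $\II[[G_F]]\cong\varprojlim_U\II[G_F/U]/p^{m_F(U)}\II[G_F/U]$; since $\m=\langle p,\mathcal{P}_\kappa\rangle$, the same cofinality argument (now using the ideals $\m^k$ rather than the ideals $p^{m_F(U)}$) shows that $\II[[G_F]]$ is also the inverse limit of the system $\II[G_F/U]/\m^k\II[G_F/U]$, because $\II[G_F/U]/\m^k\II[G_F/U]\cong(\II/\m^k)[G_F/U]$ and the ideals $\{\m^k\}$ are cofinal among powers of $p$ up to the usual bookkeeping with $m_F(U)$. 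So it suffices to compute the image of $\mu_F$ in each finite ring $(\II/\m^k)[G_F/U]$.

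Next I would write down the image explicitly. An element of $\II[G_F/U]$ is a formal sum $\sum_{\bar g\in G_F/U}a_{\bar g}\,\bar g$ with $a_{\bar g}\in\II$, and under the projection $\II[[G_F]]\lra\II[G_F/U]$ a measure $\mu$ goes to $\sum_{\bar g}\mu(g_0 U)\,\bar g$, where $\mu(g_0U)$ denotes the total mass $\int_{g_0U}d\mu$ of the coset; this is precisely the pairing $\mu(\cdot,\delta^{\bar g})$ of $\mu$ against the characteristic function $\delta^{\bar g}$ of the coset corresponding to $\bar g$, in the notation used in the paragraph preceding this lemma. Reducing further modulo $\m^k$ gives $\sum_{\bar g\in G_F/U}\mu_F(\cdot,\delta^{\bar g})\,\bar g \bmod \m^k$, which is exactly the asserted formula. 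The only thing to verify carefully here is that the linear functional "integrate against $\delta^{\bar g}$" is well defined on $\II[[G_F]]$ and that, for the characteristic function of a coset, it agrees with evaluation of the image in the group ring at the basis element $\bar g$; this is standard measure-theoretic formalism for completed group algebras, and since $\delta^{\bar g}=\sum_j c_j\chi_j$ is a finite $\Z_p$-linear combination of locally constant characters (as recalled just above), the pairing is even given by a finite sum $\sum_j c_j\int_{G_F}\chi_j\,d\mu_F$, which lies in $\II$.

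The main obstacle, such as it is, is purely bookkeeping: one must check that the two inverse systems — the one with transition ideals $p^{m_F(U)}$ and the one with transition ideals $\m^k$ — are cofinal with one another, so that the inverse limit description carries over from the previous lemma, and one must match conventions so that "image in $\II[G_F/U]/\m^k$" really is computed coset-by-coset as claimed (there is a potential sign/indexing subtlety between $g$ and $g^{-1}$, i.e. between left and right cosets, that one should fix once and for all and keep consistent with the earlier normalization $\mu_\kappa(g)=\int_{\{g\}\times\WW}\phi_\kappa\,d\mu_F$). Once those conventions are pinned down the proof is a one-line identification. I would therefore present it as: (i) observe $\II[[G_F]]=\varprojlim_U(\II/\m^k)[G_F/U]$ by the cofinality argument; (ii) recall that projection of a measure to a finite group ring is given by total masses of cosets, i.e.\ by pairing against the $\delta^{\bar g}$; (iii) conclude.
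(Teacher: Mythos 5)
Your proposal is correct and follows essentially the same route as the paper: the paper's proof simply observes that $\mu_F$ is a measure on $\mathbb{W}\times G_F$ and invokes as a ``standard fact'' that its image in $\II[G_F/U]$ is $\sum_{\bar g}\mu_F(\cdot,\delta^{(\bar g)})\bar g$, i.e.\ the coset-by-coset total masses, which is exactly your step (ii). Your additional remarks on cofinality of the ideals $\m^k$ and on $\delta^{\bar g}$ being a finite $\Z_p$-linear combination of characters merely flesh out what the paper leaves implicit.
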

\begin{proof}
 Consider the measure $\mu_F\in\II[[\mathscr G_F]]$. Then $\mu_F$ is a measure on $\mathbb{W}\times \mathscr G_F$. Then, it is a standard fact that the image of $\mu_F$ in $\II[\mathscr G_F/U]$ 
 is given by $\sum_{\bar g\in \mathscr G_F/U}\mu_F(.,\delta^{(\bar g)})\bar g$, where $\mu(.,\delta^{(\bar g)})\in\II$. 
\end{proof}
Now let $F'$  be a totally real extension of $F$ contained in $F^{ab,p}$. Consider the base change Hilbert modular form over $F'$. Let $\wt f_\kappa$ denote the base-change of
$f_\kappa$ to $F'$, and let $\mathbb{J}$ be the irreducible component
to which $\wt f_\kappa$ belongs. Let $\mu_{F'}$ be the measure in $\mathbb{J}[[\mathscr G_{F'}]]$ interpolating the special values of $\ad{\wt f_\kappa}$. 
\begin{lemma}\label{invariance-measure}
Let $y$ be a coset of a $\Delta$-stable admissible open subgroup of $\mathscr G_{F'}$, where $\Delta=G(F'/F)$. Then 
\begin{equation*}
 L^\ast(\ad{\wt f_\kappa}/F',\delta^{(y)}_{F'})=L^\ast(\ad{\wt f_\kappa}/F',\delta^{(y^\gamma)}_{F'}),
\end{equation*}
for all $\gamma\in\Delta$. Further, let $\wt\mu_{F'}$ be the image of $\mu_{F'}$ under the map $\JJ[[\mathscr G_{F'}]]\lra\II[[\mathscr G_{F'}]]$. Then $\wt\mu_{F'}\in\II[[\mathscr G_{F'}]]^\Delta$.
\end{lemma}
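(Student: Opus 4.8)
## Proof plan for Lemma \ref{invariance-measure}

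The plan is to prove the two assertions in turn, the first being the engine for the second. For the first statement, I would argue that $L^\ast(\ad{\wt f_\kappa}/F',\delta^{(y)}_{F'})$ is, up to the archimedean period $\Omega_\infty(\ad{\wt f_\kappa}/F')$ which only depends on $f_\kappa$ and the base change to $F'$ (and hence is $\Delta$-invariant because $\wt f_\kappa$ is the base change of a form over $F$, so $\wt f_\kappa^\gamma\cong\wt f_\kappa$ for $\gamma\in\Delta$), a product of the completed $L$-value $L_{(S,p)}(\ad{\wt f_\kappa}/F',\chi,0)$ together with the modified Euler/epsilon factors $e_p$ and $\mathcal{L}_p$ at primes above $p$. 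Writing $\delta^{(y)}_{F'}=\sum_j c_j\chi_j$ as a $\Z_p$-combination of finite order characters $\chi_j$ of $G_{F'}$, the action of $\gamma\in\Delta$ sends $\delta^{(y)}_{F'}$ to $\delta^{(y^\gamma)}_{F'}=\sum_j c_j\chi_j^\gamma$. The key point is then the Galois-equivariance of the complex $L$-function: $L(\ad{\wt f_\kappa}/F',\chi^\gamma,s)=L(\ad{\wt f_\kappa}^{\gamma^{-1}}/F',\chi,s)=L(\ad{\wt f_\kappa}/F',\chi,s)$, since $\ad$ of the base change is $\gamma$-stable, and the same equivariance for the finite set of removed Euler factors at $S$ and $p$ and for the local epsilon factors at $p$ (here the hypothesis that the open subgroup is $\Delta$-stable guarantees that the local conditions defining $\delta^{(y)}_{F'}$ at primes above $p$ are permuted compatibly by $\Delta$). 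Summing over $j$ gives the claimed equality.

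For the second statement, I would use the two preceding lemmas. By the lemma describing $\mu_{F'}$ modulo $\m^k$, the image $\wt\mu_{F'}$ in $\II[G_{F'}/U]/\m^k$, for $U$ a $\Delta$-stable admissible open subgroup, is $\sum_{\bar g\in G_{F'}/U}\wt\mu_{F'}(\cdot,\delta^{(\bar g)})\,\bar g$, and by the interpolation property each coefficient $\wt\mu_{F'}(\cdot,\delta^{(\bar g)})$ equals (the image in $\II$ of) $L^\ast(\ad{\wt f_\kappa}/F',\delta^{(\bar g)}_{F'})$ up to the common period factor. Now $\Delta$ acts on $\II[G_{F'}/U]$ by permuting the group elements $\bar g\mapsto\bar g^\gamma$, and the first statement shows the coefficient attached to $\bar g^\gamma$ coincides with that attached to $\bar g$. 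Hence $\wt\mu_{F'}\bmod\m^k$ is fixed by $\Delta$ for every $k$, and every $\Delta$-stable admissible $U$ in the cofinal system; passing to the inverse limit using the lemma identifying $\II[[G_{F'}]]$ with $\varprojlim_U\II[G_{F'}/U]/p^{m_{F'}(U)}\II[G_{F'}/U]$ (refined $\m$-adically) yields $\wt\mu_{F'}\in\II[[G_{F'}]]^\Delta$.

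I expect the main obstacle to be the first statement, specifically the care needed with the Euler and epsilon factors at primes above $p$: one must check that replacing $\chi$ by $\chi^\gamma$ corresponds to applying $\gamma$ to the places above $p$ of $F'$, that $\gamma$ permutes $\{\p\mid p\}$ while fixing the relevant local data of $\wt f_\kappa$ (the nearly ordinary filtration $\mathcal{F}_\p^+$ transforms to $\mathcal{F}_{\p^\gamma}^+$), and that $\mathcal{L}_p$, $e_p$ are therefore simply reindexed rather than genuinely changed — this is exactly where $\Delta$-stability of $U$ and the fact that $\wt f_\kappa$ descends to $F$ are both used. The archimedean period invariance is essentially automatic from $\wt f_\kappa$ being a base change, but should be stated explicitly. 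The inverse-limit bookkeeping in the second part is routine given the two lemmas, so the real content is the equivariance of the $p$-adic completion of the adjoint $L$-value.
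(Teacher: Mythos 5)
Your proposal is correct and follows essentially the same route as the paper: the first assertion is reduced to finite order characters and follows from the $\Delta$-invariance of $\ad{\wt f_\kappa}$ as a base change (the paper phrases this via Artin induction, $L_{(S,p)}(\ad{\wt f_\kappa}/F',\chi,s)=L_{(S,p)}(\ad{\wt f_\kappa}/F,\Ind{F'}{F}{\chi},s)$ and $\Ind{F'}{F}{\chi}\cong\Ind{F'}{F}{\chi^\gamma}$, which is equivalent to your Galois-equivariance argument), and the second assertion follows by comparing interpolated values. The only point where the paper is more explicit than you are is that each coefficient $\mu_{F'}(\cdot,\delta^{(\bar g)})$ is itself a measure on the weight space $\WW$ rather than a single $L$-value, so to conclude equality of coefficients one must invoke that two such measures agreeing at all arithmetic specializations (infinitely many characters of $\WW$) coincide — your phrase ``by the interpolation property each coefficient equals\dots'' is hiding exactly this step.
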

\begin{proof}
Here $\gamma\in\Delta$ acts on $\mathscr G_{F'}$ by conjugation and trivially on $\II$. 
 It is enough to prove for finite order characters $\chi$ of $\mathscr G_{F'}$.
 Recall that 
 \begin{equation*}
  L^\ast(\ad{\wt f_\kappa}/F',\chi)=
                     e_p(\ad{\wt f_\kappa}/F',\chi)\mathcal{L}_p(\ad{\wt f_\kappa}/F',\chi)\dfrac{L_{(S,p)}(\ad{\wt f_\kappa}/F',\chi,0)}{\Omega_\infty(\ad{\wt f_\kappa}/F')},
 \end{equation*}
where $L_{(S,p)}(\ad{\wt f_\kappa}/F',\chi,0)$ is the critical value at $s=0$ of the $L$-function $L(\ad{\wt f_\kappa}/F',\chi,s)$ with the Euler factors at $S$ and those 
above $p$ removed.

Note that by induction of $L$-functions, we have,
\begin{equation*}
\begin{split}
L_{(S,p)}(\ad{\wt f_\kappa}/F',\chi,s)=& L_{(S,p)}(\ad{\wt f_\kappa}/F,\mathrm{ind}_F^{F'}\chi,s)\\
                             =& L_{(S,p)}(\ad{\wt f_\kappa}/F,\mathrm{ind}_F^{F'}\chi^\gamma,s)\\
                             =&L_{(S,p)}(\ad{\wt f_\kappa}/F',\chi^\gamma,s).
\end{split}
\end{equation*}
We also have $\mathcal{L}_p(\ad{\wt f_\kappa}/F,\chi)=\prod_{\p\mid p}\mathcal{L}_\p(\ad{\wt f_\kappa}/F',\chi)$ and therefore the equality in the lemma holds.

Let $\kappa'$ be the weight of $\wt f_\kappa$ and $\phi_{\kappa'}$ be any arithmetic specialization of weight ${\kappa'}$. Now 
\begin{equation*}
 \begin{split}
  (\mu_{F'})^\gamma(\phi_{\kappa'},\delta_{F'}^{(y)})=&\mu_{F'}(\phi_{\kappa'},\delta_{F'}^{(y^\gamma)})\\
                                                  =& L^\ast(\ad{\wt f_{\kappa'}}/F',\delta_{F'}^{(y^\gamma)})\\
                                                  =& L^\ast(\ad{\wt f_{\kappa'}}/F',\delta_{F'}^{(y)})\\
                                                  =&\mu_{F'}(\phi_{\kappa'},\delta_{F'}^{(y)}).
 \end{split}
\end{equation*}
In fact, we have $(\mu_{F'})^\gamma(\phi_{\kappa'},\chi)=\mu_{F'}(\phi_{\kappa'},\chi),$ for any finite order $\chi$ of $\mathscr G_{F'}$.
Since this holds for all the arithmetic specializations $\phi_{\kappa'}$, the measures $(\mu_{F'})^\gamma(.,\chi)=\mu_{F'}(.,\chi)$. Indeed,
the measures $(\mu_{F'})^\gamma(.,\chi)$ and $\mu_{F'}(.,\chi)$ on $\mathbb{W}$ are equal at infinitely many characters, 
they are equal. This further implies that $(\mu_{F'})^\gamma=\mu_{F'}$, for all $\gamma\in\Delta$. Since the morphism $\JJ\lra\II$ is equivariant with respect to $\Delta$,
$(\wt\mu_{F'})^\gamma=\wt\mu_{F'}$, for all $\gamma\in\Delta$. Therefore $\wt\mu_{F'}\in\II[[\mathscr G_{F'}]]^\Delta$.
\end{proof}
\begin{theorem}
Let $\mu_F\in\II[[\mathscr G_F]]$ be a measure interpolating all the critical values of each arithmetic specialization twisted by finite order characters of $\mathscr G_F$.
Similarly, let $\mu_{F'}\in\JJ[[\mathscr G_{F'}]]$ interpolating the critical values of the base change of each arithmetic specialization. Recall the trace ideal
$\sT\in\II[[\mathscr G_{F'}]]^\Delta$ generated by the elements $\Sigma_{\gamma\in\Delta}\alpha^\gamma$, with $\alpha\in\II[[\mathscr G_{F'}]]$.
Then the congruence
\begin{equation}\label{torsion-congruence}
 ver(\mu_F)\equiv\wt\mu_{F'}\mod\sT
\end{equation}
hold if and only if for every locally constant $\Z_p$-valued function $\epsilon$ of $\mathscr G_{F'}$ satisfying $\epsilon^\gamma=\epsilon$ for all $\gamma\in\Delta$
 we have the following congruences
 \begin{equation*}
  \int_{\mathscr G_F}\epsilon\circ ver(x)d\mu_F(x)\equiv \int_{\mathscr G_{F'}}\epsilon(x)d\wt\mu_{F'}(x) \mod{p\II}.
 \end{equation*}
\end{theorem}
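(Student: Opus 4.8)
The plan is to prove the equivalence by unwinding the definition of the trace ideal $\sT\subset\II[[G_{F'}]]^\Delta$ and translating the congruence $ver(\mu_F)\equiv\wt\mu_{F'}\imod{\sT}$ into a statement about integrals against $\Delta$-invariant locally constant functions. The key point is the standard duality: a measure in $\II[[G_{F'}]]$ is determined by its integrals against locally constant $\Z_p$-valued functions, and a $\Delta$-invariant measure is determined by its integrals against $\Delta$-invariant such functions. First I would record that by Lemma \ref{invariance-measure} both $ver(\mu_F)$ and $\wt\mu_{F'}$ lie in $\II[[G_{F'}]]^\Delta$, so their difference lies in $\II[[G_{F'}]]^\Delta$ and it makes sense to ask whether it lies in $\sT$.

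Next I would set up the pairing. Using the description of $\II[[G_{F'}]]$ as the inverse limit of the rings $\II[G_{F'}/U]/p^{m_{F'}(U)}$ over admissible open $U$ (the lemma just before \ref{invariance-measure}), and restricting to $\Delta$-stable $U$, I would identify $\II[[G_{F'}]]^\Delta$ with $\varprojlim_U (\II[G_{F'}/U]/p^{m_{F'}(U)})^\Delta$ and the trace ideal $\sT$ with the inverse limit of the images of the trace maps $\Sigma_{\gamma\in\Delta}\gamma^*$ on each finite level. The quotient $\II[G_{F'}/U]^\Delta/\sT_U$ is then, by the normal basis / Nakayama-type argument familiar from Ritter--Weiss, a quotient on which $\Delta$-invariant functions are exactly the functions pulled back from $G_F/ver(U)$ reduced mod $p$; concretely, a $\Delta$-invariant element $\alpha=\sum_{\bar g}a_{\bar g}\bar g$ lies in $\sT_U + p\,\II[G_{F'}/U]$ if and only if $\sum_{\bar g\in\text{orbit}}a_{\bar g}\equiv 0 \imod{p\II}$ for each non-fixed $\Delta$-orbit, and the fixed cosets correspond to cosets coming from $G_F$ via the transfer. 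Pairing the element $ver(\mu_F)-\wt\mu_{F'}$ against the characteristic function $\delta^{(y)}_{F'}$ of a $\Delta$-stable coset $y$, and using $\int_{G_{F'}}\delta^{(y)}d\,ver(\mu_F) = \int_{G_F}(\delta^{(y)}\circ ver)\,d\mu_F$ together with the expansion of an arbitrary $\Delta$-invariant locally constant $\epsilon$ as a $\Z_p$-combination of such characteristic functions, gives the asserted equivalence.

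Concretely the steps are: (1) reduce to finite level, replacing $\II[[G_{F'}]]$ by $\II[G_{F'}/U]/p^{m_{F'}(U)}$ for $\Delta$-stable admissible $U$; (2) compute $\II[G_{F'}/U]^\Delta/(\sT_U + p)$ explicitly as a product of $\II/p$'s indexed by $\Delta$-orbits of cosets, with the transfer image accounting for the trivial orbits; (3) observe that $ver(\mu_F)-\wt\mu_{F'}\in\sT$ is equivalent to its image in each $\II[G_{F'}/U]^\Delta/\sT_U$ vanishing, which after reduction mod $p$ (one must be slightly careful that $\sT$ is already a $\Delta$-invariant submodule, so vanishing mod $\sT$ can be detected mod $p\II$ on each orbit sum) is equivalent to the orbit-sum condition; (4) rewrite the orbit-sum condition as $\int_{G_{F'}}\epsilon\,d(ver(\mu_F)-\wt\mu_{F'})\equiv 0\imod{p\II}$ for all $\Delta$-invariant locally constant $\epsilon$, via the $\delta^{(y)}$-expansion and the transfer-compatibility $\int_{G_{F'}}(\epsilon\circ ver)\cdots$; (5) conclude. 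Throughout, one invokes Lemma \ref{invariance-measure} to know $\wt\mu_{F'}$ is genuinely $\Delta$-invariant, which is what makes the orbit-sum reformulation legitimate.

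The main obstacle I expect is step (2)–(3): making precise that the congruence modulo the ideal $\sT$ is detected, level by level, purely by the mod-$p$ orbit sums, i.e. that $\sT_U + p\,\II[G_{F'}/U] \cap \II[G_{F'}/U]^\Delta$ equals the submodule cut out by ``all non-fixed orbit sums $\equiv 0 \imod{p\II}$.'' This is exactly the place where Ritter--Weiss use a normal-basis argument for $\FF_p[\Delta]$-modules, and the generalization here requires that $\II/p\II$ (a power series ring over a finite field) behaves well as a coefficient ring — which it does, since $\Delta$ is a $p$-group and the relevant permutation modules are free over $\II/p\II$ on orbit representatives. I would cite the proof in \cite{ritter-weiss} and indicate that it carries over verbatim with $\Z_p$ replaced by $\II$ and $p$ replaced by the uniformizer-type reduction $p\II$, the only new input being that $\II$ is a complete local domain with finite residue field, already used in the two preceding lemmas.
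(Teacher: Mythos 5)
Your proposal is correct and follows essentially the same route as the paper's proof: reduce to the finite levels $\II[G_{F'}/V]/p^{m_{F'}(V)}$ for $\Delta$-stable admissible $V$, decompose the coset expansion into $\Delta$-orbits, dispose of the non-fixed orbits using the $\Delta$-invariance of $\wt\mu_{F'}$ (Lemma \ref{invariance-measure}), and detect the fixed cosets by pairing with the invariant characteristic functions $\delta^{(y)}_{F'}$, exactly as in Ritter--Weiss. One correction to your finite-level criterion in steps (2)--(3): for a $\Delta$-invariant element the non-fixed orbit contributions lie in $\sT_U$ unconditionally (the coefficients are constant on an orbit whose size is a positive power of $p$, so that piece is itself a trace), and the condition actually cutting out $\sT_U$ is that the coefficients at the \emph{fixed} cosets be divisible by $p$ --- which is precisely what your pairing against $\delta^{(y)}$ for $\Delta$-stable $y$ tests, so the remainder of your argument goes through unchanged.
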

\begin{proof}
The necessary part is clear and we need only prove the sufficient part.
Consider the components of the images of $\wt\mu_{F'}$ and $ver(\mu_{F})$ in $\II[\mathscr G_{F'}/V]/p^{m_{F'}(V)-1}$ for a $\Delta$-stable admissible open
subgroup $V$ of $\mathscr G_{F'}$. We denote the component obtained by evaluating $\wt\mu_{F'}$ at $\delta^{(y)}_{F'}$ by $\wt\mu_{F'}(.,\delta_{F'}^{(y)})$ and the component obtained by
evaluating $\mu_F$ at $\delta^{(x)}_{F'}$ by $\wt\mu_F(.,\delta_{F'}^{(y)})$. Let $U:=ver^{-1}(V)\subseteq \mathscr G_F$, then $ver(\mu_{F})$ is the image under the transfer map of the 
$U$-component of $\mu_F$. These components are the images of 
\begin{enumerate}[(i)]
 \item $\sum_{y\in \mathscr G_{F'}/V}\wt\mu_{F'}(.,\delta_{F'}^{(y)})y$,
 \item $\sum_{x\in \mathscr G_{F}/U}\mu_{F}(.,\delta_{F'}^{(x)})ver(x)$
\end{enumerate}
in $(\II[\mathscr G_{F'}/V]/p^{m_{F'}(V)-1})^\Delta$. Let $\sT(V)$ be the image of the trace ideal $\sT$ in $(\II[\mathscr G_{F'}/V]/p^{m_{F'}(V)-1})^\Delta$. We consider
the following two cases:

\paragraph{Case(i): $y$ is fixed by $\Delta$.}
In this case, $\delta_{F'}^{(y)}$ is a locally constant function as in the statement of the theorem. 

Now if $y\notin \mathrm{im}(ver)$, then $\delta_{F'}\circ ver=0$, then again by the congruence condition we have $\wt\mu_{F'}(.,\delta_{F'}^{(y)})\equiv 0\pmod{p}$.
Therefore the corresponding summands in (i) and (ii) above vanishes modulo $\sT(V)$.

\paragraph{Case(ii): $y$ is not fixed by $\Delta$:} By Lemma \ref{invariance-measure}, we have
\begin{equation*}
 \wt\mu_{F'}(.,\delta_{F'}^{(y)})=\wt\mu_{F'}(.,\delta_{F'}^{(y^\gamma)}), \forall \gamma\in\Delta.
\end{equation*}
Therefore the $\Delta$-orbit of $y$ in the sum is given by $\wt\mu_{F'}(.,\delta_{F'}^{(y)})\sum_{\gamma\in\Delta} y^\gamma$, which belongs to $\sT(V)$.
\end{proof}

Viewing the elements $\int_{\mathscr G_F}\epsilon\circ ver(x)d\mu_F(x)$ and $ \int_{\mathscr G_{F'}}\epsilon(x)d\wt\mu_{F'}(x)$ in $\II$, as measures on the weight space $\mathbb W$, they are 
determined by their values on characters $\homs{\mathbb{W}}{\ol\Q_p}$. Let $\nu:=\int_{\mathscr G_F}\epsilon\circ ver(x)d\mu_F(x)-\int_{\mathscr G_{F'}}\epsilon(x)d\wt\mu_{F'}(x)$. For
any character $\chi:\mathbb{W}\lra\ol\Q_p^\times$ let $\int_\mathbb{W}\chi(\gamma)d\nu(\gamma)\equiv 0 \pmod{p\Z_p}$. Then, it is easy to see that 
$\eta=\frac{1}{p}\nu$ defines a measure on $\mathbb{W}$, with $\eta(\chi)=\nu(\chi)/p$, and $\nu\equiv 0\pmod{p\II}$.
We therefore have the following result.
\begin{theorem}
 The congruence 
 \begin{equation*}
  ver(\mu_F)\equiv\wt\mu_{F'}\mod\sT
 \end{equation*}
hold if and only if
\begin{equation}\label{torsion-two-var}
\int_\mathbb{W}\chi(y)\int_{\mathscr G_F}\epsilon\circ ver(x)d\mu_F\equiv\int_\mathbb{W}\chi(y)\int_{\mathscr G_{F'}}\epsilon(x)d\wt\mu_{F'}\pmod{p\Z_p} 
\end{equation}
for all locally constant functions 
 $\chi$ of $\mathbb{W}$, and for every locally constant $\Z_p$-valued function $\epsilon$ of $G_{F'}$ satisfying $\epsilon^\gamma=\epsilon$ for all $\gamma\in\Delta$.
\end{theorem}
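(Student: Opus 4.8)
The plan is to reduce the two-variable congruence \eqref{torsion-two-var} to the one-variable congruence established in the previous theorem, by interpreting both sides as values of $p$-adic measures on the weight space $\mathbb{W}$. First I would recall the setup: by the previous theorem, the torsion congruence $ver(\mu_F)\equiv\wt\mu_{F'}\bmod\sT$ is equivalent to the family of congruences
\begin{equation*}
 \int_{G_F}\epsilon\circ ver(x)\,d\mu_F(x)\equiv \int_{G_{F'}}\epsilon(x)\,d\wt\mu_{F'}(x)\pmod{p\II}
\end{equation*}
for all $\Delta$-invariant locally constant $\Z_p$-valued functions $\epsilon$ on $G_{F'}$. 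So it suffices to show that, for a fixed such $\epsilon$, the congruence modulo $p\II$ in $\II$ is equivalent to the congruence \eqref{torsion-two-var} modulo $p\Z_p$ for all locally constant $\chi$ on $\mathbb{W}$.

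The key step is the observation, already spelled out in the paragraph preceding the statement: each of $\int_{G_F}\epsilon\circ ver(x)\,d\mu_F(x)$ and $\int_{G_{F'}}\epsilon(x)\,d\wt\mu_{F'}(x)$ is an element of $\II\cong\Z_p[[\mathbb{W}]]$ (up to the finite flat structure), hence can be regarded as a $p$-adic measure on $\mathbb{W}$, and such a measure is determined by its values $\int_\mathbb{W}\chi\,d(-)$ on characters $\chi:\mathbb{W}\lra\ol\Q_p^\times$ (equivalently on locally constant functions). Writing $\nu:=\int_{G_F}\epsilon\circ ver(x)\,d\mu_F(x)-\int_{G_{F'}}\epsilon(x)\,d\wt\mu_{F'}(x)\in\II$, I would argue that $\nu\equiv 0\pmod{p\II}$ if and only if $\eta:=\frac1p\nu$ lies in $\II$, i.e.\ defines a genuine measure on $\mathbb{W}$; and by the defining property of measures this happens exactly when $\eta(\chi)=\nu(\chi)/p\in\Z_p$ for every locally constant $\chi$, which is precisely the congruence $\nu(\chi)\equiv 0\pmod{p\Z_p}$, i.e.\ \eqref{torsion-two-var}. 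Running this equivalence over all $\Delta$-invariant $\epsilon$ then gives the theorem.

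Concretely the steps, in order, would be: (1) invoke the previous theorem to replace $ver(\mu_F)\equiv\wt\mu_{F'}\bmod\sT$ by the $\bmod\, p\II$ congruences indexed by $\Delta$-invariant $\epsilon$; (2) for fixed $\epsilon$, view both integrals as elements of $\II$ and, via the isomorphism $\II\cong\cO[[\mathbb{W}]]$ (finite flat over $\Z_p[[\mathbb{W}]]$), as measures on $\mathbb{W}$; (3) use the fact that a measure on $\mathbb{W}$ is determined by, and integral iff integral against, all locally constant functions, to show $\nu\in p\II\iff \nu(\chi)\in p\Z_p\ \forall\chi$; (4) combine to conclude. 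I expect the main obstacle to be a clean treatment of step (3): one must be careful that $\II$ is only finite flat over $\Z_p[[\mathbb{W}]]$ rather than equal to it, so ``$\nu\in p\II$ iff all $\chi$-values lie in $p\Z_p$'' needs the observation that $\II$ is $p$-torsion free and that evaluation at the (Zariski-dense set of) characters of $\mathbb{W}$ detects membership in $p\II$ — this uses that $\II$ is a domain finite flat over the regular ring $\Z_p[[\mathbb{W}]]$ and that $p$ is a non-zero-divisor, so $\II/p\II$ embeds into its localizations detected by the characters. The rest is the routine measure-theoretic bookkeeping already sketched in the text.
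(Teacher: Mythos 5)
Your proposal is correct and follows essentially the same route as the paper: the paper's own argument is exactly the combination of the preceding theorem (reducing the $\sT$-congruence to the $\bmod\,p\II$ congruences for $\Delta$-invariant $\epsilon$) with the measure-theoretic observation, spelled out in the paragraph immediately before the statement, that $\nu\in p\II$ iff $\nu(\chi)\in p\Z_p$ for all characters $\chi$ of $\mathbb{W}$. Your extra care about $\II$ being only finite flat over $\Z_p[[\mathbb{W}]]$ is a reasonable refinement of a point the paper leaves implicit, but it does not change the argument.
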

The congruences in equation \eqref{torsion-congruence}  are generalizations over $\II$ for those of the torsion congruences over 
$\Z_p$. The torsion congruences will be an important step towards proving the congruences in Theorem \ref{cong}. 

\subsection{Remarks on Torsion Congruence in Families}\label{example}
With the above theorem in place, the torsion congruences along with the validity of the Main conjecture over $\mathscr G_F$, for any finite extension $F\subset\inft{F}$ will be the subject of another exploration.
Briefly, for the adjoint representations,
as before, $f$ be a Hilbert modular form of weight $\kappa=(0,I)$ defined over $F$, and $f'$ be the base-change of $f$
to $F'$. We assume that both these modular forms are ordinary at all the primes above $p$.
Then $\int_{\mathscr G_{F}}d\mu_F(\sigma)$ is the $p$-adic L-function of $f$ in $\II$, which we denote by $L_{p,F}$. These $p$-adic L-functions have been constructed in 
\cite[\S 5.3.6]{hida-mfg} when $F=\Q$. A similar construction works over the totally real fields under some conditions \cite{rosso}. Note that there is no cyclotomic
variable in these $p$-adic L-functions. Further, $\int_{\mathscr G_{F'}}d\wt\mu_{F'}(\sigma)$ is the image of the $p$-adic 
L-function of $f'$ in $\II$. We denote this image by $\wt L_{p,F'}$. These $p$-adic L-functions generate the characteristic ideals of the dual Selmer groups 
$\sg{F}{\ad{\bosym\rho_f}\otimes\II}$ and $\sg{F'}{\ad{\bosym\rho_{f'}}\otimes\II}$. 
The torsion congruence then takes following form
\begin{equation*}
 \int_{\mathscr G_{F}}d\mu_F(\sigma)\equiv\int_{\mathscr G_{F'}}d\wt\mu_{F'}(\sigma)\pmod{p\II},
\end{equation*}
which can be written as:
\begin{equation*}\label{cong-trivial}
 L_{p,F}\equiv \wt L_{p,F'}\pmod{p\II}.
\end{equation*}
Such kinds of congruences for certain $p$-adic families have been shown by using an appropriate Eisenstein series and applying the $q$-expansion principle on them (see \cite{bouganis}). An understanding of integrality of these $p$-adic L-functions as well as the congruence ideals  of Hida seems to be required.
The $p$-adic L-functions of $\II$-adic families twisted by powers of $p$-adic Hecke characters constructed by Wan using ideas of Hida, (see \cite{wan}), also seem to satisfy the congruences. More on this and other related analytic aspects needs to be
explored.


\vspace{1cm}

\noindent
\author{Chandrakant Aribam\\
Indian Institute of Science Education and Research Mohali\\
Punjab 140306\\
aribam@iisermohali.ac.in}

\end{document}